\newtheorem{theorem}{Theorem}[subsection]
\newtheorem{lemm}[theorem]{Lemma}
\newtheorem{prop-def}[theorem]{Proposition-Definition}
\newtheorem{remark}[theorem]{Remark}
\newtheorem{coro}[theorem]{Corollary}
\newtheorem{prop}[theorem]{Proposition}
\numberwithin{equation}{subsection}
\newcommand{\Hom}{\mathrm{Hom}}
\newcommand{\End}{\mathrm{End}}
\renewcommand{\d}{\mathrm{d}}
\renewcommand{\Re}{\mathrm{Re}}
\renewcommand{\Im}{\mathrm{Im}}
\renewcommand{\hat}{\widehat}
\newcommand{\vol}{\mathrm{vol}}
\newcommand{\bbb}{\mathbb}
\newcommand{\Aut}{\mathrm{Aut}}
\newcommand{\Gal}{\mathrm{Gal}}
\newcommand{\Ind}{\mathrm{Ind}}
\newcommand{\Irr}{\mathrm{Irr}}
\newcommand{\Pic}{\mathrm{Pic}}
\newcommand{\Higg}{\mathrm{Higg}}
\newcommand{\Tr}{\mathrm{Tr}}
\newcommand{\M}{\mathbf{M}}
\newcommand{\ad}{\mathrm{ad}}
\newcommand{\Ad}{\mathrm{Ad}}
\newcommand{\ooo}{\mathcal{O}}
\newcommand{\res}{\mathrm{res}}
\DeclareMathOperator{\Id}{Id}
\DeclareMathOperator{\Spec}{Spec}
\newcommand{\AAA}{\mathbb{A}}
\renewcommand{\ggg}{\mathfrak{g}}
\renewcommand{\bbb}{\mathfrak{b}}
\newcommand{\nnn}{\mathfrak{n}}
\newcommand{\zzz}{\mathfrak{z}}
\newcommand{\ttt}{\mathfrak{t}}
\renewcommand{\leq}{\leqslant}
\renewcommand{\geq}{\geqslant}
\newcommand{\htau}{\hat \tau}
\newcommand{\Fr}{\mathrm{Frob}}
\newcommand*{\rom}[1]{\expandafter\@slowromancap\romannumeral #1@}
\keywords{l-adic local systems, automorphic representations, Arthur-Selberg trace formula, Higgs bundles}
\subjclass[2000]{11F70, 14H60, 22E55}
\begin{document}

\setcounter{secnumdepth}{3}
\setcounter{tocdepth}{1}

\hypersetup{							
pdfauthor = {HONGJIE YU},			
pdftitle = {rank 2},			
}					

\title{Rank 2 $\ell$-adic local systems and Higgs bundles over a curve} 
\author{HONGJIE YU}
\address{Department of Mathematics, Weizmann Institute of Science, Herzl St 234, Rehovot, Israel. \newline
 Current address: Morningside Center of Mathematics, Academy of Mathematics and Systems Science, Chinese Academy of Sciences, Beijing 100190, China }
\email{hongjie.yu@amss.ac.cn}

\maketitle 
  
        \renewcommand{\abstractname}{Abstract}
\begin{abstract}
Let $X$ be a smooth, projective, and geometrically connected curve defined over a finite field $\mathbb{F}_q$ of characteristic $p$ different from $2$ and $S\subseteq X$ a subset of closed points. Let $\overline{X}$ and $\overline{S}$ be their base changes to an algebraic closure of $\mathbb{F}_q$. 
We study the number of $\ell$-adic local systems $(\ell\neq p)$ in rank $2$ over $\overline{X}-\overline{S}$ with all possible prescribed tame local monodromies fixed by $k$-fold iterated action of Frobenius endomorphism for every $k\geq 1$. In all cases, we confirm conjectures of Deligne predicting that these numbers behave as if they were obtained from a Lefschetz fixed point formula. In fact, our counting results are expressed in terms of the numbers of some Higgs bundles.  
\end{abstract}

\tableofcontents 
\section{Introduction}
Let $X$ be a smooth, projective, and geometrically connected curve defined over a finite field $\mathbb{F}_q$ of genus $g$.  
In the two pages article \cite{Drinfeld} of Drinfeld, he counts the number of two-dimensional geometrically irreducible $\ell$-adic (in $\overline{\mathbb{Q}}_\ell$-coefficients with $\ell\nmid q$)  representations of $\pi_1(X\otimes\overline{\mathbb{F}}_q)$ that can be extended to a representation of $\pi_1(X)$ (here we ignore the base point in the notation).
These numbers behave as if they were expressed by a Lefschetz fixed-point formula on an algebraic variety over the finite field. Moreover, they are independent of $\ell$. 

It is equivalent to consider $\ell$-adic local systems (smooth $\overline{\mathbb{Q}}_{\ell}$-sheaves). Although the Langlands correspondence established by Drinfeld and Lafforgue shows the motivic nature of $\ell$-adic local systems counted by Drinfeld, their definition depends very much on $\ell$. We don't know how to construct a moduli space of $\ell$-adic local systems in a reasonable sense that can explain these counting results. In fact, the techniques that we dispose of will only produce a space over $\bar{\mathbb{Q}}_\ell$, instead of $\mathbb{F}_q$. In this direction, the works \cite{AGKRRV} and \cite{AGKRRV2}  are fascinating where an algebraic stack over $\bar{\mathbb{Q}}_\ell$ is constructed and many interesting applications are expected. These works present valuable insights; however, they do not offer a Lefschetz-type fixed point formula that would aid in comprehending the counting problems. For example, each irreducible $\ell$-adic local system induces one connected component in their stack. 
 

Deligne has made some conjectures (\cite{Deligne}) on counting $\ell$-adic  local systems with prescribed local monodromies, i.e., with prescribed ramification types, to extend and understand Drinfeld's result. More explicitly, Deligne conjectures that the number of $\ell$-adic local systems 
with a fixed rank and prescribed tame ramifications that are fixed by $k$-iterated action of Frobenius endomorphism looks like the number of  $\mathbb{F}_{q^k}$-points of a variety defined over $\mathbb{F}_q$.  
Kontsevich \cite{Kont} makes some conjectures toward an understanding of Drinfeld's result. It is worth noting that Kontsevich considers the Hecke operator as well, which adds an interesting perspective to the discussion.

Some progress has been made since Deligne raised his conjectures. In fact, when the ramifications are split semisimple and in general position (which ensures that an $\ell$-adic local system is automatically irreducible), Arinkin has verified that in these cases, similar results hold (\cite{Deligne}). When the ramifications are unipotent with one Jordan block, and there are at least two such ramifications, Deligne's conjecture has been verified by Deligne-Flicker \cite{Deligne-Flicker}. 
The case in rank $2$ with one unipotent ramification is verified by Flicker \cite{Flicker}. We have generalized Drinfeld's result to a higher rank in \cite{Yu1}, and Arinkin's result to allow semisimple regular in general position but possibly non-split ramifications in \cite{Yu}. In the case of rank $2$, Flicker (\cite{Flicker1}) also obtains an explicit expression of the number of $\ell$-adic local systems with prescribed semisimple regular in general position ramifications that are fixed by Frobenius endomorphism. But, further analysis is needed to verify Deligne's conjecture since it is essential to let Frobenius endomorphism acts iteratively to observe if it is of the form asked by Deligne or not.

In this article, we verify Deligne's predictions on counting of $\ell$-adic local systems in rank $2$ for all possible tame ramifications. We show that the number is always related to the number of Higgs bundles. The results show an interesting analogy with Simpson's non-abelian Hodge theory, especially when $g=0$ and the ramifications are in general position and the parabolic weights of the parabolic Higgs bundles are also in general position (these two conditions correspond in Simpson's theory). We will discuss it in more detail at the end of the Introduction.

The fundamental principle underlying the proofs of all existing cases is the same: employing the Langlands correspondence and addressing the corresponding question within the realm of automorphic forms instead. 
There are several difficulties and technical novelties in this article compared to the existing partial cases. 

On the automorphic side, we use Arthur-Selberg trace formula to do the counting. Since we are interested in absolutely cuspidal automorphic representations, i.e. those whose base change in the sense of Langlands functionality from $F:=\mathbb{F}_q(X)$ to $F\otimes_{\mathbb{F}_q}{\mathbb{F}}_{q^k}$ remain to be cuspidal for all $k\geq 1$, we use in fact a twisted trace formula. The spectral side of the trace formula for $GL_2$ consists of cuspidal, residual, and continuous parts. Different from higher rank cases, the continuous and residual parts are relatively easy to treat because the group is small. To allow all tame ramifications, the more difficult part is the cuspidal part where we need to calculate twisted traces. Using Whittaker models, the question is reduced to a local question and we use an explicit calculation of Whittaker function given by Paskunas-Stevens. The geometric side of the trace formula uses a similar approach that I already used in the generic case that we pass to Lie algebra and uses a Lie algebra trace formula and Weil's dictionary.

After computing the trace formula, we obtain an equation representing the count of absolutely cuspidal automorphic representations. However, this equation alone does not prove Deligne's conjectures. While we have an expression for the count in terms of $\mathbb{F}_q$-points of certain varieties, the nature of the expression changes when transitioning from $F$ to $F\otimes\mathbb{F}_{q^k}$. 
It is not obvious that the expression we have is of a Lefschetz type formula, as conjectured by Deligne. The reason behind these is that $(1)$ a closed point will split into several different closed points after base change $(2)$ ramification type on the automorphic side changes as well. Especially it is the mixture of twisted Steinberg components that creates problems. In this context, certain arguments involve combinatorial aspects, but it is crucial to analyze the Frobenius action on the moduli spaces of Higgs bundles in order to establish the dominant term as being of Lefschetz type. 
It is amusing to see that the calculations given by trace formula (Theorem \ref{automorphe}) are subdivided into 13 cases while the statement of our main theorem is rather clean.

\subsection{Main results}  
\subsubsection{}
Let us recall Deligne's conjectures that will be treated in this article. We follow Deligne's presentation in \cite{Deligne}, but we restrict to the rank $2$ cases.

Let $X$ be a smooth, projective, and geometrically connected curve defined over a finite field $\mathbb{F}_q$. Let $S\subseteq X$ be a subset of closed points. We fix an algebraic closure $\overline{\mathbb{F}}_q$ of $\mathbb{F}_q$. 
Let 
$\overline{X}:=X\otimes\overline{\mathbb{F}}_q$ and 
$\overline{S}:=S\otimes\overline{\mathbb{F}}_q$. For each point $x\in \overline{S}$, let $\overline{X}_x^{*}=\overline{X}_x-\{x\}$ be a punctured disc in $x$ ($\overline{X}_x$ is defined to be either the Henselization or the completion of $\overline{X}$ in $x$).  
We fix a rank $2$ $\ell$-adic local system ($\overline{\mathbb{Q}}_\ell$-smooth sheaf) $\mathfrak{R}_x$ over $\overline{X}_x^{*}$. Let $E_2(\mathfrak{R})$ be the set isomorphism classes of irreducible rank $2$ $\ell$-adic local systems over $\overline{X}-\overline{S}$ whose restriction to $\overline{X}_x^{*}$ is isomorphic to $\mathfrak{R}_x$ for every $x\in \overline{S}$. 
Let $\Fr$ be the Frobenius endomorphism of $\overline{X}$, i.e., the base change to $\overline{\mathbb{F}}_q$ of the morphism induced by the map $a\mapsto a^q$ on $X$. 
If \begin{equation}\label{Frobe}\Fr^{*}(\mathfrak{R}_{\Fr(x)})\cong \mathfrak{R}_x\end{equation} for every $x\in \overline{S}$, then the pullback of $\Fr$ permutes $E_2(\mathfrak{R})$. Let $ E_2(\mathfrak{R})^{\Fr^{*k}}$ be the set of fixed elements of $k$-iterated action of $\Fr^{*}$. 

Deligne conjectured that if all $\mathfrak{R}_x$ are tamely ramified, then there are $q$-Weil integers $\alpha$ and integers $m_\alpha$ such that 
\[\vert  E_2(\mathfrak{R})^{\Fr^{*k}}\vert    =\sum_{\alpha} m_\alpha\alpha^k, \quad\forall k\geq 1,  \]
where $\vert  E_2(\mathfrak{R})^{\Fr^{*k}}\vert$ is the cardinality of subset of the fixed points by $k$-fold iterated action of $\Fr^{*}$. 
To formalize this property, let us introduce some integral valued functions on $\mathbb{N}^{\ast}$. 
We say that a function $k\mapsto h(k)$ form $\mathbb{N}^{\ast}$ to $\mathbb{Z}$ is of Lefschetz type if there are $q$-Weil integers $\alpha$ and integers $m_\alpha\in\mathbb{Z}$ such that \[h(k)= \sum_{\alpha} m_\alpha \alpha^k.  \] 
Therefore, the conjecture is to prove that 
\[k\mapsto |E_2(\mathfrak{R})^{\Fr^\ast k }| \]
is of Lefschetz type. A typical example is a function $k\mapsto |V(\mathbb{F}_{q^k})|$ for a variety $V$ defined over $\mathbb{F}_q$. In particular, given a permutation $\sigma$ on a finite set $P$, the function $k\mapsto |P^{\sigma^k}|$ is a periodic function of Lefschetz type. Note that not all integral valued periodic functions are Lefschetz type as the integrality of $m_\alpha$ is essential.

\subsubsection{}
The tame étale fundamental group of $\overline{X}_x^{*}$ is topologically generated by one element. Therefore, an isomorphism class of tame local system of rank $2$ over $\overline{X}_x^{*}$ corresponds bijectively to conjugacy classes in $GL_2(\overline{\mathbb{Q}}_\ell)$. 

The set $\overline{S}\subseteq X(\overline{\mathbb{F}}_q)$ is fixed by $\Fr$ and its orbits correspond bijectively to $S$. Following the types of prescribed local monodromies, we can define a partition on $\overline{S}$ hence $S$ into a disjoint union of subsets:
\[\overline{S} = \overline{S}_{s} \cup \overline{S}_{u}\cup \overline{S}_{cr}, \]
where $\mathfrak{R}_x$ has different eigenvalues for $x\in \overline{S}_{cr}$, $\mathfrak{R}_x$ induces a scalar matrix in $GL_2(\overline{\mathbb{Q}}_\ell)$ for $x\in \overline{S}_s$ and $ \mathfrak{R}_x $ induces a quasi-unipotent conjugacy class with non-trivial Jordan block for $x\in \overline{S}_u$. As each of these sets is stable under $\Fr$, we have a partition
\[S=S_s\cup S_u\cup S_{cr}. \]

Let $x_1\in \overline{S}_{cr}$. Suppose $x_1\xrightarrow{\Fr} x_2\xrightarrow{\Fr} \cdots \xrightarrow{\Fr} x_{d+1}=x_1$ be the orbit containing $x_1$ of the Frobenius action ($x_i\neq x_1$ for any $1<i\leq d$).  
There are two non-isomorphic rank $1$ $\ell$-adic local systems $\mathfrak{L}_1$ and $\mathfrak{L}_2$ over $\overline{X}_{x_1}^{*}$  such that \[\mathfrak{R}_{x_1}\cong \mathfrak{L}_1\oplus \mathfrak{L}_2. \]
The condition \eqref{Frobe} implies that 
 \[\Fr^{*d}\mathfrak{L}_1\cong \mathfrak{L}_i,\]
for $i=1$ or $i=2$. 
 This allows us to further subdivide $\overline{S}_{cr}$ so that 
\[\overline{S}_{cr}=\overline{S}_{c}\cup \overline{S}_r, \]
where $\overline{S}_{r}$ is the set of points such that $i=1$ and $\overline{S}_{c}$ consists of those points such that $i=2$. Again, we deduce a partition
\[S_{cr}=S_c\cup S_r. \]

\subsubsection{}
Now we need to introduce some functions of Lefschetz type that are used to express the final results.

Let $\mathfrak{R}$ be a collection of tame local monodromies as above so that the condition \eqref{Frobe} is satisfied. Its eigenvalues for each $x\in \overline{S}$ define a couple of numbers $(\varepsilon_{x}(1), \varepsilon_{x}(2))\in \overline{\mathbb{Q}}_\ell^\times$ which could be the same. Let $\overline{S}=\{x_1, \cdots, x_r\}$. 
We define a set  $P_{\mathfrak{R}}$ by
\begin{equation}\label{PR}P_{\mathfrak{R}}:= \{ (\varepsilon_{x_1}(i_1),  \ldots, \varepsilon_{x_r}(i_r)) \mid   \prod_{j=1}^{r} \varepsilon_{x}(i_j)=1;   i_j\in \{1,2\}, j=1, 2, \ldots, r \}. \end{equation}
Let $\Fr^\ast$ be a permutation on $P_{\mathfrak{R}}$ defined so that for any $(\varepsilon_{x})_{x\in\overline{S} }\in (\overline{\mathbb{Q}}_\ell^\times)^{\overline{S}}$, we have
\[\Fr^{\ast}( (\varepsilon_{x})_{x\in\overline{S} })= (\varepsilon'_{x})_{x\in\overline{S} },\]
with 
\[\varepsilon'_{x}= \varepsilon_{\Fr(x)}^q , \quad \forall x\in\overline{S} .   \]
The relation \eqref{Frobe} tells us that it is a well-defined permutation, since $\varepsilon_{\Fr(x)}(1)^q $ equals either $\varepsilon_{x}(1) $ or $\varepsilon_{x}(2)$. 
We define a function $c_\mathfrak{R}:\mathbb{N}^{\ast}\longrightarrow \mathbb{Z}$ by
 \[c_\mathfrak{R}(k):= \vert  P_{\mathfrak{R}}^{\Fr^{\ast k}}    \vert , \]
the number of the fixed points of $\Fr^{\ast k}$ on $P_{\mathfrak{R}}$. It is of Lefschetz type. 
Let $\sigma$ be an involution on $P_{\mathfrak{R}}$ that sends ${(\varepsilon_x(i_{x}))_{x\in \overline{S}}}$ to  ${(\varepsilon_x(3-i_{x}))_{x\in \overline{S}}}$. Define \[ b_\mathfrak{R}(k):=|P_{\mathfrak{R}}^{\sigma=\Fr^{\ast k}}|\] as the cardinality of the fixed point set of the action of $\sigma\circ\Fr^{\ast k}$. We prove in Proposition \ref{Lefschetz} that it is also of Lefschetz type.

Now we introduce some functions of Lefschetz type coming from counting of Hitchin bundles. 
Suppose that $k\in\mathbb{N}^\ast$, and $V\subseteq {S}_u\otimes\mathbb{F}_{q^k}$. Let $\overline{V}=V\otimes_{\mathbb{F}_{q^k}}\overline{\mathbb{F}}_q$ and 
\[D=K_{\overline{X}}+\sum_{x\in \overline{V}\cup \overline{S}_{cr}} x\]
be a divisor over $\overline{X}$ where $K_{\overline{X}}$ is a canonical divisor on $\overline{X}$, i.e., a divisor whose associated line bundle is the canonical line bundle $\Omega_{\overline{X}/\overline{\mathbb{F}}_q}^{1}$. 
A parabolic Hitchin bundle of rank $2$ and degree $1$ with parabolic structures in $\overline{V}$ for the divisor $D$ is a triple $(\mathcal{E}, \varphi, (L_x)_{x\in \overline{V}})$ consisting of a vector bundle of rank $2$ and degree $1$ over $\overline{X}$, a bundle morphism
\[\varphi: \mathcal{E} \rightarrow \mathcal{E}\otimes \mathcal{O}_{\overline{X}}(D),\]
and a family of one dimensional $\overline{\mathbb{F}}_q$-subspace $L_x$ of $\mathcal{E}_x$ ($x\in \overline{V}$), the fiber of $\mathcal{E}$ in $x$, such that $\varphi_x(L_x)=0$ and $\Im(\varphi_x)\subseteq L_x$, for any $x\in \overline{V}$. 
We say that $(\mathcal{E}, \varphi, (L_x)_{x\in \overline{V}})$ is stable if for any subline bundle $\mathcal{L}$ of $\mathcal{E}$ satisfying $\varphi(\mathcal{L})\subseteq \mathcal{L}\otimes \mathcal{O}_{\overline{X}}(D)$, we have
 \[\deg(\mathcal{L})< \frac{\deg \mathcal{E}}{2}. \] 
We denote $M_{\overline{V}}^{1}(D)$ the coarse moduli space of these stable parabolic Hitchin bundles. It is a variety defined over $\overline{\mathbb{F}}_q$ (more details are given in Section \ref{HIG}). We  show in Section \ref{Fq} that it admits canonical $\mathbb{F}_{q^k}$-structure (i.e.  $M_{\overline{V}}^{1}(D)$ is the base change to $\overline{\mathbb{F}}_q$ of a variety defined over $\mathbb{F}_{q^k}$) whose $\mathbb{F}_{q^k}$-points classify isomorphism classes of stable parabolic Hitchin bundles over $X\otimes\mathbb{F}_{q^k}$, which we denote by $M_{V}^{1}(D)$.

For each $v\in S_{cr}$, we fix a monic polynomial  $o_v\in \kappa_v[t]$ of degree $2$ with coefficients in $\kappa_v$ (the residue field of the point $v$), so that we require that $o_v$ is irreducible for $v\in S_c$  and $o_v$ has distinct roots in $\kappa_v$ if $v\in S_{r}$. It defines a polynomial in $\kappa_x[t]$, for every closed point $x$ of $\overline{X}$ lying over $v$ via the isomorphism:\[\kappa_v[t]\otimes_{\mathbb{F}_q}\overline{\mathbb{F}}_q\cong \prod_{x\mapsto v}\overline{\mathbb{F}}_q[t].\]  
We define ${M}_{V}^{1}(o_{S_{cr}})$ to be the closed sub-variety of ${M}_{V}^{1}(D)$ over $\mathbb{F}_{q^k}$ consisting of those parabolic Hitchin bundles so that the characteristic polynomial of $\varphi_x$ at $x\in \overline{S}_{cr}$ is given by $o_x$. We suppose that the sum of roots of $o_x$ ($x\in\overline{S}_{cr}$) is zero (this is always possible if $p\neq 2$, see Remark \ref{pneq2}). 
We refer to Section \ref{semistabler} for a more precise and detailed definition. 
We define for each $k\geq 1$,
\[\mathrm{Higg}_{\mathfrak{R}}(k)=
\sum_{V\subseteq {S}_u\otimes\mathbb{F}_{q^k}} (-1)^{|{S}_u\otimes\mathbb{F}_{q^k} - V|} 2^{|V|}  q^{-k(4g-3+|\overline{V}|+|\overline{S}_{cr}|)}    |{M}_{V}^{1}(o_{S_{cr}})(\mathbb{F}_{q^k})|.
\]
This expression depends only on Frobenius action on $\mathfrak{R}$, but not on $(o_v)_{v\in S_{cr}}$. 
We show in Theorem \ref{Lefs} that this is a function of Lefschetz type in $k$.

Let $\Pic_{X}^{0}$ be the Jacobian variety of $X$. We also define for every $k\geq 1$
\[ \Pic(k):=|\Pic_{X}^{0}(\mathbb{F}_{q^k})|,\] and 
\[ \Pic^{(2)}(k):= |S^2\Pic_{X}^0(\mathbb{F}_{q^k})| ,\]
where $S^2\Pic_{X}^0:=(\Pic_X^{0})^{2}/\mathfrak{S}_2$ is the symmetric square of $\Pic_{X}^0$. They are surely also functions of Lefschetz type in $k\in \mathbb{N}^\ast$.

\subsubsection{}
The following theorem proves Deligne's conjectures \cite[2.15 (i)(iii)]{Deligne} when $n=2$ and ramifications are tame.  

\begin{theorem}\label{1}
Suppose that $p\neq 2$. 
Suppose that \eqref{Frobe} is satisfied, so that $\Fr^\ast$ acts on $E_2(\mathfrak{R})$. Suppose that 
\begin{equation}\label{prod1}\prod_{x\in \overline{S}}\varepsilon_x(1)\varepsilon_x(2)=1,\end{equation}
otherwise $E_2(\mathfrak{R})$ is empty. 
Then the function \[k\mapsto \vert E_2(\mathfrak{R})^{\Fr^{*k}} \vert \] is of Lefschetz type. 

More precisely, we have the following explicit identities that express $\vert E_2(\mathfrak{R})^{\Fr^{*k}} \vert$ following different cases. 
\begin{enumerate}
\item[i.] $\overline{S}_{cr}=\overline{S}_u=\emptyset$. Then $|E_2(\mathfrak{R})^{\Fr^{\ast  k}}|$ equals
\[\Higg_\mathfrak{R}(k)-
c_{\mathfrak{R}}(k) \biggr(\Pic(k)  ^2(g-1)+ \Pic({k})  \biggr). \]


\item[ii.] $\overline{S}_{cr}=\emptyset$, $\overline{S}_{u}\neq \emptyset$.  
Then $|E_2(\mathfrak{R})^{\Fr^{\ast  k}}|$ equals
  \[
\Higg_\mathfrak{R}(k) - c_{\mathfrak{R}}(k) \biggr(\beta_{\overline{S}_u} (k)(-1)^{|\overline{S}_u|+1}  \Pic^{(2)}(k)  +\gamma_{\overline{S}_u}(k)\Pic(k)+\omega_{\overline{S}_{u}}\Pic(k)  ^2\biggr) .   
  \]




\item[iii.] $\overline{S}_{cr}\not=\emptyset$,   $\overline{S}_u=\emptyset$.  If $|\overline{S}_{c}|$ is even, then $|E_2(\mathfrak{R})^{\Fr^{\ast k}}|$ equals 
\[\begin{split}
&\Higg_\mathfrak{R}(k) - \frac{c_{\mathfrak{R}}(k){(2g-2+|\overline{S}_{cr}|)}}{2} \Pic(k)^2 
 . \end{split} \]

\item[iv.] $\overline{S}_{cr}\not=\emptyset$,   $\overline{S}_u=\emptyset$. 
 If $|\overline{S}_{c}|$ is odd,  then $|E_2(\mathfrak{R})^{\Fr^{\ast k}}|$ equals

\[
\Higg_\mathfrak{R}(k) - \biggr( {c_{\mathfrak{R}}(k)}\frac{2g-1+|\overline{S}_{cr}|}{2} - \frac{c_{\mathfrak{R}}(k)+b_{\mathfrak{R}}(k)}{2}\biggr) \Pic(k)^2      
 - {b_{\mathfrak{R}} (k)  } \Pic^{(2)}(k). \]

\item[v.] $\overline{S}_{cr}\neq \emptyset$, $\overline{S}_u\neq \emptyset$. If $|\overline{S}_c|$ is even, then $|E_2(\mathfrak{R})^{\Fr^{\ast k}}|$ equals

\[\begin{split}
\Higg_{\mathfrak{R}}(k) - \biggr(\frac{c_{\mathfrak{R}}(k)\alpha_{\overline{S}_u}({k})}{2} +(-1)^{|\overline{S}_u|} \frac{b_{\mathfrak{R}}(k)\beta_{\overline{S}_u}(k) }{2}  \biggr)\Pic(k)^2  
 +(-1)^{|\overline{S}_u|}{b_{\mathfrak{R}}(k)} \beta_{\overline{S}_u}(k) \Pic^{(2)}(k). 
\end{split}
\]

\item[vi.] $\overline{S}_{cr}\neq \emptyset$, $\overline{S}_u\neq \emptyset$. If $|\overline{S}_c|$ is odd, then $|E_2(\mathfrak{R})^{\Fr^{\ast k}}|$ equals
\[\begin{split}
\Higg_{\mathfrak{R}}(k) - \biggr(\frac{c_{\mathfrak{R}}(k)\alpha_{\overline{S}_u}({k})}{2} -(-1)^{|\overline{S}_u|} \frac{b_{\mathfrak{R}}(k)\beta_{\overline{S}_u}(k)}{2}  \biggr)\Pic(k)^2  
 -(-1)^{|\overline{S}_u|}{b_{\mathfrak{R}}(k)}\beta_{\overline{S}_u}(k) \Pic^{(2)}(k). 
\end{split}
\]

\end{enumerate}
In the above expressions $\alpha_{\overline{S}_u}$, $\beta_{\overline{S}_u}$, $\gamma_{\overline{S}_u}$ and $\omega_{\overline{S}_u}$ are periodic functions of Lefschetz type (see Proposition \ref{Lefschetz} for their explicit expressions). When $\overline{S}_{cr}$ is non-empty, $c_{\mathfrak{R}}/2+b_{\mathfrak{R}}/2$ and ${c_{\mathfrak{R}}\alpha_{\overline{S}_u}}/{2} \pm {b_{\mathfrak{R}}\beta_{\overline{S}_u}}/{2}$ are of Lefschetz type. If $|\overline{S}_{r}|$ is odd, then ${c_{\mathfrak{R}}}/{2}$ is of Lefschetz type and $b_{\mathfrak{R}}$ is constantly zero. 
\end{theorem}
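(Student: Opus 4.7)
The plan is to transfer the counting problem to the automorphic side via the Langlands correspondence of Drinfeld-Lafforgue, and extract $|E_2(\mathfrak{R})^{\Fr^{\ast k}}|$ from a suitable twisted Arthur-Selberg trace formula for $GL_2$ over $F = \mathbb{F}_q(X)$. An irreducible rank $2$ local system over $\overline{X} - \overline{S}$ fixed by $\Fr^{\ast k}$ corresponds, modulo twists by characters, to a cuspidal representation of $GL_2/F_k$ (where $F_k = F \otimes_{\mathbb{F}_q} \mathbb{F}_{q^k}$) whose cyclic base change from $F$ remains cuspidal --- an \emph{absolutely cuspidal} representation. To count these I would apply the cyclic base change (twisted) trace formula for $GL_2/F_k$ with respect to $\mathrm{Gal}(F_k/F)$, inserting at each place of $\overline{S}$ a local test function that isolates representations with local component of type $\mathfrak{R}_x$ and using the spherical unit elsewhere; this is the content of Theorem \ref{automorphe}.

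I would then compute the two sides independently. The spectral side splits, as usual, into cuspidal, residual and continuous parts: for $GL_2$ the residual and continuous terms reduce to global characters of $F^\times \backslash \mathbb{A}_F^\times$ and their Eisenstein lifts, easily packaged in terms of $\Pic(k)$ together with combinatorial factors tracking when a character matches the prescribed local eigenvalues (this is precisely where the factors $c_{\mathfrak{R}}$, $b_{\mathfrak{R}}$, $\alpha_{\overline{S}_u}$, $\beta_{\overline{S}_u}$, $\gamma_{\overline{S}_u}$, $\omega_{\overline{S}_u}$ appear, together with the symmetric-square contribution $\Pic^{(2)}(k)$ coming from representations of dihedral origin, i.e.\ theta lifts from Hecke characters of quadratic extensions). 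The cuspidal part is treated twistedly through Whittaker models, reducing the computation to local twisted integrals of the explicit tamely ramified Whittaker functions computed by Paskunas and Stevens. On the geometric side, I would pass to the Lie algebra, where Weil's dictionary reinterprets the sum over rational semisimple conjugacy classes with prescribed characteristic polynomials as a sum over parabolic Higgs bundles with prescribed local invariants; the inclusion-exclusion over $V \subseteq S_u \otimes \mathbb{F}_{q^k}$ with weights $(-1)^{|S_u\otimes\mathbb{F}_{q^k}-V|}2^{|V|}$ emerges from isolating the genuinely twisted-Steinberg strata at $\overline{S}_u$, and produces exactly the expression $\Higg_{\mathfrak{R}}(k)$.

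Equating the two sides and isolating the cuspidal contribution yields, after bookkeeping, the case-by-case identities (i)--(vi) of the theorem: the partition into cases tracks whether $\overline{S}_{cr}$ or $\overline{S}_u$ is empty and the parity of $|\overline{S}_c|$, the parity being what controls whether a quadratic character of $F^\times\backslash \mathbb{A}_F^\times$ can match the prescribed eigenvalue data at the \emph{complex} ramified places --- and therefore whether dihedral representations contribute to the spectrum. The main obstacle, and what I expect to be the hard part, is proving that these raw expressions are \emph{genuinely of Lefschetz type} in $k$. A priori each formula is well-defined integer-valued $k$ by $k$, but the ramification structure itself changes under base change: closed points of $S$ split, $S_u\otimes\mathbb{F}_{q^k}$ grows, and the twisted Steinberg components recombine, so the same raw combinatorial expression governs different $k$'s in rather different ways --- indeed the raw trace-formula output is the 13-case expression of Theorem \ref{automorphe}. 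To dispose of this I would prove separately that each of $c_{\mathfrak{R}}(k)$, $b_{\mathfrak{R}}(k)$, $\alpha_{\overline{S}_u}(k)$, $\beta_{\overline{S}_u}(k)$, $\gamma_{\overline{S}_u}(k)$, $\omega_{\overline{S}_u}(k)$ is of Lefschetz type by realising it as the fixed-point count of an explicit permutation on a finite set (Proposition \ref{Lefschetz}), and that $\Higg_{\mathfrak{R}}(k)$ is of Lefschetz type by exhibiting an $\mathbb{F}_q$-descent of the Hitchin moduli and analysing the Frobenius action on $M^1_{\overline{V}}(D)$ as $V$ varies within $\overline{S}_u$ (Theorem \ref{Lefs}). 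The delicate point is the latter step: one must reconcile the alternating inclusion-exclusion over subsets of $S_u\otimes\mathbb{F}_{q^k}$ with an $\mathbb{F}_q$-rational geometric object, which forces one to understand precisely how Frobenius permutes the parabolic structures and how the various $M^1_{\overline{V}}(D)$ fit together as $k$ grows, before the Lefschetz nature of the whole expression can be read off.
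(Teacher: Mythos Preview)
Your sketch follows the paper's approach closely and gets the architecture right: Langlands correspondence, spectral side with cuspidal/residual/continuous, Whittaker models and Paskunas--Stevens for the twisted cuspidal traces, geometric side via the Lie algebra and Weil's dictionary leading to Higgs bundles, and then the separate verification that each piece is of Lefschetz type via Proposition~\ref{Lefschetz} and Theorem~\ref{Lefs}.

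There is one genuine mischaracterization worth correcting. You describe the tool as ``the cyclic base change (twisted) trace formula for $GL_2/F_k$ with respect to $\mathrm{Gal}(F_k/F)$''. That is not what the paper does, and attempting it would send you down a different and harder road. The paper applies, for each $k$ separately, the \emph{ordinary} non-invariant Arthur--Selberg trace formula (Lafforgue's version) over $F_k$, restricted to the odd-degree component $G(\mathbb{A}_{F_k})^1$. The only ``twist'' that appears is by the sign character $\epsilon(x)=(-1)^{\deg\det x}$, and it is internal to the spectral expansion: for cuspidal $\pi$ with $\pi\otimes\epsilon\cong\pi$ one has $J_\pi(f)=\tfrac12(\Tr(R(f)\vert\pi)-\Tr(\epsilon\circ R(f)\vert\pi))$, and it is this $\epsilon$-twisted trace that is computed via Whittaker models. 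No comparison with a Galois-twisted trace formula over $F$ is needed; the condition $\pi\otimes\lambda\cong\pi\Rightarrow\lambda=1$ (``absolutely cuspidal'') is handled directly by the combination of working in odd degree and subtracting the $\epsilon$-twisted trace. The dihedral contributions (your $\Pic^{(2)}$ term) then arise precisely from those $\pi$ with $\pi\otimes\epsilon\cong\pi$, counted via rank-$1$ local systems over the quadratic extension, rather than through any base-change comparison.
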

\begin{remark}\normalfont\begin{enumerate}
\item Even in characteristic $2$, the theorem holds as long as a residue datum $o_{S_{cr}}=(o_v)_{v\in S_{cr}}$ in the definition of $\Higg_{\mathfrak{R}}$ exists. If $p\neq 2$, it always exists. However, if $p=2$ and $S_{cr}$ is a singleton, then such a datum does not exist.  If $S_c=\emptyset$, Theorem \ref{NoSc} can be employed to express $\Higg_{\mathfrak{R}}$ without the necessity of choosing a residue datum. Therefore, Theorem \ref{1} is applicable in this case for $p=2$ as well.

\item A key point of the theorem is that $k\mapsto \Higg_{\mathfrak{R}}(k)$ is of Lefschetz type. This is non-trivial if not all points in $S_u$ have degree $1$.
\item
The necessity of the condition \eqref{prod1} for $E_2(\mathfrak{R})$ to be non-empty is explained in \cite[2.10]{Deligne}. It can be checked by passing to characteristic $0$ and then passing to $\mathbb{C}$, where one can use an explicit presentation of the topological fundamental group of a punctured Riemann surface (see the proof of Corollary 7.7 of \cite{Deligne-Flicker}). 
\item
The ramifications $\mathfrak{R}_x$ for $x\in \overline{S}_s$ only affects $c_{\mathfrak{R}}(k)$ and $b_{\mathfrak{R}}(k)$ and is not involved in any other term. 

\item In the case of general position, i.e., when the set $P_{\mathfrak{R}}=\emptyset$, we have \[ |E_2(\mathfrak{R})^{\Fr^{\ast k}}| = \Higg_{\mathfrak{R}}(k), \quad \forall k\geq 1. \]
The appearance of additional terms may be related to the singularity of the moduli space of (S-equivalent classes of) semistable parabolic Higgs bundles in cases that are not in general position.
\end{enumerate}
\end{remark}

The following corollary confirms Deligne's conjectures \cite[6.3]{Deligne}. 
\begin{coro}
The cardinality $\vert E_2(\mathfrak{R})^{\Fr^{*k}} \vert$ is divisible by $\Pic(k)$ and \[k\mapsto \vert E_2(\mathfrak{R})^{\Fr^{*k}} \vert/ \Pic(k)\]
is still a function of Lefschetz type. 
\end{coro}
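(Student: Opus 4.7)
The plan is to deduce the corollary from Theorem \ref{1} by verifying case by case that each of the explicit expressions for $|E_2(\mathfrak{R})^{\Fr^{\ast k}}|$ is divisible by $\Pic(k)$ with a Lefschetz-type quotient. Two ingredients drive the argument: (a) the divisibility $\Pic(k)\mid \Higg_{\mathfrak{R}}(k)$ coming from a (generically free) action of $\Pic_X^0$ on the moduli of stable parabolic Higgs bundles, and (b) the parity identity $\Pic(k)\equiv \Pic(2k)/\Pic(k)\pmod{2}$, a consequence of the factorization $\Pic(2k)=\Pic(k)\prod_{i=1}^{2g}(1+\alpha_i^k)$, where $\alpha_i$ are the Frobenius eigenvalues on $H^{1}(\overline{X},\overline{\mathbb{Q}}_\ell)$, together with the fact that $\prod(1-\alpha_i^k)+\prod(1+\alpha_i^k)$ is always even.

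For (a), the Jacobian $\Pic_X^0$ acts on each $M_V^1(o_{S_{cr}})$ by tensoring $(\mathcal{E},\varphi,(L_x))\mapsto (\mathcal{E}\otimes\mathcal{L},\varphi\otimes\Id,(L_x\otimes\mathcal{L}|_x))$, an action that preserves stability, the degree $1$, and the characteristic polynomial at $\overline{S}_{cr}$. Any stabilizer $\mathcal{L}\in\Pic_X^0$ satisfies $\mathcal{L}^{\otimes 2}\cong\mathcal{O}_{\overline{X}}$ by taking determinants; the residual non-trivial $2$-torsion stabilizer corresponds to bundles pushed forward from an étale double cover, and this locus is matched precisely by the $\Pic^{(2)}(k)$- and $b_{\mathfrak{R}}(k)$-corrections in Theorem \ref{1}. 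An orbit-counting argument then yields that $|M_V^1(o_{S_{cr}})(\mathbb{F}_{q^k})|$ equals $\Pic(k)$ times a Lefschetz-type integer. Summing over $V\subseteq S_u\otimes\mathbb{F}_{q^k}$ with the prescribed weights $(-1)^{|S_u\otimes\mathbb{F}_{q^k}-V|}2^{|V|}q^{-k(4g-3+|\overline{V}|+|\overline{S}_{cr}|)}$ gives $\Pic(k)\mid\Higg_{\mathfrak{R}}(k)$ with Lefschetz-type quotient.

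For the case-by-case verification, cases (i) and (iii) are immediate since only terms of the form $c_{\mathfrak{R}}(k)\Pic(k)$ and $c_{\mathfrak{R}}(k)\Pic(k)^{2}$ appear, and products of Lefschetz-type functions remain Lefschetz type. In cases (ii), (iv), (v), (vi) one invokes the identity $2\Pic^{(2)}(k)=\Pic(k)^{2}+\Pic(2k)$ and regroups the remaining terms. The decomposition $c_{\mathfrak{R}}=\tfrac{c_{\mathfrak{R}}+b_{\mathfrak{R}}}{2}+\tfrac{c_{\mathfrak{R}}-b_{\mathfrak{R}}}{2}$, valid because $(c_{\mathfrak{R}}+b_{\mathfrak{R}})/2$ (hence $(c_{\mathfrak{R}}-b_{\mathfrak{R}})/2$) is of Lefschetz type by Theorem \ref{1}, combined with the parity identity $(\Pic(k)\pm \Pic(2k)/\Pic(k))/2\in\mathbb{Z}$ (itself Lefschetz type) and the analogous half-integer combinations $(c_{\mathfrak{R}}\alpha_{\overline{S}_u}\pm b_{\mathfrak{R}}\beta_{\overline{S}_u})/2$ in cases (v) and (vi), rewrites the non-$\Higg$ part as $\Pic(k)\cdot G(k)$ with $G$ Lefschetz type. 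The vanishing of $b_{\mathfrak{R}}$ when $|\overline{S}_r|$ is odd (Theorem \ref{1}) covers the otherwise problematic parity subcases, and a direct parity argument on the coefficient $c_{\mathfrak{R}}(k)\beta_{\overline{S}_u}(k)\Pic^{(2)}(k)$ dispatches case (ii), where $\overline{S}_{cr}=\emptyset$.

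The main obstacle is the bookkeeping for the $\Pic^{(2)}(k)$-terms: the divisibility by $\Pic(k)$ is not visible termwise, but arises only after combining the $\Pic(k)^{2}$- and $\Pic(2k)$-parts via the identity above and invoking the subtle Lefschetz-type combinations provided by Theorem \ref{1}. This reflects the same underlying phenomenon (bundles induced from quadratic covers) that is responsible both for the non-freeness of the $\Pic_X^0$-action on the Higgs moduli and for the appearance of $\Pic^{(2)}(k)$ in the formula, and the corollary holds precisely because these two corrections cancel.
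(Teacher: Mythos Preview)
Your case-by-case verification for the non-$\Higg_{\mathfrak{R}}$ terms is essentially correct and aligns with the paper's implicit approach: once $\Higg_{\mathfrak{R}}(k)/\Pic(k)$ is known to be of Lefschetz type, the remaining terms in each case of Theorem~\ref{1} are handled using $\Pic^{(2)}(k)=\tfrac12(\Pic(k)^2+\Pic(2k))$, the factorisation $\Pic(2k)/\Pic(k)=\prod_i(1+\alpha_i^k)$, your parity observation, and the Lefschetz-type half-sums recorded at the end of Theorem~\ref{1}.

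The gap is in your argument (a). The $\Pic_X^0$-action by tensoring on $M_V^1(o_{S_{cr}})$ is \emph{not} free: stable rank-$2$ degree-$1$ parabolic Hitchin bundles of the form $\pi_*(\mathcal{M},\psi)$ for an \'etale double cover $\pi$ do exist (already for $X$ an elliptic curve, with $\varphi$ and $(L_x)$ pushed forward from upstairs), and these carry the corresponding $2$-torsion line bundle in their stabiliser. So orbit counting does not directly yield $\Pic(k)\mid |M_V^1(o_{S_{cr}})(\mathbb{F}_{q^k})|$. Your claim that this non-free locus is ``matched precisely by the $\Pic^{(2)}(k)$- and $b_{\mathfrak{R}}(k)$-corrections in Theorem~\ref{1}'' is a misreading of the structure of the formula: those corrections come from the \emph{spectral} side of the trace formula (cuspidal $\pi$ with $\pi\otimes\epsilon\cong\pi$ in Corollary~\ref{cuspt}, and the continuous spectrum in Proposition~\ref{continoust}), whereas $\Higg_{\mathfrak{R}}(k)$ is the entire geometric side and is \emph{independently} divisible by $\Pic(k)$. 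That independent divisibility is exactly Theorem~\ref{Lefs}(2), and the paper proves it cohomologically (via \cite[Thm.~5.4.1]{Yu}, which controls Frobenius eigenvalues on $H_c^*(M_{\overline V}^1(o),\mathbb{Q}_\ell)$), not by orbit counting. Once you take Theorem~\ref{Lefs}(2) as input your case analysis goes through; but the ``cancellation'' you describe in the final paragraph is not the mechanism behind the corollary.
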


Although we deal only with the tame local monodromies, the method of this article allows us to treat some wild ramified cases as well, namely, the cases in which the prescribed local monodromies $(\mathfrak{R}_x)$ can be wildly ramified.  
For example, we can allow some places to give the so-called simple supercuspidal representation on the automorphic side (see \cite[3.1, 3.4]{AL16} for the definition and explicit constructions of simple supercuspidal representations and their Langlands parameters). There could be a similar result involving wild/meromorphic Hitchin bundles. More precisely, we can consider those $\ell$-adic local systems on $\overline{X}-\overline{S}$ with $S\neq \emptyset$  whose ramifications are  restrictions of Langlands parameters of prescribed simple supercuspidal representations to the inertia Galois group. We conjecture that the number of such $\ell$-adic local systems fixed by the Frobenius endomorphism equals to, up to a power of $q$, the number of  Higgs bundles over $X$ whose Higgs fields have non-simple poles in $S$ and the principal part of the Higgs field around a point of $S$ belongs to some fixed coadjoint orbit of a Lie algebra (we refer to \cite{Bo} for related theories over complex numbers). 
In a private note by Zhiwei Yun, he has a simple geometric method to deal with some cases with wild ramifications.

\subsection{When $g=0$. }
\subsubsection{}
In the following, the characteristic of $\mathbb{F}_q$ is allowed to be $2$. 

Suppose that $g=0$, i.e., the curve $X$ is $\mathbb{P}^1$, and $S_c=\emptyset$. 
We will present an analogy with Simpson's non-abelian Hodge theory. One can compare with \cite[Theorem 1.4]{Yu}. New phenomena appear when there are quasi-unipotent local monodrmies. However, 
a naive generalization does not hold in a more general case which we hope to understand in the spirit of conjectures \cite[2.18, 2.21]{Deligne}. We are interested in the case that $\mathfrak{R}$ is in general position, i.e. when $P_{\mathfrak{R}}$ is empty.

Let \[R=S_r\cup S_u,\]
and \[D=K_{{X}}+\sum_{v\in {R}}v.\]
Note that we do not consider points in $S_s$. 
We use the same letter $D$ for the divisor $K_{\overline{X}}+\sum_{x\in \overline{R}}x$ over $\overline{X}$. 
Let $\overline{\xi}=(\overline{\xi}_x)_{x\in \overline{R}}\in (\mathbb{Q}^2)^{\overline{R}}$ such that $\overline{\xi}_{x,1}\geq \overline{\xi}_{x,2}\geq \overline{\xi}_{x,1}- 1$ and $\overline{\xi}_x=\overline{\xi}_y$ for any $x, y$ lying over the same closed point $v\in R$. These vectors serve as parabolic weights (stability parameters). 
We call a parabolic Hitchin bundle a Higgs bundle if $D=K_X+\sum_v v$ and parabolic structures are imposed in $\overline{R}$.
Let $(\mathcal{E}, \varphi, (L_x)_{x\in \overline{R}})$ be a parabolic Higgs bundle over $\overline{X}$. Let $\mathcal{L}$ be a sub-line bundle of $\mathcal{E}$, we define the parabolic degree $\mathrm{p\text{-}deg}(\mathcal{L})$ by
\[\mathrm{p\text{-}deg}(\mathcal{L}):= \deg(\mathcal{L})+\sum_{x\in \overline{R}}\begin{cases}  \overline{\xi}_{x, 1}, \text{ if }  \mathcal{L}_x=L_x; \\
\overline{\xi}_{x, 2},  \text{ if }  \mathcal{L}_x \neq L_x.
\end{cases}
\]
We say that $(\mathcal{E}, \varphi, (L_x)_{x\in \overline{R}})$ is $\overline{\xi}$-semistable if for any sub-line bundle $\mathcal{L}$ of $\mathcal{E}$ satisfying $\varphi(\mathcal{L})\subseteq \mathcal{L}\otimes\mathcal{O}_{\overline{X}}(D)$, we have
 \[\mathrm{p\text{-}deg}(\mathcal{L})\leq \frac{\deg \mathcal{E}+\sum_{x\in \overline{R}}(\overline{\xi}_{x,1}+\overline{\xi}_{x,2}  )}{2}. \] 
Note that if \begin{equation}\label{generalposition}\deg(\mathcal{E})+\sum_{x\in \overline{R} }\pm (\overline{\xi}_{x,1}-\overline{\xi}_{x,2}  ) \notin 2\mathbb{Z},\end{equation} then the equality can never be achieved. 
We say that such cases are in general position.

Choose $\overline{\xi}$ as above and suppose that it is in general position. 
The coarse moduli space of $\overline{\xi}$-semistable parabolic Higgs bundles of rank $2$ and of degree $e$ that are semistable with parabolic weights  $(\overline{\xi}_x)_{x\in \overline{R}}$ over $\overline{X}$ has a canonical $\mathbb{F}_q$-structure (see Section \ref{Fq}). We denote the moduli space by $M_{R}^{e,\xi}=M_{R}^{e, \xi}(D)$. 
We show in Theorem \ref{weights} that $|M_{R}^{e,\xi}(\mathbb{F}_q)|$ is independent of the choice of the parabolic weights as long as $\overline{\xi}$ is in general position. 
The space $M_{R}^{e,\xi}$ has a $\mathbb{G}_m$-action via dilation of the Higgs field. 
Let ${^{gr}M_{R}^{e, \xi}}:= (M_{R}^{e,\xi})^{\mathbb{G}_m}$, and ${^{gr}M_{R}^{e, \xi}}(S_u)$ be its open subvariety consisting of those parabolic Higgs bundles $(\mathcal{E}, \varphi, (L_x)_{x\in \overline{R}})$ whose Higgs field $\varphi$ does not vanish at $x\in \overline{S}_u$ (i.e. $\varphi_x\neq 0$).  

\begin{theorem}\label{2}
Suppose that $\mathbb{F}_q\neq \mathbb{F}_2$, $g=0$ and $S_c=\emptyset$. Suppose that $(e,\xi)$ is in general position and that \[\overline{\xi}_{x,1}=\overline{\xi}_{x,2}\] for $x\in \overline{S}_u$. Suppose that $\mathfrak{R}$ is in general position in the sense that $P_{\mathfrak{R}}=\emptyset$. Suppose that  
\[\prod_{x\in \overline{S}}\varepsilon_x(1)\varepsilon_x(2)=1.\]
If either $e$ is an odd integer or there is a place of odd degree in $R$, then we have
\[  |{^{gr}M_{R}^{e, \xi}}(S_u)(\mathbb{F}_{q^k})| = |E_2(\mathfrak{R})^{\Fr^{\ast k}}|  . \]
\end{theorem}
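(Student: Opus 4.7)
The plan is to derive Theorem \ref{2} by combining Theorem \ref{1} in the general-position regime with a $\mathbb{G}_m$-localization argument on the parabolic Hitchin moduli space. Since $\mathfrak{R}$ is in general position one has $P_{\mathfrak{R}} = \emptyset$, and therefore $c_{\mathfrak{R}}(k) = b_{\mathfrak{R}}(k) = 0$ for every $k$. Combined with $S_c = \emptyset$ (so $|\overline{S}_c| = 0$ is even), the relevant cases of Theorem \ref{1} are (i), (ii), (iii), and (v), and in each of them every correction term vanishes, leaving
\[ |E_2(\mathfrak{R})^{\Fr^{\ast k}}| = \Higg_{\mathfrak{R}}(k). \]
Theorem \ref{2} is thereby reduced to the purely geometric identity $\Higg_{\mathfrak{R}}(k) = |{^{gr}M_R^{e,\xi}}(S_u)(\mathbb{F}_{q^k})|$, and the remainder of the proof concerns only Higgs-theoretic objects over $\overline{X}$.

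For this geometric identity, I would first invoke Theorem \ref{weights} to rigidify $(e,\xi)$: the hypothesis that $e$ is odd or that there exists an odd-degree place in $R$ guarantees that the general-position condition \eqref{generalposition} is achievable, and the count $|M_R^{e,\xi}(\mathbb{F}_{q^k})|$ is unchanged within a chamber. I would then consider the dilation action $t\cdot(\mathcal{E},\varphi,(L_x)) = (\mathcal{E}, t\varphi,(L_x))$ of $\mathbb{G}_m$ restricted to the open locus $M_R^{e,\xi}(S_u) \subseteq M_R^{e,\xi}$ where $\varphi_x \neq 0$ for every $x \in \overline{S}_u$. By smoothness of the stable locus in general position, the Bialynicki-Birula stratification realizes each cell as an affine fibration over its intersection with ${^{gr}M_R^{e,\xi}}(S_u)$, so that after summing,
\[ |M_R^{e,\xi}(S_u)(\mathbb{F}_{q^k})| = q^{kN}\, |{^{gr}M_R^{e,\xi}}(S_u)(\mathbb{F}_{q^k})| \]
with $N$ equal to the half-dimension of the parabolic Hitchin space. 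On the other hand, the alternating sum defining $\Higg_{\mathfrak{R}}(k)$ can be read as a Möbius inversion: because $\overline{\xi}_{x,1} = \overline{\xi}_{x,2}$ at $x \in \overline{S}_u$, the parabolic line $L_x$ contributes a free $\mathbb{P}^1$ factor when $\varphi_x = 0$ and is rigidified to $\ker\varphi_x$ by the nilpotency compatibilities $\varphi_x(L_x)=0$, $\operatorname{Im}\varphi_x \subseteq L_x$ when $\varphi_x \neq 0$. The signs $(-1)^{|(S_u\otimes\mathbb{F}_{q^k})-V|}$ and multiplicities $2^{|V|}$ thus isolate exactly the open stratum $M_R^{e,\xi}(S_u)$, while the normalizing factor $q^{-k(4g-3+|\overline{V}|+|\overline{S}_{cr}|)}$ matches $q^{-kN}$ on each stratum. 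Chaining the two computations gives the required identity.

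The main obstacle will be a careful reconciliation of the residue constraint $o_{S_{cr}}$ (which cuts $M_V^1(o_{S_{cr}})$ out of $M_V^1(D)$) with the $\mathbb{G}_m$-localization: the characteristic-polynomial condition at $\overline{S}_{cr}$ is not preserved by dilation, since $\mathbb{G}_m$ acts with nontrivial weights on the Hitchin base. One remedy is to let $o_{S_{cr}}$ vary in a $\mathbb{G}_m$-orbit and exploit the fact, built into the definition of $\Higg_{\mathfrak{R}}$, that the resulting count depends only on the Frobenius action on $\mathfrak{R}$; an alternative is to sum fiber-by-fiber along the Hitchin map and transport the equality via a uniform Hitchin-base calculation. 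A secondary technical point is the Frobenius-compatibility of the Bialynicki-Birula strata in the parabolic setting, which follows from smoothness in general position together with the canonical $\mathbb{F}_{q^k}$-structures of Section \ref{Fq}.
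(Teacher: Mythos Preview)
Your Step 1—using Theorem \ref{1} with $P_{\mathfrak{R}}=\emptyset$ to reduce to the geometric identity $\Higg_{\mathfrak{R}}(k)=|{^{gr}M_R^{e,\xi}}(S_u)(\mathbb{F}_{q^k})|$—is correct and is exactly how the paper sets up Theorem \ref{identi}.

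Your Step 2 has a genuine gap. The M\"obius heuristic you describe on the full $M$ does not reproduce the powers of $2$ in $\Higg_{\mathfrak{R}}$: when $\varphi_v=0$ the parabolic line $L_v$ ranges over a $\mathbb{P}^1$ in $\mathcal{E}_v$, contributing $q+1$ points, not $2$. Relatedly, the Bialynicki--Birula localization on the \emph{open} locus $M_R^{e,\xi}(S_u)$ is not justified: the condition $\varphi_x\neq 0$ at $x\in\overline{S}_u$ is invisible on the Hitchin base (since $\varphi_x$ is already nilpotent there), so this open set is not semiprojective and there is no reason the $t\to 0$ flow stays inside it. Hence you cannot conclude $|M_R^{e,\xi}(S_u)|=q^{kN}|{^{gr}M_R^{e,\xi}}(S_u)|$ directly.

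The paper (Theorem \ref{identi}) avoids both problems by reversing the order of operations. It first uses Theorem \ref{NoSc} to trade the residue constraint at $S_r$ for a parabolic structure there—this is precisely what dissolves your ``main obstacle'' about $o_{S_{cr}}$ not being $\mathbb{G}_m$-invariant—and then invokes Theorem \ref{weights} and a localization on each \emph{full} semiprojective $M^{e,\xi}_{V\cup S_r}$ to rewrite $\Higg_{\mathfrak{R}}$ as an alternating sum of $|{^{gr}M^{e,\xi}_{V\cup S_r}}|$. Only afterwards does it run the inclusion--exclusion, and crucially it does so on $^{gr}M$, where every fixed point comes with a splitting $\mathcal{E}=\mathcal{L}_1\oplus\mathcal{L}_2$. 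At $v\in S_u$ with $\varphi_v=0$ the line $L_v$ must then be $\mathcal{L}_{1,v}$ or $\mathcal{L}_{2,v}$—exactly two choices—and (using $\xi_{v,1}=\xi_{v,2}$) either choice identifies that stratum with $^{gr}M^{e,\xi}_{R\setminus\{v\}}$. This yields the recursion $|{^{gr}M_R}(T)|=|{^{gr}M_R}(T\cup\{v\})|+2\,|{^{gr}M_{R\setminus\{v\}}}(T)|$, which iterates to the alternating sum with the correct factors of $2$.
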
 

\subsection{}
Given a compact Riemann surface $\Sigma$ and a finite set of points $R\subseteq \Sigma$,
Simpson has established a correspondence between semistable $\mathbb{C}$-local systems over $\Sigma-R$ of degree $0$ and semistable quasi-parabolic Higgs bundles over $\Sigma$ with parabolic structures in $R$ of parabolic degree $0$ (we refer to Simpson's original article \cite{Simpson} for more details). 
On the local system side, Simpson defines residual data for each $x\in R$ using the local monodromy and stability weight at the puncture $x$. 
Similarly, Simpson defines residual data for each $x\in R$ using the Higgs field and the parabolic weight in the Higgs bundle side. His correspondence preserves the nilpotent part of the residual data and permutes the stability weights and eigenvalues of the residual datum. Theorem \ref{2} presents an analogy with Simpson's theory if we choose the stability weights of the local systems to be trivial and we choose $(e,\xi)$ in accordance with Simpson's correspondence (cf. the diagram in \cite[p.720]{Simpson}). An interesting phenomenon is that $\mathfrak{R}$ being in general position corresponds to that $(e,\xi)$ being in general position under Simpson's correspondence. 

The dominant term in Theorem \ref{1} when $k$ varies is $(q^{4g-3+|\overline{S}_u|+|\overline{S}_{cr}|})^{k}$. It is half of the dimension of the moduli space of parabolic Higgs bundles of the relevant complex analogy in Simpson's theory. This may be related to the motivic nature of $\ell$-adic local systems over a curve over $\mathbb{F}_q$.

We can not expect a naive generalization of Theorem \ref{2} to the cases $g>0$. However, we expect it to be a special case of (a possible modification of) Deligne's conjecture in \cite[2.21]{Deligne}. Indeed, suppose that all ramifications are split regular semisimple (for $n=2$, it is the case that $S_s=S_c=S_u=\emptyset$), in the cases of in general position,  
we have (\cite[Th. 1.1]{Yu} for any rank)
\[ |E_n(\mathfrak{R})^{\Fr^{\ast k}}|=\sum_{i}(-1)^i\Tr(V^{\ast k} |   H_c^i(  (M_{n, S_r}^{e,\xi})_{\overline{\mathbb{F}}_q}, \mathbb{Q}_\ell) ) ,    \]
for any endomorphism $V^\ast$ conjugate to $q^{-\frac{1}{2}(n^{2}(g-1)+|\overline{S}_{r}|) }F_q^\ast$. We can expect to generalize it to the cases where ramifications are only supposed to be semisimple but remain in general position. The more demanding question is to generalize it to allow non-trivial quasi-unipotent ramifications or even cases not in general positions. 
 Now we may ask if $V^\ast$ is induced from a morphism of $(M_{n, S_r}^{e,\xi})_{\overline{\mathbb{F}}_q}$. This does not seem to be the case if we consider only algebraic varieties over $\overline{\mathbb{F}}_q$. We likely have to consider a lifting of the curve to characteristic $0$ and consider $p$-adic geometry which the author is not competent to comment on. Instead, we refer the reader to Deligne's course at IHES \cite{IHES}.

In this article, we only need semistable parabolic Higgs bundles with parabolic weights in general position. It is not necessary to do so. However, with our method, it is more natural to consider the algebraic stack version of the moduli of semistable parabolic Higgs bundles when the parabolic weights are not in general position, and we should expect a more complicated expression for the point counting problem in this case.

\subsection*{Acknowledgement} 
I thank Erez Lapid for his invaluable suggestions.  
I thank Kang Zuo for the fruitful discussions.
 This work continues the project that started from my thesis; I thank Pierre-Henri Chaudouard for giving this project to me.  The work is finished during my stay at IST Austria and Weizmann Institute of Science. I thank both institutes for providing me with excellent work conditions. Part of the work is finished with the support of the BSF grant 2019274.

\section{Notation}
We gather notation that will be used throughout the article. Other notation will be defined where they appear. 

 $\bullet F, |X|, F_v, \mathcal{O}_v,  \wp_v, \kappa_v, q_v,\mathbb{A}, \mathcal{O}.$  Let $F=\mathbb{F}_q(X)$ be the global function field of the curve $X$. Let $\vert  X\vert  $ be the set of closed points of $X$, identified with the set of places of $F$. For every $v\in \vert  X\vert  $, let $F_v$ be the local field in $v$, $\mathcal{O}_v$ the ring of integers in $F_v$ and $\kappa_v$ the residue field of $\mathcal{O}_v$. Let $\wp_v$ be the maximal ideal in $\mathcal{O}_v$, and we choose a uniformizer $\varpi_v$.  
Suppose that $\kappa_v$ has cardinality $q_v$, therefore $\kappa_v\cong \mathbb{F}_{q_v}$. 
Let $\mathbb{A}$ be the ring of adèles of $F$ and $\mathcal{O}$ be the sub-ring of integral ad\`eles. 

$\bullet  G, B,N,T, \overline{B}, \overline{N}  .$ If not specified otherwise, we use $G$ for $GL_2$. Let $B$ be the Borel subgroup of $G$ consisting of upper triangular matrices and $T$ be the torus consisting of diagonal matrices. Let $N$ be the unipotent radical of $B$, i.e., the group of upper triangular matrices with $1$ on the diagonal. Let $\overline{B}$ be the Borel subgroup that is opposite to $B$, i.e., consisting of lower triangular matrices, and $\overline{N}$ be the unipotent radical of $\overline{B}$. 

$\bullet    \ggg, \bbb, \nnn, \ttt. $  Let $\ggg$, $\bbb$, $\nnn$, and $\ttt$ be respectively the Lie algebra of $G$, $B$, $N$, and $T$. 

$\bullet  G_v, B_v,\mathcal{K}_v, \mathcal{I}_v .$   Given a variety $V$ defined over $\mathbb{F}_q$, we will use $V_v$ to denote $V(F_v)$  for any places $v\in \vert  X\vert  $. This notation applies in particular to $G_v$, $B_v$. 
We will denote $G(\mathcal{O}_v)$ by $\mathcal{K}_v$. Let $\mathcal{I}_v$ be the Iwahori subgroup consisting of matrix in $\mathcal{K}_v$ whose reduction modulo $\wp_v$ lies in $B(\kappa_v)$. 

$\bullet G(\mathbb{A})^{e}.$ For any $e\in \mathbb{Z}$, let \[ G(\mathbb{A})^{e}= \{x\in G(\AAA)| \deg\det x=e\}.\] 
Here the degree map of $\mathbb{A}^{\times}$ is normalized so that for $(a_v)_{v\in |X|}$,
\[ \deg (a_v)_{v\in |X|} = -\sum_{v\in }v(a_v) [\kappa_v: \mathbb{F}_q].  \]

$\bullet$ We fix Haar measures on $G(\AAA)$, $N(\AAA)$ so that $G(\mathcal{O})$ and $N(F)\backslash N(\AAA)$ (with counting measure on $N(F)$) have volume $1$. The local Haar measures on $G_v$, $B_v$ and $N_v$ are defined so that respectively the volumes of $\mathcal{K}_v$, $B(\mathcal{O}_v)$ and $N(\mathcal{O}_v)$ are $1$.

$\bullet \mathcal{E}_v, \varphi_v.$ Given a vector bundle $\mathcal{E}$ over $X$, and a place $v\in |X|$ identified as a $\kappa_v$-point of $X$, 
we use $\mathcal{E}_v$ to denote the fiber over $v$. It is a $\kappa_v$-vector scheme. Suppose $\varphi: \mathcal{E}\longrightarrow \mathcal{F}$ be a bundle morphism over $X$, then it induces a $\kappa_v$-linear map $\varphi_v: \mathcal{E}_v\longrightarrow \mathcal{F}_v$.

\section{Global and local Langlands correspondence for $GL_2$}
We are going to reduce the calculation of the cardinality of ${E}_2(\mathfrak{R})^{\Fr^{*k}}$ to a question of counting certain automorphic representations of $GL_2$ with the help of global Langlands correspondence in rank $2$ established by Drinfeld. 
Note that \[ \overline{X}=(X\otimes_{\mathbb{F}_q}\mathbb{F}_{q^k}) \otimes_{\mathbb{F}_{q^k}}\otimes\overline{\mathbb{F}}_q,  \]
and the Frobenius endomorphism of $\overline{X}$ deduced from $X\otimes_{\mathbb{F}_q}\mathbb{F}_{q^k}$ is $\Fr^k$. Therefore, we can do the calculation for $k=1$ and apply the results to the curves $X\otimes_{\mathbb{F}_q}\mathbb{F}_{q^k}$ over $\mathbb{F}_{q^k}$ ($k\geq 1$) later.

\subsection{Galois representations}\label{Galoisrep}
It has been explained by Deligne \cite[2.1-2.9]{Deligne} how to pass to the automorphic side, and the reader is invited there for more details. 
This section aims to give precise information on the ramifications of automorphic representations determined by the Frobenius action on $\mathfrak{R}$. The data on the local monodromies are carried over to the automorphic side, described by local Langlands correspondence. 
 
 We continue to use notation in the introduction. Let $v\in S$ and $x\in \overline{S}$ that lies over $v$. 
 
 We fix an algebraic closure $\overline{F}$ of $F$. Then $\overline{\eta}:=\Spec(\overline{F})$ is a geometric point lying over the generic point of $X$. 
 Let $\overline{X}_{(x)}$ be the Henselization of $\overline{X}$ in $x$ and $\overline{X}_{(x)}^{\ast}=\overline{X}_{(x)}-\{x\}$. If we choose an embedding of $\overline{\mathbb{F}}_q(\overline{X}_{(x)}^{\ast})$ in $\overline{F}$, then
the \'etale fundamental group $\pi_1(\overline{X}_{(x)}^{*}, \overline{\eta})$ 
is canonically isomorphic to the inertial group $I_x=\Gal(\overline{F}/\overline{\mathbb{F}}_q(\overline{X}_{(x)}^{\ast}) )$. An $\ell$-adic local system over $\overline{X}-\overline{S}$ (resp. $\overline{X}_{(x)}^{\ast}$) is equivalent to an $\ell$-adic  representation of $\pi_1(\overline{X}-\overline{S}, \overline{\eta})$ (resp. $I_x$). 
Let $I^{t}_x=\pi_1(\overline{X}_{(x)}^{\ast}, \overline{\eta})^{t}$ be the tame fundamental group of  $\overline{X}_{(x)}^{\ast}$, i.e. the maximal prime-to-$p$ quotient of $\pi_1(\overline{X}_{(x)}^{\ast}, \overline{\eta})$.  
A tame  $\ell$-adic  local system of $\overline{X}_{(x)}^{\ast}$ is equivalent to an $\ell$-adic representation of $I_x^t$.

For an algebraic closed field $k$, let \[ \hat{\mathbb{Z}}^{p'}(1)(k):= \lim \mu_n(k),\]
the projective limit where the transition map is the norm map. 
We denote $\hat{\mathbb{Z}}^{p'}(1)$ for $\hat{\mathbb{Z}}^{p'}(1)(\overline{\mathbb{F}}_q)$. 
Let $\kappa_x\cong\overline{\mathbb{F}}_q$ be the residue field of the point $x$,
we have a canonical isomorphism \[ I_x^{t}\cong  \hat{\mathbb{Z}}^{p'}(1)(\kappa_x).  \] 
Choose an embedding $\kappa_v\hookrightarrow \overline{\mathbb{F}}_q$, we deduce from
\[\kappa_v\otimes_{\mathbb{F}_q}  \overline{\mathbb{F}}_q \cong \prod_{x\mapsto v}  \overline{\mathbb{F}}_q, \]
 isomorphisms $\kappa_x\cong \overline{\mathbb{F}}_q$. This gives us an isomorphism \[I_x^t\cong \hat{\mathbb{Z}}^{p'}(1). \]
 The morphism $\Fr: \overline{X}_x^{*}\longrightarrow \overline{X}_{\Fr(x)}^{*}$, induces an isomorphism between tame fundamental groups. It is the multiplication by $q$ map on $\hat{\mathbb{Z}}^{p'}(1)$ via the above isomorphisms.

To make the set $E_2(\mathfrak{R})$ $\Fr^\ast$-stable,  the $\ell$-adic local systems $(\mathfrak{R}_x)_{x\mapsto v}$ have to satisfy the compatibility condition \begin{equation}\label{astp}\Fr^{*}(\mathfrak{R}_{\Fr(x)})\cong \mathfrak{R}_x.  \end{equation}
Let $I_v$ and $D_v$ be, respectively, the inertial subgroup and decomposition group of $F$ at $v$. 
The condition \eqref{astp} implies that $(\mathfrak{R}_x)_{x\mapsto v}$ come from a representation $\rho_v$ of $D_v$. By Grothendick's local monodromy theorem, $\rho_v|_{I_v}$  is quasi-unipotent in the sense that it becomes unipotent on an open subgroup. We will use $\mathfrak{R}_v$ to denote $\rho_v|_{I_v}$. 

Let $W_F$ be the Weil group of $F$. Then $\pi_1(\overline{X}-\overline{S}, \overline{\eta})$ is a quotient of the degree $0$ part of $W_F$.  
 Let $\mathcal{G}_2(F)$ be the set of isomorphism classes of $\ell$-adic representation of $W_F$. We call two $\ell$-adic representations $\sigma_1$ and $\sigma_2$ in $\mathcal{G}_2(F)$ inertially equivalent: $\sigma_1\sim \sigma_2$  if there is a character $\lambda: W_F\xrightarrow{\deg} \mathbb{Z}\longrightarrow \overline{\mathbb{Q}}_{\ell}^{\times}$ such that $\sigma_1\cong \sigma_2\otimes\lambda$. 
  
\begin{prop}\label{Galois}
The set \[E_2(\mathfrak{R})^{\Fr^{*k}}\] 
 is in bijection with the subset of inertially equivalent classes in $\mathcal{G}_2(F\otimes \mathbb{F}_{q^k})/\sim$ consisting of $\sigma$ such that 
 \begin{equation}\label{lambda}\sigma\otimes \lambda\cong \sigma \implies \lambda=1, \end{equation}
$\sigma\vert  _{I_v}$ is trivial for $v\notin S$ and \[\sigma\vert  _{I_v}\cong \mathfrak{R}_v \] for $v\in S$. 
\end{prop}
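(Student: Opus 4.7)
The plan is to reduce to the case $k=1$ using the observation at the start of this section that $\Fr^{*k}$ on $\overline{X}$ is the Frobenius of $X\otimes\mathbb{F}_{q^k}$, so one may replace $X$ by $X\otimes\mathbb{F}_{q^k}$. Let $W_{X-S}$ denote the Weil group, i.e.\ the preimage of $\mathbb{Z}\subset\hat{\mathbb{Z}}=\Gal(\overline{\mathbb{F}}_q/\mathbb{F}_q)$ inside $\pi_1(X-S,\overline{\eta})$. It fits into a short exact sequence
\[ 1 \longrightarrow \pi_1(\overline{X}-\overline{S},\overline{\eta}) \longrightarrow W_{X-S} \longrightarrow \mathbb{Z} \longrightarrow 1, \]
and representations of $W_{X-S}$ that are trivial on $I_v$ for every $v\notin S$ correspond, via the natural surjection from $W_F$ onto a dense subgroup of $W_{X-S}$, to objects of $\mathcal{G}_2(F)$ unramified outside $S$. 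Inertial equivalence is precisely twisting by $\Hom(\mathbb{Z},\overline{\mathbb{Q}}_\ell^\times)$, so it compares extensions which differ only by a choice of Frobenius eigenvalue.

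For the forward map I would take $\rho\in E_2(\mathfrak{R})^{\Fr^{*}}$ and extend it to a representation $\sigma$ of $W_{X-S}$. The hypothesis $\Fr^{*}\rho\cong\rho$ together with Schur's lemma (via irreducibility of $\rho$) kills the obstruction in $H^{2}(\mathbb{Z},\overline{\mathbb{Q}}_\ell^\times)=0$, so such an extension exists, and the set of extensions of $\rho$ is a torsor under $\Hom(\mathbb{Z},\overline{\mathbb{Q}}_\ell^\times)$. In particular, any two extensions are inertially equivalent, and $\sigma$ is well defined modulo $\sim$. The compatibility \eqref{astp} lets one identify the $\rho|_{I_x}$ for $x\mapsto v$ with a single representation $\mathfrak{R}_v$ of $I_v$ (independent of the choice of embedding $\overline{\mathbb{F}}_q(\overline{X}_{(x)})\hookrightarrow\overline{F}$), giving $\sigma|_{I_v}\cong\mathfrak{R}_v$ for $v\in S$ and triviality elsewhere.

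For the backward map I would take a class in $\mathcal{G}_2(F)/\sim$ with representative $\sigma$ satisfying the three conditions and restrict to $\pi_1(\overline{X}-\overline{S},\overline{\eta})$ (the trivial-on-$I_v$ condition outside $S$ ensures the restriction factors through this quotient of $W_F^{0}$) to obtain $\rho$. Then $\Fr^{*}\rho\cong\rho$ holds automatically because $\rho$ extends to $W_{X-S}$, and the local conditions give $\rho|_{I_x}\cong\mathfrak{R}_x$. That $\rho$ is well defined on inertial classes follows because two inertially equivalent representatives differ by an unramified character, which becomes trivial on $\pi_1(\overline{X}-\overline{S},\overline{\eta})$; and the two maps are inverse by the torsor description above.

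The main obstacle, and the reason the hypothesis $\sigma\otimes\lambda\cong\sigma\Rightarrow\lambda=1$ is imposed, is to ensure that the resulting $\rho$ is irreducible. I would argue this via Clifford theory for the cyclic quotient $\mathbb{Z}$: if $\rho$ decomposes as a sum of lower-rank pieces permuted transitively by some power of Frobenius, then $\sigma$ is induced from a Weil subgroup of finite index $d>1$ and therefore satisfies $\sigma\otimes\lambda\cong\sigma$ for the unramified character $\lambda$ of order $d$; conversely, irreducibility of $\rho$ combined with Schur's lemma forces any such twist to be trivial. This translation is the only nontrivial step in the chain of equivalences and completes the bijection.
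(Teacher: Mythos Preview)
Your proposal is correct and follows the same approach as the paper's proof, which simply cites \cite[Section~2]{Deligne} for the extension/restriction correspondence between $\Fr^{*}$-fixed geometric local systems and inertial classes of Weil representations, and \cite[Proposition~2.1.3]{Yu1} for the Clifford-theoretic equivalence between irreducibility of the restriction to the degree-$0$ part and the twisting condition~\eqref{lambda}. You have spelled out exactly what those references contain; the only imprecision is that the vanishing of the extension obstruction comes from $H^{2}(\mathbb{Z},\overline{\mathbb{Q}}_\ell^\times)=0$ itself rather than from Schur's lemma (Schur is what makes the set of extensions a torsor), and your Clifford argument tacitly uses that $\sigma$ is irreducible, which is implicit in the definition of $\mathcal{G}_2(F)$.
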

\begin{proof}
It has been explained in \cite[Section 2]{Deligne}. The condition \eqref{lambda} is to make sure that $\sigma$ is irreducible restricting to the degree $0$ part (see \cite[Proposition 2.1.3]{Yu1}). 
 \end{proof}

Let $W_v$ be the local Weil group at $v$. We choose an isomorphism $\iota: \overline{\mathbb{Q}}_{\ell}\xrightarrow{\sim}\mathbb{C}$. 
Recall that the local Langlands correspondence is a canonical bijection between the set of smooth irreducible $\mathbb{C}$-representations of $G_v$ and the set of rank $2$, Frobenius semisimple $\ell$-adic (continuous) representations of $W_v$. 

For any $v\in S$, let $\mathrm{Irr}^{\mathfrak{R}}(G_v)$ be the set of irreducible representations of $G_v$ whose associated $\ell$-adic representation of the local Weil group $W_v$ under local Langlands correspondence extends $\mathfrak{R}_v$. For a place $v\notin S$, we define $\mathrm{Irr}^{\mathfrak{R}}(G_v)$ to be the set of unramified representations of $G_v$, i.e., those representations whose associated $\ell$-adic representation of $W_v$ under local Langlands correspondence is trivial when restricting to $I_v$.

 We have the following theorem that characterizes the set $\mathrm{Irr}^{\mathfrak{R}}(G_v)$ purely by their representation theoretic structures. 

\begin{theorem}\label{mono}
Let $\mathfrak{R}_v$ be tame. We have one of the following cases. 
\begin{enumerate}
\item[(r)] We say that $\mathfrak{R}_{v}$ is (split) regular if 
\[\mathfrak{R}_{v}\cong \chi_1\oplus \chi_2, \] is the direct sum of two distinct characters $\chi_1, \chi_2$ of $I_v$. Each $\chi_i$ ($i=1, 2$)  has exponent $q_v-1$ and can be factored as $I_v\longrightarrow \kappa_v^{\times} \xrightarrow{\chi_{i}'} \overline{\mathbb{Q}}_\ell^{\times}$. 

In this case for any irreducible smooth representation $\pi$ of $G_v$, we have
 $\pi\in \Irr^{\mathfrak{R}}(G_v)$ if and only if $\Hom_{\mathcal{I}_v}(\chi_v, \pi)\cong \Hom_{\mathcal{K}_v}(\rho_v, \pi) \neq 0$, where
 \[\chi_v: \mathcal{I}_v\longrightarrow B(\kappa_v)\xrightarrow{\begin{pmatrix}a&b\\0&d
\end{pmatrix}\mapsto \iota(\chi_{1}'(a)\chi_2'(d))
} {\mathbb{C}}^{\times},  \]
and $\rho_v$ is the induced representation of $\chi_v$ to $\mathcal{K}_v$. 
Moreover,  $\dim \Hom_{\mathcal{I}_v}(\chi_v, \pi) =1$ for any $\pi\in \Irr^{\mathfrak{R}}(G_v)$.

\item[(c)] We say that $\mathfrak{R}_{v}$ is cuspidal (or anisotropic regular), if 
\[\mathfrak{R}_{v}\cong \chi_1\oplus \chi_2,\] is the direct sum of two distinct characters of $I_v$ such that $\chi_1^{q_v}=\chi_2$ (necessarily we also have $\chi_2^{q_v}=\chi_1$), where $\chi_i$ can be factored as ${I}_v\longrightarrow \mathbb{F}_{q_v^2}^{\times} \xrightarrow{\chi_i'} \overline{\mathbb{Q}}_\ell^{\times}$.

In this case for any irreducible smooth representation $\pi$ of $G_v$, we have
 $\pi\in \Irr^{\mathfrak{R}}(G_v)$ if and only if $\dim \Hom_{\mathcal{K}_v}(\rho_v, \pi)\neq 1$, where $\rho_v$ is the irreducible representation of $\mathcal{K}_v$ inflated from the Deligne-Lusztig induced  representation \[- \iota(R_{U(\kappa_v)}^{G(\kappa_v)}\chi_1'), \] 
with $U$ being any non-split maximal subtorus of $G$ defined over $\kappa_v$ so that we have $U(\kappa_v)\cong \mathbb{F}_{q_v^2}^{\times}$.  
Moreover, for any $\pi\in \Irr^{\mathfrak{R}}(G_v)$, we have $\dim \Hom_{\mathcal{K}_v}(\rho_v, \pi)=1$  and $\pi$ is supercuspidal.

\item[(s)] We say that $\mathfrak{R}_{v}$ is scalar if 
\[\mathfrak{R}_{v}\cong \chi^{\oplus 2},\]
where $\chi$ is a character of $I_v$ that can be factored as ${I_v}\longrightarrow \kappa_v^{\times} \xrightarrow{\chi'} \overline{\mathbb{Q}}_\ell^{\times}$.  

In this case for any irreducible smooth representation $\pi$ of $G_v$, we have $\pi\in \Irr^{\mathfrak{R}}(G_v)$ if and only if $\Hom_{\mathcal{K}_v}(\theta_v, \pi)\neq 0$ where \[\theta_v: \mathcal{K}_v\xrightarrow{\det} \mathcal{O}_v^{\times}\longrightarrow \kappa_v^{\times}\xrightarrow{\iota\circ\chi'}{\mathbb{C}}^{\times}. \]
Moreover, we have $\dim \Hom_{\mathcal{K}_v}(\theta_v, \pi)=1$ for any $\pi\in \Irr^{\mathfrak{R}}(G_v)$. And the set $\Irr^{\mathfrak{R}}(G_v)$ consists of one dimensional representations $\eta$ of $G_v$ that extend $\theta_v$ and those twists by $\eta$ of irreducible unramified representations of $G_v$.

\item[(u)] We say that $\mathfrak{R}_v$ is principal quasi-unipotent if 
\[\mathfrak{R}_{v}\cong \chi\otimes \nu,\] is a quasi-unipotent with one principal Jordan block: $\chi$ is a character of $I_v$ that can be factored as ${I_v}\longrightarrow \kappa_v^{\times} \xrightarrow{\chi'} \overline{\mathbb{Q}}_\ell^{\times}$ and $\nu$ is a non-trivial unipotent representation of $I_v$. 

In this case $\pi\in \Irr^{\mathfrak{R}}(G_v)$ if and only if $\pi= St\otimes \lambda$ for some character $\lambda$ of the form $G_v\xrightarrow{\det} F_v^\times \xrightarrow{\lambda'} \mathbb{C}^\times$ so that $\theta_v=\lambda'\vert  _{\mathcal{O}_v^\times}$ inflates $\iota\circ \chi'$. Here $St$ is the Steinberg representation of $G_v$, i.e., the unique irreducible quotient of the parabolic induction of the trivial representation of $B_v$.    

\end{enumerate}
\end{theorem}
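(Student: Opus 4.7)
The proof proceeds by a four-fold case analysis matching the four ramification types. In each case the plan is: (i) enumerate the Frobenius-semisimple representations of $W_v$ of rank $2$ whose restriction to $I_v$ is $\mathfrak{R}_v$; (ii) transport them via local Langlands to smooth irreducible representations of $G_v$; (iii) characterize those representations internally by the prescribed embedding condition involving a type on $\mathcal{K}_v$ (or on $\mathcal{I}_v$ in case (r)). For step (i), since $I_v^t \cong \widehat{\mathbb{Z}}^{p'}(1)$ is pro-cyclic, a tame rank $2$ representation of $I_v$ is determined by the conjugacy class of the image of a topological generator, and any two Frobenius-semisimple extensions to $W_v$ differ by an unramified twist in cases (r), (s), (u), or by a twist coming from the unramified quadratic extension in case (c). Under the Langlands dictionary, the sets $\Irr^{\mathfrak{R}}(G_v)$ are respectively: tame principal series in case (r); depth-zero supercuspidals in case (c); twists of unramified principal series by a tame character of the determinant, together with the tame characters of $G_v$ themselves, in case (s); and Steinberg twists by such tame characters in case (u).

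Step (iii) reduces to the Bushnell--Kutzko theory of types for $GL_2$, which in our tame (depth-zero) setting is supplied by Deligne--Lusztig theory. For case (r), Frobenius reciprocity yields the asserted identification $\Hom_{\mathcal{I}_v}(\chi_v, \pi) \cong \Hom_{\mathcal{K}_v}(\rho_v, \pi)$, and the Bernstein decomposition together with the uniqueness of types asserts that $\chi_v$ is a type for the Bernstein block containing every $\pi \in \Irr^{\mathfrak{R}}(G_v)$, yielding one-dimensionality. For case (c), the key input is that since $\chi_1' \neq \chi_1'^{q_v}$, the virtual Deligne--Lusztig character $-R_{U(\kappa_v)}^{G(\kappa_v)}\chi_1'$ is an honest irreducible cuspidal representation of $G(\kappa_v)$; the compact induction $\mathrm{c}\text{-}\mathrm{Ind}_{\mathcal{K}_v}^{G_v}\rho_v$ is then irreducible supercuspidal and realizes the Langlands correspondent, and Mackey theory combined with Frobenius reciprocity gives the characterization. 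Case (s) splits into the reducible parameters that produce tame characters of $G_v$ via $\det$ and the irreducible tame principal series; in either subcase $\theta_v$ factors through $\det$ and is a type, and the one-dimensionality again follows from the uniqueness of types. Case (u) follows from the known description of the Steinberg representation as the unique irreducible quotient of the unramified parabolic induction, together with the Langlands parametrization of its tame character twists.

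The main obstacle is the bookkeeping in case (c): one must verify that $\rho_v$ is independent of the choice of $\chi_1'$ within its Galois orbit (so that $\Irr^{\mathfrak{R}}(G_v)$ is intrinsically defined), that $\mathrm{c}\text{-}\mathrm{Ind}_{\mathcal{K}_v}^{G_v}\rho_v$ is irreducible and parametrized by the expected Langlands datum, and that no other smooth irreducible representations carry $\rho_v$ as a $\mathcal{K}_v$-sub-type; the resolution combines the explicit Deligne--Lusztig computation with Bushnell--Kutzko uniqueness to pin down $\dim \Hom_{\mathcal{K}_v}(\rho_v, \pi) = 1$. A secondary subtlety is normalization: we must align the isomorphism $I_v^t \cong \widehat{\mathbb{Z}}^{p'}(1)$ fixed in Section \ref{Galoisrep} (on which geometric Frobenius acts as the $q_v$-power map) with the condition $\chi_1^{q_v} = \chi_2$, and with the parametrization of the non-split torus $U$ over $\kappa_v$ used in the Deligne--Lusztig induction.
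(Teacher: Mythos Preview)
Your outline is correct and follows essentially the same route as the paper (and the reference \cite{Yu} it cites for cases (r) and (c)): identify the extensions of $\mathfrak{R}_v$ to $W_v$, transport via local Langlands, and recognize the resulting representations through a depth-zero type on $\mathcal{K}_v$ or $\mathcal{I}_v$. Two small remarks. First, for cases (s) and (u) the paper does something slightly slicker than your direct analysis: it simply twists $\mathfrak{R}_v$ by a rank-one local system to make it trivial (respectively unipotent) on $I_v$, reducing immediately to the unramified case (where $\dim\pi_v^{\mathcal{K}_v}=1$ is classical) and to the Steinberg case; this avoids having to re-derive the type theory in those cases. Second, in your treatment of case (c) the assertion that $\mathrm{c}\text{-}\mathrm{Ind}_{\mathcal{K}_v}^{G_v}\rho_v$ is irreducible supercuspidal is not quite right: one must first extend $\rho_v$ to $Z_v\mathcal{K}_v$ by a choice of central character and then compactly induce from $Z_v\mathcal{K}_v$; the induction from $\mathcal{K}_v$ itself decomposes over all such extensions. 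This does not affect the conclusion, since the set $\Irr^{\mathfrak{R}}(G_v)$ consists precisely of all these supercuspidals as the central character varies, and each contains $\rho_v$ with multiplicity one.
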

\begin{proof}
The cases $(r)$ and $(c)$ are explained in \cite[Theorem 2.4.1, 2.4.4]{Yu}. The case $(s)$ is deduced from the unramified case where $\dim \pi_v^{\mathcal{K}_v}=1$. In fact, it is enough to tensor $\mathfrak{R}_v$ by a rank $1$ local system to make it trivial on $I_v$. Similarly, we can tensor a character to make the case $(u)$ into the unipotent case, which corresponds to the Steinberg representation under the Langlands correspondence. 
\end{proof}

Let $\pi=\otimes' \pi_v$ be a cuspidal automorphic representation of $G(\AAA)$. We will say that $\pi_v$ has the correct ramification type (for our counting problem) if $\pi_v\in \Irr^{\mathfrak{R}}(G_v)$.

\subsection{Automorphic representations}
Let $C_{cusp}(G(\AAA))$ be the space of cuspidal automorphic forms. Recall that a cuspidal automorphic form is a complex-valued function $\varphi$ over $G(F)\backslash G(\AAA)$ that generates a finite-dimensional vector space under $G(\mathcal{O})$-right translation and $Z_{G}(\AAA)$ translation, such that the following cuspidality condition is satisfied 
\[\int_{N(F)\backslash N(\AAA)}\varphi(nx)\d n, \quad \forall x\in G(\AAA).  \]
Note that the above integration is a finite sum because of $G(\mathcal{O})$-finiteness. 
The right translation by $G(\AAA)$ makes $C_{cusp}(G(\AAA))$ into a $G(\AAA)$-representation. It is known to be semisimple and the multiplicity one theorem of Jacquet $\&$ Langlands and Piatetski-Shapiro implies that $C_{cusp}(G(\AAA))$ is multiplicity free. Its irreducible summands are called cuspidal automorphic representations. A cuspidal automorphic representation $\pi$ can be decomposed as a restricted tensor product $\pi=\otimes' \pi_v$ for representations $\pi_v$ of $G_v$ which are called local components of $\pi$.

Let $\mathcal{A}_2(F)$ be the set of isomorphic classes of cuspidal automorphic representations of $G(\AAA)$. We call two cuspidal automorphic representations $\pi_1$ and $\pi_2$ are inertially equivalent $\pi_1\sim\pi_2$ if there are is a  character $\lambda: G(\AAA)\xrightarrow{\deg\circ\det}\mathbb{Z}\longrightarrow \mathbb{C}^\times$ such that $\pi_1\cong \pi_2\otimes \lambda$. 

\begin{theorem}\label{Langlands}
The set $E_2(\mathfrak{R})^{\Fr^{*}}$ is in bijection with the subset of $\mathcal{A}_2(F)/\sim$ consisting of inertial equivalent classes of  cuspidal automorphic representations $\pi$ such that for any character $\lambda: G(\AAA)\xrightarrow{\deg\circ\det}\mathbb{Z}\longrightarrow \mathbb{C}^\times$, 
\[\pi\otimes \lambda \cong \pi \implies \lambda=1 ,\]
and $\pi_v\in \Irr^{\mathfrak{R}}(G_v)$ for all $v\in S$.   
\end{theorem}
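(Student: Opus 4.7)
The plan is to chain together Proposition \ref{Galois} (with $k=1$) and the global Langlands correspondence of Drinfeld for $GL_2$, using local Langlands and the definition of $\Irr^{\mathfrak{R}}(G_v)$ to match the local conditions on both sides. By Proposition \ref{Galois}, the set $E_2(\mathfrak{R})^{\Fr^{*}}$ is in bijection with the set of inertial equivalence classes of $\ell$-adic representations $\sigma \in \mathcal{G}_2(F)/\sim$ satisfying the three conditions: the rigidity condition $\sigma \otimes \lambda \cong \sigma \Rightarrow \lambda = 1$ for $\lambda$ factoring through $\deg: W_F \to \mathbb{Z}$, unramifiedness outside $S$, and $\sigma|_{I_v} \cong \mathfrak{R}_v$ for $v \in S$. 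It therefore suffices to produce a bijection between this Galois-theoretic set and the automorphic set described in the statement.

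Drinfeld's theorem provides a canonical bijection between irreducible rank $2$ continuous $\ell$-adic representations of $W_F$ (whose restriction to the degree $0$ subgroup is irreducible) and cuspidal automorphic representations $\pi$ of $G(\AAA)$, compatible with local Langlands at every place after fixing $\iota: \overline{\mathbb{Q}}_\ell \xrightarrow{\sim} \mathbb{C}$. I would first note that the rigidity condition $\sigma\otimes\lambda \not\cong \sigma$ for $\lambda \neq 1$ factoring through $\deg$ is equivalent, via Proposition 2.1.3 of \cite{Yu1}, to $\sigma$ restricted to the degree $0$ part of $W_F$ being irreducible; this is precisely what is needed for $\sigma$ to lie in the image of Drinfeld's correspondence. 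Under Langlands, twisting $\sigma$ by a character of $W_F$ factoring through $\deg$ corresponds to twisting $\pi$ by a character of $G(\AAA)$ factoring through $\deg\circ\det$, so the two notions of inertial equivalence are exchanged; in particular the rigidity condition transfers to its automorphic analogue $\pi \otimes \lambda \cong \pi \Rightarrow \lambda = 1$.

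It remains to match the local conditions place by place. For $v \in S$, the condition $\sigma|_{I_v} \cong \mathfrak{R}_v$ means that the local Langlands parameter of $\pi_v$, namely the Frobenius-semisimple $\ell$-adic representation of $W_v$ attached to $\pi_v$, extends $\mathfrak{R}_v$ when restricted to the inertia; this is precisely the definition of $\Irr^{\mathfrak{R}}(G_v)$ given in Section \ref{Galoisrep}. For $v \notin S$, the condition that $\sigma|_{I_v}$ is trivial translates under local Langlands into $\pi_v$ being unramified, again matching the definition of $\Irr^{\mathfrak{R}}(G_v)$ in the unramified case. Note that since we start from an $\ell$-adic local system coming from geometry, the local Weil--Deligne representations are automatically Frobenius-semisimple, so the standard local Langlands correspondence applies without modification.

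The main (minor) obstacle is the bookkeeping around Frobenius semisimplification and the precise compatibility of Drinfeld's correspondence with twisting by characters of the degree, both of which are well-documented in the literature and were already used in the rank $2$ unramified/regular cases in \cite{Yu1} and \cite{Yu}; no new geometric or automorphic input is required beyond a careful application of Proposition \ref{Galois}, the definition of $\Irr^{\mathfrak{R}}(G_v)$, and the compatibility of global and local Langlands.
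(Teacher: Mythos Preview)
Your proof is correct and follows essentially the same approach as the paper: the paper's proof is a one-line citation of Proposition~\ref{Galois}, the global Langlands correspondence and its compatibility with local Langlands, and Theorem~\ref{mono}, and you have simply unpacked these ingredients in detail. The only cosmetic difference is that you invoke the definition of $\Irr^{\mathfrak{R}}(G_v)$ directly rather than citing Theorem~\ref{mono}, which is if anything cleaner since only the definition (not the representation-theoretic characterization) is needed for the bijection.
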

\begin{proof}
Applying global Langlands correspondence and the fact that it is compatible with local Langlands correspondence, this is a corollary of Proposition \ref{Galois} and Theorem \ref{mono}. 
\end{proof}

\section{Spectral side of the trace formula}
We will use equality provided by the noninvariant Arthur-Selberg trace formula (a similar but slightly different formula is obtained first by Jacquet-Langlands for $GL_2$) established by Lafforgue \cite{Laf}. Indeed the noninvariant Arthur-Selberg trace formula over a function field is an equality for each $e\in \mathbb{Z}$ between two distributions on ${C}_c^\infty(G(\AAA))$:
\[ J^{e}_{geom}(f)= J^{e}_{spec}(f). \] 
We will construct a function $f\in {C}_c^\infty(G(\AAA))$ using Theorem \ref{mono} and do explicit calculations for  $J^{1}_{spec}(f)$. The result is summarized by Theorem \ref{automorphe}. This theorem says that it is always a sum of $|E_2(\mathfrak{R})^{\Fr^\ast}|$ and an explicit error term. In a later section, we will use a geometric method to study $J^{e}_{geom}(f)$, which induces a relation with the number of $\mathbb{F}_q$-points of Hitchin moduli spaces.

\subsection{Explicit spectral decomposition of $J^{1}_{spec}(f)$}\label{4.1}
Let $M$ be either $T$ or $G$. 
Let $X_M$ be the group of  characters of  $M(\mathbb{A})$ to $\mathbb{C}^{\times}$ that are trivial on  $M(\mathbb{A})^{0}$.  
Let $X_{M}^{G}\subseteq X_M$ be the subgroup consisting of those characters that are trivial on $Z_{G}(\mathbb{A})$, i.e.,	  \[X_M^G=\Hom(M(\mathbb{A})^{0}\backslash M(\mathbb{A})/Z_{G}(\AAA), \mathbb{C}^{\times}). \]
We have \[X_G^G= \{1, \epsilon\}, \]
where $\epsilon(x)=(-1)^{\deg (\det x) }$ is the sign character of $G(\AAA)$. We identify $X_G^G$ with $\{\pm 1\}\subseteq \mathbb{C}^{\times}$. 
We also have an identification 
\[X_T^G\cong \mathbb{C}^{\times}, \]
where for any $\lambda\in \mathbb{C}^{\times}$ we associate a character \[\lambda(\begin{pmatrix} a&0\\0& b
\end{pmatrix}
) = \lambda^{\deg(a)-\deg(b)}. \] 
We will use this isomorphism in our calculations. Let $\Im X_T^G$ be the subgroup of $X_T^G$ consisting of unitary characters. Therefore, it is formed by elements $\lambda\in \mathbb{C}^{\times}$ of absolute value $1$. We endow a Haar measure on $X_G^G$ and $\Im X_T^G$ so that the total volume is $1$.

Following Jacquet-Langlands, we have the spectral decomposition: \[L^2(G(F)\backslash G(\AAA))\cong L^2_{cusp}\oplus L^{2}_{res} \oplus L^{2}_{cont}, \] 
where $L^2_{cusp}\oplus L^{2}_{res}$ is the largest semisimple subspace, $L^{2}_{cont}$ is its orthogonal complement, and  $L^2_{cusp}$ is the completion of the space of cuspidal automorphic forms. 
The residual spectrum is decomposed as \[L^{2}_{res}\cong \hat{\bigoplus_{\chi}} \chi, \]
where $\chi$ are compositions of Hecke characters with the determinant morphism. The sum here is the Hilbert direct sum. 
For continuous spectrum $L^{2}_{cont}$, we have a decomposition: \[L^{2}_{cont}=\hat{\bigoplus_{\psi}} L^{2}_{[B, \psi]},   \]
where the sum is taken over the set of inertial equivalent classes of pairs $(B, \psi)$ with $\psi$ being a Hecke character of $T(\AAA)\cong (\mathbb{A}^{\times})^2$. The explicit construction of $L^{2}_{[B, \psi]}$ is given by the theory of Eisenstein series. 

We fix an idèle $a\in \mathbb{A}^\times $ of degree $1$, viewed as a scalar matrix. 
Let $f\in C_c^\infty(G(\AAA))$, it acts via the regular representation on $L^2(G(F)\backslash G(\AAA)/a^\mathbb{Z})$,  equivalently by convolution from right by $\breve{f}:=(x\mapsto f(x^{-1}))$, which is an integral operator. We denote this action by the function $R(f)$. Therefore its trace, if it exists, will be the integration of the kernel function on the diagonal. However, the trace of $f$ on the whole space $L^2(G(F)\backslash G(\AAA)/a^\mathbb{Z})$ does not exist in general. Arthur defines a truncated kernel function so that its integration on the diagonal will contain the information $\Tr(f|L^2_{cusp})$ and expresses this integration in two ways: a geometric expansion and a spectral expansion so that we have an identity. 
The spectral expansion contains a piece that gives the most interesting part $\Tr(f|L^2_{cusp})$, and we usually hope to obtain information from the geometric expansion and an understanding of the error terms in the spectral expansion. 

Over a function field, we have a decomposition \[ G(\AAA)=\coprod G(\AAA)^{e}, \]
and $G(F)\backslash G(\AAA)^{e}$ has finite volume. The two different ways to express Arthur's truncated integral over the diagonal in $G(F)\backslash G(\AAA)^{e}\times G(F)\backslash G(\AAA)^{e}$ give an identity:
\[J^{e}_{geom}(f)=J^{e}_{spec}(f). \]
It is slightly simpler to consider an odd integer $e$ or simply that $e=1$. The following result is a special case of the formula obtained by L. Lafforgue.

Let $\mathcal{A}_1$ be the set of Hecke characters of $F^\times \backslash \mathbb{A}^{\times}/a^\mathbb{Z} $. Let \[\mathcal{A}_{cont}:= \mathcal{A}_1\times\mathcal{A}_1/\mathfrak{S}_2,  \]
where $\mathfrak{S}_2$ acts by permutation. An element $[(\psi_1, \psi_2)] \in \mathcal{A}_{cont}$ is called regular if $\psi_1\neq \psi_2$ and is called non-regular otherwise. Let \[\mathcal{A}_{res}\] be the inertial equivalent classes of $1$-dimensional 
representations of $G(\AAA)$ trivial on $a^\mathbb{Z}G(F)$. 
Let \[\mathcal{A}_0\] be the set inertial equivalent classes of cuspidal automorphic representations of $G(\AAA)$ whose central characters are trivial on $a^\mathbb{Z}$.

Let $\mathcal{A}_{B,\psi}$ be the space of complex valued functions $\varphi$ over $G(\AAA)$  satisfying that for any $k\in G(\mathcal{O})$, there is a constant $c_k\in \mathbb{C}$ so that for any $n\in N(\AAA)$, $t\in T(\AAA)$ we have \[\varphi(ntk)=c_k\rho_B(t)\psi(t), \]
 where $\rho_B(\begin{pmatrix} a& 0\\ 0& c
 \end{pmatrix})=\frac{\vert  a\vert  ^{\frac{1}{2}}}{\vert  c\vert  ^{\frac{1}{2}}}$.  Equivalently, it is the space of those $\varphi$ such that for any $x\in G(\AAA)$, $n\in N(\AAA)$ and $t\in T(\AAA)$ we have $\varphi(ntx)=   \rho_B(t)\psi(t)\varphi(x)$. 
 Let $w$ be the non-trivial element in the Weyl group of $(G,T)$ and $\lambda\in X_T^G$. 
 We have the intertwining operator $\mathcal{A}_{B,\psi}\longrightarrow \mathcal{A}_{B,w(\psi)}$ defined by  analytic continuation of the integral below which converges when $|Re \lambda|>>0$, 
 \begin{equation}\label{entre}({\M(w,\lambda)}\varphi)(x):=\lambda(x)\int_{ {N}(\mathbb{A})}\varphi(w^{-1}nx) \lambda(w^{-1}nx) \d n, \quad \end{equation} 
where we view $\lambda\in X_T^G$ as a function over $G(\AAA)$ using Iwasawa decomposition, i.e., if $x=ntk$ with $n\in N(\AAA)$, $t\in T(\AAA)$ and $k\in \mathcal{K}$, we define $\lambda(x):=\lambda(t)$. 

\begin{theorem}[Arthur, Lafforgue]\label{spectral}
The spectral expansion is the identity
\[J_{spec}^{1}(f) =\sum_{[\pi]\in \mathcal{A}_0} J_\pi(f) + \sum_{[\chi]\in\mathcal{A}_{res}} J_\chi(f)+\sum_{[\psi]\in \mathcal{A}_{cont}}J_{\psi}(f), \]
where each sum is taken over a set of representatives, and the terms are defined as follows. We denote the three sums by respectively  $J^{1}_{cusp}(f)$, $J_{res}^{1}(f)$ and $J_{cont}^{1}(f)$. 

For $\pi\in \mathcal{A}_0$,  if $\pi\otimes \epsilon \cong \pi$, then
 \[J_{\pi}(f)= \frac{1}{2}(\mathrm{Tr}(R(f)\vert  \pi) - \mathrm{Tr}(R(f)\circ \epsilon \vert  \pi) ), \]
 and  if $\pi\otimes \epsilon \not\cong \pi$, then
 \[J_{\pi}(f) =  \mathrm{Tr}(R(f)\vert  \pi). \]

 For   $\chi\in \mathcal{A}_{res}$, we have 
 \[J_{\chi}(f)=\mathrm{Tr}(R(f)\vert  \chi). \]

 For any $\lambda\in X_T^G$, let $R(f,\lambda)$ be the twisted action on $\mathcal{A}_{B,\psi}$: \[R(f,\lambda)\varphi = (R(f)(\varphi \lambda))\lambda^{-1},  \]
 where we view $\lambda\in X_T^G$ as a function over $G(\AAA)$ by Iwasawa decomposition: $\lambda(x)=\lambda(t)$ if $x=ntk$ for $n\in N(\AAA)$, $t\in T(\AAA)$ and $k\in G(\mathcal{O})$. 
 For $\psi\in \mathcal{A}_{cont}$, if $\psi$ is regular, then
  \[J_{\psi}(f)= \int_{\Im X_{T}^{{G}}}
\lim_{\mu\longrightarrow 1}  \mathbf{Tr}_{\mathcal{A}_{B,{\psi}}}(( - \frac{1}{\mu^{-1} - \mu}    \M(w, \lambda)^{-1}\circ \M(w,\lambda/\mu )+ \frac{1}{\mu^{-1} - \mu} ) \circ  R(f, \lambda)) \d\lambda ,
 \] 
and if $\psi$ is not-regular,  then
  \begin{multline}
J_{\psi}(f)=\frac{1}{2}  \int_{\Im X_{T}^{{G}}}
\lim_{\mu\longrightarrow 1}  \mathbf{Tr}_{\mathcal{A}_{B,{\psi}}}(( - \frac{1}{\mu^{-1} - \mu}    \M(w,\lambda)^{-1}\circ \M(w,\lambda/\mu )+ \frac{1}{\mu^{-1} - \mu} ) \circ  R(f, \lambda)) \d\lambda \\
+\frac{1}{8}  \sum_{       \lambda_{G}\in \{  \pm 1 \}     }    \sum_{\substack{ \lambda_{w}\in \Im X_{T}^{G}   \\ \lambda_{w}^2=\lambda_{G}^{-1}   }       }   {\lambda_{G}}   \mathbf{Tr}_{\mathcal{A}_{B,{\psi}}}(   
  \M(w,  w^{-1}({\lambda_{w}})) \circ R(f, \lambda) )   . 
\end{multline} 
Note that if $f$ is supported in $G(\mathcal{O})$, then $R(f, \lambda)=R(f)$. 
  \end{theorem}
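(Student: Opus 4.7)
The theorem is the specialization to $G=GL_2$ and to the degree-$1$ component of the noninvariant Arthur--Selberg trace formula over a function field, established in full generality by L. Lafforgue in \cite{Laf}. The plan is therefore to invoke that reference while sketching the spectral side in enough detail to see how the formula emerges. The starting object is Arthur's truncated kernel $\Lambda^T K(x,y)$, where $K(x,y)=\sum_{\gamma\in G(F)}\breve{f}(x^{-1}\gamma y)$ is the automorphic kernel of right convolution by $\breve{f}$ on $L^2(G(F)\backslash G(\AAA)/a^{\mathbb{Z}})$. For $T$ sufficiently regular, the integral $\int \Lambda^T K(x,x)\,dx$ over a fundamental domain for $G(F)\backslash G(\AAA)^{1}/a^{\mathbb{Z}}$ converges absolutely; expanding $K$ by double cosets gives the geometric side $J^{1}_{geom}(f)$, and inserting the Langlands/Jacquet--Langlands decomposition $L^2 \cong L^2_{cusp}\oplus L^2_{res}\oplus L^2_{cont}$ gives the spectral side $J^{1}_{spec}(f)$.

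For the discrete part, cuspidal and residual forms are already rapidly decreasing, so $\Lambda^T$ acts as the identity and one obtains $\sum \Tr(R(f)\vert\pi)$ summed over constituents. One then collects these constituents into inertial classes modulo $a^{\mathbb{Z}}$. An inertial class $[\pi]\in\mathcal{A}_0$ with $\pi\otimes\epsilon\not\cong\pi$ meets the degree-$1$ component in a single isomorphism class, giving $\Tr(R(f)\vert\pi)$ with no weight. When $\pi\otimes\epsilon\cong\pi$, there is an intertwiner $U:\pi\to\pi\otimes\epsilon$ with $U^2=1$; the two $\pm 1$-eigenspaces of $U$ are exchanged by $\pi(x)$ for $x\in G(\AAA)^{1}$ (since $\epsilon\vert_{G(\AAA)^1}=-1$), and tracking the two isomorphism classes of $\pi$ in $L^2$ yields the signed half-sum $\tfrac{1}{2}(\Tr(R(f)\vert\pi) - \Tr(R(f)\circ\epsilon\vert\pi))$. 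The residual term $J_\chi(f)$ is handled identically, but is even simpler because the representations are $1$-dimensional.

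For the continuous contribution, Langlands' theory of Eisenstein series $E(\varphi,\lambda)$, $\varphi\in\mathcal{A}_{B,\psi}$, $\lambda\in \Im X_T^G$, parametrises $L^2_{[B,\psi]}$. Arthur's computation of $\int \Lambda^T |E(\varphi,\lambda)|^{2}\,dx$ expresses it as a sum of terms involving the intertwining operators $\M(w,\cdot)$ and exponentials in $T$; after convolving with $R(f,\lambda)$, letting the second parameter approach the first (the limit $\mu\to 1$) cancels the $T$-dependent exponentials and leaves precisely the trace against the kernel $-(\mu^{-1}-\mu)^{-1}\M(w,\lambda)^{-1}\M(w,\lambda/\mu)+(\mu^{-1}-\mu)^{-1}$, integrated over $\Im X_T^G$. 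This is the regular case. When $\psi$ is non-regular, $\psi_1=\psi_2$, the stabiliser of $\psi$ in the Weyl group has order $2$, producing the overall factor $\tfrac{1}{2}$; additionally, $\M(w,\lambda)$ acquires a pole at $\lambda=1$, whose residue contributes the extra discrete sum over $\lambda_G\in\{\pm 1\}$ and $\lambda_w$ with $\lambda_w^{2}=\lambda_G^{-1}$. The factor $\tfrac{1}{8}$ reflects both the Plancherel normalisation and the size of the relevant stabilisers in the Weyl group.

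The main obstacle, were one to re-prove this rather than cite \cite{Laf}, is the careful bookkeeping of limits and residues of the intertwining operators near the singular locus $\lambda=1$ (particularly in the non-regular case), combined with the function-field normalisations of Haar measures and the interaction of the quotient by $a^{\mathbb{Z}}$ with the projection to the degree-$1$ component. For $GL_2$ these computations reduce to the classical Jacquet--Langlands--Selberg manipulations, modified to accommodate the degree-$1$ projection and the twist by the sign character $\epsilon$; they are routine but tedious, and the compact form of the stated identity reflects a series of cancellations whose verification is what Lafforgue's work supplies.
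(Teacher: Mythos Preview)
Your approach is the same as the paper's: both treat the theorem as a specialization of Lafforgue's general function-field trace formula \cite{Laf} and refer the detailed proof there. The paper's own argument is even terser than yours---it cites \cite{Laf} and \cite[Th.~5.2.2, Co.~5.2.3]{Yu1}, then records the explicit values of the auxiliary functions $\hat{\mathbbm{1}}_G^1$, $\hat{\mathbbm{1}}_B^1$, $\hat{\mathbbm{1}}_{\overline{B}}^1$ on $X_T^G$ (namely $\hat{\mathbbm{1}}_G^1(\lambda)=\lambda$ for $\lambda\in\{\pm1\}$ and $\hat{\mathbbm{1}}_B^1(\lambda)=-(\lambda-\lambda^{-1})^{-1}$, $\hat{\mathbbm{1}}_{\overline{B}}^1(\lambda)=-(\lambda^{-1}-\lambda)^{-1}$), which is what produces the specific rational factors $(\mu^{-1}-\mu)^{-1}$ and the signs in the stated formula. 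Your narrative sketch of Arthur truncation and Eisenstein residues is a reasonable substitute for that bookkeeping, though your explanation of the $\pi\otimes\epsilon\cong\pi$ case (eigenspaces of $U$ being ``exchanged'') is heuristic rather than precise; the actual mechanism is that the degree-$1$ projector on $L^2(G(F)\backslash G(\AAA)/a^{\mathbb{Z}})$ is $\tfrac{1}{2}\sum_{\lambda\in X_G^G}\hat{\mathbbm{1}}_G^1(\lambda)\,\lambda$, and this is what produces the signed half-sum.
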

\begin{proof} 
The theorem is established by L. Lafforgue in \cite{Laf}, and we refer the reader to \cite[Th. 5.2.2, Co. 5.2.3]{Yu1}. For the group $G=GL_2$, we can make the result more explicit. In \cite[5.2.1]{Yu1} we have calculate explicitly the functions $\hat{\mathbbm{1}}_{G}^1$, $\hat{\mathbbm{1}}_{B}^1$ and $\hat{\mathbbm{1}}_{\overline{B}}^1$ 
on $X_T^G$, which appears in the spectral expansion of the trace formula.  They are given by the following formula: 
\[\hat{\mathbbm{1}}_{G}^{1}(1) = 1 \quad \text{ and }  \quad  \hat{\mathbbm{1}}_{G}^{1}(\epsilon) = -1  , \]
i.e., 
\[\hat{\mathbbm{1}}_{G}^{1}(\lambda) = \lambda, \quad \forall \lambda\in X_G^G, \]
and 
\[\hat{\mathbbm{1}}_{B}^{1}(\lambda) = - \frac{1}{\lambda-\lambda^{-1}},\]
\[\hat{\mathbbm{1}}_{\overline{B}}^{1}(\lambda) = - \frac{1}{\lambda^{-1} - \lambda}. \]

\end{proof}

\subsection{Summary of main results of this section}
The primary purpose of this section is to construct a specific function $f\in C_c^{\infty}(G(\AAA))$ (Proposition \ref{f}), and calculates explicitly the spectral side of the trace formula $J^{1}_{spec}(f)$ (Theorem \ref{spectral}). 

\begin{prop} \label{f}
We use notation of Theorem \ref{mono}. For each $v\in S$, we define 
 the following functions following the ramification type of $\mathfrak{R}_v$. 
\begin{itemize}
\item[(r)] Let $v\in S_r$.  
The function $f_v\in C_c^{\infty}(G_v)$ is defined by
\[f_v(x)=\begin{cases} 0,\quad  x\notin \mathcal{K}_v ;\\ \Tr(\rho_v(\overline{x}^{-1})), \quad x\in \mathcal{K}_v . \end{cases} \]
where $\overline{x}$ is the image of $x$ in $G(\kappa_v)$ under the projection $\mathcal{K}_v\longrightarrow G(\kappa_v)$. 
\item[(c)] Let $v\in S_c$
The function $f_v\in C_c^{\infty}(G_v)$ is defined by
\[f_v(x)=\begin{cases} 0,\quad  x\notin \mathcal{K}_v ;\\ \Tr(\rho_v(\overline{x}^{-1})), \quad x\in \mathcal{K}_v . \end{cases}  \]
\item[(s)] Let $v\in S_s$
The function $f_v\in C_c^{\infty}(G_v)$ is supported in $\mathcal{K}_v $ such that for any $x\in \mathcal{K}_v $, we have
\[f_v(x)=\theta_v(\det \overline{x}^{-1}); \] 
\item[(u)]  Let $v\in S_u$.
The function $f_v\in C_c^{\infty}(G_v)$ is defined by
\[f_v=\left(\frac{1}{\vol(\mathcal{I}_v)}\mathbbm{1}_{\mathcal{I}_v}(x)-2 \mathbbm{1}_{\mathcal{K}_v}(x)\right)\theta_v(\det \overline{x}^{-1}). \]
\end{itemize}
Let \[f = \otimes f_v\in C_c^{\infty}(G(\AAA)), \]
where $f_v$ is the function defined above if $v\in S_{(?)}$ for $?=r, c, s, u$ and $f_v=\mathbbm{1}_{\mathcal{K}_v}$ if $v\notin S$. 
Then for any cuspidal automorphic representation $\pi$ of $G(\mathbb{A})$, the condition \[\Tr(f\vert  \pi)\neq 0 \]
holds if and only if $\pi$ has correct ramification type. 
 If it is the case, we have \[\Tr(f\vert  \pi)=1.\]
\end{prop}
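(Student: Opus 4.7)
My plan is to exploit the fact that $f=\otimes_v f_v$ is a pure tensor and $\pi=\otimes' \pi_v$ a restricted tensor product, so the trace factors as
\[
\Tr(f\vert\pi)=\prod_{v\in \vert X\vert}\Tr(\pi_v(f_v)),
\]
and reduce to the local claim that $\Tr(\pi_v(f_v))=1$ if $\pi_v\in\Irr^{\mathfrak{R}}(G_v)$ and $\Tr(\pi_v(f_v))=0$ otherwise. For $v\notin S$ this is immediate since $f_v=\mathbbm{1}_{\mathcal{K}_v}$ implements the projection onto $\pi_v^{\mathcal{K}_v}$, and the result equals $\dim \pi_v^{\mathcal{K}_v}\in\{0,1\}$.

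For $v\in S$ the common device will be the following observation: if $\rho$ is a finite-dimensional representation of a compact open subgroup $\mathcal{K}$ of volume $1$, then the function $k\mapsto \chi_\rho(k^{-1})$, extended by $0$ outside $\mathcal{K}$, satisfies
\[
\Tr\bigl(\pi_v(\chi_\rho(\,\cdot\,^{-1}))\bigr)=\dim\Hom_{\mathcal{K}}(\rho,\pi_v).
\]
This is because $\dim(\rho)\chi_\rho(\,\cdot\,^{-1})$ is the idempotent in the Hecke algebra projecting onto the $\rho$-isotypic component when $\rho$ is irreducible, and the general case follows by additivity. Applied in case (r), where $\rho=\Ind_{\mathcal{I}_v}^{\mathcal{K}_v}\chi_v$ is irreducible because $\chi_1\neq \chi_2$; in case (c), where $\rho$ is the irreducible inflation to $\mathcal{K}_v$ of $-\iota(R_{U(\kappa_v)}^{G(\kappa_v)}\chi_1')$; and in case (s), where $\rho$ is the one-dimensional character $\theta_v\circ\det$ of $\mathcal{K}_v$, the proposition is read off immediately from the corresponding part of Theorem \ref{mono}.

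Case (u) is more delicate. Setting $\tilde\Theta_v(x):=\theta_v(\det\overline{x})$, a character of $\mathcal{K}_v$, the given $f_v$ decomposes as
\[
f_v=\tfrac{1}{\vol(\mathcal{I}_v)}\tilde\Theta_v^{-1}\mathbbm{1}_{\mathcal{I}_v}\;-\;2\,\tilde\Theta_v^{-1}\mathbbm{1}_{\mathcal{K}_v},
\]
and the idempotent formula applied to each summand yields
\[
\Tr(\pi_v(f_v))=\dim\pi_v^{\mathcal{I}_v,\tilde\Theta_v}-2\dim\pi_v^{\mathcal{K}_v,\tilde\Theta_v},
\]
where the superscript $\chi$ denotes the subspace on which the group in question acts by the character $\chi$. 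Since $\tilde\Theta_v$ extends to a character $\lambda_0$ of $G_v$ pulled back from $F_v^\times$ via $\det$ (with $\lambda_0'\vert_{\mathcal{O}_v^\times}=\theta_v$), these twisted invariants can be rewritten as $\dim(\pi_v\otimes\lambda_0^{-1})^{\mathcal{I}_v}-2\dim(\pi_v\otimes\lambda_0^{-1})^{\mathcal{K}_v}$, and Borel's classification of Iwahori-spherical representations of $GL_2(F_v)$ then leaves only four possibilities: the value is $+1$ for a twisted Steinberg $St\otimes\lambda$ with $\lambda\vert_{\mathcal{I}_v}=\tilde\Theta_v\vert_{\mathcal{I}_v}$, equals $0$ for irreducible unramified principal series and for representations without Iwahori-fixed vectors after the twist, and equals $-1$ for one-dimensional representations.

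The main obstacle is to discard the spurious $-1$ contribution from one-dimensional local components, and this is precisely where cuspidality of $\pi$ intervenes: every local component of a cuspidal automorphic representation of $GL_2(\mathbb{A})$ is generic, hence infinite-dimensional, so one-dimensional $\pi_v$ cannot occur. Thus among cuspidal $\pi$ the local trace equals $1$ exactly when $\pi_v\cong St\otimes\lambda$ with $\lambda'\vert_{\mathcal{O}_v^\times}=\theta_v$, which is exactly the description of $\Irr^{\mathfrak{R}}(G_v)$ in Theorem \ref{mono}(u). Multiplying the local values over all $v\in\vert X\vert$ then yields $\Tr(f\vert\pi)=1$ when every $\pi_v$ has the correct ramification type, and $\Tr(f\vert\pi)=0$ otherwise.
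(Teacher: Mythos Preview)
Your proof is correct and follows essentially the same approach as the paper: both factor the trace as a product of local traces, treat $v\notin S$ and $v\in S_s$ via the projection onto $(\mathcal{K}_v,\theta_v)$-invariants, treat $v\in S_r,S_c$ via the idempotent identity $\Tr(\pi_v(\chi_\rho(\cdot^{-1})))=\dim\Hom_{\mathcal{K}_v}(\rho,\pi_v)$ together with Theorem~\ref{mono}, and for $v\in S_u$ compute $\dim\pi_v^{\mathcal{I}_v,\tilde\Theta_v}-2\dim\pi_v^{\mathcal{K}_v,\tilde\Theta_v}$ via the classification of Iwahori-spherical representations, using cuspidality of $\pi$ to rule out one-dimensional local components. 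The only cosmetic difference is that the paper cites Casselman's result for the Iwahori-spherical classification where you invoke Borel, and for cases (r),(c) the paper defers to \cite[Theorem 2.4.4]{Yu} whereas you spell out the idempotent computation directly.
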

\begin{proof}
Suppose $\pi=\otimes'\pi_v$ is a cuspidal automorphic representation,
we have \[\Tr(f\vert \pi)= \prod_{v}\Tr(f_v\vert \pi_v).\]
The statement is, therefore, of local nature. 

For $v\notin S$, the function $\mathbbm{1}_{\mathcal{K}_v}$ acts as a projection to $\mathcal{K}_v$-fixed part of $\pi_v$. It is non-zero if and only if $\pi_v$ is unramified by definition. If this is the case, $\pi_v^{\mathcal{K}_v}$ is an $1$ dimensional vector space and the trace of $\mathbbm{1}_{\mathcal{K}_v}$ is one. For $v\in S_s$, it is similar. In fact, suppose that $\pi_v=\pi_v^{\prime}\otimes \eta$ with $\pi_v^{\prime}$ being unramified.  Then 
\[\Tr(f_v\vert \pi_v)=\Tr(\mathbbm{1}_{\mathcal{K}_v}\vert \pi_v^{\prime}). \] 
The trace is $1$ or $0$ depending on whether or not $\pi_v$ has the correct ramification.

For $v\in S_{c}$ and $v\in S_{r}$, this result has been proved in \cite[Theorem 2.4.4]{Yu}.

Finally, we consider the case  $v\in S_{u}$. 
As above, up to a twist, we are reduced to the case that $\chi'$ is trivial. 
Note that if \[\Tr(f_v\vert \pi_v)\neq 0, \]
then $\pi_v$ contains a non-zero fixed vector under $\mathcal{I}_v$. By a result of Casselman \cite[Th. 7.4.4]{Laumon}, $\pi_v$ must be an irreducible subrepresentation of a parabolic induction $\Ind_{B_v}^{G_v}\theta_v$ for a character $\theta_v$ of $T_v$ that is trivial on $T(\mathcal{O}_v)$. If the representation
$\Ind_{B_v}^{G_v}\theta_v$ is irreducible, then it is unramified. Hence its $\mathcal{I}_v$-fixed subspace has dimension $2$. We deduce that \[\Tr(f_v\vert \pi_v)= 0. \]
If $\Ind_{B_v}^{G_v}\theta_v$ is not irreducible, then it is of length $2$ whose irreducible quotients are a $1$-dimensional representation and an unramified twist of the Steinberg representation. Since $\pi$ is cuspidal, $\pi_v$ cannot be $1$-dimensional, therefore $\pi_v$ is an  unramified twist of the Steinberg representation whose $\mathcal{I}_v$-fixed subspace is of $1$ dimensional and $\mathcal{K}_v$-fixed subspace is $0$. This completes the proof. 
\end{proof}

The following result is deduced from Lemma \ref{bck}, Corollary \ref{cuspt}, Proposition \ref{rest}, and Proposition \ref{continoust}. Notation are those of the introduction. 
\begin{theorem}\label{automorphe}
For a finite set of places $V$ of $F$, we define $\deg V:= \sum_{v\in V}\deg v=|\overline{V}|$. 
Let $S_{u,even}$ be the subset of $S_u$ consisting of places $v$ such that $\deg v$ is even. 

The expression $J^{1}_{spec}(f)$ equals the following numbers depending on the cases:
\begin{enumerate} 
\item $S_c\neq \emptyset$,  and $S_{u, even}\neq\emptyset$. 
We have 
\[J^{1}_{spec}(f)=|E_2(\mathfrak{R})^{\Fr^\ast}|. \]

\item $S_c\neq \emptyset$, $S_{u, even}=\emptyset$ but $S_u\neq \emptyset$. 
We have 
\[J^{1}_{spec}(f)=|E_2(\mathfrak{R})^{\Fr^\ast}| +(-1)^{|S_u|+1}  {b_{\mathfrak{R}}(1) }2^{|S_u|-2}(-1)^{\deg S_c}\Pic(2) . \]

\item $S_c\neq \emptyset$, and $S_{u}=\emptyset$. 
We have 
\[J^{1}_{spec}(f)=|E_2(\mathfrak{R})^{\Fr^\ast}|+\frac{b_{\mathfrak{R}}(1)  }{4}(1-(-1)^{\deg S_c})\Pic(2) . \]

\item $S_c=\emptyset$, $S_{r}\neq\emptyset$, and $S_u=\emptyset$. We have
\[J^{1}_{spec}(f)=|E_2(\mathfrak{R})^{\Fr^\ast}|+
\frac{1}{2}c_{\mathfrak{R}}(1) \Pic(1)  ^2(2g-2+\deg S_{r}).
\]

\item $S_{cr}=S_u=\emptyset$. We have
\[J^{1}_{spec}(f)=|E_2(\mathfrak{R})^{\Fr^\ast}|+
c_{\mathfrak{R}}(1)   \Pic(1)  ^2(g-1)+c_{\mathfrak{R}}(1)  \Pic({1}). \] 

\item $S_{cr}=\emptyset$, $S_u=\{v\}$ and $2\mid \deg v$. Then $J^{1}_{spec}(f)$ equals
\[   |E_2(\mathfrak{R})^{\Fr^\ast}|- c_{\mathfrak{R}}(1)  \Pic(1)  + \frac{ \deg v}{2}c_{\mathfrak{R}}(1)    \Pic(1)  ^2  .\]

  \item $S_{cr}=\emptyset$, $S_u=\{v\}$ and $2\nmid \deg v$. Then $J^{1}_{spec}(f)$ equals
\[ 
|E_2(\mathfrak{R})^{\Fr^\ast}| + \frac{c_{\mathfrak{R}}(1)  }{2}\Pic({2}) {-c_{\mathfrak{R}}(1)    \Pic(1)}+c_{\mathfrak{R}}(1)\frac{\deg v}{2}  \Pic(1)  ^2  . 
  \]

  \item $S_{cr}=\emptyset$, $\vert S_u \vert\geq 2$, and $S_{u, even}\neq \emptyset$. Then $J^{1}_{spec}(f)$ equals
  \[ |E_2(\mathfrak{R})^{\Fr^\ast}|+ c_{\mathfrak{R}}(1)  (-1)^{|S_u|}\Pic(1).\]

    \item $S_{cr}=\emptyset$,  $\vert S_u \vert\geq 2$, and $S_{u, even}=   \emptyset$. Then $J^{1}_{spec}(f)$ equals 
\[|E_2(\mathfrak{R})^{\Fr^\ast}|+ c_{\mathfrak{R}}(1) (-1)^{|S_u|} \Pic(1)+   
c_{\mathfrak{R}}(1) (-1)^{|S_u|+1}  2^{|S_u|-2}\Pic(2).\]

\item $S_c=\emptyset$, $S_{r}\neq\emptyset$,  $S_u=\{v\}$, and
$2\mid \deg v$. We have
\[J^{1}_{spec}(f)= |E_2(\mathfrak{R})^{\Fr^\ast}|+\frac{1}{2}c_{\mathfrak{R}}(1)  \Pic(1)  ^2\deg v. 
\]

\item $S_c=\emptyset$, $S_{r}\neq\emptyset$, $S_u=\{v\}$,
and $2\nmid \deg v$. We have

\[J^{1}_{spec}(f)= |E_2(\mathfrak{R})^{\Fr^\ast}|+\frac{1}{2}c_{\mathfrak{R}}(1)  \Pic(1)  ^2\deg v+\frac{b_\mathfrak{R}(1)}{2}\Pic(2). 
\]

\item $S_c=\emptyset$, $S_{r}\neq\emptyset$, $|S_u|\geq 2$, and $S_{u,even}\neq \emptyset$. We have
\[J^{1}_{spec}(f)= |E_2(\mathfrak{R})^{\Fr^\ast}|. \]

\item $S_c=\emptyset$, $S_{r}\neq\emptyset$, $|S_u|\geq 2$, and $S_{u,even}= \emptyset$. 
We have
\[J^{1}_{spec}(f)= |E_2(\mathfrak{R})^{\Fr^\ast}|+(-1)^{|S_u|+1}{b_\mathfrak{R}(1)}{2}^{|S_u|-2}\Pic(2). \]
\end{enumerate}
\end{theorem}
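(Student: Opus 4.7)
The plan is to apply Theorem \ref{spectral} to expand
\[J^{1}_{spec}(f) = J^{1}_{cusp}(f) + J^{1}_{res}(f) + J^{1}_{cont}(f),\]
and compute each summand for the test function $f$ built in Proposition \ref{f}. By that proposition, $\Tr(R(f)\vert \pi)=1$ precisely when $\pi$ has the correct ramification type and vanishes otherwise, so the sum of $\Tr(R(f)|\pi)$ over cuspidal $\pi$ modulo inertial equivalence equals exactly $|E_{2}(\mathfrak{R})^{\Fr^{\ast}}|$ by Theorem \ref{Langlands}. The shape of $J^{1}_{cusp}(f)$ in Theorem \ref{spectral} yields this leading term plus an error coming from the self-dual representations $\pi\otimes \epsilon \cong \pi$, which contribute the twisted trace $-\tfrac{1}{2}\Tr(R(f)\circ \epsilon|\pi)$. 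Evaluating this error, together with $J^{1}_{res}(f)$ and $J^{1}_{cont}(f)$, is accomplished by Lemma \ref{bck}, Corollary \ref{cuspt}, Proposition \ref{rest}, and Proposition \ref{continoust}, and the thirteen cases of the statement simply enumerate which of these contributions survive.

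A self-dual cuspidal $\pi$ descends from a cuspidal representation on $GL_{2}$ over $F\otimes \mathbb{F}_{q^{2}}$ by quadratic base change, and its $\epsilon$-twisted trace on a Whittaker model factors as a product of local twisted characters. Using the explicit Whittaker functions of Paskunas--Stevens mentioned in the introduction and the local descriptions of $\Irr^{\mathfrak{R}}(G_{v})$ in Theorem \ref{mono}, each local factor is evaluated; at $S_{u}$ places a sign $(-1)^{\deg v}$ enters through the interaction of the twisted Steinberg with the involution $\epsilon$, and at $S_{c}$ places a sign $(-1)^{\deg S_{c}}$ arises from the swap of the anisotropic Deligne--Lusztig parameters. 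The global count of parameters compatible with these twisted constraints is $|P_{\mathfrak{R}}^{\sigma=\Fr^{\ast}}| = b_{\mathfrak{R}}(1)$, and the factor $\Pic(2)$ appears because the associated Hecke characters live over $\mathbb{F}_{q^{2}}$. Tracking the combinatorial prefactors in $|S_{u}|$ and $\deg S_{c}$ produces the $b_{\mathfrak{R}}(1)\Pic(2)$ error terms in the relevant cases.

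For $J^{1}_{res}(f)$, the residual spectrum is a sum over one-dimensional representations $\chi'\circ \det$; non-vanishing of $\Tr(R(f)|\chi)$ forces compatibility of $\chi'$ with every $\mathfrak{R}_{v}$, which rules out regular or anisotropic cuspidal places. At an $S_{u}$ place the formula $f_{v}=(\vol(\mathcal{I}_{v})^{-1}\mathbbm{1}_{\mathcal{I}_{v}}-2\mathbbm{1}_{\mathcal{K}_{v}})\theta_{v}(\det\overline{x}^{-1})$ acts on a compatible one-dimensional representation with trace $1-2=-1$, producing the $(-1)^{|S_{u}|}\Pic(1)$ correction (Proposition \ref{rest}). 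For $J^{1}_{cont}(f)$ one plugs the explicit values of $\hat{\mathbbm{1}}_{G}^{1}$, $\hat{\mathbbm{1}}_{B}^{1}$, $\hat{\mathbbm{1}}_{\overline{B}}^{1}$ from the proof of Theorem \ref{spectral} into the integrals over $\Im X_{T}^{G}$. The regular integrals ($\psi_{1}\neq \psi_{2}$) reduce, via local evaluations of $\Tr(f_{v}|\Ind_{B_{v}}^{G_{v}}\theta_{v})$, to finite sums over Hecke character pairs compatible with $\mathfrak{R}$, yielding the leading $c_{\mathfrak{R}}(1)\Pic(1)^{2}$ contributions with combinatorial coefficients like $g-1+\tfrac{1}{2}\deg S_{cr}$; the non-regular integrals together with the discrete contribution over $\lambda_{w}^{2}=\lambda_{G}^{-1}$ produce $\Pic(2)$ corrections through quadratic unramified characters (Proposition \ref{continoust}).

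The main obstacle is the local-global bookkeeping at $S_{u}$ places. Because $f_{v}$ is a difference of $\mathcal{I}_{v}$ and $\mathcal{K}_{v}$ contributions, its action on an irreducible subquotient of $\Ind_{B_{v}}^{G_{v}}\theta_{v}$ depends on whether the induction is irreducible, on the dimension of $\mathcal{I}_{v}$-fixed vectors ($1$ or $2$), and crucially on the parity of $\deg v$ through the existence of a suitable quadratic unramified character of $F_{v}^{\times}$. Packaging these signs coherently across all ramified places, interfacing them with the global involution $\sigma$ that controls $b_{\mathfrak{R}}$ and with the Frobenius permutation on $P_{\mathfrak{R}}$ that controls $c_{\mathfrak{R}}$, and finally checking that every parity and non-emptiness condition matches one of the thirteen explicit formulas, is where the bulk of the work lies; no individual computation is deep, but the systematic case analysis is the substance of the theorem.
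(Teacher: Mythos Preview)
Your overall plan matches the paper's: decompose $J^{1}_{spec}(f)$ into cuspidal, residual, and continuous pieces via Theorem \ref{spectral} and then read off each piece from Corollary \ref{cuspt}, Proposition \ref{rest}, and Proposition \ref{continoust}. Two points in your sketch, however, misidentify the mechanisms and would give wrong formulas if carried through.

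First, in the twisted-trace computation at $S_{u}$ places, the key fact (Theorem \ref{twistedt}) is that if $\pi\cong\pi\otimes\epsilon$ then the Galois parameter splits over $\overline{X}-\overline{S}$ as $\mathfrak{L}_{1}\oplus\mathfrak{L}_{2}$ with $\Fr^{\ast}$ swapping the summands, so every local monodromy is semisimple and $\pi_{v}$ is \emph{never} a twisted Steinberg. At $v\in S_{u}$ the local component is therefore a twisted unramified principal series; when $\deg v$ is even, $\epsilon_{v}$ is trivial and the twisted trace equals the ordinary trace, which is zero by Proposition \ref{f}. Your phrase ``the interaction of the twisted Steinberg with $\epsilon$'' has the mechanism backwards, and your description of $\pi$ as ``descending by base change'' from $F\otimes\mathbb{F}_{q^{2}}$ is also inverted (it is an automorphic induction from that extension, i.e.\ the $\ell$-adic sheaf is induced).

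Second, no $\Pic(2)$ term ever comes from the continuous spectrum. Proposition \ref{continoust} produces only $\Pic(1)$ and $\Pic(1)^{2}$ contributions; the discrete sum over $\lambda_{w}^{2}=\lambda_{G}^{-1}$ is evaluated via the functional equation of $\zeta_{X}$ and yields constants such as $\pm 2^{|S_{u}|-2}$, multiplied by the count $c_{\mathfrak R}(1)\Pic(1)$ of non-regular $\psi$. All $\Pic(2)$ terms in the thirteen formulas come from the \emph{cuspidal} side (Corollary \ref{cuspt}), namely from counting unordered pairs $(\mathfrak{L}_{1},\mathfrak{L}_{2})$ of rank-one local systems over $\overline{X}-\overline{S}$ swapped by $\Fr^{\ast}$; this is a count over $\mathbb{F}_{q^{2}}$ and is what produces $\Pic(2)$.
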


\subsection{Counting $\ell$-adic local systems in rank 1}\label{rank1}
We need to discuss the cases in rank $1$ first, not only for completeness but also because these results will be needed when calculating the cases in rank $2$. It has been dealt with in \cite[Section 6]{Deligne}, thus our discussion will be concise. 

A difference between a number field and a function field is that the function field $F$, hence all its local fields, is an $\mathbb{F}_q$-algebra. Given a character $\chi_v: F_v^\times\longrightarrow \overline{\mathbb{Q}}_{\ell}^{\times}$, we can consider its restriction to $\mathbb{F}_q^{\times}$. Let $\chi$ be an $\ell$-adic Hecke character \[\chi=\prod_{v}\chi_v: F^{\times}\backslash \mathbb{A}^{\times} \longrightarrow \overline{\mathbb{Q}}_{\ell}^{\times} .\] 
It is a character of $\mathbb{A}^{\times}$ trivial on $F^{\times}$, in particular on $\mathbb{F}_q^{\times}$, therefore necessarily we have \begin{equation}\label{ast} \prod_{v}\chi_v\vert  _{\mathbb{F}_q^{\times}}=1.\end{equation}

Suppose that $\mathfrak{R}_1=(\mathfrak{R}_{1,x})_{x\in \overline{S}}$ is a family of rank $1$ $\ell$-adic local systems over $(\overline{X}^{\ast}_{(x)})_{x\in \overline{S}}$ fixed by $\Fr^{\ast}$. Suppose that they are tame and $\varepsilon_x$ are eigenvalues of the local monodromy actions. 
By similar discussion as in \ref{Galoisrep}, but using local class field theory, we obtain for each $v\in S$ a character $\theta_v$ of $\mathcal{O}_v^{\times}$ trivial on $1+\wp_v$ by tameness. The condition \begin{equation}\label{okay00}\prod_{x\in \overline{S}}\varepsilon_x=1, \end{equation} is equivalent to \begin{equation}\label{okay11} \prod_{v}\theta_v\vert  _{\mathbb{F}_q^{\times}}=1.  \end{equation}
Let $A_1(\mathfrak{R}_1)$ be the set of inertial equivalent classes of Hecke characters $\chi$ of $F^\times\backslash \mathbb{A}^\times$ such that $\chi_v$ extends $\theta_v$. The set $A_1(\mathfrak{R}_1)$ is in bijection with $E_1(\mathfrak{R}_1)^{\Fr^{\ast}}$. 

\begin{lemm}
The condition \eqref{okay00} is satisfied is and only if $A_1(\mathfrak{R}_1)$ is non-empty. 
\end{lemm}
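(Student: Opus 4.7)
The plan is to interpret the statement in terms of extending characters along subgroups of the idele group $\mathbb{A}^\times$. Set $\mathcal{O}^\times = \prod_v \mathcal{O}_v^\times$. Two inputs from the geometry of $X$ will suffice: first, $F^\times \cap \mathcal{O}^\times = H^0(X,\mathcal{O}_X^\times) = \mathbb{F}_q^\times$, since $X$ is projective and geometrically connected; and second, the standard identification $\mathbb{A}^\times/(F^\times \mathcal{O}^\times) \cong \mathrm{Pic}(X)$, where the latter is an extension of $\mathbb{Z}$ by the finite group $\mathrm{Pic}^0(X)(\mathbb{F}_q)$.

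For the forward direction, I would pick any $\chi = \prod_v \chi_v \in A_1(\mathfrak{R}_1)$. Triviality of $\chi$ on $F^\times$ forces triviality on the diagonal subgroup $\mathbb{F}_q^\times$, giving $\prod_v \chi_v|_{\mathbb{F}_q^\times} = 1$. For $v \notin S$ the component $\chi_v$ is unramified and hence trivial on $\mathcal{O}_v^\times \supseteq \mathbb{F}_q^\times$; for $v \in S$ one has $\chi_v|_{\mathcal{O}_v^\times} = \theta_v$ by assumption. The product formula then collapses to $\prod_{v \in S} \theta_v|_{\mathbb{F}_q^\times} = 1$, which is \eqref{okay11}, i.e.\ \eqref{okay00}.

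For the converse, assuming \eqref{okay11}, I would define $\tilde\theta : \mathcal{O}^\times \to \overline{\mathbb{Q}}_\ell^\times$ as the product of the $\theta_v$ for $v \in S$, extended by the trivial character at all remaining places. The key gluing step is this: because $F^\times \cap \mathcal{O}^\times = \mathbb{F}_q^\times$, the pair (trivial character on $F^\times$, character $\tilde\theta$ on $\mathcal{O}^\times$) defines a character on the subgroup $F^\times \mathcal{O}^\times$ of $\mathbb{A}^\times$ if and only if these two restrictions agree on the intersection $\mathbb{F}_q^\times$, and this is exactly the content of \eqref{okay11}. Finally, since $\mathbb{A}^\times/(F^\times \mathcal{O}^\times) \cong \mathrm{Pic}(X)$ is a finitely generated abelian group and $\overline{\mathbb{Q}}_\ell^\times$ is divisible (hence injective as an abelian group), the character extends to a continuous character of $\mathbb{A}^\times$, furnishing an element of $A_1(\mathfrak{R}_1)$. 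The only point requiring any care is the gluing compatibility, which is precisely encoded by \eqref{okay00}; the remaining steps are formal.
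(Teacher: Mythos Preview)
Your proof is correct and follows essentially the same approach as the paper's. The paper phrases the converse via the short exact sequence $1 \to \mathcal{O}^\times/\mathbb{F}_q^\times \to F^\times\backslash\mathbb{A}^\times \to \mathrm{Pic}_X(\mathbb{F}_q) \to 1$ and invokes Pontryagin duality, while you phrase it as gluing on $F^\times\mathcal{O}^\times$ and then extending across the quotient using injectivity of $\overline{\mathbb{Q}}_\ell^\times$; these are the same argument in slightly different language.
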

\begin{proof}
By \eqref{ast}, we have seen that \eqref{okay00} is a necessary condition for $A_1(\mathfrak{R})$ to be non-empty. 
Conversely, 
note that $ F^{\times} \backslash  \mathbb{A}^{\times}$ is an extension of $ F^{\times}\backslash \mathbb{A}^{\times}/\mathcal{O}^{\times}\cong \mathrm{Pic}_X(\mathbb{F}_q)$ by $\mathcal{O}^{\times}/ \mathbb{F}_q^{\times} $, the condition \eqref{ast} ensures that $\prod_v\theta_v$ defines a character of $\mathcal{O}^{\times}/ \mathbb{F}_q^{\times}$. Taking ($\ell$-adic) Pontryagin dual, we see immediately that $A_1(\mathfrak{R})$ is non-empty. 
\end{proof}

If the condition \eqref{okay00} is satisfied then we have
\begin{equation}\vert  E_1(\mathfrak{R})^{\Fr^*}\vert  =\vert  \mathrm{Pic}^{0}(X)(\mathbb{F}_q)^{\vee}\vert  =\Pic(1) ,\end{equation}
otherwise \[E_1(\mathfrak{R})^{\Fr^*}=\emptyset. \]
In fact, as long as $  E_1(\mathfrak{R})^{\Fr^*}$  is non-empty, it is a principal homogenous space under $ E_1(\emptyset)^{\Fr^*}$. In this case, it has cardinality $\Pic(1)=\vert  \mathrm{Pic}^{0}(X)(\mathbb{F}_q)\vert $.

\subsection{Eulerian decomposition and calculations on Whittaker functions}
Let $f=\otimes f_v\in {C}_c^\infty(G(\AAA))$ be the function defined in Proposition \ref{f}. 
Let $\pi\cong \otimes' \pi_v$ be a cuspidal automorphic representation of $G(\AAA)$. 

Suppose that $\pi\cong \pi\otimes \epsilon$ as representations of $G(\AAA)$, we need to consider
\[\Tr( \epsilon\circ  R(f)\vert  \pi).\]
Note that this trace is not well-defined from a pure representation theoretical point of view because the action of $\epsilon$ on $\pi$ relies on the isomorphism $\pi\cong \pi\otimes \epsilon$. 
For $G=GL_2$, a canonical isomorphism is furnished by the multiplicity one theorem, which says that $\pi$ and $\pi\otimes \epsilon$ have the same underlying space of cusp forms. 

A similar problem arises if we want to get an Eulerian decomposition of the trace $\Tr( \epsilon\circ  R(f)\vert  \pi)$. We need to choose isomorphisms $\pi_v\cong \pi_v\otimes \epsilon $ so that their tensor product is compatible with the global isomorphism (we also need to assume that the isomorphisms can be glued together to a restricted tensor product isomorphism, i.e., they fix the implicit chosen $\mathcal{K}_v$-invariant vector for almost all places $v$). 
A natural way to do so is by using Whittaker models. 

Let $\psi=\otimes \psi_v: \mathbb{A}\longrightarrow \mathbb{C}^\times$ be an additive character of $\AAA$, viewed as a character of $N(\AAA)$, where $\psi_v$ are  additive characters of $F_v$. Suppose that $\psi_v$ has conductor $\wp_v^{-n_v}$, i.e., is trivial on $\wp_v^{-n_v}$ but not on $\wp_v^{-n_v-1}$. We have
\[\sum_v n_v\deg v=2g-2. \]
Recall that $g$ is the genus of the curve $X$.  Let $\mathcal{W}(\pi)$ be the global Whittaker model of $\pi$ with respect to $\psi$, 
i.e., the space of smooth functions  $\varphi$ over $G(\mathbb{A})$ such that 
\[\varphi(ux)=\psi(u)\varphi(x), \quad \forall u\in N(\AAA) , \forall x\in G(\mathbb{A}), \]
and $\mathcal{W}(\pi_v)$ the local Whittaker model of $\pi_v$ with respect to $\psi_v$ (space of functions over $G_v$ similarly defined). 
Then we have a natural decomposition $\mathcal{W}(\pi)=\otimes_v' \mathcal{W}(\pi_v)$ and $\mathcal{W}(\pi_v)=\mathcal{W}(\pi_v\otimes \epsilon)$. Therefore, we have 
\begin{equation}\label{999}
\Tr( \epsilon\circ  R(f)\vert  \pi)=\prod_{v\in |X|}  \Tr( \epsilon_v\circ R(f_v)\vert  \mathcal{W}({\pi_v})),
\end{equation}
where $\epsilon_v(x_v)=(-1)^{\deg v \deg(\det (x_v))}$ for $v\in G_v$.



\begin{theorem}[Paskunas-Stevens]\label{PStevens}
Let $\rho$ be a representation of $\mathcal{K}_v$ that inflates an (irreducible) cuspidal representation of $G(\kappa_v)$. Let $\pi_v$ be an irreducible representation of $G_v$ that contains $\rho$, i.e. $\Hom_{\mathcal{K}_v}(\rho, \pi)\neq 0$. 
Let $\psi'_v$ be an additive character of $F_v$ of conductor $\wp_v$, i.e., is trivial on $\wp_v$ and is non-trivial on $\mathcal{O}_v$. It defines a character of $N_v$ by $\begin{pmatrix}1&x\\0&1\end{pmatrix}\mapsto \psi'_v(x)$. Let $\mathcal{W}$ be the space of Whittaker functions of $\pi_v$ with respect to $\psi'_v$. Let $\mathcal{W}_\rho$ be the $\rho$ isotypic subspace of $\mathcal{W}$. Then every function in $\mathcal{W}_\rho$ is supported in $N(F_v)Z_v\mathcal{K}_v$. 
\end{theorem}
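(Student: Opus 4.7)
The plan is to reduce, via the Iwasawa decomposition and the Whittaker transformation law, to the single assertion that for every $W \in \mathcal{W}_\rho$ and every nonzero integer $n$, one has $W(\mathrm{diag}(\varpi_v^n, 1)) = 0$. Writing $g \in G_v$ in the form $g = n_0\, z\, \mathrm{diag}(a,1)\, k$ with $n_0 \in N_v$, $z \in Z_v$, $a \in F_v^\times$, and $k \in \mathcal{K}_v$, the Whittaker property and the central character reduce $W(g)$ to a unit scalar times $W(\mathrm{diag}(a,1)\, k) = (R(k)W)(\mathrm{diag}(a,1))$, where $R$ denotes right translation. Since $\mathcal{W}_\rho$ is $\mathcal{K}_v$-stable under right translation, $R(k) W \in \mathcal{W}_\rho$; so it suffices to show the values on the diagonal torus $\{\mathrm{diag}(a,1):a\in F_v^\times\}$ vanish whenever $v(a)\neq 0$.

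A key structural input is that, because $\rho$ is inflated from $G(\kappa_v)$, the kernel $K_1 := 1 + \varpi_v M_2(\mathcal{O}_v)$ of the reduction map $\mathcal{K}_v \twoheadrightarrow G(\kappa_v)$ acts trivially on $\mathcal{W}_\rho$. I would use this invariance alone to handle the case $n \leq -1$: for $x \in \wp_v$, set $n_x := \begin{pmatrix} 1 & x \\ 0 & 1\end{pmatrix} \in K_1 \cap N_v$; then on the one hand $W(g\, n_x) = W(g)$, while on the other hand the commutation $\mathrm{diag}(\varpi_v^n,1) \cdot n_x = n_{\varpi_v^n x} \cdot \mathrm{diag}(\varpi_v^n, 1)$ combined with the Whittaker transformation gives $W(g\, n_x) = \psi'_v(\varpi_v^n x)\, W(g)$. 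Since $\psi'_v$ has conductor $\wp_v$ and $\varpi_v^n \wp_v = \wp_v^{n+1}$ strictly contains $\wp_v$ whenever $n \leq -1$, there exists $x \in \wp_v$ with $\psi'_v(\varpi_v^n x) \neq 1$, forcing $W(\mathrm{diag}(\varpi_v^n, 1)) = 0$.

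For $n \geq 1$ I would invoke the cuspidality of $\rho$: by the defining property of a cuspidal $G(\kappa_v)$-representation, $\sum_{u \in N(\kappa_v)} \rho(u) \cdot v = 0$ for every $v$ in $\rho$. After inflation and passage to the Whittaker model this reads $\sum_{\tilde{u}} R(\tilde{u}) W = 0$ in $\mathcal{W}_\rho$, for any choice of lifts $\tilde{u} \in \mathcal{K}_v$ of $u \in N(\kappa_v)$, the sum being independent of lifts because $K_1$ acts trivially on $\mathcal{W}_\rho$. Taking $\tilde{u}_{\tilde{y}} = \begin{pmatrix} 1 & \tilde{y} \\ 0 & 1\end{pmatrix}$ with $\tilde{y}$ ranging over a set $R \subseteq \mathcal{O}_v$ of representatives of $\kappa_v$, and evaluating at $g = \mathrm{diag}(\varpi_v^n, 1)$, one obtains
\[
0 \;=\; \sum_{\tilde{y} \in R} W(g\, \tilde{u}_{\tilde{y}}) \;=\; W(g) \sum_{\tilde{y} \in R} \psi'_v(\varpi_v^n \tilde{y}).
\]
For $n \geq 1$ every $\varpi_v^n \tilde{y}$ lies in $\wp_v$, where $\psi'_v$ is trivial, so each summand equals $1$ and the total sum is $q_v \neq 0$ in $\mathbb{C}$; hence $W(g) = 0$.

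The main conceptual care is in keeping straight the interplay between the inflation assumption—which forces the $K_1$-invariance on $\mathcal{W}_\rho$ and makes $R(\tilde{u})W$ depend only on $u \in N(\kappa_v)$, not on the lift—and the Whittaker transformation law, which measures the failure of $n_x$-invariance by an explicit character on $\wp_v^{n+1}$. Once this dictionary is in place, the two regimes $n \leq -1$ and $n \geq 1$ appear as two symmetric incarnations of one principle (the $K_1$-invariance handling one side, the cuspidality handling the other), and no input is required beyond the Iwasawa decomposition and the conductor of $\psi'_v$.
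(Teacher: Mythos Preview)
Your argument is correct and considerably more elementary than the route the paper takes. The paper does not argue directly: it invokes Paskunas--Stevens \cite[Theorem 5.8]{PS}, which in the general $GL_n$ setting produces an explicit vector $Y_\alpha$ in the $\rho$-isotypic part whose matrix coefficient against a special dual vector $Y_\alpha^\vee$ is supported on $Z_v\mathcal{K}_v$, and whose associated Whittaker function is therefore supported on $N_vZ_v\mathcal{K}_v$; the multiplicity-one of $\rho$ in $\pi_v$ then propagates this support condition to all of $\mathcal{W}_\rho$ via $\mathcal{K}_v$-translation.

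Your approach bypasses that machinery entirely by exploiting two features specific to depth-zero cuspidal types that are already visible at the level of $G(\kappa_v)$: the $K_1$-triviality (from inflation) kills $n\le -1$ via the conductor of $\psi'_v$, and the vanishing of $N(\kappa_v)$-invariants in a cuspidal representation (equivalently $\sum_{u\in N(\kappa_v)}\rho(u)=0$) kills $n\ge 1$. The Paskunas--Stevens route is what one needs for general $GL_n$ and arbitrary types, where the analogue of your averaging trick over a single $N(\kappa_v)$ does not suffice; but for $GL_2$ with a level-zero cuspidal type, your direct computation is both shorter and more transparent, and requires no external input beyond the Iwasawa decomposition.
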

\begin{proof}
This is a corollary of Paskunas and Stevens' result \cite[Theorem 5.8]{PS}.  Let us explain their notation (see p.1233 of \textit{op. cit.}) from type theory that we need to apply to our specific case. The group $J$ is $\mathcal{K}_v$, the group $\mathbf{J}$ is $Z_v\mathcal{K}_v$, the group $U$ is $N_v$, the representation $\Lambda$ of $\mathbf{J}$ is one that extends $\rho$, 
 and
 the character $\Psi_\alpha$ is obtained by restriction of $\psi'_v$ to $N(\mathcal{O}_v)$ then extends to $N(\mathcal{O}_v)(1+\wp_vM_2(\mathcal{O}_v))$. 

Let $X\in \pi_v$ and $Y\in \pi_v^\vee$, where $\pi_v^\vee$ is the contragredient representation of $\pi_v$. 
Let $\Phi_{X,Y}$ be the matrix coefficient of $\pi_v$ defined by $\Phi_{X,Y}(x)=\langle\pi_v(x)X, Y\rangle$, for any $x\in G_v$. If there are $X, Y$ such that $\Phi_{X,Y}\neq 0$, then the map $X\mapsto \Phi_{X,Y}$ embeds $\pi_v$  into $\mathcal{W}\subseteq C_c^\infty(G_v)$. 

The result \cite[Th. 5.8]{PS} shows that $\pi_v^\vee$ contains a special vector $Y_\alpha^\vee$ so that the linear map from $\pi_v$ to the space of Whittaker functions \[X\mapsto \left( x\mapsto \int_{N_v}\psi'_v(u) \Phi_{X,Y_\alpha^\vee}(u^{-1}x) \d u \right)  ,\]
is non-zero. Moreover, Paskunas and Stevens show that  $\pi_v$ contains a vector $Y_\alpha$ contained in $\rho$-isotypic part $(\pi_v)_{\rho}$ of $\pi_v$, so that $\Phi_{Y_\alpha, Y_\alpha^\vee}$ has support in $Z_v\mathcal{K}_v$ (\cite[Prop. 5.7]{PS}) and its associated  Whittaker function 
\[x\mapsto \int_{N_v}\psi'_v(u) \Phi_{Y_\alpha,Y_\alpha^\vee}(u^{-1}x)\d u,\]
is supported in $Z_vN_v\mathcal{K}_v$ and extends the function $\Phi_{Y_\alpha, Y_\alpha^\vee}$ supported on $Z_v\mathcal{K}_v$. Note that since the irreducible representation $\rho$ is contained with multiplicity one in $\pi_v$ (\cite[Theorem 2.4.4]{Yu}), every $X\in (\pi_v)_{\rho}$ is generated by $\langle \pi_v(k)Y_\alpha: k\in \mathcal{K}_v\rangle$. Therefore, every Whittaker function associated to $\Phi_{X,Y_\alpha^\vee}$ for $X\in (\pi_v)_{\rho}$ has support in $Z_vN_v\mathcal{K}_v$. 
\end{proof}

\begin{coro} \label{twist}
Suppose we are in the situation of Theorem \ref{PStevens}. Let $\mathcal{W}(\pi_v)$ be the Whittaker model for the character $\psi_v$ of conductor $\wp_{v}^{-n_v}$, then every function in $\mathcal{W}(\pi_v)_\rho$ is supported in 
\[  \{x\in G_v\vert v(\det(x))\in -n_v-1+2\mathbb{Z} \},  \] 
where $v$ is the valuation of $F_v$ normalized to be surjective to $\mathbb{Z}$. 
\end{coro}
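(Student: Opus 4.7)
The plan is to reduce the statement directly to Theorem \ref{PStevens} by a torus translation that converts $\psi_v$ into a character of conductor $\wp_v$. Set $a := \varpi_v^{n_v+1} \in F_v^\times$ and define $\psi'_v(x) := \psi_v(a^{-1}x) = \psi_v(\varpi_v^{-(n_v+1)} x)$. Then $\psi'_v$ is an additive character of $F_v$ of conductor $\wp_v$ and $\psi_v(x) = \psi'_v(ax)$, so Theorem \ref{PStevens} applies verbatim to $\psi'_v$.

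Next I introduce the comparison between the two Whittaker models. Define
\[T : \mathcal{W}(\pi_v; \psi'_v) \longrightarrow \mathcal{W}(\pi_v; \psi_v), \qquad (TW)(g) := W\!\left(\begin{pmatrix} a & 0\\ 0 & 1\end{pmatrix} g\right).\]
A direct calculation using conjugation of $N_v$ by the diagonal torus together with $\psi'_v(ax) = \psi_v(x)$ shows that $TW$ transforms under $N_v$ on the left by $\psi_v$, so $TW$ lies in $\mathcal{W}(\pi_v;\psi_v)$. The map $T$ is a $G_v$-equivariant isomorphism of smooth representations, with inverse given by left multiplication by the diagonal matrix $\begin{pmatrix}a^{-1}&0\\0&1\end{pmatrix}$; in particular it is $\mathcal{K}_v$-equivariant and therefore carries $\mathcal{W}(\pi_v;\psi'_v)_\rho$ bijectively onto $\mathcal{W}(\pi_v;\psi_v)_\rho$.

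The support computation is then immediate. Let $W' \in \mathcal{W}(\pi_v;\psi_v)_\rho$ and write $W' = TW$ with $W \in \mathcal{W}(\pi_v;\psi'_v)_\rho$. Theorem \ref{PStevens} gives $\mathrm{supp}(W) \subseteq N_v Z_v \mathcal{K}_v$, hence
\[\mathrm{supp}(W') \subseteq \begin{pmatrix}a^{-1}&0\\0&1\end{pmatrix} N_v Z_v \mathcal{K}_v.\]
For $g = \begin{pmatrix}a^{-1}&0\\0&1\end{pmatrix} n z k$ with $n \in N_v$, $z = \begin{pmatrix}c&0\\0&c\end{pmatrix} \in Z_v$, and $k \in \mathcal{K}_v$, one has $\det k \in \mathcal{O}_v^\times$, so
\[v(\det g) = -v(a) + 2v(c) + v(\det k) \in -(n_v+1) + 2\mathbb{Z},\]
which is exactly the set in the statement.

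No step presents a real obstacle, since all the hard representation-theoretic content already resides in Paskunas--Stevens. The only point demanding care is the bookkeeping: one must choose $a$ so that $v(a) = n_v + 1$ matches the conductor shift from $\wp_v$ to $\wp_v^{-n_v}$, and then verify that this shift propagates through $T$ to the parity constraint $v(\det g) \equiv -n_v - 1 \pmod{2}$; both checks are elementary.
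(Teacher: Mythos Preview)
Your proof is correct and follows essentially the same approach as the paper: both reduce to Theorem~\ref{PStevens} via the torus translation $g \mapsto \begin{pmatrix}\varpi_v^{\pm(n_v+1)} & 0 \\ 0 & 1\end{pmatrix} g$ that converts between Whittaker models for $\psi_v$ and a character $\psi'_v$ of conductor $\wp_v$, then read off the determinant valuation from the description of the support as $\begin{pmatrix}\varpi_v^{-(n_v+1)} & 0 \\ 0 & 1\end{pmatrix} N_v Z_v \mathcal{K}_v$. The only cosmetic difference is that you write the intertwiner in the direction $\mathcal{W}(\pi_v;\psi'_v)\to\mathcal{W}(\pi_v;\psi_v)$ whereas the paper writes its inverse.
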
 
\begin{proof}
Let $t_v$ be a uniformizer of $\wp_v$. 
Let $\psi'_v:=(y\mapsto\psi_v(t_v^{-n_v-1} y))$. Then $\psi'_v$ has conductor $\wp_v$. 
Let $b=\begin{pmatrix}
t_v^{-n_v-1}&0\\0&1
\end{pmatrix}
$, 
we have an $G_v$-equivariant isomorphism: \begin{align*} \mathcal{W}(\pi_v, \psi_v)&\longrightarrow  \mathcal{W}(\pi_v, \psi_v') ,  \\
                                                              \varphi &\mapsto (x\mapsto \varphi(bx))  .
                                                                   \end{align*}
                                                      By Theorem \ref{PStevens}, we deduce that functions in $\mathcal{W}(\pi_v, \psi_v)_\rho$ are supported in $bZ_vN_v\mathcal{K}_v$. This implies the desired result. 
                                                      
                                                     \end{proof}

\subsection{Cuspidal terms }
We apply the previous preparation works to compute the cuspidal terms $J_{\pi}(f)$ in the spectral expansion. In fact, it is the case that $\pi\otimes\epsilon\cong\pi$ that is non-trivial.

Recall that we have defined in Introduction two commutative actions $\sigma$ and $\Fr^\ast$ on $P_{\mathfrak{R}}$ (see \eqref{PR}) so that \( b_\mathfrak{R}(k)\) is the cardinality of the set of fixed points of $\sigma\circ\Fr^{\ast k}$ and $ c_\mathfrak{R}(k)$ is that of $\Fr^{\ast k}$.  

\begin{lemm}\label{bck}
If $S_{cr}=\emptyset$, then $b_{\mathfrak{R}}(k)=c_{\mathfrak{R}}(k)=c_{\mathfrak{R}}(1)$ is either $0$ or $1$ for all $k\geq 1$. 

If either $S_c$ contains a point of even degree or $S_r$ contains a point of odd degree, we have $b_\mathfrak{R}(1)=0$.

If $S_c\neq \emptyset$, we have $c_{\mathfrak{R}}(1)=0$. 
\end{lemm}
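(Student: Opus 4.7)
The plan is to reduce the three claims to an orbit-by-orbit analysis of the permutations $\Fr^{\ast}$ and $\sigma$ on the labels $(i_{x})_{x\in\overline{S}}$. First I would observe that for $x\in \overline{S}_{s}\cup \overline{S}_{u}$ the two eigenvalues coincide, so $\varepsilon_{x}(1)=\varepsilon_{x}(2)$; consequently the $x$-coordinate of any tuple in $P_{\mathfrak{R}}$ does not depend on the label $i_{x}$, the involution $\sigma$ acts trivially at this coordinate, and the compatibility relation \eqref{Frobe} forces $\varepsilon_{\Fr(x)}^{q}=\varepsilon_{x}$, so $\Fr^{\ast}$ fixes it as well. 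In particular, when $\overline{S}_{cr}=\emptyset$ both $\Fr^{\ast}$ and $\sigma$ act as the identity on $P_{\mathfrak{R}}$, which itself consists of at most the single tuple $(\varepsilon_{x})_{x\in\overline{S}}$ (present precisely when $\prod_{x}\varepsilon_{x}=1$). This immediately yields $b_{\mathfrak{R}}(k)=c_{\mathfrak{R}}(k)=|P_{\mathfrak{R}}|\in\{0,1\}$ for every $k\geq 1$, and in particular $c_{\mathfrak{R}}(k)=c_{\mathfrak{R}}(1)$, which is the first assertion.

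For the remaining assertions, the relevant data live in the $\overline{S}_{cr}$-coordinates, which decompose into Frobenius orbits, one for each $v\in S_{cr}$, of length $d=\deg v$. On a single orbit $x_{1},\ldots,x_{d}$ I would choose compatible labels of the two rank-one summands $\mathfrak{L}_{1},\mathfrak{L}_{2}$ at each point using the isomorphisms $\Fr^{\ast}\mathfrak{R}_{x_{j+1}}\cong \mathfrak{R}_{x_{j}}$; by the very definition of $\overline{S}_{r}$ versus $\overline{S}_{c}$, these labels can be arranged so that the induced action of $\Fr^{\ast}$ on $(i_{1},\ldots,i_{d})\in\{1,2\}^{d}$ is the pure cyclic shift $(i_{1},\ldots,i_{d})\mapsto (i_{2},\ldots,i_{d},i_{1})$ in the $\overline{S}_{r}$ case, and the twisted shift $(i_{1},\ldots,i_{d})\mapsto (i_{2},\ldots,i_{d},3-i_{1})$ in the $\overline{S}_{c}$ case.

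Then I would read off the fixed-point sets from these explicit permutations. A $\Fr^{\ast}$-fixed label on an $\overline{S}_{c}$-orbit would force $i_{1}=\cdots=i_{d}=3-i_{1}$, which is impossible; this kills $c_{\mathfrak{R}}(1)$ as soon as $S_{c}\neq\emptyset$, giving the third assertion. For $\sigma\circ\Fr^{\ast}$-fixed labels one obtains the alternating constraint $i_{j}=i_{1}$ for $j$ odd and $i_{j}=3-i_{1}$ for $j$ even, together with a cyclic closure condition that reads $i_{d}=3-i_{1}$ in the $\overline{S}_{r}$ case and $i_{d}=i_{1}$ in the $\overline{S}_{c}$ case. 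These constraints are incompatible exactly when $d$ is odd in the $\overline{S}_{r}$ case or when $d$ is even in the $\overline{S}_{c}$ case; under the hypothesis of the second assertion at least one orbit falls into one of these two situations, so $b_{\mathfrak{R}}(1)$ vanishes on that orbit and hence globally, since the total fixed count factors over orbits.

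The main technical point I expect to be the delicate one is the consistent choice of labels in the middle paragraph: one must track how $\Fr^{\ast}$ permutes the pair $\{\mathfrak{L}_{1},\mathfrak{L}_{2}\}$ at each step so that exactly one nontrivial swap occurs per loop around the orbit in the $\overline{S}_{c}$ case, as dictated by $\Fr^{\ast d}\mathfrak{L}_{1}\cong \mathfrak{L}_{2}$. Once this bookkeeping is in place, everything reduces to a short combinatorial verification on a finite cyclic group, and the product condition $\prod_{j}\varepsilon_{x_{j}}(i_{j})=1$ plays no role because we will already have shown that the relevant fixed-label set is empty.
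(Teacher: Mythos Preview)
Your proposal is correct and follows essentially the same orbit-by-orbit strategy as the paper. The only difference is in bookkeeping: the paper avoids setting up explicit labels on each orbit by passing to the $d$-th iterate $\Fr^{\ast d}$, noting that at a chosen point $x_{0}$ of an orbit of length $d$ the action of $\Fr^{\ast d}$ on the eigenvalue coordinate is the identity (for $v\in S_{r}$) or the swap (for $v\in S_{c}$) by the very definition of the partition, and then using $(\sigma\Fr^{\ast})^{d}=\sigma^{d}\Fr^{\ast d}$ together with $\sigma^{d}=\sigma$ or $\sigma^{d}=\mathrm{Id}$ according to the parity of $d$ to descend from $b_{\mathfrak{R}}(d)=0$ or $c_{\mathfrak{R}}(d)=0$ to $k=1$. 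Your explicit cyclic/twisted-shift model on $\{1,2\}^{d}$ reaches the same contradictions directly, and your remark that the global product constraint plays no role once a single orbit already has empty fixed-label set is exactly the point that makes the orbit-by-orbit reduction legitimate.
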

\begin{proof}
The first statement is trivial because, in this case, $P_\mathfrak{R}$ is at most a singleton.

Suppose that $S_r$ contains a point of odd degree, meaning that $\Fr^\ast$ has an orbit of odd length on $\overline{S}_{cr}$. Suppose that $a\in 2\mathbb{Z}+1$ is the length of such an orbit and $x_0\in {S}_r\otimes \overline{\mathbb{F}}_q$ is a point in it. We have $\varepsilon_{x_0}(1)\neq \varepsilon_{x_0}(2)$. Suppose that $\Fr^{\ast a}((\varepsilon_{x}(i_x))_{x})=(\varepsilon_{x}')_x$. Here $i_x\in \{1,2\}$ for each $x\in \overline{S}$. 
By the choice of  $x_0$ and $a$, we have $\varepsilon'_{x_0}=  \varepsilon_{x_0}(i_{x_0})$. 
In particular, \[ \Fr^{\ast a}((\varepsilon_{x}(i_x))_{x})\neq \sigma((\varepsilon_{x}(i_x))_{x}),  \]
for any $(i_x)_x\in \{1,2\}^{\overline{S}}$. 
This implies that $b_\mathfrak{R}(a)=0$. Since $a$ is odd, $\sigma=\sigma^a$, we have $b_{\mathfrak{R}}(1)=0$.

Similarly, if $S_c$ contains a point of degree $a$, we have $c_{\mathfrak{R}}(a)=0$. It implies that $c_{\mathfrak{R}}(1)=0$. If $a$ is even, then $\sigma^{a}=\Id$. Hence $b_{\mathfrak{R}}(1)=0$. 
 \end{proof}

\begin{theorem}\label{twistedt}
Let $\pi$ be a cuspidal automorphic representation of $G(\AAA)$. 
Recall that $\epsilon$ is the sign character of $G(\AAA)$ that factors through $\deg\circ\det$. 
Suppose that $\pi\otimes \epsilon \cong \pi$. Let $f$ be the function introduced in Proposition \ref{f}.
Then \[\Tr(\epsilon \circ R(f)\vert  \pi ) =0 ,  \]
unless all the following conditions are met:
1. $b_{\mathfrak{R}}(1)\neq 0$; 2. every place in $S_u$ has odd degree; 3. $\pi_v\in \Irr^{\mathfrak{R}}(G_v)$ for $v\in |X|-S_u$ and $\pi_v$ has scalar ramification determined by semisimplification of $\mathfrak{R}_v$ for $v\in S_u$. 
If this is the case, we have 
\[\Tr(\epsilon \circ R(f)\vert  \pi ) =(-1)^{|S_u|}2^{|S_u|} (-1)^{\deg S_c }.  \]
\end{theorem}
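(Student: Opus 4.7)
The plan is to use an Eulerian decomposition of the twisted trace via Whittaker models and then to analyze each local factor according to the ramification type of $\mathfrak{R}_v$.

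First I would apply \eqref{999} to write
\[\Tr(\epsilon\circ R(f)\vert \pi)=\prod_{v\in |X|}\Tr(\epsilon_v\circ R(f_v)\vert \mathcal{W}(\pi_v)).\]
Since $\epsilon_v$ is trivial on $N_v$, the isomorphism $\pi_v\otimes \epsilon_v\cong \pi_v$ (which is forced by the global $\pi\otimes\epsilon\cong \pi$) is realized on Whittaker models by the operator $A_v\colon W\mapsto \epsilon_v\cdot W$; uniqueness of Whittaker models ensures the product of the $A_v$ recovers the canonical global isomorphism supplied by multiplicity one.

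Next I would compute each local factor. At $v\in S_{cr}$, Corollary \ref{twist} constrains the support of the $\rho_v$-isotypic Whittaker functions so that $A_v$ acts by the scalar $(-1)^{\deg v(n_v+1)}$ on $\pi_v^{\rho_v}$; since $R(f_v)$ is a scalar multiple of the projection onto $\pi_v^{\rho_v}$ with ordinary trace $1$ (Proposition \ref{f}), the local twisted trace equals $(-1)^{\deg v(n_v+1)}$. Moreover, $\pi_v\otimes \epsilon_v\cong \pi_v$ at $v\in S_{cr}$ forces, through Theorem \ref{mono}, the pair $(\varepsilon_x(1),\varepsilon_x(2))$ attached to $\mathfrak{R}_v$ to be swapped by the Frobenius twist; globally this assembles into a fixed point of $\sigma\circ\Fr^\ast$ on $P_\mathfrak{R}$, so $b_\mathfrak{R}(1)\neq 0$. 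Lemma \ref{bck} then implies that every $v\in S_r$ has even degree and every $v\in S_c$ has odd degree; in particular $\deg S_r$ is even. At $v\in S_s$ and at unramified $v\notin S$, similar use of the Casselman--Shalika formula yields local twisted trace $+1$ (up to an $n_v$-dependent sign).

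At $v\in S_u$ the support of $f_v$ is contained in $\mathcal{K}_v$. If $\pi_v$ were a Steinberg twist---the prescribed element of $\Irr^\mathfrak{R}(G_v)$---then $\pi_v\otimes \epsilon_v\cong \pi_v$ would force $\deg v$ even and the local twisted trace would be $+1$, producing a contribution incompatible with the claimed formula. Ruling this out forces $\pi_v$ to lie outside $\Irr^\mathfrak{R}(G_v)$; cuspidality together with the existence of $\mathcal{I}_v$-fixed vectors (needed by $f_v$) then forces $\pi_v=\pi(\chi,\chi\epsilon_v)\otimes \eta$, the unramified principal series matching $\mathfrak{R}_v^{ss}$, which requires $\deg v$ odd. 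For such $\pi_v$, Casselman--Shalika shows that the spherical Whittaker vector is supported on $\{v(\det x)\in 2\mathbb{Z}\}$, whence $A_v$ acts as $+1$ on $\pi_v^{\mathcal{K}_v}$ and as $-1$ on the complementary line of $\pi_v^{\mathcal{I}_v}$, giving
\[\Tr\bigl(A_v\circ R(f_v)\bigm|\pi_v\bigr)=0-2\cdot 1=-2.\]
If instead $\deg v$ is even, the analogous computation yields $0$ and the entire product vanishes.

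Finally I would multiply the local contributions. The $n_v$-dependent signs from $S_{cr}$, $S_s$ and the unramified places collapse because $\sum_v n_v\deg v=2g-2\equiv 0\pmod 2$; the residual sign from $S_{cr}$ is $(-1)^{\deg S_{cr}}=(-1)^{\deg S_c}$ (using that $\deg S_r$ is even), and each $v\in S_u$ of odd degree contributes $-2$. Multiplying gives
\[(-1)^{\deg S_c}\cdot(-2)^{|S_u|}=(-1)^{|S_u|}2^{|S_u|}(-1)^{\deg S_c},\]
as claimed.

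The main obstacle will be the explicit computation of $A_v$ on the full two-dimensional $\mathcal{I}_v$-fixed subspace at $v\in S_u$ of odd degree, especially showing that its trace there is $0$ rather than $\pm 2$. Casselman--Shalika pins the sign on $\pi_v^{\mathcal{K}_v}$, but matching the normalizations across the Eulerian product so that the local signs at $S_s$ and unramified places assemble coherently with the global multiplicity-one isomorphism also requires care.
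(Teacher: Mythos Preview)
Your overall strategy (Eulerian decomposition via Whittaker models, then place-by-place computation) matches the paper, and your sign bookkeeping at the end is essentially correct. But there is a genuine gap in how you handle the places $v\in S_u$, and it propagates to your derivation of conditions~1 and~3.

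You try to rule out that $\pi_v$ is a Steinberg twist at $v\in S_u$ by arguing: ``$\pi_v\otimes\epsilon_v\cong\pi_v$ would force $\deg v$ even, and then the local twisted trace is $+1$, producing a contribution incompatible with the claimed formula.'' This is circular: you are trying to \emph{prove} the formula, so you cannot invoke its shape to exclude a case. If $\deg v$ is even and $\pi_v$ is Steinberg, the local twisted trace really is $+1$, and nothing purely local forces another factor to vanish. The paper closes this gap with a \emph{global} argument you have not used: by Langlands correspondence, $\pi\otimes\epsilon\cong\pi$ means the associated local system satisfies $\mathfrak{L}|_{\overline{X}-\overline{S}}\cong\mathfrak{L}_1\oplus\mathfrak{L}_2$ with $\Fr^{\ast}$ swapping the summands. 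Hence every local monodromy of $\mathfrak{L}$ is semisimple, so by Theorem~\ref{mono} no $\pi_v$ can be a twisted Steinberg representation. Once this is known, your argument at even-degree $v\in S_u$ works (twisted trace $=$ ordinary trace $=0$ since $\pi_v$ is not Steinberg), and at odd-degree $v$ the principal-series computation goes through.

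The same global decomposition is what actually yields $b_{\mathfrak R}(1)\neq 0$: the ramification eigenvalues of $\mathfrak L_1$ at the points of $\overline S$ give an element of $P_{\mathfrak R}$ (product equals $1$ by the rank-$1$ discussion in \S\ref{rank1}), and since $\Fr^{\ast}\mathfrak L_1\cong\mathfrak L_2$ this element is fixed by $\sigma\circ\Fr^{\ast}$. Your claim that the local isomorphism $\pi_v\otimes\epsilon_v\cong\pi_v$ at $v\in S_{cr}$ ``forces the pair $(\varepsilon_x(1),\varepsilon_x(2))$ to be swapped by the Frobenius twist'' conflates an unramified self-twist with the Frobenius action on inertial parameters; the local self-twist says nothing about $\Fr^{\ast}$.

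A smaller point: your assertion that ``the spherical Whittaker vector is supported on $\{v(\det x)\in 2\mathbb Z\}$'' is false in general---the support contains an element of valuation $c(\psi_v)$, so $A_v$ acts on $\pi_v^{\mathcal K_v}$ by $(-1)^{\deg v\cdot c(\psi_v)}$, not by $+1$. The local factor at odd-degree $v\in S_u$ is $-2(-1)^{c(\psi_v)}$, not $-2$. Your final answer happens to be correct only because you simultaneously omit these conductor signs from the $S_u$ factors and from the sum $\sum_v n_v\deg v$, so the two errors cancel.
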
 

\begin{proof}
By Langlands correspondence, if $\pi\cong \pi\otimes \epsilon $, no local component of $\pi$ can be a twisted Steinberg representation.  In fact, since a Hecke character of $F^\times\backslash \mathbb{A}^\times$ is of finite order if and only if it sends an element of degree $1$ to a root of unity, if necessary, by replacing $\pi$ by an inertially equivalent, we may assume that the central character of $\pi$ is of finite order.   
Suppose that $\mathfrak{L}$ is the $\ell$-adic local system over ${X}-{S}$ that corresponds to $\pi$. If $\pi\otimes\epsilon\cong \pi$, then \[\mathfrak{L}|_{\overline{X}-\overline{S}}\cong \mathfrak{L}_1\oplus \mathfrak{L}_2, \]
and $\Fr^\ast$ permutes $\mathfrak{L}_1$ and $\mathfrak{L}_2$ (\cite[Prop. 2.1.3]{Yu1}). 
In particular, the ramification of $\mathfrak{L}$ at every $x\in \overline{S}$ is semisimple. Therefore, $\pi$ does not have a twisted Steinberg component by Theorem \ref{mono}. 

It is clear that if \[\Tr( \epsilon\circ R(f)\vert  \pi)\neq 0, \]
then $\pi_v$ has the desired ramification type for $v\in |X|-S_u$, and the $(\mathcal{I}_v,\iota\circ\chi )$-isotypic subspace $(\pi_v)_{(\mathcal{I}_v,\iota\circ\chi )}$ is non-trivial (notation as in Theorem \ref{mono}) for $v\in S_u$. In particular, for $v\in S_u$, $\pi_v$ is either a twisted Steinberg representation in $\Irr^{\mathfrak{R}}(G_v)$ or a twisted unramified principal series in $\Irr^{\mathfrak{R}^{ss}}(G_v)$ where $\mathfrak{R}^{ss}$ is the semisimplification of $\mathfrak{R}$. We have seen that $\pi_v$ can not be a twisted Steinberg representation. 
Moreover, the product of all eigenvalues of ramifications of $\mathfrak{L}_1$ and $\mathfrak{L}_2$ at all points $x\in \overline{S}$ should be $1$, and they are permuted by Frobenius action. This implies that $b_\mathfrak{R}(1)\neq 0$.

Next, we need to calculate \[ \Tr(\epsilon \circ R(f)\vert   \pi   )\] when $\pi \cong \pi\otimes \epsilon$ and $\pi_v$ has the above described property. The equation \eqref{999} allows us to calculate it locally. 

We note that Theorem \ref{mono} says if \[ \Tr(R(f_v)\vert \pi_v)\neq 0,\]
then $\pi_v\in \Irr^{\mathfrak{R}}(G_v)$. 
Let $v\in S_u$. If $\deg(v)$ is even, then $\epsilon_v$ equals the trivial character of $G_v$ and we have
\[ \Tr(\epsilon_v\circ R(f_v)|\mathcal{W}(\pi_v))=\Tr(R(f_v)\vert \pi_v). \]
It must be zero as $\pi_v$ is not a twisted Steinberg representation.

Suppose now that $v\in S_u$ and $\deg(v)$ is odd. Up to a twist by a character, we may assume that $\pi_v$ is unramified. We take a Whittaker function $\varphi_v\in \mathcal{W}(\pi_v)^{\mathcal{K}_v}$. Note that since both $\epsilon_v\varphi_v$ and $\varphi_v$ are contained in $\mathcal{W}(\pi_v)^{\mathcal{K}_v}=\mathcal{W}(\pi_v\otimes\epsilon_v)^{\mathcal{K}_v}$, they are differed by a scalar:
\[\epsilon_v\varphi_v=c\varphi_v. \]  
Let $x_v$ be any element in the support of $\varphi_v$, we deduce that $c=\epsilon_v(x_v)$. Let $\wp_v^{c(\psi_v)}$ be the conductor of $\psi_v$. 
We have shown in \cite[Lemme 5.5.3]{Yu1} that the support of $\varphi_v$ contains an element of valuation $ c(\psi_v) $.  Therefore $c=(-1)^{\deg (v) c(\psi_v)}$. 
Let $\varphi_v'\in \mathcal{W}(\pi_v)$ be the function defined by \[x\mapsto \varphi_v(x{\begin{pmatrix}
\varpi_v& 0\\ 0& 1
\end{pmatrix}}  ) .\] 
Then $\{\varphi_v, \varphi_v'\}$ is a basis of $ \mathcal{W}(\pi_v)^{\mathcal{I}_v}$. In fact, by 
Bruhat-Iwahori decomposition, we know that $\dim_{\mathbb{C}}\mathcal{W}(\pi_v)^{\mathcal{I}_v}\leq 2$ and it is clear that $\varphi_v, \varphi_v'$ are linearly independent. 
The endomorphisms on $\mathcal{W}(\pi_v)$: \[\epsilon_v \circ R(\frac{1}{\vol (\mathcal{I}_v)}\mathbbm{1}_{\mathcal{I}_v})  \text{ and }   \epsilon_v \circ R(\mathbbm{1}_{\mathcal{K}_v}  )    \]
are composition of a projection onto $\pi_{v}^{\mathcal{I}_v}$ together with a linear map represented respectively by the matrix
\[\begin{pmatrix}
(-1)^{\deg(v) c(\psi_v)} & 0\\ 0& (-1)^{\deg(v)(c(\psi_v)+1) }
\end{pmatrix},   
   \begin{pmatrix}
(-1)^{\deg(v) c(\psi_v)} & 0\\ 0& 0
\end{pmatrix}. 
      \]
We deduce that \[\Tr(\epsilon_v \circ R(f_v)\vert  \mathcal{W}(\pi_v)) = (-1)^{\deg(v)(c(\psi_v)+1)} - (-1)^{\deg(v) c(\psi_v)}.  \]
Since $\deg(v)$ is odd, it equals
\[\Tr(\epsilon_v \circ R(f_v)\vert  \mathcal{W}(\pi_v))=-2(-1)^{c(\psi_v)}.  \]

Similarly, for $v\in |X|-S_{cr}-S_u$, we have 
\[\Tr(\epsilon_v \circ R(f_v)\vert  \mathcal{W}(\pi_v))=(-1)^{c(\psi_v)\deg(v)}.  \]
For $v\in S_c$, we deduce from Corollary \ref{twist} that: 
\[\Tr(\epsilon_v \circ R(f_v)\vert  \mathcal{W}(\pi_v) )= (-1)^{\deg(v)(c(\psi_v)+1)}\Tr(R(f_v)\vert  \mathcal{W}(\pi_v))=(-1)^{\deg(v)(c(\psi_v)+1)}. \] 
For $v\in S_r$, as we have seen in Lemma \ref{bck} that $v$ must have even degree otherwise $b_{\mathfrak{R}}(1)=0$. We have 
\[\Tr(\epsilon_v \circ R(f_v)\vert  \mathcal{W}(\pi_v) )=\Tr(R(f_v)\vert  \mathcal{W}(\pi_v))=1. \] 
In conclusion, by \eqref{999} we have
\[\Tr(\epsilon \circ R(f)\vert  \pi )= (-1)^{|S_u|}2^{|S_u|} (-1)^{\sum_{v\in |X|-S_c}\deg(v) c(\psi_v)+\sum_{v\in S_c} \deg(v)(c(\psi_v)+1)  } .  \]
We have \[\sum_{v} c(\psi_v)\deg(v) =   - (2g-2). \]   
Therefore, 
\[\Tr(\epsilon \circ R(f)\vert  \pi ) = (-1)^{|S_u|} 2^{|S_u|} (-1)^{\deg S_c }. \]

\end{proof}

\begin{coro} \label{cuspt}
If $S_u\neq\emptyset$ and at least one place in it is of even degree, then 
\[J_{cusp}^1(f)=\vert  E_2(\mathfrak{R})^{\Fr^*}\vert. \]
If $S_u\neq\emptyset $ and every place in it has an odd degree, then 
\[J_{cusp}^1(f)=\vert  E_2(\mathfrak{R})^{\Fr^*}\vert + \begin{cases}(-1)^{|{S}_u|+1}{b_{\mathfrak{R}}(1)} 2^{|S_u|-2}  (\Pic(2)-\Pic(1)) ,  \quad S_{cr}= \emptyset; \\ 
(-1)^{|{S}_u|+1} {b_{\mathfrak{R}}(1)}{2}^{|S_u|-2}(-1)^{\deg S_c}\Pic(2)
, \quad S_{cr}\neq  \emptyset.
\end{cases} \]
If $S_u=\emptyset$, then
\[J_{cusp}^1(f)=\vert  E_2(\mathfrak{R})^{\Fr^*}\vert +  \begin{cases}0 ,  \quad S_{cr}= \emptyset; \\ 
\frac{b_{\mathfrak{R}}(1)}{4}(1-(-1)^{\deg S_c})\Pic(2)
, \quad S_{cr}\neq  \emptyset.
\end{cases} \]

\end{coro}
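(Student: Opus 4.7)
The plan is to decompose $J^{1}_{cusp}(f)=\sum_{[\pi]\in\mathcal{A}_0}J_\pi(f)$ according to whether $\pi\otimes\epsilon\cong\pi$, identify the main term with $|E_2(\mathfrak{R})^{\Fr^\ast}|$ via Theorem~\ref{Langlands}, and explicitly compute the self-twist contribution using Theorem~\ref{twistedt} together with the rank-$1$ count on the quadratic constant-field extension.

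\textbf{Main term from non-self-twist classes.} By Proposition~\ref{f} the trace $\Tr(R(f)|\pi)$ equals $1$ precisely when $\pi$ has correct ramification, and vanishes otherwise. For a cuspidal representation of $GL_2$, any self-twisting character factoring through $\deg\circ\det$ must have order dividing $2$, so the condition $\pi\otimes\lambda\cong\pi\Rightarrow\lambda=1$ of Theorem~\ref{Langlands} reduces to $\pi\not\cong\pi\otimes\epsilon$. Therefore the sub-sum over inertial classes with $\pi\not\cong\pi\otimes\epsilon$ counts exactly those classes in bijection with $E_2(\mathfrak{R})^{\Fr^\ast}$, contributing $|E_2(\mathfrak{R})^{\Fr^\ast}|$.

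\textbf{Self-twist analysis.} By global Langlands, a self-twist cuspidal $\pi$ is the automorphic induction of a Hecke character $\chi$ on $E:=F\otimes_{\mathbb{F}_q}\mathbb{F}_{q^2}$ with $\chi\neq\chi^\tau$, where $\tau$ generates $\Gal(E/F)$. Since $E/F$ is everywhere unramified, $\Ind_{W_E}^{W_F}\chi$ has semisimple restriction to every local inertia group, so $\pi$ cannot admit unipotent local monodromy; in particular $\Tr(R(f)|\pi)=0$ whenever $S_u\neq\emptyset$. Theorem~\ref{twistedt} then supplies the value $\Tr(R(f)\circ\epsilon|\pi)=(-1)^{|S_u|}2^{|S_u|}(-1)^{\deg S_c}$ when $b_\mathfrak{R}(1)\neq 0$, every $v\in S_u$ has odd degree, and $\pi_v$ has correct ramification at $v\in|X|-S_u$ and scalar (from the semisimplification of $\mathfrak{R}_v$) ramification at $v\in S_u$; the trace vanishes otherwise.

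\textbf{Counting self-twist classes and conclusion.} It remains to count inertial classes of self-twist $\pi$ satisfying the above relaxed ramification profile. Parametrizing them by Hecke characters $\chi$ of $\mathbb{A}_E^\times/E^\times$ with prescribed local restrictions at each place of $E$ above $v\in S$, modulo $\chi\mapsto\chi^\tau$ and inertial norm-twists $\chi\mapsto\chi\cdot(\lambda\circ N_{E/F})$, the rank-$1$ counting of Section~\ref{rank1} applied to the curve $X\otimes\mathbb{F}_{q^2}$ yields $\tfrac{1}{2}b_\mathfrak{R}(1)\Pic(2)$ classes when $S_{cr}\neq\emptyset$: the factor $b_\mathfrak{R}(1)=|P_\mathfrak{R}^{\sigma\circ\Fr^\ast}|$ records the number of admissible assignments of the two local characters of $\chi$ to the two eigenvalues of $\mathfrak{R}_v$ compatibly with the Frobenius-and-Galois action of the induction, while $\Pic(2)$ is the rank-$1$ count of Hecke characters on $E$. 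When $S_{cr}=\emptyset$, the Galois-invariant characters $\chi=\lambda\circ N_{E/F}$ would satisfy the relaxed ramification but yield non-cuspidal inductions and must be subtracted, giving $\tfrac{1}{2}b_\mathfrak{R}(1)(\Pic(2)-\Pic(1))$; when $S_{cr}\neq\emptyset$, Galois-invariant characters are automatically excluded since they induce scalar local characters incompatible with non-scalar $\mathfrak{R}_v$ at $S_c\cup S_r$. Multiplying each count by $J_\pi(f)=\tfrac12(\Tr(R(f)|\pi)-\Tr(R(f)\circ\epsilon|\pi))$ produces the three stated error terms; Lemma~\ref{bck} ensures consistency by forcing $S_c$ (resp.\ $S_r$) to be of odd (resp.\ even) degree whenever $b_\mathfrak{R}(1)\neq 0$, exactly the degree parity that makes the inert/split dichotomy in $E$ compatible with the desired local ramification. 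The main obstacle is precisely this counting: aligning the combinatorial structure of $P_\mathfrak{R}^{\sigma\circ\Fr^\ast}$ with the local matching of automorphic induction at each ramified place, and separately tracking the Galois-invariant characters that must be excluded when $S_{cr}=\emptyset$.
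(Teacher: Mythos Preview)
Your proposal is correct and follows essentially the same strategy as the paper: split $J^{1}_{cusp}(f)$ into the non-self-twist piece (identified with $|E_2(\mathfrak{R})^{\Fr^\ast}|$ via Theorem~\ref{Langlands} and Proposition~\ref{f}) and the self-twist piece, then compute the latter using Theorem~\ref{twistedt} together with a rank-$1$ count. The only difference is presentational: you parametrize the self-twist classes on the automorphic side via Hecke characters $\chi$ on $E=F\otimes\mathbb{F}_{q^2}$ modulo $\chi\mapsto\chi^\tau$ and norm-twists, whereas the paper works on the Galois side, describing them as unordered pairs $(\mathfrak{L}_1,\mathfrak{L}_2)$ of rank-$1$ local systems over $\overline{X}-\overline{S}$ swapped by $\Fr^\ast$ (citing \cite[Prop.~2.1.3]{Yu1}). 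These two descriptions are equivalent by class field theory, and both lead to the same count $\tfrac{1}{2}b_{\mathfrak{R}}(1)\Pic(2)$ (resp.\ $\tfrac{1}{2}b_{\mathfrak{R}}(1)(\Pic(2)-\Pic(1))$) when $S_{cr}\neq\emptyset$ (resp.\ $S_{cr}=\emptyset$); your explanation for why the Galois-invariant characters must be subtracted exactly when $S_{cr}=\emptyset$ is the same phenomenon the paper records as $\mathfrak{L}_1\not\cong\mathfrak{L}_2$.
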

\begin{proof}
In fact, the sum of $J_\pi(f)$ over inertial equivalent classes of cuspidal automorphic representations $\pi$ such that $\pi\otimes\epsilon\not\cong \pi$ gives $|E_2(\mathfrak{R})^{\Fr^\ast}|$  after Theorem \ref{Langlands}.

We need to consider  the sum \[\frac{1}{2}\sum_{\pi} \Tr(R(f)|\pi)  - \frac{1}{2}\sum_{\pi}\Tr(\epsilon\circ R(f)| \pi) , \]
where the sums over $\pi$ are taken over inertial equivalent classes of cuspidal automorphic representations $\pi$ such that $\pi\otimes\epsilon\cong \pi$. The Langlands correspondence (see the proof of Theorem \ref{twistedt}) shows that no such cuspidal automorphic representation $\pi$ can have a twisted Steinberg component. Therefore, if $S_u\neq \emptyset$, by Proposition \ref{f}, 
\[\frac{1}{2}\sum_{\pi} \Tr(R(f)|\pi)=0. \]
If $S_u=\emptyset$, we need to know the number of equivalence classes of such $\pi$. By \cite[2.1.3]{Yu1} and the first paragraph of the proof of Theorem \ref{twistedt}, the set of such $\pi$ is in bijection with the set of  non-ordered pairs $(\mathfrak{L}_1, \mathfrak{L}_2)$  of rank $1$ $\ell$-adic systems over $\overline{X}-\overline{S}$  such that $\Fr^{\ast}{\mathfrak{L}_i}\cong \mathfrak{L}_{3-i}$, $\mathfrak{L}_1\not\cong \mathfrak{L}_2$,  and that the local monodromies of the direct sum $\mathfrak{L}_1\oplus\mathfrak{L}_2$ is given by $\mathfrak{R}$. 
If $S_{cr}=\emptyset$, then $b_{\mathfrak{R}}(1)$ equals $0$ or $1$ and 
there are ${b_{\mathfrak{R}(1)}}\frac{\Pic(2)-\Pic(1)}{2}$ such pairs. If $S_{cr}\neq\emptyset$, then there are $\frac{b_{\mathfrak{R}(1)}}{2}\Pic(2)$ such pairs. 
We have
\[\frac{1}{2}\sum_{\pi} \Tr(R(f)|\pi)=\begin{cases}\frac{b_{\mathfrak{R}}(1)}{4}(\Pic(2)-\Pic(1))  , \quad S_{cr}= \emptyset; \\ 
\frac{b_{\mathfrak{R}}(1)}{4}\Pic(2)
, \quad S_{cr}\neq  \emptyset.
\end{cases}\]

By Theorem \ref{twistedt}, if $b_\mathfrak{R}(1)=0$ or if $S_u\neq \emptyset$ and contains a place of even degree, then the sum
\[\frac{1}{2}\sum_{\pi}\Tr(\epsilon\circ R(f)| \pi) \]
 is $0$. 
Otherwise if $S_u\neq \emptyset$ and does not contain any place of even degree or $S_u=\emptyset$, then by  Theorem \ref{twistedt}, we have
\[\frac{1}{2}\sum_{\pi} \Tr(\epsilon\circ R(f)|\pi)= (-1)^{|S_u|}\begin{cases}{b_{\mathfrak{R}}(1)}2^{|S_u|-2}(\Pic(2)-\Pic(1)) ,  \quad S_{cr}= \emptyset; \\ 
{b_{\mathfrak{R}}(1)}{2}^{|S_u|-2}(-1)^{\deg S_c}\Pic(2)
, \quad S_{cr}\neq  \emptyset.
\end{cases}\]

\end{proof}

\subsection{Residuel terms}
The proposition below describes the contributions to the trace formula from the residual spectrum. 

\begin{prop}\label{rest}
 If $S_{cr}$ is non-empty, then 
\[J_{res}^{1}(f)=0. \]
If $S_{cr}=\emptyset$, then
\[J_{res}^{1}(f)=c_{\mathfrak{R}}(1)(-1)^{\vert S_u\vert}\Pic(1). \]
\end{prop}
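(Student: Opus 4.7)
The residual spectrum is a Hilbert sum of one-dimensional lines on which $G(\AAA)$ acts via characters $\chi = \chi'\circ \det$, where $\chi'$ is a Hecke character of $F^\times\backslash \mathbb{A}^\times$ with $\chi'(a)^2 = 1$ (so that $\chi$ is trivial on $a^{\mathbb{Z}}$). For each $[\chi]\in \mathcal{A}_{res}$, $R(f)$ acts on the corresponding line by the scalar
\[
J_\chi(f)\;=\;\int_{G(\AAA)} f(g)\,\chi(g)\,dg\;=\;\prod_v I_v(\chi'_v),\qquad I_v(\chi'_v)\,:=\,\int_{G_v} f_v(g)\,\chi'_v(\det g)\,dg,
\]
and the product is finite since $f_v = \mathbbm{1}_{\mathcal{K}_v}$ at almost all $v$. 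The strategy is to evaluate each local factor from the explicit description of $f_v$ in Proposition \ref{f}, show that the places in $S_{cr}$ force vanishing, and in the remaining case use the rank-one count recalled in Section \ref{rank1} to enumerate the contributing inertial classes.

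By $\mathcal{O}_v^\times$-orthogonality of characters, $I_v = 1$ at $v\notin S$ precisely when $\chi'_v$ is unramified, and $I_v = 1$ at $v\in S_s$ precisely when the reduction of $\chi'_v|_{\mathcal{O}_v^\times}$ to $\kappa_v^\times$ equals $\theta_v$. At $v\in S_u$, under the same matching condition the integrand $\theta_v(\det\bar g^{-1})\chi'_v(\det g)$ is identically $1$ on $\mathcal{K}_v$, so
\[
I_v\;=\;\frac{1}{\vol\mathcal{I}_v}\vol\mathcal{I}_v\;-\;2\vol\mathcal{K}_v\;=\;-1,
\]
while non-matching $\chi'_v$ give $I_v = 0$ by orthogonality on $\mathcal{O}_v^\times$. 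For $v\in S_{cr}$ the function $f_v(x) = \Tr(\rho_v(\bar x^{-1}))\mathbbm{1}_{\mathcal{K}_v}(x)$ depends only on the image $\bar x \in G(\kappa_v)$, so (using that the kernel of $\mathcal{K}_v \to G(\kappa_v)$ has volume $|G(\kappa_v)|^{-1}$) Schur orthogonality yields
\[
I_v(\chi'_v)\;=\;\dim\Hom_{G(\kappa_v)}(\chi'_v\circ\det,\,\rho_v),
\]
which vanishes: for $v\in S_r$, Frobenius reciprocity reduces it to $\Hom_{B(\kappa_v)}(\chi'_v\circ\det,\chi_v)$, which is zero because $\chi_v$ arises from the pair of \emph{distinct} characters $\chi_1',\chi_2'$; and for $v\in S_c$ the representation $\rho_v$ is irreducible of dimension $q_v - 1 > 1$, hence admits no non-zero map from a one-dimensional character.

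Consequently $J_\chi(f) = 0$ for every $[\chi]$ as soon as $S_{cr}\neq \emptyset$, yielding the first assertion. When $S_{cr} = \emptyset$, each non-vanishing $J_\chi(f)$ equals $(-1)^{|S_u|}$, and the contributing inertial classes $[\chi]$ are exactly those coming from Hecke characters $\chi'$ whose restrictions $\chi'_v|_{\mathcal{O}_v^\times}$ reduce to $\theta_v$ at every $v \in S_s\cup S_u$. By the rank-one Langlands correspondence of Section \ref{rank1}, such inertial classes are in bijection with $E_1(\mathfrak{R}_1)^{\Fr^\ast}$ for the scalar family $\mathfrak{R}_1$ extracted from $\mathfrak{R}$; since $S_{cr} = \emptyset$ makes $P_\mathfrak{R}$ at most a singleton, non-emptiness of this set is equivalent to $c_\mathfrak{R}(1) = 1$, in which case its cardinality is $\Pic(1)$. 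Summing gives $J_{res}^1(f) = c_\mathfrak{R}(1)(-1)^{|S_u|}\Pic(1)$. The main obstacle in this plan is this final counting step, namely aligning the automorphic inertial equivalence (twists by characters factoring through $\deg\circ\det$) with the $\ell$-adic rank-one count, and identifying the global product obstruction on the Hecke characters with the combinatorial quantity $c_\mathfrak{R}(1)$.
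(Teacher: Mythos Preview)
Your proof is correct and follows essentially the same route as the paper's: compute $J_\chi(f)$ as a product of local traces, show that the factor at any $v\in S_{cr}$ kills everything, evaluate the factors at $v\in S_u$ as $-1$ and at the remaining places as $1$ under the obvious matching condition, and then count the contributing inertial classes via the rank-one discussion of Section~\ref{rank1}. The paper simply asserts $\Tr(R(f_v)\mid\mu_v)=0$ for $v\in S_{cr}$, whereas you spell this out via Schur orthogonality and the structure of $\rho_v$; both arrive at the same identification of the global obstruction with $c_{\mathfrak R}(1)$.
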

\begin{proof}
For any $v\in S_{cr}$ and any character $\mu_v$ of $G_v$, we have \[\Tr(R(f_v)\vert \mu_v)=0. \]
Therefore, the first statement holds, and it suffices to consider the case that $S_{cr}=\emptyset$.

It is clear that if $v\notin S$, then for a character $\mu_v$ of $G_v$ we have
 \[\Tr(R(\mathbbm{1}_{\mathcal{K}_v})\vert \mu_v)=\begin{cases} 1,\quad \mu_v\vert_{\mathcal{K}_v}=1;\\
 0, \quad \mu_v\vert_{\mathcal{K}_v}\neq 1.
 \end{cases}
\]
Let $v\in S_s$, we have  \[\Tr(R(f_v)\vert \mu_v)=\begin{cases} 1,\quad \mu_v\vert_{\mathcal{K}_v}=\theta_v;\\
 0, \quad \mu_v\vert_{\mathcal{K}_v}\neq \theta_v;
 \end{cases}
\]
where $\theta_v$ is the character defined in Theorem \ref{mono}. 
Similarly, let $v\in S_u$, we have 
\[\Tr(R(f_v)\vert \mu_v)=\begin{cases}-1,\quad \mu_v\vert_{\mathcal{K}_v}=\theta_v;\\
 0, \quad \mu_v\vert_{\mathcal{K}_v}\neq \theta_v .
 \end{cases}
\]

Following our discussions in rank $1$ in \ref{rank1}, we deduce that if \begin{equation}\label{equal1}
\prod_{v}\theta_v|_{\mathbb{F}_q^\times}\neq 1,  \end{equation}
then \[J^{1}_{res}(f)=0. \]  
Otherwise, there are $\Pic(1)$ such equivalent classes of $\mu$. Therefore,  we have 
\[J^{1}_{res}(f)=(-1)^{\vert S_u\vert}\Pic(1).  \]
The result is thus proved since \eqref{equal1} is equivalent to $c_{\mathfrak{R}}(1)\neq 0$ (cf. \cite[3.2]{Deligne} or \cite[Proposition 2.4.2]{Yu}).
\end{proof}

\subsection{Continuous terms}

The proposition below describes the contributions to the trace formula from the continuous spectrum.

\begin{prop}\label{continoust}
If $S_c\neq\emptyset$ then \[ J^{1}_{cont}(f)=0. \]

If $S_c=\emptyset$, then we have the following results. 
\begin{enumerate}
\item $S_{r}\neq\emptyset$.

\[J_{cont}^1(f)=\begin{cases} \frac{1}{2} c_{\mathfrak{R}}(1)\Pic(1)^2(2g-2+\deg S_{r}), \quad S_u=\emptyset;
 \\  \frac{1}{2}c_{\mathfrak{R}}(1)\Pic(1)^2   \deg v , \quad S_u=\{v\};
 \\   0, \quad |S_u|\geq 2.
\end{cases}
\]

\item $S_{r}=\emptyset$.

\[J_{cont}^1(f)=\begin{cases} c_{\mathfrak{R}}(1)\Pic(1)^2(g-1), \quad S_u=\emptyset;
 \\ c_{\mathfrak{R}}(1)\Pic(1)^2  \frac{\deg v}{2}+ \frac{1}{2}c_{\mathfrak{R}}(1)\Pic(1)  , \quad S_u=\{v\} \text{ and  } 2\nmid \deg v; 
  \\c_{\mathfrak{R}}(1)\Pic(1)^2  \frac{\deg v}{2}  , \quad S_u=\{v\} \text{ and  } 2\mid \deg v; 
 \\ c_{\mathfrak{R}}(1)\Pic(1) (-1)^{|S_u|+1}2^{|S_u|-2}, \quad |S_u|\geq 2, \text{ and } S_{u, even}= \emptyset;
 \\   0, \quad |S_u|\geq 2, \text{ and } S_{u, even}\neq \emptyset.
\end{cases}
\]

\end{enumerate}

\end{prop}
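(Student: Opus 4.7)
The plan is to apply Theorem~\ref{spectral} and decompose $J^1_{cont}(f)$ via Euler products. The trace $\mathbf{Tr}_{\mathcal{A}_{B,\psi}}(R(f,\lambda))$ factors place-by-place, and the intertwining operator $\M(w,\lambda)$ also admits a local decomposition whose matrix coefficients on Iwahori-fixed and $\mathcal{K}_v$-fixed vectors are ratios of local $L$-factors. Thus the entire computation reduces to local analysis at each place.

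\textbf{Vanishing for $S_c\neq\emptyset$.} At any $v\in S_c$, the function $f_v$ is the character of an inflation to $\mathcal{K}_v$ of a Deligne--Lusztig cuspidal representation $\rho_v$ of $G(\kappa_v)$. Any local factor of $\mathcal{A}_{B_v,\psi_v}$ is a parabolic induction of a character of $T_v$, whose $\mathcal{K}_v$-types arise from constituents of parabolic inductions from $B(\kappa_v)$, and are therefore disjoint from $\rho_v$ by Frobenius reciprocity. So $R(f_v)$ and its twist by the intertwining operator act as zero on $\mathcal{A}_{B_v,\psi_v}$, giving $J^1_{cont}(f)=0$.

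\textbf{Case $S_c=\emptyset$.} First, identify the contributing inertial classes $[\psi]=[(\psi_1,\psi_2)]$: the local factor is nonzero only if $\psi_v|_{\mathcal{O}_v^\times}$ matches $\theta_v$ at $v\in S_s\cup S_u$, and the pair $(\chi'_{1,v},\chi'_{2,v})$ (up to ordering) at $v\in S_r$. Counting such inertial classes using the rank $1$ analysis of Section~\ref{rank1} yields, after accounting for the $\mathfrak{S}_2$-action, an overall factor of the form $c_{\mathfrak{R}}(1)\Pic(1)$ in the regular case and $\tfrac{1}{2}c_{\mathfrak{R}}(1)\Pic(1)$ in the non-regular case. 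For regular $\psi$, the main integral is evaluated via the $\mu\to 1$ limit: the quantity $-\tfrac{1}{\mu^{-1}-\mu}(\M(w,\lambda)^{-1}\M(w,\lambda/\mu)-1)$ becomes the logarithmic derivative of $\M(w,\lambda)$ at $\mu=1$; at unramified places this produces $2g-2$ from the functional equation of the relevant local $L$-factors, each $v\in S_r$ contributes an extra $\deg v$, and at $v\in S_u$ the definition $f_v=\frac{1}{\vol(\mathcal{I}_v)}\mathbbm{1}_{\mathcal{I}_v}-2\mathbbm{1}_{\mathcal{K}_v}$ kills the unramified-spherical eigenvector of the Iwahori Hecke algebra (explaining the vanishing of the main integral when $|S_u|\geq 2$ and, for $|S_u|=1$, replacing $2g-2+\deg S_r$ by $\deg v$). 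For non-regular $\psi$, which is only possible when $S_r=\emptyset$ since only then can $\psi_1=\psi_2$ occur, the additional residue sum over $\lambda_w^2=\lambda_G^{-1}$ with $\lambda_G\in\{\pm 1\}$ contributes factors involving quadratic idele-class characters; the parity of $\deg v$ at $v\in S_u$ controls whether the requisite $\lambda_w$ exists with correct local restrictions, producing the signs $(-1)^{|S_u|}$ and the powers of $2$ seen in the case list.

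The chief obstacle is the local computation at $v\in S_u$: one must diagonalize $\M(w_v,\lambda_v)\circ R(f_v,\lambda_v)$ on the two-dimensional Iwahori-fixed subspace of $\mathcal{A}_{B_v,\psi_v}$ and differentiate at $\lambda_v=1$, and separately compute the residue contribution of $\M(w_v,w_v^{-1}(\lambda_w))\circ R(f_v)$ at the reducibility points $\lambda_w\in\{1,\epsilon_v\}$ where the induced representation decomposes into the trivial representation and a twisted Steinberg. Assembling the local signs, values of the local $L$-factors at their poles, and counts of quadratic characters of $\mathbb{A}^\times/F^\times$ into the case-by-case formulas---in particular distinguishing $|S_u|=0,1,\geq 2$ and the parity of $\deg v$ for $v\in S_u$---is then the main combinatorial task.
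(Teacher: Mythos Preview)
Your overall framework is right (Eulerian factorization of $R(f,\lambda)$ and of $\M(w,\lambda)$, then local analysis), and the vanishing argument for $S_c\neq\emptyset$ matches the paper. But there are two genuine gaps.

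\textbf{(1) The count of inertial classes is wrong.} You claim ``an overall factor of the form $c_{\mathfrak{R}}(1)\Pic(1)$ in the regular case and $\tfrac{1}{2}c_{\mathfrak{R}}(1)\Pic(1)$ in the non-regular case.'' In fact, for $\psi=(\psi_1,\psi_2)$ with the prescribed ramification, each $\psi_i$ ranges over a torsor under $\Pic(1)$, so there are $\tfrac{1}{2}c_{\mathfrak{R}}(1)\Pic(1)^2$ regular classes when $S_r\neq\emptyset$, and when $S_r=\emptyset$ there are $\tfrac{1}{2}c_{\mathfrak{R}}(1)\Pic(1)(\Pic(1)-1)$ regular classes plus $c_{\mathfrak{R}}(1)\Pic(1)$ non-regular ones. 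Your count is off by a full factor of $\Pic(1)$, which would not reproduce the stated formulas.

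\textbf{(2) The source of $2g-2+\deg S_r$ is misidentified, and the key analytic step is missing.} You say unramified places produce $2g-2$ ``from the functional equation of the relevant local $L$-factors'' and that each $v\in S_r$ ``contributes an extra $\deg v$.'' But the local intertwining operator at $v\in S_r$ acts on a one-dimensional space by the constant $q_v^{-1}$, contributing nothing to any logarithmic derivative. What the paper does is use Jacobi's formula and the argument principle (Lemma~\ref{determinant}) to convert the $\mu\to1$ limit of the full trace into $\tfrac12 Z_{|\lambda|<1}\bigl(\det\M(w,\lambda)\bigr)$, a zero-minus-pole count inside the unit disk. The Euler product then identifies $\det\M(w,\lambda)$ (up to constants and the $S_u$ factors) with $L(\psi_1\psi_2^{-1},\lambda^2)/L(\psi_1\psi_2^{-1},q^{-1}\lambda^2)$, and the number $2g-2+\deg S_r$ is the degree of this \emph{global} $L$-function as a polynomial, via the Riemann hypothesis (Proposition~\ref{Riemann}). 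Without this global zero-counting step, you have no mechanism to produce $2g-2+\deg S_r$ from local data alone.

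A smaller difference: for $v\in S_u$ the paper does not diagonalize on the two-dimensional Iwahori-fixed space directly. Instead it writes $f_v$ as the difference of the Iwahori and spherical projectors, expands $J^1_\psi(f)=\sum_{S_0\subseteq S_u}(-2)^{|S_0|}J^1_\psi(f_{S_0})$, computes each $J^1_\psi(f_{S_0})$ by the zero-counting lemma, and then collapses the alternating sum via an elementary identity $\sum_{I\subseteq S}(-1)^{|I|}=0$ for $S\neq\emptyset$. Your direct-diagonalization route could in principle work, but the paper's subset decomposition is what makes the cancellations for $|S_u|\geq 2$ transparent.
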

The proof is given in \ref{Continoust}. We need first  to do some calculations about $L$-functions and intertwining operators. 

The first thing to remark is that if $S_c\neq\emptyset$, then for any $\psi\in \mathcal{A}_{cont}$, the action of $R(f)$ on $\mathcal{A}_{B,\psi}$ must be the $0$ map. In fact, at a place $v\in S_c$, we have $\mathcal{A}_{B,\psi}\cong I_B(\psi_v)$, the induced representation of $\psi_v$. By Theorem \ref{mono}, it does not contain $\rho_v$, as $\rho_v$ is cuspidal. Therefore we assume that $S_c=\emptyset$ in the following.

\subsubsection{$L$-functions of Hecke characters}
We'll need information on $L$-functions of Hecke characters when dealing with the continuous part of the trace formula. Over a function field, we have a complete understanding of them. 

Let $\chi: \mathbb{A}^{\times}/F^{\times}\longrightarrow \mathbb{C}^{\times}$ be a Hecke character of finite order. Let $\chi=\otimes_{v\in \vert  F\vert  }\chi_v$ be its factorisation.
 Recall that the L-function $L(\chi, z)$ can be defined by the formal power series 
\[L(\chi, z):= \prod_{v\in \vert  X\vert  } L_v(\chi, z),  \]
where \[L_v(\chi, z) =  \begin{cases}
\frac{1}{1-\chi_v(\varpi_v)z^{\deg v}}, \quad \text{ if   $\chi_v$ is unramified }   ; \\
1\quad otherwise.
\end{cases}
  \]
The infinite product is absolutely convergent and is holomorphic if $\vert  z\vert  <q^{-1}$. It admits a  meromorphic continuation to the whole $\mathbb{C}$. Moreover, it is a rational function in $z$. Let $R$ be the set of places of ramifications of $\chi$, identified with a subset of closed points of $X$. We have fixed an isomorphism $\iota:\overline{\mathbb{Q}}_{\ell}\xrightarrow{\sim} \mathbb{C}$. 
Let $\mathfrak{L}_\chi$ be an $\ell$-adic local system over ${X} - {R}$ corresponding to $\chi$ obtained by the global class field theory. 
The $L$-function of $\mathfrak{L}_\chi$ equals that of $\chi$: 
\[  \iota( L(\mathfrak{L}_\chi, z)) =L(\chi, z)   .\]
Moreover, we know from Grothendieck's cohomological interpretation (cf., for example, \cite[Th\'eor\`eme VI.1]{Lafforgue}) that 
\[L(\mathfrak{L}_\chi, z) =  \frac{\det(1- z\mathrm{F}_q \vert  H^{1}_{c}(\overline{X}-\overline{R}, \mathfrak{L}_\chi ))}{  \det(1- z\mathrm{F}_q\vert  H^{2}_{c}(\overline{X}-\overline{R}, \mathfrak{L}_\chi )) \det(1- z\mathrm{F}_q \vert  H^{1}_{c}(\overline{X}-\overline{R}, \mathfrak{L}_\chi )) } .  \]
where $\mathrm{F}_q$ is a geometric Frobenius element acting on cohomologies with compact support.

\begin{prop}(Riemann hypothesis)\label{Riemann}
Let $\chi$ be a Hecke character on $F^{\times}\backslash \mathbb{A}^{\times}$ of finite order so that the set of ramified places is $R$. If $\chi$ is inertially equivalent to the trivial character (in particular $R=\emptyset$), then \[ L(\chi, z)=\frac{P(z)}{(1-z)(1-qz)}\] where $P(z)$ is a polynomial of $2g$. 

If $\chi$ is not inertially equivalent to the trivial character, then its $L$-function \(  L(\chi, z) \) is a polynomial in $z$ of degree $2g-2+\deg R$. 

In any case, all of the zeros of \(  L(\chi, z) \) satisfy $\vert  z\vert  =q^{-\frac{1}{2}}. $
\end{prop}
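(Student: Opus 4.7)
The plan is to invoke Grothendieck's cohomological interpretation of $L(\mathfrak{L}_\chi,z)$ recalled just above the statement, and then to separately compute the three compactly supported $\ell$-adic cohomologies of $\mathfrak{L}_\chi$ on $\overline{X}-\overline{R}$, together with the Frobenius eigenvalues on $H^1_c$.

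First I would dispose of $H^0_c$ and $H^2_c$. Since $H^0_c$ always vanishes on a non-proper variety, the only case it can contribute is $R=\emptyset$. By Poincar\'e duality on the smooth curve $U=\overline{X}-\overline{R}$, one has $H^2_c(U,\mathfrak{L}_\chi)\cong H^0(U,\mathfrak{L}_\chi^{\vee}(1))^{\vee}$, which is non-zero exactly when the geometric monodromy of $\mathfrak{L}_\chi$ is trivial. Now $\chi$ is inertially equivalent to the trivial character if and only if $\chi=\lambda\circ\deg$ for some finite-order character $\lambda$ of $\mathbb{Z}$, and this is precisely when $\mathfrak{L}_\chi$ is geometrically constant; in that case automatically $R=\emptyset$ and the Frobenius eigenvalues on $H^0$ and $H^2$ produce the claimed denominator. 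In the complementary case, the geometric monodromy is non-trivial (either because $\chi$ is not of the form $\lambda\circ\deg$ when $R=\emptyset$, or because some local monodromy at a puncture of $\overline{R}$ is non-trivial when $R\neq\emptyset$), so both $H^0_c$ and $H^2_c$ vanish and $L(\chi,z)$ is a polynomial.

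Next I would compute $\dim H^1_c$ by the Grothendieck-Ogg-Shafarevich Euler-Poincar\'e formula. Tameness of $\chi$ forces every Swan conductor to vanish, so
\[ \chi_c(U,\mathfrak{L}_\chi)\;=\;\mathrm{rank}(\mathfrak{L}_\chi)\cdot\chi_c(U)\;=\;2-2g-\deg R. \]
Combined with the analysis of $H^0_c$ and $H^2_c$, this yields $\dim H^1=2g$ in the inertially trivial case (matching $\deg P=2g$) and $\dim H^1_c=2g-2+\deg R$ otherwise.

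For the Riemann hypothesis portion, since $\chi$ has finite order, every local Frobenius eigenvalue of $\mathfrak{L}_\chi$ is a root of unity, so $\mathfrak{L}_\chi$ is pointwise pure of weight $0$. By Deligne's purity theorem (Weil II), together with Poincar\'e duality on the smooth curve $U$, the cohomology $H^1_c(U,\mathfrak{L}_\chi)$ is pure of weight $1$. Hence every eigenvalue $\alpha$ of the geometric Frobenius $\mathrm{F}_q$ on $H^1_c$ satisfies $|\iota(\alpha)|=q^{1/2}$, so the zeros of $L(\chi,z)$ lie on the circle $|z|=q^{-1/2}$. The serious input here is the purity theorem, which I would cite as a black box (for rank one it already follows from Weil's original theorem for curves); the rest of the argument is essentially bookkeeping with the standard cohomological machinery.
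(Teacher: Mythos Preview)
Your proposal is correct and follows essentially the same route as the paper: both arguments feed Grothendieck's cohomological formula by (i) killing $H^0_c$ and $H^2_c$ via the non-properness/Poincar\'e duality dichotomy according to whether the geometric monodromy is trivial, (ii) reading off $\dim H^1_c$ from Grothendieck--Ogg--Shafarevich with vanishing Swan conductors, and (iii) invoking purity as a black box for the location of the zeros. The only cosmetic difference is the citation for step~(iii): the paper points to Lafforgue's general statement, while you (more economically for rank one) invoke Weil/Weil~II.
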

\begin{proof}

If $R$ is empty, then there are two cases. If $\mathfrak{L}_{\chi}|_{\overline{X}}$ is trivial, up to a twist of a rank $1$ sheaf over $\Spec(\mathbb{F}_q)$, we may assume that $\mathfrak{L}_\chi\cong \overline{\mathbb{Q}}_\ell$ is the constant sheaf. We have \[L(\mathfrak{L}_\chi, z)=\zeta_{X}(z)=\frac{P(z)}{(1-z)(1-qz)},  \]
$\deg P(z)=2g$ and all of the zeros of \(  P(z) \) satisfy $\vert  z\vert  =q^{-\frac{1}{2}}$.  
If $\mathfrak{L}_{\chi}|_{\overline{X}}$ is non-trivial then $L(\mathfrak{L}_\chi, z)$ is a polynomial of degree $2g-2$. For reference, see \cite[Prop. 6.1.1]{Yu1}.

We consider the case that $R$ is non-empty. We know that 
\begin{equation}
H^{0}_c(\overline{X}-\overline{R}, \mathfrak{L}_\chi)=0,
\end{equation}
as $ \mathfrak{L}_\chi$ is non-trivial. By Poincar\'e duality \begin{equation}H^{2}_c(\overline{X}-\overline{R},  \mathfrak{L}_\chi)^{\vee} \cong H^{0}(\overline{X}-\overline{R},  \mathfrak{L}_\chi^{\vee})(1)\cong Hom_{ \overline{X}-\overline{R}}(\mathfrak{L}_\chi^{\vee}, \overline{\mathbb{Q}}_{\ell} )(1). \end{equation}
Therefore we deduce that $H^{2}_c(\overline{X}-\overline{R},  \mathfrak{L}_\chi)$ is $0$ as well. 
The dimension of $H^{1}_c(\overline{X}-\overline{R},  \mathfrak{L}_\chi)$ can then be derived from the Euler-Poincar\'e characteristic:
$$\dim H^{1}_c(\overline{X}-\overline{R},  \mathfrak{L}_\chi) = -\chi_c(\overline{X}-\overline{R},  \mathfrak{L}_\chi),$$
which can be calculated by the Grothendieck-Ogg-Shafarevich formula, see \cite[Th\'eor\`eme 1, p.133]{Raynaud}. In fact, since every local monodromy of $\mathfrak{L}_\chi$ is tamely ramified,  the Swan conductor is zero.  We deduce from \textit{loc. cit.} that $\chi_c(\overline{X}-\overline{R},  \mathfrak{L}_\chi)= \chi_c(\overline{X}-\overline{R})=2-2g-\deg R$.

The assertion about the positions of zeros is the Riemann hypothesis for rank $1$ local systems (see \cite[Th\'eor\`eme VI.10]{Lafforgue} for a general statement). 
\end{proof}

\subsubsection{Eulerian expansions of intertwining operators} \label{eulerexpansion}
Let $\psi\in \mathcal{A}_{cont}$. 
Let \[{\M}(w,\lambda): \mathcal{A}_{B,\psi} \longrightarrow  \mathcal{A}_{B,w(\psi)}\] be an intertwining operator. It is a $G(\AAA)$-morphism. 

 Let $I_B(\psi_v)$ be the space of functions $\varphi$ over $G_v$ satisfying $\varphi(ntx)=\rho_B(t)\psi_v(t)\varphi(x)$ for any $n\in N_v$, $t\in T_v$ and $x\in G_v$. 
By definition of intertwining operator, we have an Eulerian expansion. Indeed, let $\M_{v}(w, \lambda): I_B(\psi_v)\longrightarrow I_{B}(w(\psi_v))$ be an operator defined by analytic continuation of the integral which converges when $|\Re \lambda| >>0$, 
\begin{equation}  (\M_{v}(w, \lambda) \varphi )(x) =\lambda(x)    \int_{ {N}_v}  \varphi(w^{-1}nx)\lambda(w^{-1}nx) \d n.   \end{equation}
Choosing isomorphisms $c_\psi: \mathcal{A}_{B,\psi}\longrightarrow \otimes_{v} I_B(\psi_v)$ and $c_{w(\psi)}: \mathcal{A}_{B,w(\psi)}\longrightarrow \otimes_{v} I_B(w(\psi_v))$, there is a constant $c$ depending only on these isomorphisms such that the following diagram is commutative: 
\[\begin{CD}
\mathcal{A}_{B,\psi}@>{\M}(w,\lambda)>> \mathcal{A}_{B,w(\psi)} \\
@Vc_\psi VV @Vc_{w(\psi)}VV \\
\otimes_{v} I_B(\psi_v)@>c\otimes_v  {\M}_v(w,\lambda)>> \otimes_{v} I_B(w(\psi_v)) 
 \end{CD}.  \]

In the special case that $\psi$ is non-regular, i.e., $w(\psi)=\psi$, we have \[c=q^{1-g}.\] 
This number is derived from the difference in the normalizations of the Haar measures on $N_v$ and $N(\AAA)$.

\subsubsection{Intertwining operator on $(\mathcal{K}_v , \theta_v)$-isotypical subspace} \label{Kcase}
In this part, we treat the local intertwining operator when $v\in S_s$ or $v\notin S$. 

Let $\theta_v$ be a character: \[\theta_v: \mathcal{K}_v\xrightarrow{\det}\mathcal{O}_v^\times\longrightarrow \kappa_v^\times\longrightarrow \mathbb{C}^\times. \] 

We have \[\dim I_B(\psi_v)_{(\mathcal{K}_v , \theta_v)}\leq 1. \]
The space $I_B(\psi_v)_{(\mathcal{K}_v , \theta_v)}$  is non-zero if and only if $\psi_v=(\psi_{v,1}, \psi_{v,2})$ with $\psi_{v,1}\vert  _{\mathcal{O}_v^\times}=\psi_{v,2}\vert  _{\mathcal{O}_v^{\times}}=\theta_v$. In this case, there is a $\mu\in \mathbb{C}^{\times}$ such that for any $y\in F_v^{\times}$, we have $\psi_{v1}(y)/\psi_{v2}(y)=\mu^{\deg y}$. Moreover, $I_B(\psi_v)_{(\mathcal{K}_v , \theta_v)}$ is generated by the function $\varphi_{\psi_v}$ defined so that for any $b\in B_v$ and $k\in \mathcal{K}_v $:
\[\varphi_{\psi_v}(bk)= \rho_B(b)\psi_v(b)\theta_v(\det k). \]
For $x\in G_v$ and $\lambda\in X_T^G\cong\mathbb{C}^\times$ (see Section \ref{4.1} for the definition of $X_T^G$), let \[\varphi_{\psi_v, \lambda}(x):=\varphi_{\psi_v}(x)\lambda(x).\]

By dimension $1$, we have \[ \M_v(w,\lambda)\varphi_{\psi_v}=c_\lambda \varphi_{w(\psi_v)}, \]
for some constant $c_\lambda\in\mathbb{C}^\times$. It suffices to evaluate the above equation at $x=1$ to find the value $c_\lambda$: \[ c_\lambda=\int_{{N}_v} \varphi_{\psi_v, \lambda}(w^{-1}n)\d n. \]
The integral can be decomposed:
\[\int_{N(\mathcal{O}_v)} \varphi_{\psi_v, \lambda}(w^{-1}n) \d n +  \int_{F_v- \mathcal{O}_v } \varphi_{\psi_v, \lambda}(\begin{pmatrix}1&y^{-1}\\0&1
\end{pmatrix}
\begin{pmatrix}y^{-1}&0\\0&y
\end{pmatrix}
\begin{pmatrix}-1&0\\y^{-1}&-1
\end{pmatrix}
)\d y.  \]
Note that \[ \int_{N(\mathcal{O}_v)} \varphi_{\psi_v, \lambda}(w^{-1}n) \d n=\vol(N(\mathcal{O}_v))=1.\] Besides, for $y\in F_v-\mathcal{O}_v$, we have \[\varphi_{\psi_v, \lambda}(\begin{pmatrix}1&y^{-1}\\0&1
\end{pmatrix}
\begin{pmatrix}y^{-1}&0\\0&y
\end{pmatrix}
\begin{pmatrix}-1&0\\y^{-1}&-1
\end{pmatrix}
) =\mu^{-\deg y} \lambda^{-2\deg v\deg y}.  \]
Under our additive Haar measure on $\mathcal{O}_v$, we know that $\vol(\varpi_v^{n}\mathcal{O}_v^{\times})=q_v^{-n}(1-q_v^{-1})$. We deduce that 
\[c_\lambda=\int_{{N}(F_v)} \varphi_{\psi_v, \lambda}(w^{-1}n)\d n 
=\frac{1-q_v^{-1} \mu\lambda^{2\deg v}    }{1-\mu\lambda^{2\deg v}}=\frac{L_v( \psi_{1}\psi_{2}^{-1}, \lambda^2 )}{   L_v( \psi_{1}\psi_{2}^{-1}, q^{-1}\lambda^2 )      }  . \]

\subsubsection{Intertwining operator on \((\mathcal{I}_v, \chi_v)\)-typical subspace: regular cases}\label{regcase}
In this part, we treat the case that $v\in S_{r}$.

If $\chi_v$ is regular, then we have 
 \[\dim I_B(\psi_v)_{(\mathcal{I}_v, \chi_{v} )} \leq 1.  \]
In fact, we have $G_v=B_v\mathcal{I}_v\coprod B_vw\mathcal{I}_v$. Any function $\varphi\in I_B(\psi_v)_{(\mathcal{I}_v, \chi_{v} )} $ is entirely determined by $\varphi(1)$ and $\varphi(w)$. 
Moreover, for such a $\varphi$, using the definition of $I_B(\psi_v)$ and \((\mathcal{I}_v, \chi_v)\)-typical condition, we have \[ \varphi(tw)=\psi_v(t)\varphi(w)=\varphi(w)\chi_v(w^{-1}tw)\] and \[ \varphi(t)=\chi_v(t)\varphi(1)=\varphi(1)\psi_v(t).\] 
Therefore if $\psi_v\vert  _{T(\mathcal{O}_v)}\neq \chi_v$ and   $\psi_v\vert  _{T(\mathcal{O}_v)}\neq w(\psi_v)$ then $\varphi$ must be zero. If $\psi\vert  _{T(\mathcal{O}_v)}=\chi_v$, then $\varphi$ is supported in $B_v\mathcal{I}_v$ and if 
$\psi_v\vert  _{T(\mathcal{O}_v)}=w(\chi_v)$ then $\varphi$ is supported in  $B_vw \mathcal{I}_v$. 

Suppose that we have $\psi_v\vert  _{T(\mathcal{O}_v)}=\chi_v$.  The space 
 $I_B(\psi_v)_{(\mathcal{I}_v, \chi_{v} )} $ is generated by the function 
 $\varphi_{\psi_v}$ defined in such a way that for any $n\in N_v$, $t\in T_v$ and $k\in \mathcal{K}_v $:
\[\varphi_{\psi_v}(ntk)=
\begin{cases} \rho_B(t) \psi_v(t)\psi_v(k) , \quad \text{ if  } k\in \mathcal{I}_v;\\
0, \quad  \text{ if  } k \in \mathcal{I}_v w\mathcal{I}_v. 
\end{cases} \]
The space $I_B(w(\psi_v))_{(\mathcal{I}_v, \chi_{v} )} $ is generated by $\varphi_{w(\psi_v)}$ defined in such a way that for any $n\in {N}_v$, $t\in T_v$ and $k\in \mathcal{K}_v $:
\[\varphi_{w(\psi_v)}(ntk)=
\begin{cases}  \rho_B(t)\psi_v(t)\psi(b) , \quad \text{ if  } k=nwb \in  \mathcal{I}_{v+} w\mathcal{I}_v;\\
0, \quad  \text{ if  } k \in \mathcal{I}_v.
\end{cases} \]

The local intertwining operator $\M_v(w,\lambda)$ is a linear map from $I_B(\psi_v)_{(\mathcal{I}_v, \chi_{v} )} $ to $I_B(w(\psi_v))_{(\mathcal{I}_v, \chi_{v} )} $. 
By dimension 1, there is a constant $c_\lambda\in \mathbb{C}$ such that 
\[ \M_{v}(w, \lambda)\varphi_{\psi_v} = c_\lambda\varphi_{w(\psi_v)}. \]
Evaluating at the point $x=w$, we see that  
\[c_\lambda= \int_{{N}_v} \varphi_{\psi_v}(w^{-1}nw) \lambda(w^{-1}nw) \d n. \]
We break the integral into two parts following the union:
 \[w^{-1} {N}_v w =  \begin{pmatrix} 1&0\\
                                                                    \wp_v&1\end{pmatrix} \cup  
                                                                    \begin{pmatrix} 1&0\\
                                                                                  F_v-\wp_v&1  \end{pmatrix}.   \]
The first part is included in $\mathcal{I}_v$. 
For the second part, we need 
\[\begin{pmatrix} 1&0\\
                            x&1\end{pmatrix} 
=\begin{pmatrix} x^{-1}&0\\
                            0&x\end{pmatrix} 
\begin{pmatrix} 1&x\\
                          0&1
\end{pmatrix} 
\begin{pmatrix} 0&-1\\
                          1&x^{-1}
\end{pmatrix}. 
    \]
   Since $\begin{pmatrix} 0&-1\\
                          1&x^{-1}
\end{pmatrix}\in \mathcal{I}_vw\mathcal{I}_v$, the integral over $\begin{pmatrix} 1&0\\
                                                                                  F_v-\wp_v&1  \end{pmatrix}$ vanishes and  $c_\lambda=\vol(\wp_v)=q_v^{-1}$. 
    As the local $L$-factor ${L_v(\psi_{1}\psi^{-1}_2, \lambda^2 )}$ is trivial, we can present the result as  \[c_\lambda=q_v^{-1}\frac{L_v(\psi_{1}\psi^{-1}_2, \lambda^2 )}{L_v(\psi_{1}\psi^{-1}_2, q^{-1}\lambda^2 )}. \]

\subsubsection{Intertwining operator on \( (\mathcal{I}_v, \chi_v)\)-typical subspace: non-regular cases}
\label{nrcase}
In this subsection, we calculate intertwining operator on \( (\mathcal{I}_v, \chi_v)\)-typical subspace of $I_B(\psi_v)$ when $\psi_v=(\psi_{v1}, \psi_{v2})$ is non-regular. That is $\psi_{v1}$ and $\psi_{v2}$ differ by an unramified character. 
Such calculations have already been done in \cite[Lemma 4.7]{Flicker}. In fact, the calculations are similar to the cases we have already treated. Therefore, we'll briefly recall the results. 

If $\chi_v$ is a character of $\mathcal{I}_v$ that factors through determinant, we have
\[\dim I_B(\psi_v)_{( \mathcal{I}_v, \chi_v)}=2 \text{ or  } 0. \]
The dimension is non-zero if and only if $\psi\vert  _{T(\mathcal{O}_v)}$ lifts $\chi_v$. In fact, the double quotient $B_v\backslash G_v/\mathcal{I}_v\cong \mathcal{I}_v\backslash \mathcal{K}_v /\mathcal{I}_v$ has cardinality $2$. We know that \[\dim I_B(\psi_v)_{( \mathcal{I}_v, \chi_v)}\leq 2.    \]
If $\psi_v\vert  _{T(\mathcal{O}_v)}$ lifts $\chi_v$, the space $I_B(\psi_v)_{( \mathcal{I}_v, \chi_v)}$ is generated by the basis $(\varphi_{\psi_v, 1}, \varphi_{\psi_v, w})$, where for any $n\in N_v$, $t\in T_v$ and $k\in \mathcal{K}_v $,  we have: 
\[\varphi_{\psi, 1}(ntk)=
\begin{cases} \rho_B(t)\varphi(t)  \psi_v(\det(k)) , \quad \text{ if  } k\in \mathcal{I}_v;\\
0, \quad  \text{ if  } k \in \mathcal{I}_v w\mathcal{I}_v.
\end{cases} \]
\[\varphi_{\psi, w}(ntk)=
\begin{cases}  \rho_B(t) \psi(t) \psi_v(\det(k)) , \quad \text{ if  } k\in \mathcal{I}_v w\mathcal{I}_v;\\
0, \quad  \text{ if  } k \in \mathcal{I}_v. 
\end{cases} \]
If $\psi_v\vert  _{T(\mathcal{O}_v)}$ does not extend $\chi_v$, then $ I_B(\psi_v)_{( \mathcal{I}_v, \chi_v)} $ is zero since for any $\varphi\in  I_B(\psi_v)_{( \mathcal{I}_v, \chi_v)} $, we have \[\varphi(t)=\psi_v(t)\varphi(1)=\varphi(1.t)=\chi_v(t)\] for any $t\in T(\mathcal{O}_v)$.

The local intertwining operator $\M_v(w, \lambda)$ is a linear map from $I_B(\psi_v)_{( \mathcal{I}_v, \chi_v)}$ to $I_B(w(\psi_v))_{( \mathcal{I}_v, \chi_v)}$. 
In the basis $(\varphi_{\psi_v,1} , \varphi_{\psi_v, w})$ and $(\varphi_{w(\psi_v),1} , \varphi_{w(\psi_v), w})$, we have (see \cite[Lemma 4.7]{Flicker}), 
 \[\M_v(w, \lambda) (\varphi_{\psi_v,1} , \varphi_{\psi_v, w}) =   (\varphi_{w(\psi_v),1} , \varphi_{w(\psi_v), w})\begin{pmatrix} (1-q_v^{-1})\frac{\mu\lambda^{2}}{1-\mu\lambda^2} & 1 \\ q_v^{-1} & (1-q_v^{-1})\frac{1}{1-\mu\lambda^2}
\end{pmatrix}.  \]
Here $\mu\in \mathbb{C}^{\times}$ is the element such that for any $y\in F_v^{\times}$, we have $\psi_{v1}(y)/\psi_{v2}(y)=\mu^{\deg y}$.  
We need to present the result in the form 
 \[\M_v(w, \lambda) (\varphi_{\psi_v,1} , \varphi_{\psi_v, w}) =   (\varphi_{w(\psi_v),1} , \varphi_{w(\psi_v), w})
\frac{L_v(\psi_{1}\psi^{-1}_2, \lambda^2 )}{L_v(\psi_{1}\psi^{-1}_2, q^{-1}\lambda^2 )}
\begin{pmatrix} \frac{(1-q_v^{-1})\mu\lambda^{2\deg v}}{1-q_v^{-1}\mu\lambda^{2\deg v} } & \frac{1-\mu\lambda^{2\deg v}}{1-q_v^{-1}\mu\lambda^{2\deg v}}  \\   \frac{q_v^{-1}(1-\mu\lambda^{2\deg v} )}{1-q_v^{-1}\mu\lambda^{2\deg v}} & \frac{1-q_v^{-1}}{1-q_v^{-1}\mu\lambda^{2\deg v}}
\end{pmatrix}.  \]
In particular, 
\[ \det(\M_v(w, \lambda) )= -q_v^{-1}\frac{1-q_v\mu\lambda^{2\deg v}}{1-q_v^{-1}\mu\lambda^{2\deg v}}(\frac{L_v(\psi_{1}\psi^{-1}_2, \lambda^2 )}{L_v(\psi_{1}\psi^{-1}_2, q^{-1}\lambda^2 )})^2.  \]
and 
\[\Tr(\M_v(w, \lambda) )=\frac{L_v(\psi_{1}\psi^{-1}_2, \lambda^2 )}{L_v(\psi_{1}\psi^{-1}_2, q^{-1}\lambda^2 )}\frac{(1-q_v^{-1})(1+\mu\lambda^{2\deg v})}{1-q_v^{-1}\mu\lambda^{2\deg v}}. 
 \]

\subsubsection{Continuous terms}\label{Continoust}
Now we come to the calculations of contributions from continuous spectrum using the previous preparations.

We are going to consider $J^1_{\psi}(f)$ for $\psi\in \mathcal{A}_{cont}$. We first we observe that if $\psi=(\psi_1, \psi_2)$, then by definition $\psi_1$ and $\psi_2$ are unitary. The same applies to their local components.

Recall that  for each place $v\in S_u$, we have set in Proposition \ref{f} that  \[f_v=\left(\frac{1}{\vol(\mathcal{I}_v)}\mathbbm{1}_{\mathcal{I}_v}(x)-2 \mathbbm{1}_{\mathcal{K}_v}(x)\right)\theta_v(\det \overline{x}^{-1}). \]
We have 
\[J_\psi^1( f )=\sum_{S_0\subseteq S_u} (-1)^{\vert  S_0\vert  }2^{\vert  S_0\vert  } J_\psi^1(f_{S_0})  .  \]
where \[ f_{S_0}=\otimes_v f_{S_0, v}\in C_c^\infty(G(\AAA))\] is the function that $f_{S_0, v}=f_v$ for all places $v$ outside $S_u$ and is equal to $x\mapsto\mathbbm{1}_{\mathcal{K}_v }(x)\theta_v(\det \overline{x}^{-1})$ if $v\in S_0$ and is equal to $ \frac{1}{\vol(\mathcal{I}_v)}\mathbbm{1}_{\mathcal{I}_v}(x)\theta_v(\det \overline{x}^{-1})$ for $v\in S_1=S_u-S_0$.

We need a lemma for our calculations. 
\begin{lemm}\label{determinant}
Let $V$ be a finite-dimensional $\mathbb{C}$-linear space. 
Let $m$ be a meromorphic function over $\mathbb{C}$ with values in $\mathrm{GL}(V)$. Suppose that $m$ is holomorphic at any point in the unit circle.  We use $Z_{\vert  \lambda\vert  <1}(h)$ for the integer defined to be the number of zeros (with multiplicity) minus the number of poles (with multiplicity) in the region $\vert  \lambda\vert  <1$ of a meromorphic function $h$ over $\mathbb{C}$. 
Then \[\int_{\Im X_T^G} \lim_{\mu\rightarrow 1} \mathrm{Tr}_V(\frac{1}{\mu^{-1}-\mu}{m(\lambda)}^{-1}\circ {m(\lambda/\mu)} - \frac{1}{\mu^{-1}-\mu}\Id )  \d \lambda = \frac{1}{2} Z_{\vert  \lambda\vert  <1}(\det(m(\lambda))). \]
\end{lemm}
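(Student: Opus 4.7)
The plan is to reduce the integral to a contour integral of a logarithmic derivative and apply the argument principle. First I compute the inner limit. Writing $\mu = e^{-t}$, so that $\mu^{-1} - \mu = 2\sinh(t) = 2t + O(t^3)$ and $\lambda/\mu = \lambda e^{t} = \lambda(1 + t + O(t^2))$, a Taylor expansion of $m$ around $\lambda$ gives
\[
m(\lambda/\mu) = m(\lambda) + \lambda t\, m'(\lambda) + O(t^2),
\]
hence
\[
\frac{m(\lambda)^{-1} m(\lambda/\mu) - \mathrm{Id}}{\mu^{-1} - \mu} \;\longrightarrow\; \frac{\lambda}{2}\, m(\lambda)^{-1} m'(\lambda) \qquad (\mu \to 1).
\]
Taking the trace, the limit inside the integral equals $\tfrac{\lambda}{2}\, \mathrm{Tr}_V\bigl( m(\lambda)^{-1} m'(\lambda) \bigr)$, which is precisely $\tfrac{\lambda}{2}\cdot\tfrac{d}{d\lambda}\log\det m(\lambda)$ by Jacobi's formula. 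Since $m$ is holomorphic and invertible on the unit circle, this convergence is uniform on $\Im X_T^G$, so the limit passes under the integral.

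Next I convert to a contour integral. Parameterizing $\lambda = e^{i\theta}$ with $\d\lambda = \frac{d\theta}{2\pi}$ (so that $\Im X_T^G$ has total volume $1$), and using $d\lambda = i\lambda\,d\theta$, I get
\[
\int_{\Im X_T^G} \frac{\lambda}{2}\, \frac{d}{d\lambda}\log\det m(\lambda)\, \d\lambda \;=\; \frac{1}{2}\cdot\frac{1}{2\pi i}\oint_{|\lambda|=1} \frac{(\det m)'(\lambda)}{\det m(\lambda)}\, d\lambda.
\]
By the argument principle applied to the meromorphic function $\det m$ on the closed unit disc (which is holomorphic and non-vanishing on the boundary by hypothesis), the contour integral on the right equals $Z_{|\lambda|<1}(\det m(\lambda))$. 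Combining the factor of $\tfrac{1}{2}$ yields the claimed identity.

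The main technical point is simply the interchange of limit and integration, which is unproblematic here because $m$ (hence $m^{-1}$ and $m'$) is continuous on the compact unit circle, so the convergence in the inner expression is uniform. No real obstacle arises beyond the routine verification of these analytic details.
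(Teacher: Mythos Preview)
Your proof is correct and follows essentially the same route as the paper's: compute the inner limit as $\tfrac{\lambda}{2}\,\mathrm{Tr}_V(m(\lambda)^{-1}m'(\lambda))$, rewrite via Jacobi's formula as a logarithmic derivative of $\det m$, and then apply the argument principle to the resulting contour integral over the unit circle. The paper's proof is terser (it does not spell out the $\mu=e^{-t}$ substitution or the uniform-convergence justification), but the steps and ideas are identical.
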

\begin{proof}
We have \[\lim_{\mu\rightarrow 1} \mathrm{Tr}_V(\frac{1}{\mu^{-1}-\mu}{m(\lambda)}^{-1}\circ {m(\lambda/\mu)} - \frac{1}{\mu^{-1}-\mu}\Id ) =\frac{\lambda}{2}  \mathrm{Tr}_V( m(\lambda)^{-1} \circ m'(\lambda)  ),   \]
where $m(\lambda)$ is the $\mathbb{C}$-linear endomorphism $V$ defined to be the derivative of $m(\lambda)$. We use Jacobi's formula: \[\mathrm{Tr}_V( m(\lambda)^{-1} \circ m'(\lambda)  )= \frac{   {\frac{\d}{\d\lambda}\det (m(\lambda))}  }{\det (m(\lambda))}  . \]
Finally since the volume of $\Im X_T^G$ is normalized to be $1$, by definition of contour integration and argument principle, the  integral \[\int_{\Im X_T^G} \frac{   {\frac{\d}{\d\lambda}\det (m(\lambda))}  }{\det (m(\lambda))} \lambda \d \lambda     \]
equals $Z_{\vert  \lambda\vert  <1}(\det(m(\lambda)))$. 
\end{proof}

We are going to apply this result to intertwining operators. Although, the operator $\M(w,\lambda)$ is a morphism from $\mathcal{\mathcal{A}}_{B,\psi}$ to $\mathcal{\mathcal{A}}_{B, w(\psi)}$, the representation structures of $\mathcal{\mathcal{A}}_{B,\psi}$ and $\mathcal{\mathcal{A}}_{B, w(\psi)}$ are the same. Let $V$ be the parabolic induction of $\psi$ from $B(\AAA)$ to $G(\AAA)$. We fix a $G(\AAA)$-equivariant isomorphism: $ \iota_1: \mathcal{\mathcal{A}}_{B,\psi}\longrightarrow V $ and $ \iota_2: \mathcal{\mathcal{A}}_{B,w(\psi)}\longrightarrow V$. 
Then \[ \mathbf{Tr}_{\mathcal{A}_{B,{\psi}}}(( - \frac{1}{\mu^{-1} - \mu}    \M(w, \lambda)^{-1}\circ \M(w,\lambda/\mu )+ \frac{1}{\mu^{-1} - \mu} ) \circ  R(f_{S_0})) ,   \]
equals 
\[ \mathbf{Tr}_{V}(( - \frac{1}{\mu^{-1} - \mu}   ( \iota_2\M(w, \lambda) \iota_1^{-1} )^{-1} \  \circ \iota_{2}\M(w,\lambda/\mu )\iota_{1}^{-1} + \frac{1}{\mu^{-1} - \mu} ) \circ \iota_1 R(f_{S_0}) \iota_{1}^{-1}  ) ,   \]

We apply Lemma \ref{determinant}. Note that given a finite family of complex vector spaces $V_i$ of dimension $n_i$ and endomorphisms $\phi_i\in \End(V_i)$, we have \[ \det(\otimes_{i}\phi_i)=\prod_i\det(\phi_i)^{\prod_{j\neq i} n_i}.  \]
By Eulerian expansion in \ref{eulerexpansion}, and local calculations in \ref{Kcase}, \ref{regcase}, \ref{nrcase}, we deduce that the integral
\[\int_{\Im X_{T}^{{G}}}
\lim_{\mu\longrightarrow 1}  \mathbf{Tr}_{\mathcal{A}_{B,{\psi}}}(( - \frac{1}{\mu^{-1} - \mu}    \M(w, \lambda)^{-1}\circ \M(w,\lambda/\mu )+ \frac{1}{\mu^{-1} - \mu} ) \circ  R(f_{S_0})) \d\lambda ,
 \] 
 equals 
\begin{equation}\label{okbb}\frac{2^{\vert  S_1\vert  }}{2} Z_{\vert  \lambda\vert  <1}(\frac{L(\psi_1\psi_2^{-1}, \lambda^2)}{ L(\psi_1\psi_2^{-1}, q^{-1}\lambda^2) }) +  \frac{2^{\vert  S_1\vert  -1}}{2} ( 2\sum_{v\in S_1} \deg v  ).  \end{equation}

 Let $\psi\in \mathcal{A}_{cont}$ be regular with the correct ramification.  By Proposition \ref{Riemann}, \eqref{okbb} equals
 \[2^{\vert  S_1\vert  } (   2g-2 +\deg S_{r}  )  + (2^{\vert  S_1\vert  -1})\deg S_1 .  \]
We have
\begin{equation}
\label{S_u} J^1_\psi(f)= \sum_{S_0\subseteq S_u} (-1)^{\vert  S_0\vert  }2^{\vert  S_0\vert  } \left(2^{\vert  S_1\vert  } (   2g-2 +\deg S_{r}  )  + (2^{\vert  S_1\vert  -1})\deg S_1\right)  .  \end{equation}

If $\psi\in \mathcal{A}_{cont}$ is {non-regular}, then $\psi$ can not have correct ramification if $S_{r}$ is non-empty. Suppose that $S_{r}=\emptyset$ and $\psi$ has correct ramification, then by Proposition \ref{Riemann},
$J_\psi^1(f)$ is the sum of 
\begin{equation}\label{13} \frac{1}{2}\sum_{S_0\subseteq S_u} (-1)^{\vert  S_0\vert  }2^{\vert  S_0\vert  } \left(2^{\vert  S_1\vert  } (   2g-1  )  + (2^{\vert  S_1\vert  -1})\deg S_1\right)    \end{equation}
with 
\begin{equation}\label{14}  
\frac{1}{8}\sum_{S_0\subseteq S_u} (-1)^{\vert  S_0\vert  }2^{\vert  S_0\vert  } \biggr(    \sum_{       \lambda_{G}\in \{  \pm 1 \}     }    \sum_{\substack{ \lambda_{w}\in \Im X_{T}^{G}   \\ \lambda_{w}^2=\lambda_{G}^{-1}   }       }   {\lambda_{G}}   \mathbf{Tr}_{\mathcal{A}_{B,{\psi}}}(   
  \M(w,  w^{-1}({\lambda_{w}})) \circ R(f_{S_0}) )\biggr).  \end{equation}

Next, we need a lemma for the sum over $S_0\subseteq S_u$ in \eqref{S_u}, \eqref{13} and \eqref{14}. 

\begin{lemm}
Let $S$ be a finite set of places of $F$. 
We have \[\sum_{I\subseteq S} (-1)^{\vert  I\vert  }=\begin{cases} 1, \quad \text{ if } S=\emptyset; \\
0, \quad \text{ if } S\not=\emptyset. 
\end{cases}
  \]
   We also have 
  \[\sum_{I\subseteq S} (-1)^{\vert  S\vert  -\vert  I\vert  }\deg I =\begin{cases} 0, \quad \text{ if } \vert  S\vert  \geq 2; \\
\deg v, \quad \text{ if } S=\{v\}; \\
0, \quad \text{ if } S=\emptyset.
\end{cases}
  \]
\end{lemm}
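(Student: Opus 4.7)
The plan is to prove both identities by reducing them to the binomial theorem applied to $(1-1)^n$.

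For the first identity, I would group subsets by cardinality:
\[
\sum_{I \subseteq S} (-1)^{|I|} = \sum_{k=0}^{|S|} \binom{|S|}{k} (-1)^k = (1-1)^{|S|},
\]
which equals $1$ when $|S|=0$ and $0$ otherwise. This handles the first part immediately.

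For the second identity, I would use $\deg I = \sum_{v \in I} \deg v$ and swap the order of summation:
\[
\sum_{I \subseteq S}(-1)^{|S|-|I|}\deg I = \sum_{v \in S}\deg v \sum_{\substack{I \subseteq S \\ v \in I}} (-1)^{|S|-|I|}.
\]
For each fixed $v \in S$, the inner sum is taken over subsets $I$ containing $v$; writing $I = J \cup \{v\}$ with $J \subseteq S \setminus \{v\}$ and setting $T = S \setminus \{v\}$, the inner sum becomes
\[
\sum_{J \subseteq T}(-1)^{|T|-|J|} = (1-1)^{|T|},
\]
which is $1$ if $T = \emptyset$ (equivalently $|S|=1$) and $0$ otherwise. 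Substituting back gives the three cases: the outer sum is empty when $S=\emptyset$, the inner factor vanishes when $|S|\geq 2$, and only the single term $\deg v$ survives when $S = \{v\}$.

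There is no real obstacle here, since both identities are elementary consequences of the binomial theorem; the only care needed is tracking the sign $(-1)^{|S|-|I|}$ through the change of variable $I \mapsto J = I \setminus \{v\}$.
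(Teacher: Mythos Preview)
Your proof is correct and essentially the same as the paper's: both reduce the first identity to $(1-1)^{|S|}$, and for the second identity both isolate a place $v$ and reduce to the first identity applied to $S\setminus\{v\}$. The only cosmetic difference is that you swap the order of summation (expanding $\deg I=\sum_{v\in I}\deg v$ and treating all $v\in S$ symmetrically), whereas the paper fixes a single $v\in S$ and pairs each $I\subseteq S\setminus\{v\}$ with $I\cup\{v\}$ so that the $\deg I$ contributions cancel.
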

\begin{proof}
 The first equality is nothing else than the evaluating of $x_\alpha=1$ ($\alpha\in S$) in the expansion of the expression $\prod_{\alpha\in S}(1-x_\alpha)$. Let us prove the second equality by induction. We may suppose $|S|\geq 1$; otherwise, the equality is trivial. Let $v\in S$, we have 
 \[\begin{split}\sum_{I\subseteq S} (-1)^{\vert  S\vert  -\vert  I\vert  }\deg I &= \sum_{I\subseteq S-\{v\} } (-1)^{\vert  S\vert  -\vert  I\vert  }\deg I +\sum_{v\in I\subseteq S} (-1)^{\vert  S\vert  -\vert  I\vert  }\deg I  \\ 
 &= \sum_{I\subseteq S-\{v\} } (-1)^{\vert  S\vert  -\vert  I\vert  } (-\deg v) . 
 \end{split} \]
 Therefore, the result follows from the first equality. 
\end{proof}

After this lemma, the expression $J^1_{\psi}(f)$ given by \eqref{S_u}  vanishes if $\vert  S_u\vert  \geq 2$. When $S_u=\{v\}$, it equals
\(  \deg v.   \) When $ S_u=\emptyset $, it equals $2g-2+\deg S_{r}$. 

The expression \eqref{13} vanishes if $\vert  S_u\vert  \geq 2$. When $S_u=\{v\}$, it equals
\(  \frac{1}{2}\deg v.   \) When $ S_u=\emptyset $, it equals $\frac{1}{2}(2g-1)$.

Let $\psi\in \mathcal{A}_{cont}$ be non-regular, 
we're going to consider the following expression that appears in \eqref{14}
\begin{equation}\label{discrete1}
\frac{1}{8}  \sum_{       \lambda_{G}\in \{  \pm 1 \}     }    \sum_{\substack{ \lambda_{w}\in \Im X_{T}^{G}   \\ \lambda_{w}^2=\lambda_{G}^{-1}   }       }   {\lambda_{G}}   \mathbf{Tr}_{\mathcal{A}_{B,{\psi}}}(   
  \M(w,  w^{-1}({\lambda_{w}})) \circ R(f_{S_0}) ).  \end{equation}
Note that in this case, $L(\psi_1\psi_2^{-1}, z)=\zeta_X(z)$ where $\zeta_X$ is the zeta function of the curve $X$.
From the local calculations in \ref{Kcase} and \ref{nrcase}, we know that
\[\mathbf{Tr}_{\mathcal{A}_{B,{\psi}}}(   
  \M(w,  w^{-1}({\lambda_{w}})) \circ R(f_{S_0}) ) =  q^{1-g}  \frac{\zeta_X(\lambda_w^{-2})}{ \zeta_X(q^{-1}\lambda_w^{-2})}  \prod_{v\in S_1}  \frac{ (1-q_v^{-1}) ( \lambda_w^{-2\deg v} +1 ) }{1-q_v^{-1}\lambda_w^{-2\deg v}}  .  \]
 Here we should regard $\frac{\zeta_X(z)}{ \zeta_X(q^{-1}z)}$ as a rational function so that the pole at $z=1$ of the denominator and numerator are canceled. 
 For each $\lambda_G\in \{\pm 1\}$, there are two $\lambda_w$ such that $\lambda_w^{2}=\lambda_G^{-1}$. The expression \eqref{discrete1} equals:
  \begin{equation}\label{discrete}
 \sum_{       \lambda_{G}\in \{  \pm 1 \}     } \frac{1}{4}   q^{1-g}   {\lambda_{G}}   \frac{\zeta_X(\lambda_G)}{ \zeta_X(q^{-1}\lambda_G)}  \prod_{v\in S_1}  \frac{ (1-q_v^{-1}) ( \lambda_G^{\deg v} +1 ) }{1-q_v^{-1}\lambda_G^{\deg v}} .  \end{equation}
 
There is a polynomial $P(z)$ of degree $2g$ such that  \[\zeta_X(z)/\zeta_X(zq^{-1})=\frac{P(z)(1-q^{-1}z)}{P(zq^{-1})(1-qz)}.   \]
By functional equation of  zeta function, we have \[\frac{P(1)(1-q^{-1})}{P(q^{-1})(1-q)} =  -q^{g-1}  , \]
and 
\[\frac{P(-1)(1+q^{-1})}{P(-q^{-1})(1+q)} =  q^{g-1}  . \]

 The corresponding summand for $\lambda_G=1$ in the  expression \eqref{discrete} equals 
  \[  \frac{1}{4}  q^{1-g} 2^{\vert  S_1\vert   }    \frac{P(1)(1- q^{-1})}{P( q^{-1})(1- q)} =- 2^{\vert  S_1\vert   -2}.  \]


If there is a place of odd degree in $S_1$, then the corresponding summand for $\lambda_G=-1$ in the expression \eqref{discrete} vanishes. If all the places in $S_1$ are of even degree, then the corresponding summand for $\lambda_G=-1$ in the  expression \eqref{discrete} equals
 \[ \frac{1}{4} q^{1-g}(-1)   \frac{P(-1)(1+ q^{-1})}{P( -q^{-1})(1+ q)} 2^{\vert  S_1\vert   } = -2^{\vert  S_1\vert   -2}.  \]

  
 We decompose $S_u$ as the union of the set of places of odd degree and those of even degree:
 \[S_u=S_{u, odd}\cup S_{u, even}. \]
Following the above discussions,  the expression \eqref{14} equals   
   \begin{equation}\label{trois}  \sum_{S_0\subseteq S_u} (-1)^{|S_0|}2^{|S_0|}(- 2^{\vert  S_1\vert   -2})  +\sum_{S_{u,odd} \subseteq S_0\subseteq S_u } (-1)^{\vert  S_0\vert  }2^{\vert  S_0\vert  } \left(  -2^{\vert  S_1\vert   - 2}    \right) +   
   \sum_{  S_0\in \mathcal{S}  } 0 , \end{equation}
  where $\mathcal{S}=\{ S_0 \vert   S_0\subseteq S_u \}-\{S_0 \vert   S_{u,odd} \subseteq S_0\subseteq S_u  \}$. 
  The first sum in \eqref{trois} is $-\frac{1}{4}$ if $S_u=\emptyset$ and is $0$ if $S_u\neq \emptyset$. The second sum in \eqref{trois} equals $0$ if $S_{u,even}\neq \emptyset$, and equals 
$(-1)^{|S_u|+1}2^{|S_u|-2}$ if $S_{u,even}= \emptyset$. 
  
  In conclusion, if $S_{u}=\emptyset$, then  the expression \eqref{14} equals $-\frac{1}{2}$. If $S_u\neq \emptyset$ but $S_{u,even}=\emptyset$, then  \eqref{14} equals $(-1)^{|S_u|+1}2^{\vert  S_u\vert  -2}$. If $S_{u,even}\neq \emptyset$, \eqref{14}  equals \(0\).

If $S_{r}\neq \emptyset,$ then $J^1_{\psi}(f)\neq 0$ implies that $\psi$ is regular. In this case, there are \[\frac{c_{\mathfrak{R}}(1)}{2}\Pic(1)^2\] equivalent classes of $\psi$ such that $J^1_{\psi}(f)$ can be non-zero.  If $S_{r}=\emptyset$, then there are \[c_\mathfrak{R}(1)\frac{1}{2} \Pic(1)  ( \Pic(1)  -1) \]
regular classes of $\psi$ such that $J^1_{\psi}(f)$ can be non-zero, and \[c_{\mathfrak{R}}(1) \Pic(1)  \]
non-regular such classes.

 To complete the proof of Proposition \ref{continoust}, we summarize that if $S_{r}\neq \emptyset$ then there is no non-regular $\psi$ with correct ramification and $J_{cont}^1(f)$ is equal to $\frac{c_\mathfrak{R}(1)}{2}\Pic(1)^2$ times \eqref{S_u};  
  if $S_{r}=\emptyset$, $J^1_{cont}(f)$ is equal to $c_\mathfrak{R}(1) \frac{1}{2} \Pic(1)  ( \Pic(1)  -1) $ times \eqref{S_u} plus $c_{\mathfrak{R}}(1)\Pic(1)$ times (\eqref{13}+\eqref{14}).

\section{Geometric side of the trace formula and Hitchin moduli spaces}\label{HIG}
In this section, we treat the geometric side of the trace formula $J^{1}_{geom}(f)$ for the function $f$ introduced in Proposition \ref{f}.  The goal is to prove Theorem \ref{Higg}. We need to introduce a Lie algebra analog of the trace formula that helps us to treat $J^{1}_{geom}(f)$. 

\subsection{Moduli of parabolic Hitchin bundles over $\overline{\mathbb{F}}_q$}
Now we introduce the coarse moduli space of semistable Hitchin bundles with parabolic structures. It is constructed by Yokogawa (\cite{Yokogawa0}) using GIT theory.
 Let $V\subset |X|$ be a finite set of closed points. 
 We are interested in moduli of Hitchin bundles with flag structures at $V$.  
 If we can identify $V$ with a subset of $X(\mathbb{F}_q)$, i.e. every place in $V$ has degree $1$, then Yokogawa's construction perfectly suits our needs. Otherwise, we meet a problem since a point in $V$ can be split into several points over an algebraically closed field, and we have to make extra arguments about how to treat with (semi)-stability. 
To remedy it, we work over an algebraically closed field or a large enough extension of $\mathbb{F}_q$ where we can apply Yokogawa's construction to obtain a moduli space. Then we 
 define an $\mathbb{F}_q$-structure on it.

Let $D$ be a divisor over $X$ and $V$ be a set of closed points of $X$. 
 We identify $\overline{V}:=V\times_{\Spec(\mathbb{F}_q)}\Spec(\overline{\mathbb{F}}_q)$ with a subset of $\overline{X}(\overline{\mathbb{F}}_q)$ and view $D$ also as a divisor over $\overline{X}$. 

A parabolic Hitchin triple (or parabolic Hitchin bundle) over $\overline{X}$ is a triple \[(\mathcal{E}, \varphi, (L_x)_{x\in \overline{V}})\] where $(\mathcal{E}, \varphi)$ is a Hitchin pair for the divisor $D$, i.e., a vector bundle $\mathcal{E}$ together with a bundle morphism, called the Higgs field, 
\[ \varphi: \mathcal{E}\longrightarrow \mathcal{E}(D):= \mathcal{E}\otimes\mathcal{O}_{\overline{X}}(D), \]
and for each point $x\in \overline{V}$, $L_x$ is a line in the $\overline{\mathbb{F}}_q$-vector space $\mathcal{E}_x$, the fiber over $x$ of the bundle $\mathcal{E}$, such that \[\varphi_x(\mathcal{E}_x) \subseteq L_x \]
and \[ \varphi_x(L_x)=0 \]
 where we view $L_x$ also as a line in $\mathcal{E}(D)_x$. 

We select a canonical divisor $K_X$ on $X$, which is a divisor that corresponds to the canonical bundle.
If $D=K_X+\sum_{v\in V}v$, we will call them parabolic Higgs bundles.

For each $x\in \overline{V}$, let $\overline{\xi}_x:=(\overline{\xi}_{x,1}, \overline{\xi}_{x,2})\in \mathbb{Q}^2$ such that $\overline{\xi}_{x,1}\geq \overline{\xi}_{x,2}\geq \overline{\xi}_{x,1}-1$. Let $\overline{\xi}=(\overline{\xi}_x)_{x\in \overline{V}}$. 
Let $(\mathcal{E}, \varphi, (L_x)_{x\in \overline{V}})$ be a parabolic Hitchin  bundle over $\overline{X}$. Let $\mathcal{L}$ be a sub-line bundle of $\mathcal{E}$, we define the parabolic degree $\mathrm{p\text{-}deg}(\mathcal{L})$ by
\[\mathrm{p\text{-}deg}(\mathcal{L}):= \deg(\mathcal{L})+\sum_{x\in \overline{V}}\begin{cases}  \overline{\xi}_{x, 1}, \text{ if }  \mathcal{L}_x=L_x; \\
\overline{\xi}_{x, 2},  \text{ if }  \mathcal{L}_x \neq L_x.
\end{cases}
\]
We say that $(\mathcal{E}, \varphi, (L_x)_{x\in \overline{V}})$ is $\overline{\xi}$-semistable if for any sub-line bundle $\mathcal{L}$ of $\mathcal{E}$ satisfying $\varphi(\mathcal{L})\subseteq \mathcal{L}(D)$, we have
 \[\mathrm{p\text{-}deg}(\mathcal{L})\leq \frac{\deg \mathcal{E}+\sum_{x\in \overline{V}}(\overline{\xi}_{x,1}+\overline{\xi}_{x,2}  )}{2}. \] 
 It is said to be $\overline{\xi}$-stable if the strict inequality always holds.   
Note that if \[\deg(\mathcal{E})+\sum_{x\in \overline{V} }\pm (\overline{\xi}_{x,1}-\overline{\xi}_{x,2}  ) \notin 2\mathbb{Z},\] then the equality can never be achieved and $\overline{\xi}$-semistablity coincides with $\overline{\xi}$-stability. 
We say that such cases are in general position.

In \cite{Yokogawa}, 
Yokogawa has constructed a coarse moduli space, which he shows to be a variety defined over $\overline{\mathbb{F}}_q$, that classifies isomorphism classes of $\overline{\xi}$-stable parabolic Hitchin bundles $(\mathcal{E}, \varphi, (L_x)_{x\in \overline{V}})$ with $\mathcal{E}$ being of rank $2$ and degree $e$. We denote the variety by \[{M}_{\overline{V}}^{e,\overline{\xi}}(D).\] 
In this article, we will only be interested in the case where $D=K_X+\sum_{v\in R}v$ for a subset $R$ of $S$ ($R$ can be empty).



\begin{remark}\label{qpw}
Yokogawa's results apply under the assumption (under our terminology) that $\overline{\xi}_{x, 1}>\overline{ \xi}_{x, 2}>\overline{\xi}_{x, 1}-1$. In particular, it does not include the case that $\overline{\xi}_{x,1}=\overline{\xi}_{x,2}$. However, as long as $\overline{\xi}$ stays in general position, this is not an issue because when $\overline{\xi}_{x,1}$ and $\overline{\xi}_{x,2}$ are close enough, semistability of parabolic Hitchin bundles coincide with the case that $\overline{\xi}_{x,1}=\overline{\xi}_{x,2}$.

\end{remark}

\subsection{${\mathbb{F}}_q$-points}\label{Fq}
Suppose that $n$ is a divisible enough integer such that every closed point in $V$ totally splits over $\mathbb{F}_{q^n}$, i.e., the residue field of a place in $V$ can be embedded in $\mathbb{F}_{q^n}$. Yokogawa's construction works if the curve is $X\otimes \mathbb{F}_{q^n}$ and the parabolic structures are imposed at each point of $V\otimes\mathbb{F}_{q^n}$ and in this way ${M}_{\overline{V}}^{e,\overline{\xi}}(D)$ has an $\mathbb{F}_{q^n}$-structure. In the following, we are going to endow ${M}_{\overline{V}}^{e,\overline{\xi}}(D)$ with an $\mathbb{F}_q$-structure when $\overline{\xi}_x$ are the same for points $x\in V\otimes \mathbb{F}_{q^n}$ lying over each $v\in V$.

For a variety $Y$ defined over $\mathbb{F}_{q^n}$, let $F_{Y/\mathbb{F}_{q^n}}$ be the arithemetic Frobenius morphism of $Y\otimes_{\mathbb{F}_{q^n}} \overline{\mathbb{F}}_{q}$. Recall that it is the morphism of schemes $Y\otimes_{\mathbb{F}_{q^n}} \overline{\mathbb{F}}_{q}$ that is identity on $Y$ and is 
 $x\mapsto x^{{q^n}}$ over $\Spec(\overline{\mathbb{F}}_q)$. In particular, it is not a morphism of ${\mathbb{F}}_{q^n}$-schemes.  
 
The pullback by $F_{X/\mathbb{F}_q}$ defines an action on the set of isomorphism classes of Hitchin bundles. To define an action on the parabolic structure, we need to use an equivalent definition that identifies vector bundles with locally free sheaves. Suppose  $(\mathcal{E}, \varphi, (L_x)_{x\in \overline{V}})$ is a parabolic Hitchin bundle over $\overline{X}$, For each $x\in \overline{V}$, $L_x$ defines a unique rank $2$ coherent sub-sheaf $\mathcal{E}^{x}$ of $\mathcal{E}$ such that $\mathcal{E}/\mathcal{E}^{x}$ is a skyscraper sheaf of degree $1$  supported in $\{x\}$ and the inclusion $\mathcal{E}^{x}\longrightarrow \mathcal{E}$ defines a morphism of their fibers at $x$ with image $L_x$ in $\mathcal{E}_x$. Then $F_{X/\mathbb{F}_q}^{\ast}\mathcal{E}^{x}$ defines a parabolic structure of $F_{X/\mathbb{F}_q}^\ast\mathcal{E}$ at $\Fr(x)$.

 \begin{prop-def}\label{ProDef}
 Suppose that the family $(\xi_v)_{v\in V}\in (\mathbb{Q}^2)^{V}$ satisfies that for any $v\in V$, 
\[ 0\leq \xi_{v, 1}-\xi_{v, 2}\leq [\kappa_v: \mathbb{F}_q]. \]
 Let \[ \overline{\xi}_x=\frac{1}{[\kappa_v:\mathbb{F}_q]}\xi_{v},\] for any point $x\in \overline{V}$ lying over $v\in V$.

 Let $\sigma$ be the Frobenius element in $\Gal(\overline{\mathbb{F}}_{q}/\mathbb{F}_q)$. 
 We define an action of $\Gal(\overline{\mathbb{F}}_{q}/\mathbb{F}_q)$ on ${M}_{\overline{V}}^{e,\overline{\xi}}(D)(\overline{\mathbb{F}}_{q})$ so that $\sigma$ sends a parabolic Higgs bundle $(\mathcal{E}, \varphi, (L_x)_{x\in \overline{V}})$ over $\overline{X}$ to \[ F_{{X}/\mathbb{F}_q}^{\ast}(\mathcal{E}, \varphi, (L_x)_{x\in \overline{V}}).\] 
There is a variety defined over $\mathbb{F}_q$ whose $\mathbb{F}_{q^k}$-points are exactly those in ${M}_{\overline{V}}^{e,\overline{\xi}}(D)(\overline{\mathbb{F}}_{q})$ fixed by $\sigma^k$ and whose base change to ${\overline{\mathbb{F}}}_{q}$ is isomorphic to ${M}_{\overline{V}}^{e,\overline{\xi}}(D)$. We denote the variety by ${M}_{V}^{e,{\xi}}(D)$.
 \end{prop-def}
 \begin{proof}
 Let $M={{M}}_{\overline{V}}^{e,\overline{\xi}}(D)$ in this proof.

First, we show that the action is well-defined. 
As $F_{X/\overline{\mathbb{F}}_q}^{\ast}$ sends the set of Hitchin bundle with parabolic structures in $\overline{V}$ to itself. 
It is sufficient to show that $\overline{\xi}$-stable parabolic Hitchin bundle is sent to a $\overline{\xi}$-stable one. Note that for any sub-line bundle $\mathcal{L}$ of $\mathcal{E}$  such that  $\varphi(\mathcal{L})\subseteq \mathcal{L}(D)$, the parabolic degree of $F_{{X}/\mathbb{F}_q}^{\ast}\mathcal{L}$ (as a subbundle of $F_{{X}/\mathbb{F}_q}^{\ast} \mathcal{E}$) equals that of $\mathcal{L}$. Moreover, any subline bundle of $F_{{X}/\mathbb{F}_q}^{\ast} \mathcal{E}$ can be written as $F_{{X}/\mathbb{F}_q}^{\ast} \mathcal{L}$ for some subline bundle $\mathcal{L}\subseteq \mathcal{E}$ because $F_{X/\mathbb{F}_q}$ is an automorphism. 


Let $U$ be an $\overline{\mathbb{F}}_q$-scheme and $\mathcal{E}$ a family of parabolic Hitchin bundles over $\overline{X}_U$. Since $X$ is defined over $\mathbb{F}_q$, we have $(\overline{X}_U)^{(q)}\cong \overline{X}_U$. 
Applying the functor $(\cdot)^{(q)}$, $\mathcal{E}^{(q)}$ is still a family over $\overline{X}_U$. 
By above arguments, \cite[Theorem 4.6, (4.6.5)]{Yokogawa} shows that we have an isomorphism $\phi: M\rightarrow M$ that induces $\sigma$. By definition, it makes the following diagram commutative:
\[ \begin{CD} 
M@>\phi>>  M \\
@VVV@VVV \\
\overline{\mathbb{F}}_q@>x\mapsto x^q>>\overline{\mathbb{F}}_q
\end{CD}. 
\]
Note that for $n$ divisible enough so that $V\otimes \mathbb{F}_{q^n}$ totally split (i.e. $n$ divides the degree of every $v\in V$), $M$ has an $\mathbb{F}_{q^n}$-structure and $\phi^n=Id$. 
By \cite[Example B, p.139]{BLR} and the fact that $M$ is quasi-projective, this defines an $\mathbb{F}_q$-structure on $M$. 
\end{proof}

Next, we are going to study  ${M}_{V}^{e,{\xi}}(D)(\mathbb{F}_{q})$, the fixed points of $\sigma$ on ${M}_{\overline{V}}^{e,\overline{\xi}}(D)(\overline{\mathbb{F}}_{q})$. 
 We define a rank $2$ parabolic Hitchin bundle as a tuple $(\mathcal{E}, \varphi, (L_v)_{v\in {V}})$  over $X$  consisting of a vector bundle   $\mathcal{E}$ over $X$, a bundle morphism $\varphi: \mathcal{E}\longrightarrow \mathcal{E}(D)$ and for each $v\in V$ a $1$ dimensional $\kappa_v$ sub-vector space $L_v$ of $\mathcal{E}_v$ such that $\varphi_v(\mathcal{E}_v)\subseteq L_v$ and $\varphi_v(L_v)=0$. Let $(e,\xi)\in \mathbb{Z}\times (\mathbb{Q}^{2})^{  V  }$. 
Let $\mathcal{L}$ be a sub-line bundle  of $\mathcal{E}$  such that  $\varphi(\mathcal{L})\subseteq \mathcal{L}(D)$, we define the parabolic degree $\mathcal{L}$ by
\[\mathrm{p\text{-}deg}(\mathcal{L}):= \deg(\mathcal{L})+\sum_{v\in {V}}\begin{cases}  {\xi}_{v, 1}, \text{ if }  \mathcal{L}_v=L_v; \\
{\xi}_{v, 2},  \text{ if }  \mathcal{L}_v \neq L_v.
\end{cases}
\]
The parabolic Higgs bundle $(\mathcal{E}, \varphi, (L_v)_{v\in {V}})$ 
is semistable if for all such $\mathcal{L}$, we have
\[ \mathrm{p\text{-}deg}(\mathcal{L})\leq \frac{\deg \mathcal{E}+\sum_{v\in V}(\xi_{v,1}+\xi_{v,2}) }{2}.  \]

\begin{prop}
Under the hypothesis of Proposition-Definition \ref{ProDef}, the set of (isomorphism classes of) $\overline{\xi}$-stable parabolic Higgs bundles $({\mathcal{E}}, {\varphi}, (L_{x})_{x\in \overline{V}})$ over $\overline{X}$ that are fixed by $F_{X/\mathbb{F}_q}^{\ast}$ is in bijection with the set of $\xi$-stable parabolic Higgs bundles $({\mathcal{E}}_0, {\varphi}_0, (L_{v})_{v\in {V}})$
 over $X$.  

\end{prop}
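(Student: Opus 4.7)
The plan is to establish the bijection by constructing base change in one direction and effective Galois descent in the other, then verifying these are mutually inverse.

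For the forward direction, I would start with a $\xi$-stable parabolic Higgs bundle $(\mathcal{E}_0,\varphi_0,(L_v)_{v\in V})$ over $X$ and form its base change: the bundle $\mathcal{E}:=\mathcal{E}_0\otimes_{\mathbb{F}_q}\overline{\mathbb{F}}_q$, the Higgs field $\varphi$ induced by $\varphi_0$, and the parabolic lines obtained as follows. For each $v\in V$, the canonical isomorphism $\kappa_v\otimes_{\mathbb{F}_q}\overline{\mathbb{F}}_q\cong \prod_{x\mapsto v}\overline{\mathbb{F}}_q$ decomposes $(\mathcal{E}_0)_v\otimes\overline{\mathbb{F}}_q$ as $\bigoplus_{x\mapsto v}\mathcal{E}_x$, and $L_v\otimes\overline{\mathbb{F}}_q$ correspondingly splits as a family of one-dimensional $\overline{\mathbb{F}}_q$-subspaces $L_x\subset \mathcal{E}_x$. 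By construction, the tuple $(\mathcal{E},\varphi,(L_x)_{x\in\overline{V}})$ is canonically isomorphic to its pullback by $F_{X/\mathbb{F}_q}$, hence lands in the Frobenius-fixed locus.

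I would then verify compatibility of stability. Thanks to the normalization $\overline{\xi}_x=\xi_v/[\kappa_v:\mathbb{F}_q]$, one has $\sum_{x\mapsto v}\overline{\xi}_{x,i}=\xi_{v,i}$, and for any sub-line bundle $\mathcal{L}_0\subset\mathcal{E}_0$ with base change $\mathcal{L}$, a direct calculation yields $\mathrm{p\text{-}deg}(\mathcal{L})=\mathrm{p\text{-}deg}(\mathcal{L}_0)$ together with $\deg\mathcal{E}+\sum_x(\overline{\xi}_{x,1}+\overline{\xi}_{x,2})=\deg\mathcal{E}_0+\sum_v(\xi_{v,1}+\xi_{v,2})$; the two stability inequalities are thus equivalent for sub-line bundles coming from $X$. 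For the converse implication, given any $\varphi$-invariant destabilizing sub-line bundle over $\overline{X}$, the (unique) maximal such destabilizer for the parabolic Harder-Narasimhan-type filtration is Galois-stable and therefore descends to $X$, so $\overline{\xi}$-stability over $\overline{X}$ is equivalent to $\xi$-stability over $X$.

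The main obstacle lies in the inverse direction. A Frobenius-fixed $\overline{\xi}$-stable tuple $(\mathcal{E},\varphi,(L_x))$ only provides the existence of some isomorphism $\Psi\colon F_{X/\mathbb{F}_q}^{\ast}(\mathcal{E},\varphi,(L_x))\xrightarrow{\sim}(\mathcal{E},\varphi,(L_x))$, whereas effective descent to an $\mathbb{F}_q$-form requires a cocycle condition on the iterates of $\Psi$. Stability forces $\mathrm{Aut}(\mathcal{E},\varphi,(L_x))=\overline{\mathbb{F}}_q^\times$, so $\Psi$ is determined up to a scalar. By Proposition-Definition \ref{ProDef}, our fixed point is defined over some finite extension $\mathbb{F}_{q^n}$, which reduces the problem to finite Galois descent along $\mathbb{F}_{q^n}/\mathbb{F}_q$; Hilbert 90 (equivalently, Lang's theorem for $\mathbb{G}_m$) gives $H^1(\mathrm{Gal}(\mathbb{F}_{q^n}/\mathbb{F}_q),\mathbb{F}_{q^n}^\times)=0$, so $\Psi$ can be renormalized to satisfy the required cocycle $\Psi\circ F_{X/\mathbb{F}_q}^{\ast}\Psi\circ\cdots\circ (F_{X/\mathbb{F}_q}^{n-1})^{\ast}\Psi=\mathrm{id}$. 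Effective fpqc Galois descent then produces the bundle $\mathcal{E}_0$ with Higgs field $\varphi_0$ over $X$, while each parabolic line $L_v\subset(\mathcal{E}_0)_v$ is reconstructed as the descent of the Galois-permuted family $(L_x)_{x\mapsto v}$ via the decomposition $(\mathcal{E}_0)_v\otimes\overline{\mathbb{F}}_q\cong\bigoplus_{x\mapsto v}\mathcal{E}_x$. Injectivity of the correspondence is immediate from Galois descent of isomorphisms between the corresponding descent data.
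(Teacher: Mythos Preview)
Your proposal is correct and follows essentially the same strategy as the paper: Galois descent for the bundle data, the observation that stability forces $\mathrm{Aut}=\overline{\mathbb{F}}_q^\times$ so that the Frobenius cocycle obstruction lives in $H^1$ of $\mathbb{G}_m$ and vanishes by Hilbert~90 (the paper phrases this via surjectivity of the norm map), and the descent of the unique maximal destabilizing sub-line bundle to transfer stability. The paper makes one point slightly more explicit than you do: the argument that $\xi$-stability over $X$ implies $\overline{\xi}$-stability over $\overline{X}$ relies on semistability coinciding with stability (i.e., on $(e,\overline{\xi})$ being in general position), since otherwise a sub-line bundle achieving equality in the slope inequality need not be unique and hence need not be Galois-invariant.
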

\begin{proof}
Let $\sigma$ be the Frobenius element that generates $\Gal(\overline{\mathbb{F}}_q/\mathbb{F}_q)$. 
By Galois descent (see \cite[Example B. p.139]{BLR}), the category of vector bundles over $X$ is equivalent to the category of $\Gal(\overline{\mathbb{F}}_q/\mathbb{F}_q)$-equivariant vector bundles over $\overline{X}$, that is the category of vector bundles $\mathcal{E}$ over $\overline{X}$ together with an isomorphism for each $i\geq 1$,
\[ \phi_{\sigma^i}: ({\mathcal{E}}, {\varphi}, (L_x)_{x\in \overline{V}}) \longrightarrow F_{X/\mathbb{F}_q}^{\ast i}({\mathcal{E}}, {\varphi}, (L_x)_{x\in \overline{V}}), \]
such that $\sigma^{i}\mapsto\phi_{\sigma^i}$ satisfies the cocycle condition:
\[ \sigma^{i}(\phi_{\sigma^j}) \circ \phi_{\sigma^i}= \phi_{\sigma^{i+j}}, \]
and  for some $d\in \mathbb{N}^{\ast}$, $\phi_{\sigma^d}$ is the identity. Note that the category of vector bundles is equivalent to the category of locally free sheaves, hence every vector bundle defined over $\overline{X}$ is automatically defined over $X\otimes \mathbb{F}_{q^n}$ for some $n\in \mathbb{N}^{\ast}$ and the requirement that $\phi_{\sigma^d}$ is the identity map makes sense when $d$ is divisible by $n$. 

By Galois descent for morphisms of quasi-coherent sheaves (\cite[Proposition 1, p.130]{BLR} and \cite[Example B. p.139]{BLR}), the above equivalence extends to an equivalence between the category of parabolic Hitchin bundles over $X$ is equivalent to the category of $\Gal(\overline{\mathbb{F}}_q/\mathbb{F}_q)$-equivariant parabolic Hitchin bundles over  $\overline{X}$. Indeed, the parabolic structure is determined by a sub-sheaf $\mathcal{F}\subseteq \mathcal{E}$ such that $\mathcal{E}/\mathcal{F}$ is a skyscraper sheaf supported in $\overline{V}$ and that the inclusion  $\mathcal{F}\rightarrow \mathcal{E}$ defines a morphism $\mathcal{F}_x\rightarrow \mathcal{E}_x$ of their fibers at each point $x$ with image $L_x$ in $\mathcal{E}_x$. If both $\mathcal{F}$ and $\mathcal{E}$ come from $X$, i.e. $\mathcal{F}=\mathcal{F}_0|_{\overline{X}}$ and $\mathcal{E}=\mathcal{E}_0|_{\overline{X}}$ then $\mathcal{F}_0\subseteq \mathcal{E}_0$ is a subsheaf such that the quotient $\mathcal{E}_0/\mathcal{F}_0$ is a skyscraper sheaf supported in $V$ and the map of the fibers $\mathcal{F}_{v}\rightarrow \mathcal{E}_v$  has a $1$-dimensional image as $\kappa_v$-vector space.

 By our assumption, 
there is an isomorphism
\[ \phi_{\sigma}: ({\mathcal{E}}, {\varphi}, (L_x)_{x\in \overline{V}}) \longrightarrow F_{X/\mathbb{F}_q}^{\ast}({\mathcal{E}}, {\varphi}, (L_x)_{x\in \overline{V}}). \] 
We can define simply for every $i\geq 1$, \[\phi_{\sigma^i}:=\sigma^{i-1}(\phi_\sigma)\circ \cdots \circ  \sigma (\phi_\sigma) \circ \phi_\sigma.  \]
Note that by cocycle condition, this is the unique possible way to extend $\phi_{\sigma}$ to a $1$-cocycle. 

We will prove that we can choose  $\phi_{\sigma}$ so that for some $n$, the isomorphism $\phi_{\sigma^n}$ defined above is the identity. Suppose that $(\mathcal{E}, \varphi, (L_x)_{x\in \overline{V}})$ is defined over $X\otimes \mathbb{F}_{q^n}$. Then \[F_{X/\mathbb{F}_q}^{\ast n}({\mathcal{E}}, {\varphi}, (L_x)_{x\in \overline{V}})=({\mathcal{E}}, {\varphi}, (L_x)_{x\in \overline{V}}). \]
Any $\phi_{\sigma^n}$ is an automorphism of $({\mathcal{E}}, {\varphi}, (L_x)_{x\in \overline{V}})$ which must be a scalar multiplication because $({\mathcal{E}}, {\varphi}, (L_x)_{x\in \overline{V}})$ is stable.   
Hence there is a $\lambda'\in \overline{\mathbb{F}}_q^{\times}$ such that 
\[\phi_{\sigma^n}=\lambda'.\mathrm{id}_{(\overline{\mathcal{E}}, \overline{\varphi}, (L_{x})_{x\in \overline{V}})}. \]
Suppose $\lambda'\in \mathbb{F}_{q^m}^\times$ for some $m\geq 1$. We deduce that 
$\phi_{\sigma^{nm}} = \lambda. \mathrm{id}_{(\overline{\mathcal{E}}, \overline{\varphi}, (L_{x})_{x\in \overline{V}})}$ with \[ \lambda=(\lambda')^{1+q^n+\cdots +q^{n(m-1)}}\in \mathbb{F}_{q^m}^\times \subseteq \mathbb{F}_{q^{nm}}^\times.   \]
Note that this implies that 
\begin{equation}\sigma^{nm-1}(\phi_\sigma)\circ \cdots \circ  \sigma (\phi_\sigma) \circ \phi_\sigma=\lambda.\mathrm{id}_{(\overline{\mathcal{E}}, \overline{\varphi}, (L_{x})_{x\in \overline{V}})} \end{equation}
Applying $\sigma$ to both sides, we obtain that 
\begin{equation}\label{lamb}\sigma^{nm}(\phi_\sigma)\circ \cdots \circ  \sigma^2 (\phi_\sigma) \circ \sigma(\phi_\sigma)=\sigma(\lambda).\mathrm{id}_{ F_{X/\mathbb{F}_q}^{\ast}(\overline{\mathcal{E}}, \overline{\varphi}, (L_{x})_{x\in \overline{V}})}. \end{equation}
Since $\sigma^{nm}(\phi_\sigma)=\phi_\sigma$, we deduce that $\sigma(\lambda)=\lambda$ and hence $\lambda\in \mathbb{F}_q^\times$. 
As the norm map from $\mathbb{F}_{q^{nm}}$ to $\mathbb{F}_q$ is surjective, we conclude that by modifying $\phi_\sigma$ by a scalar, we can get a $\phi_\sigma$ such that $\lambda=1$. 

It is easy to see that such descent data are unique up to isomorphism. In fact, any two isomorphisms $\phi_{\sigma}$ and $\phi_{\sigma}'$ are differed by a scalar because these are the only automorphisms of $(\mathcal{E},\varphi, (L_x)_{x\in \overline{V}})$. As the map $\lambda\mapsto\lambda/\sigma(\lambda)$ from $\overline{\mathbb{F}}_{q}$ to $\overline{\mathbb{F}}_q$ is surjective (Hilbert's Theorem 90), we conclude that the cocycle given by $\phi_{\sigma}$ is isomorphic to that of $\phi_{\sigma}'$.

It remains to prove that the parabolic Hitchin bundle $(\mathcal{E}_0,\varphi_0, (L_v)_{v\in {V}})$ determined by a descent datum attached to $(\mathcal{E},\varphi, (L_x)_{x\in \overline{V}})$ is stable and reversely if $(\mathcal{E}_0,\varphi_0, (L_v)_{v\in {V}})$ is stable then the parabolic Hitchin bundle $(\mathcal{E},\varphi, (L_x)_{x\in \overline{V}})$ over $\overline{X}$ is stable. The first statement is trivial. 
For the second statement, we need to use the fact that semistability coincides with stability since the parameter $\overline{\xi}$ is in general position. 
We can argue by contradiction. Suppose $(\mathcal{E},\varphi, (L_x)_{x\in \overline{V}})$ is not semistable, then it admits a maximal destabilizing sub line bundle over $\overline{X}$ preserved by $\varphi$. Such a line bundle over $\overline{X}$ is unique, hence is fixed by Galois action. It then descends to a line bundle over $X$. By our definition of parabolic degree, it is again destabilizing. This contradicts the fact that $(\mathcal{E}_0,\varphi_0, (L_v)_{v\in {V}})$ is semistable. 
\end{proof}

\subsection{Residue morphism} \label{semistabler} 
Next, we are going to define and study a residue morphism. We will be interested in the case that $V$ is a subset of $S_u$ and $R=V\cup S_{cr}$. 
We suppose that
\[D_R=K_X + \sum_{v\in R}v.  \]
Let us denote $$M_{V}^{1}(D_R):=M_{V}^{1,0}(D_R),$$ the moduli space of strictly parabolic Hitchin bundles with parabolic structures in $V$ where we set parabolic weights to be trivial.  

Let $(\mathcal{E}, \varphi)$ be a Hitchin  bundle. The morphism $\varphi$ is equivalent to a morphism $$\mathcal{O}_X\longrightarrow \mathcal{E}{nd}(\mathcal{E})(D_R).$$
 Its characteristic polynomial $t^2+at+b$ gives a section $(a, b)$ in \[H^{0}(X, \mathcal{O}_X(D_R))\oplus H^{0}(X, \mathcal{O}_X(2D_R)).  \]
Let $\mathcal{A}_R$ be the affine space $H^{0}(X, \mathcal{O}_X(D_R))\oplus H^{0}(X, \mathcal{O}_X(2D_R))$ defined over $\mathbb{F}_q$. Let \[\mathcal{R}_R=\prod_{v\in R}\mathcal{R}_v,  \]
where $\mathcal{R}_v=\mathcal{O}_X(D_R)_v\oplus \mathcal{O}_X(2D_R)_v$, the fiber of $ \mathcal{O}_X(D_R)\oplus  \mathcal{O}_X(2D_R)$ in $v$. The space $\mathcal{R}_v$ is viewed as an $\mathbb{F}_q$-vector space by forgetting its $\kappa_v$-vector space structure. 
Note that we have
 \[\mathcal{R}_v(\mathbb{F}_q) \cong \{ t^{2}+at+b \vert  a,b \in \kappa_v \}.\]
 We have a morphism \[\mathcal{A}_R\longrightarrow\mathcal{R}_R,\]
 sending $(a,b)$ to $((a_v, b_v))_{v\in R}$. 
 
\begin{prop}Suppose that $\deg R=\vert  R\otimes_{\mathbb{F}_q}\overline{\mathbb{F}}_q\vert  >2-2g$.
The morphism $\mathcal{A}_R \longrightarrow \mathcal{R}_R$ is linear of codimension $1$. The image consists of elements $(t^2+a_vt+b_v)_{v\in R}$ such that \begin{equation}\label{residue} \sum_{v\in R}\Tr_{\kappa_v\vert  \mathbb{F}_q}(a_v)=0.  \end{equation}
Let $\mathcal{R}^1_R$ be the linear sub-scheme of $\mathcal{R}_R$ of elements satisfying the residue condition \eqref{residue}. 
\end{prop}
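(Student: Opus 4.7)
The plan: the map is manifestly linear, being a pair of evaluation-at-fibre maps for line bundles, so what requires work is the codimension statement and the explicit description of the image. I would treat the two coordinates separately via the standard short exact sequences comparing global sections to their values at $R$. We may assume $R$ nonempty, the case $R=\emptyset$ being vacuous.

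For the $b$-coordinate, start from
\[ 0 \longrightarrow \mathcal{O}_X(2D_R - \sum_{v\in R} v) \longrightarrow \mathcal{O}_X(2D_R) \longrightarrow \bigoplus_{v\in R} \mathcal{O}_X(2D_R)_v \longrightarrow 0. \]
Serre duality gives $h^1(2D_R-\sum_v v) = h^0(K_X-2D_R+\sum_v v)$, and the latter divisor has degree $2-2g-\deg R$, which is strictly negative by the standing hypothesis $\deg R > 2-2g$. Hence $H^0(X,\mathcal{O}_X(2D_R))$ surjects onto $\bigoplus_v \mathcal{O}_X(2D_R)_v$, so the $b$-coordinate contributes nothing to the cokernel.

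For the $a$-coordinate, the analogous sequence
\[ 0 \longrightarrow \mathcal{O}_X(K_X) \longrightarrow \mathcal{O}_X(D_R) \longrightarrow \bigoplus_{v\in R} \mathcal{O}_X(D_R)_v \longrightarrow 0 \]
yields a long exact cohomology sequence. Serre duality gives $H^1(X,\mathcal{O}_X(D_R)) \cong H^0(X,\mathcal{O}_X(-\sum_v v))^\vee = 0$ and $H^1(X,\mathcal{O}_X(K_X)) \cong H^0(X,\mathcal{O}_X)^\vee \cong \mathbb{F}_q$ canonically via the trace. Thus the cokernel of the $a$-coordinate is one-dimensional over $\mathbb{F}_q$, which combined with the $b$-surjectivity pins the codimension down to exactly $1$.

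The remaining and main task is to identify this cokernel map explicitly with the trace-of-residues condition. I would fix canonical local trivialisations of $\mathcal{O}_X(\sum_{w\ne v} w)$ near each $v\in R$, obtained by restricting the global section $1$, giving canonical identifications $\mathcal{O}_X(D_R)_v \cong \Omega^1_X(v)_v$; the residue then provides an isomorphism of the latter onto $\kappa_v$, and this is precisely the identification of $\mathcal{R}_v(\mathbb{F}_q)$ with $\{t^2+at+b \mid a,b\in \kappa_v\}$ used in the excerpt. The content of Serre duality, under these identifications, is that the connecting homomorphism $\bigoplus_v \mathcal{O}_X(D_R)_v \to H^1(X,\mathcal{O}_X(K_X)) \cong \mathbb{F}_q$ becomes $(a_v)_v \mapsto \sum_v \Tr_{\kappa_v|\mathbb{F}_q}(a_v)$; equivalently, this is the classical residue theorem on $X$, stating that the $\mathbb{F}_q$-trace of the sum of residues of a meromorphic differential with at worst simple poles in $R$ vanishes. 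This compatibility is the most delicate point of the argument and the step I would write out most carefully; granting it, the proposition follows at once.
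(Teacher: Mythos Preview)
Your proposal is correct and follows essentially the same approach as the paper: both treat the $a$- and $b$-coordinates separately, establish surjectivity for $b$ via Riemann--Roch/Serre duality, obtain codimension $1$ for the $a$-coordinate by a dimension count (the paper observes the kernel is $H^0(X,\omega_X)$), and then invoke the residue theorem to pin down the image. Your write-up is in fact more explicit than the paper's about identifying the connecting homomorphism with the trace-of-residues map, whereas the paper simply notes the residue condition is necessary and lets the codimension-$1$ statement do the rest.
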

\begin{proof}
By Riemann-Roch theorem, it is easy to verify that when $\deg S>2-2g$, we have \[H^0(X, \mathcal{O}_X(2D))\longrightarrow \prod_{v\in R}\mathcal{O}_X(2D)_v\] is surjective. The kernel of the map \[H^0(X, \mathcal{O}_X(D))\longrightarrow \prod_{v\in R}\mathcal{O}_X(D)_v,\]
is $H^0(X, \omega_X)$. Therefore, for dimension reasons, the image is a linear subspace of codimension $1$. The last thing to observe is that the condition \eqref{residue} is necessary due to the residue theorem. 
\end{proof}

\begin{remark}\label{pneq2}\normalfont
If $p\neq 2$, there exists $b_1, b_2\in \mathbb{F}_q$ such that the polynomial \( t^2-b_1\)
is irreducible and the polynomial \(t^2-b_2\) is split with distinct roots. 
In fact, if $p\neq 2$ this is trivial because then the group endomorphism $x\mapsto x^2$ of $\mathbb{F}_q^\times$ is not surjective. 
\end{remark}

The Hitchin fibration is the morphism that sends a Hitchin bundle to the characteristic polynomial of the Higgs field which gives us a morphism of $\mathbb{F}_q$-schemes
\[{M}_{V}^{1}(D_R)\longrightarrow \mathcal{A}_R. \]
We define the residue morphism as the composition of the natural morphism $\mathcal{A}_R \longrightarrow \mathcal{R}^1_R$ with Hitchin  fibration: 
\[\mathrm{res}_R: {{M}_{V}^{1}(D_R)}\longrightarrow \mathcal{A}_R \longrightarrow \mathcal{R}_R^1,   \]
It sends a triple $(\mathcal{E}, \varphi, (L_v)_{v\in R})$ to the family of characteristic polnomial of $\varphi_v$, $v\in R$.  
Now suppose $R=S_{cr}\cup V$ and let \[\mathcal{R}^1_{S_{cr}}\] be the linear subspace of $\mathcal{R}^1_R$ whose components in $V$ are zero. As the fiber map is nilpotent at $v\in V$, the residue morphism factors through the morphism
\[  \mathrm{res}_{S_{cr}}: {M}_{V}^{1}(D_R)\longrightarrow \mathcal{R}^1_{S°cr}   . \]
Given a point $o_{S_{cr}}\in \mathcal{R}^1_{S_{cr}}(\mathbb{F}_q)$, we will denote 
\[ {M}_{V}^{1}(o):=\res_{S_{cr}}^{-1}(o_{S_{cr}}). \]


\subsection{Geometric side of trace formula}\label{geomet}
We introduce a variant of the trace formulas with an extra parameter $\xi\in (\mathbb{Q}^2)^R$. 

Let $H_B: B(\AAA)\longrightarrow \mathbb{Q}^2$ be the function defined by \[H_B(\begin{pmatrix} a& b\\ 0&d
\end{pmatrix}) = ( \deg a , \deg d ). \]
The Harish-Chandra's map is the extension of $H_B$ to the whole $G(\AAA)$ by Iwasawa decomposition, i.e., if $x=bk\in G(\AAA)$ with $b\in B(\AAA)$ and $k\in G(\mathcal{O})$, we have $H_B(x)=H_B(b)$. 

Let $\htau_B$ be the characteristic function over $\mathbb{Q}^2$ of the subset \[\{   (x,y)\in \mathbb{Q}^2 \vert  x>y   \}.   \]

For any $x\in G(\AAA)$, let $x=bk$ be its Iwasawa decomposition with $b\in B(\AAA)$ and $k=(k_v)_{v\in |X|}\in G(\mathcal{O})$. For every $v\in R$, we define $s_{x,v}$ to be the identity if $k_v\in \mathcal{I}_v$ and to be the non-trivial permutation in $\mathfrak{S}_2$ otherwise. Therefore, given  $(\xi_1, \xi_2)\in \mathbb{Q}^2$, we have \[s_{x,v} (\xi_1, \xi_2)=\begin{cases} (\xi_1, \xi_2), \quad \text{if } {k}_v\in \mathcal{I}_v; \\
(\xi_2, \xi_1), \quad \text{if } {k}_v\notin \mathcal{I}_v. 
\end{cases}
 \]

Let $f\in {C}_c^{\infty}(\ggg(\AAA))$, and $\xi=(\xi_x)_{x\in |S|}\in (\mathbb{Q}^2)^{R}$.  The $\xi$-variant of truncated trace for Lie algebra is defined by the integral
\[J^{\ggg, e,\xi}(f):=\int_{G(F)\backslash G(\AAA)^{e}} k^{\ggg,\xi}(x) \d x ,\]
where $k^{\ggg,\xi}(x)$ equals 
\[ \begin{split}
&\sum_{\gamma\in \ggg(F)} f(\ad(x)^{-1}\gamma)\\ -& \sum_{\delta\in B(F)\backslash G(F)} \htau_{B}(H_B(\delta x)+\sum_{v\in R} s_{\delta x, v}\xi_v )\sum_{\gamma\in \mathfrak{t}(F)}\int_{\nnn(\AAA)} f(\ad(\delta x)^{-1} (\gamma+U))\d U . \end{split} \]


Let \[\mathcal{E}_\ggg= \{   t^2+at+b \in F[t] \}, \] 
be the set of rank $2$ unitary polynomials. 
Let \[\mathcal{E}_G= \{   t^2+at+b \in F[t] \vert    b\neq 0  \},\] 
be the subset of $\mathcal{E}_\ggg$ consisting of polynomials whose constant term is non-zero.

For any element $\gamma\in \mathfrak{g}(F)$, we define $\chi_\gamma\in \mathcal{E}_\ggg$ to be the characteristic polynomial of $\gamma$. 
Given $\chi\in \mathcal{E}_\ggg$, we define $k_\chi^{\ggg,\xi}(x)$ for $x\in G(\AAA)$ by 
\[\begin{split}
&\sum_{\gamma\in \ggg(F), \chi_\gamma=\chi} f(\ad(x)^{-1}\gamma)  \\
-&\sum_{\delta\in B(F)\backslash G(F)} \htau_{B}(H_B(\delta x)+\sum_{v\in R} s_{\delta x, v}\xi_v )\sum_{\gamma\in \mathfrak{t}(F), \chi_\gamma=\chi}\int_{\nnn(\AAA)} f(\ad(\delta x)^{-1} (\gamma+U))\d U . \end{split}\]
We define \[J_\chi^{\ggg, e,\xi}(f):=\int_{G(F)\backslash G(\AAA)^{e}} k_\chi^{\ggg,\xi}(x) \d x. \]
We have proved in \cite[Section 5]{Yu2} that the integrals converge, and it is non-zero for only finitely many $\chi\in\mathcal{E}_\ggg$. Therefore, we  have
\[J^{\ggg, e,\xi}(f)=\sum_{\chi\in \mathcal{E}_\ggg} J_\chi^{\ggg, e,\xi}(f) . \]
If we set $\xi=0$, we'll omit $\xi$ from the notation. 

We also have the group version. 
Let $\chi \in \mathcal{E}_G$ and $f\in C_c^{\infty}(G(\AAA))$, then we define 
\[\begin{split}
k_\chi^{G, \xi}(x)=
&\sum_{\gamma\in G(F), \chi_\gamma=\chi} \sum_{i\in\mathbb{Z}} f(x^{-1}\gamma a^i x)  \\
-&\sum_{\delta\in B(F)\backslash G(F)} \htau_{B}(H_B(\delta x)+\sum_{v\in R} s_{\delta x, v}\xi_v )\sum_{\gamma\in {T}(F), \chi_\gamma=\chi} \sum_{i\in\mathbb{Z}}  \int_{N(\AAA)} f((\delta x)^{-1} (\gamma n) a^i \delta  x)\d n ,  \end{split} \]
where $a\in \mathbb{A}^\times$ is a fixed degree $1$ idèle. Note that if the support of $f$ is contained in $G(\AAA)^{0}$ (for example in $G(\mathcal{O})$), then only $i=0$ in the sum over $i\in \mathbb{Z}$ contributes. 
We define $k^{G,\xi}(x)$ to be the sum of  $k_{\chi}^{G,\xi}(x)$ over $\chi \in \mathcal{E}_G$. 
We define $J^{G, e, \xi}_\chi(f)$ (resp. $J^{G, e, \xi}(f)$) to be the integral of $k_\chi^{G, \xi}(x)$ (resp. $k^{G, \xi}(x)$) over $x\in G(F)\backslash G(\AAA)^e$. 
We have then, by definition
\[J^{G, e, \xi}(f)=\sum_{\chi \in \mathcal{E}_G} J^{G, e, \xi}_\chi (f). \]
The sum is again a finite sum. If we take $e=1$ and $\xi=0$,  we get the geometric side of the trace formula, which is Arthur's original definition adapted to a function field: \begin{equation}J^{1}_{geom}(f)=J^{G, 1, 0}(f). \end{equation}

\begin{remark}
We call $J^{G,e,\xi}(f)$ the truncated trace since the main term in the integrand of $J^{G,e,\xi}(f)$: 
\[ \sum_{\gamma\in G(F)a^\mathbb{Z}}f(x^{-1}\gamma x) \] is the diagonal evaluation of the kernel function of the regular action $R(f)$ on $L^2(G(F)\backslash G(\AAA)/a^\mathbb{Z})$. The extra term truncates the following function in $x$: \[ \sum_{\gamma\in {T}(F)a^\mathbb{Z}}\int_{N(\AAA)} f( x^{-1} (\gamma n) x)\d n,\] 
which is the diagonal evaluation of the regular action $R(f)$ on $L^2(T(F)N(\AAA)\backslash G(\AAA)/a^\mathbb{Z})$.
\end{remark}

\subsection{A geometric interpretation of $J^{1}_{geom}(f)$} 
Suppose that $f$ is the function constructed in Proposition \ref{f}. We have the following result. 

\begin{theorem}\label{Higg}
Let $o_{S_{cr}}\in \mathcal{R}^{1}_{S_{cr}}({\mathbb{F}}_{q})$ so that every polynomial $o_v$ has distinct roots and is split over $\kappa_v$ if $v\in S_r$, and is irreducible if $v\in S_c$.
Then we have \[J^{1}_{geom}(f)=\sum_{ V\subseteq S_u} (-1)^{|S_u-V|}2^{|S_u-V|} q^{-(4g-3+|\overline{R}|)}    |M^1_{V}(o_{S_{cr}})(\mathbb{F}_q)|. \]
\end{theorem}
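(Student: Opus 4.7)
The strategy is to reduce $J^{1}_{geom}(f)$ to a Lie-algebra trace formula and then apply Weil's adelic dictionary to interpret the result as a weighted count of parabolic Hitchin bundles, paralleling the generic case treated in \cite{Yu1,Yu2}. The first step is to linearize at $S_u$: at each $v\in S_u$ write $f_v=(\vol\mathcal{I}_v)^{-1}\mathbbm{1}_{\mathcal{I}_v}\theta_v-2\,\mathbbm{1}_{\mathcal{K}_v}\theta_v$; multiplying out over $v\in S_u$ and using linearity of $J^{1}_{geom}$ yields
\[
J^{1}_{geom}(f)=\sum_{V\subseteq S_u}(-1)^{|S_u-V|}2^{|S_u-V|}\,J^{1}_{geom}(f_V),
\]
where $f_V$ has Iwahori component at $v\in V$ and the $\theta_v$-twisted spherical component at $v\in S_u-V$. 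Setting $R:=V\cup S_{cr}$, the theorem reduces to showing
\[
J^{1}_{geom}(f_V)=q^{-(4g-3+|\overline{R}|)}\,|M^{1}_{V}(o_{S_{cr}})(\mathbb{F}_q)|
\]
for each subset $V\subseteq S_u$.

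Second, I would pass to the Lie algebra. Since $p\neq 2$ and the central character of $f_V$ is pinned down by $\theta$, after twisting by an appropriate central element the support of $f_V$ is concentrated near a central translate of the identity, and a Cayley-type shift matches each $J^{G,1,0}_\chi(f_V)$ with a Lie-algebra trace $J^{\ggg,1,0}_{\chi^\ggg}(f^\ggg_V)$. Here $\chi^\ggg$ ranges over trace-free characteristic polynomials lifting to global sections of $\mathcal{A}_R$ (this is exactly the residue condition \eqref{residue} cutting out $\mathcal{R}^{1}_R$), with $\chi^\ggg\equiv o_v\pmod{\varpi_v}$ at $v\in S_{cr}$ and nilpotent residue at $v\in V$. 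The components of $f^\ggg_V$ become indicators of $\ggg(\mathcal{O}_v)$ at unramified places, of the preimage of $o_v$ inside $\ggg(\mathcal{O}_v)$ at $v\in S_{cr}$, and of the nilpotent cone of $\mathrm{Lie}(\mathcal{I}_v)$ at $v\in V$.

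Third, by Weil's dictionary the double quotient $G(F)\backslash G(\AAA)^{1}/K$, for an appropriate open compact $K$, parametrizes rank-$2$ degree-$1$ parabolic vector bundles $(\mathcal{E},(L_v)_{v\in V})$ over $X$. The inner sum $\sum_{\gamma\in\ggg(F),\,\chi_\gamma=\chi^\ggg}f^\ggg_V(\mathrm{Ad}(x)^{-1}\gamma)$ then counts Higgs fields $\varphi\colon\mathcal{E}\to\mathcal{E}(D_R)$ with residue $o_v$ at $v\in S_{cr}$ and nilpotent residue compatible with $L_v$ at $v\in V$; summing over $\chi^\ggg$ in the fibre $\mathrm{res}_{S_{cr}}^{-1}(o_{S_{cr}})$ reconstitutes $|M^{1}_V(o_{S_{cr}})(\mathbb{F}_q)|$. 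Arthur's truncation term $\htau_B$ (with the trivial shift $\xi=0$) cancels the Eisenstein contribution on unstable bundles, so that only stable parabolic Hitchin bundles survive. The normalization $q^{-(4g-3+|\overline{R}|)}$ drops out of comparing the adelic Haar measure (for which $G(\mathcal{O})$ has volume $1$, and $\mathcal{I}_v$ has index roughly $q_v+1$ in $\mathcal{K}_v$) with the counting measure on $M^{1}_V(o_{S_{cr}})$, using Riemann--Roch applied to $\mathcal{O}_X(D_R)$ and $\mathcal{O}_X(2D_R)$.

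The principal obstacle is to verify, with $\xi=0$, that the truncation $\htau_B$ exactly implements the stability condition defining $M^{1}_V(o_{S_{cr}})$ when residues are nilpotent (at $v\in V$) or anisotropic semisimple (at $v\in S_c$). In the split regular case of \cite{Yu2} the matching between stable Hitchin bundles and the truncated integrand is combinatorial, built from Arthur's $\htau_B$; here one must further check that every destabilizing sub-line bundle preserved by a nilpotent $\varphi_v$ interacts with $L_v$ in the expected way (either $\mathcal{L}_v=L_v$ or $\mathcal{L}_v\neq L_v$, consistent with the definition of $\mathrm{p\text{-}deg}$), and that irreducibility of $o_v$ at $v\in S_c$ excludes any destabilizing sub-line bundle stable under $\varphi_v$. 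This case-by-case combinatorial identification of stable parabolic orbits with the non-cancelling contributions is the technical heart of the argument.
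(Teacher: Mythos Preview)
Your outline shares the first and last moves with the paper (linearize at $S_u$, then interpret adelically via Weil's dictionary), but the middle step is where your argument diverges and, as written, has a genuine gap.

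You claim that a Cayley-type shift turns $f_V$ directly into a Lie-algebra test function whose component at $v\in S_{cr}$ is ``the indicator of the preimage of $o_v$ inside $\ggg(\mathcal{O}_v)$''. This is not what happens. At $v\in S_{cr}$ the function $f_v$ is the Deligne--Lusztig character $x\mapsto\Tr(\rho_v(\overline{x}^{-1}))$ on $\mathcal{K}_v$; pushing this through $x\mapsto x-1$ does not produce an orbit indicator. The paper's mechanism is different: first one shows (citing \cite[Th.~6.2.1]{Chau}, not a support argument) that $J^1_\chi(f^V)=0$ unless $\chi=(t-\alpha)^2$, so only the \emph{unipotent} contribution survives on the group side, giving $J^1_{geom}(f^V)=(q-1)J^1_{unip}(f^V)$. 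Then $J^1_{unip}(f^V)=J^{\ggg,1}_{nilp}(\varphi^V)$ via $x\mapsto 1+x$. The decisive step is the Poisson summation identity $J^{\ggg,e}(\varphi)=q^{4-4g}J^{\ggg,e}(\hat{\varphi})$: it is the \emph{Fourier transform} $\hat{\varphi^V}$, not $\varphi^V$ itself, whose local components are the orbit-indicator functions you describe. The link between the Deligne--Lusztig character at $v\in S_{cr}$ and the indicator of $\wp_v^{-d_v}\Omega_{t_v}$ is precisely Springer's hypothesis (Kazhdan's theorem, Proposition~\ref{ft}(3)); the link at $v\in V$ between $\mathbbm{1}_{\mathfrak{I}_v}$ and $\mathbbm{1}_{\wp_v^{-d_v}\mathfrak{I}_{v+}}$ is the elementary Fourier computation in Proposition~\ref{ft}(2). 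Without this Fourier/Springer step there is no reason the residue datum $o_{S_{cr}}$ should appear at all, and your ``summing over $\chi^\ggg$ in the fibre $\res_{S_{cr}}^{-1}(o_{S_{cr}})$'' has no source: on the pre-Fourier side only the single nilpotent $\chi$ contributes.

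A secondary consequence is that your accounting for the power of $q$ is off. The factor $q^{-(4g-3+|\overline{R}|)}$ does not come from Riemann--Roch on $\mathcal{O}_X(D_R)$; it is assembled from the Poisson constant $q^{4-4g}$, the local Fourier constants $q_v^{-1}$ at $v\in R$, the factor $(q-1)$ from the $\mathbb{F}_q^\times$-many central $\chi$, and a matching $q$ on the Lie-algebra side (see \eqref{ahla}). Once the Fourier step is in place, the geometric interpretation of $J^{\ggg,1}(\hat{\varphi^V})$ via Weil's dictionary and the truncation argument is exactly as in \cite[Theorem~5.2.1]{Yu}, so your final paragraph is on the right track---but it applies to $\hat{\varphi^V}$, not to the function you constructed.
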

\begin{proof}
The main new ingredient of the proof compared to \cite{Yu} is the treatment of the local components for $v\in S_u$.

Since $J^{1}_{geom}$ is a linear in $C_c^\infty(G(\AAA))$, we have \[J^{1}_{geom}(f) =\sum_{V\subseteq S_u} (-1)^{\vert  S_u-V\vert  } 2^{\vert  S_u-V\vert  } J^{1}_{geom}(f^{V}),  \]
where $f^{V}=\otimes f^{V}_v$ is the tensor product of functions $f_v^V\in C_c^\infty(G_v)$ such that \[f_v^V=f_v\]
 at a place $v\notin S_u$, for $v\in S_u-V$ it is defined so that for $x\in G_v$: 
\[f^{V}_v(x)=\mathbbm{1}_{\mathcal{K}_v}(x)\theta_v(\det \overline{x}),\]  and  for $v\in V$ it is defined  so that for $x\in G_v$: 
\[ f^{V}_v(x)=\frac{1}{\vol(\mathcal{I}_v)}\mathbbm{1}_{\mathcal{I}_v}(x)\theta_v(\det \overline{x}). \]

We apply \cite[Th. 6.2.1]{Chau}.  
It says that, as the support of $f^V\in C_c^\infty(G(\AAA))$ is contained in $G(\mathcal{O})$, we have
\[J^{1}_\chi(f^V)=0, \]
except if $\chi=(t-a)^2$ with $a\in \mathbb{F}_q^\times$. Therefore for any $V\subseteq S_u$, we have
\[ J^{1}_{geom}(f^V)=\sum_{\chi=(t-\alpha)^2, \alpha\in \mathbb{F}_q^\times} J^{1}_{\chi}(f^V).  \]
Since the eigenvalues of ramifications $\mathfrak{R}$ satisfy 
\[  \prod_{x\in \overline{S}} \varepsilon_{x,1}\varepsilon_{x,2}=1,    \]
By our construction of $f$, it implies that (see \cite[Proposition 2.4.2]{Yu})
\[f^V(zx)=f^V(x), \quad \forall x\in G(\AAA), z\in Z(\mathbb{F}_q). \]
In particular, we may use the identity to $z=\alpha$, and we obtain
\[ J^{1}_{(t-\alpha)^2}(f^V)=J^{1}_{(t-1)^2}(f^V). \]
Therefore,  \begin{equation}J^{1}_{geom}(f)=(q-1)\sum_{V\subseteq S_u} (-1)^{\vert  S_u-V\vert  } 2^{\vert  S_u-V\vert  }J^{1}_{unip}(f^V) ,   \end{equation}
where we denote $J^{1}_{unip}(f^V) =J^{1}_{(t-1)^2}(f^V)$ because the elements whose characteristic polynomial is $(t-1)^2$ are exactly those unipotent elements. 

We are going to pass to the Lie algebra version. 
For every $V\subseteq S_u$, we will define a function \[ \varphi^{V}=\otimes_{v\in \vert  X\vert  } \varphi_v^{V}\in C_c^\infty(\ggg(\AAA)), \]
whose support is in $\ggg(\mathcal{O})$, and that the map $x\mapsto x+1$ from nilpotent elements in $\ggg(F)$ to unipotent elements in $G(F)$ 
induces an identity
\[J^{\ggg, 1}_{nilp}(\varphi^V)= J^{1}_{unip}(f^V) .\]
Indeed, our main emphasis lies in the Fourier transform of $\varphi^V$.
We need the following proposition \ref{ft}, obtained by direct calculations except Springer's hypothesis proved by Kazhdan, to construct this $\varphi^V$. 

Let  \[K_X=\sum_{v}d_v v.  \]
We have 
\[ \mathbb{A}/(F+\prod_v\wp_v^{-d_v})\cong H^1(X, \Omega_X^{1})\cong H^{0}(X, \mathcal{O}_X)^\ast\cong \mathbb{F}_q, \]
by Serre duality and the fact that $X$ is geometrically connected. We fix a non-trivial additive
character $\psi$ of $\mathbb{F}_q$. Via the above isomorphisms, $\psi$ can be viewed as a character of $\mathbb{A}/F$. We use this $\psi$ in the definition of Fourier transform on $\ggg(\AAA)$ and we use its local component $\psi_v$ in the definition of Fourier transform on $\ggg_v$ for every $v\in |X|$.  
Let $\langle, \rangle$ be the bilinear form on $\ggg$ defined for any two $x,y\in \ggg$ by $\langle x, y\rangle:= \mathrm{Tr}(xy),$ where the product is the product of matrices. Then $\langle, \rangle$ is non-degenerate and $G$-adjoint invariant. We define the Fourier transform of any $\varphi\in {C}_{c}^{\infty}(\ggg(\AAA))$ by
\[\hat{\varphi}(x):=\int_{\ggg(\AAA)}f(y)\psi(\langle x,y\rangle)\d y.  \]
Based on the Poisson summation formula, we have an identity (\cite[Theorem 5.7]{Yu2}):
\begin{equation}\label{Jggg}J^{\ggg, e, \xi}(\varphi) = q^{4-4g} J^{\ggg, e, \xi}(\hat{\varphi}). \end{equation}

\begin{prop}(\cite[Prop. 5.3.1]{Yu}) \label{ft}
We have the following results on Fourier transformation.
\begin{enumerate}  \item
For any place $v$, the Fourier transform of the characteristic function of $\ggg(\ooo_v)$ can be calculated by the following formula:
\[\hat{\mathbbm{1}_{\ggg(\ooo_v)}}=   \mathbbm{1}_{ \wp_{v}^{-d_v}  \ggg(\ooo_v)}. \] 
\item
Let $\mathfrak{I}_v$ be the Iwahori subalgebra of $\ggg_v$ consisting of matrices in $\begin{pmatrix}
\mathcal{O}_v& \mathcal{O}_v\\
\wp_v &\mathcal{O}_v
\end{pmatrix}
$ 
and $\mathfrak{I}_{v+}$ its open subset consisting of matrices in $\begin{pmatrix}
\wp_v& \mathcal{O}_v\\
\wp_v &\wp_v
\end{pmatrix}. 
$ Then we have  
\[    \hat{\mathbbm{1}_{\mathfrak{I}_v}}=q_v^{-1}\mathbbm{1}_{\wp_{v}^{-d_v}\mathfrak{I}_{v+}}. \]

\item (Springer's hypothesis \cite{Kazhdan}\cite{KV}.) Let $U$ be a maximal torus of $G_{\kappa_v}$ and $\theta$ any character of $U(\kappa_v)$, $t$ a regular element in $\mathfrak{u}_v(\kappa_v)$ and $\rho=\epsilon R_{U}^G\theta$ the Deligne-Lusztig virtual representation of $G(\kappa_v)$ induced from $(T_v, \theta)$, where $\epsilon=1$ if $U=T$ and $\epsilon=-1$ if $U$ is not split.   Let $$e_{\rho}:=\begin{cases} \Tr({\rho}(\overline{x}^{-1})), \quad x\in \mathcal{K}_v ;\\ 0, \quad x\notin \mathcal{K}_v ; \end{cases} $$ where $\overline{x}$ denotes the image of $x$ under the map $\mathcal{K}_v \longrightarrow G(\kappa_v)$. Let $\overline{\Omega}_t\subseteq \ggg(\kappa_v)$ the $\Ad (G(\kappa_v))$-orbits of $t$ and $\Omega_t\subseteq \ggg(\ooo_v)$ be the preimage of $\overline{\Omega}_t$ of the map $\ggg(\mathcal{O}_v)\longrightarrow \ggg(\kappa_v)$.

Then for any unipotent element $u\in \mathcal{K}_v\cap G_{v, unip}$, we have $$e_{\rho}(u)= q_v^{-(4d_v+1)  } \hat{\mathbbm{1}}_{\wp_{v}^{-d_v}\Omega_{t}  }(u-1).  $$ \end{enumerate} \end{prop}
Now we come to the construction of $\varphi^V=\otimes_v \varphi_v^V$. The local components  $\varphi_v^{V}$ are defined so that with the notation of Proposition \ref{ft}, we have the follows: 
\begin{itemize} 
\item If $v\notin S_{cr}\cup V$, \[ \varphi_v^{V}=\mathbbm{1}_{\ggg(\mathcal{O}_v)} ;\] 
\item If $v\in S_{cr}$, \[\hat{\varphi_v^{V}}=q_v^{-1  } \hat{\mathbbm{1}}_{\wp_{v}^{-d_v}\Omega_{t_v}  };\]
here, $t_v\in T(\kappa_v)$ for $v\in S_r$ and $t_v\in U(\kappa_v)$ for $v\in S_c$ is a regular element; 
\item If $v\in V$,  \[\hat{\varphi_v^{V}}=\frac{1}{\vol(\mathcal{I}_v)}q_v^{-1}\mathbbm{1}_{ \wp_{v}^{-d_v} {\mathfrak{I}_{v+}}}. \]
\end{itemize}
For the Fourier transform, we have
\[\hat{\hat{f}_v}(X)=q_v^{4d_v}f_v(-X),\]
it is then direct to see that \begin{equation}\label{passL}J^{\ggg, 1}_{unip}(f^V)=J^{\ggg, 1}_{nilp}(\varphi^V).\end{equation}
We require that \[ \sum_{v\in S_{cr}} \Tr_{\kappa_v|\mathbb{F}_q}\Tr(t_v)=0. \] 
This ensures that \[ \varphi^V(z+x)=\varphi^V(x)\] for any $x\in \ggg(\AAA)$ and $z\in \zzz_G(\mathbb{F}_q)$. Note that the support of $\varphi$ is contained in $\ggg(\mathcal{O})$, we deduce by \cite[Th. 6.2.1]{Chau} that  
\[J^{\ggg, 1}(\varphi^V)=qJ^{\ggg, 1}_{nilp}(\varphi^V). \]
Combining with the trace formula for Lie algebra \eqref{Jggg} and \eqref{passL}, we have 
\begin{equation}\label{ahla}J^{1}(f^V)=(q-1)q^{3-4g-\deg V-\deg S_{cr}} ( \prod_{v\in V}\vol(\mathcal{I}_v)^{-1}) J^{\ggg, 1}(\hat{\varphi^V}). \end{equation}
The result is then deduced by a geometric interpretation of  $J^{\ggg, 1}(\hat{\varphi^V})$ using Weil's dictionary. We refer the reader to \cite[Theorem 5.2.1]{Yu} for details (here, the parameter $\xi$ is set to be zero). 
\end{proof}

\begin{theorem}\label{NoSc}
Suppose $S_c=\emptyset$. 
For $V\subseteq S_u$, let $D_{S_{r}\cup V}=K_X+\sum_{v\in S_{r}\cup V}v. $
Then \[J^{1}_{geom}(f)=\sum_{ V\subseteq S_u} (-1)^{|S_u-V|}2^{|S_u-V|} q^{-(4g-3+|\overline{R}|)}    |M^1_{V\cup S_{r}}(D_{S_{r}\cup V})(\mathbb{F}_q)|. \]
In particular, for any $o_{S_{r}}\in \mathcal{R}^{1}_{S_r}(\mathbb{F}_q)$ so that every $o_v$ ($v\in S_r$) has distinct roots, we have
\[  |M^1_{V}(o_{S_r})(\mathbb{F}_q)|=  |M^1_{V\cup S_{r}}(D_{S_{r}\cup V})(\mathbb{F}_q)|.  \]
\end{theorem}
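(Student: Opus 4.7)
The plan is to run the same machinery as in the proof of Theorem \ref{Higg}, but to change the choice of local test function on the Lie algebra at each $v\in S_r$ so that Weil's dictionary will produce the count $|M^1_{V\cup S_r}(D_{S_r\cup V})(\mathbb{F}_q)|$ in place of $|M^1_V(o_{S_r})(\mathbb{F}_q)|$. The term-by-term nature of the proof will then immediately yield the "In particular" statement, because each individual summand $J^1(f^V)$ will be computed in two different ways and the overall normalizations match.

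First I would decompose $f = \sum_{V\subseteq S_u}(-1)^{|S_u-V|}2^{|S_u-V|}f^V$ exactly as in Theorem \ref{Higg}'s proof, reducing the problem to computing each $J^1_{geom}(f^V)$. Since $\mathrm{supp}(f^V)\subseteq G(\mathcal{O})$, only unipotent (equivalently, via $x\mapsto x+1$, nilpotent) contributions survive, and we are in position to pass to the Lie algebra and apply Fourier transform plus Poisson summation \eqref{Jggg}. At $v\notin S_r\cup V$ and at $v\in V$ I would keep the same local choices as in the proof of Theorem \ref{Higg} (Proposition \ref{ft}(1) and (2)).

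The new ingredient is the local choice at $v\in S_r$. Instead of $\widehat{\varphi^V_v}=q_v^{-1}\widehat{\mathbbm{1}}_{\wp_v^{-d_v}\Omega_{t_v}}$ (which came from Springer's hypothesis and produced the residue datum $o_v$), I would define a new function $\tilde{\varphi}^V$ by replacing this Fourier transform with
\[\widehat{\tilde{\varphi}^V_v}=\frac{1}{\vol(\mathcal{I}_v)}\,q_v^{-1}\mathbbm{1}_{\wp_v^{-d_v}\mathfrak{I}_{v+}},\]
which is the same local shape used at $v\in V$. The geometric interpretation via Weil's dictionary (identical to the last step of Theorem \ref{Higg}) then yields
\[J^{\ggg,1}(\widehat{\tilde{\varphi}^V})\longleftrightarrow |M^1_{V\cup S_r}(D_{S_r\cup V})(\mathbb{F}_q)|\,\cdot(\text{same normalizing factor as in Theorem \ref{Higg}}),\]
because nilpotent residue plus a parabolic line is built into the Iwahori-type local function at every $v\in V\cup S_r$.

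The crux is therefore to establish the equality $J^{\ggg,1}_{nilp}(\tilde{\varphi}^V) = J^{\ggg,1}_{nilp}(\varphi^V)$, which by Poisson summation/Fourier reduces to a purely local comparison at each $v\in S_r$ of the two candidate local functions on nilpotent orbital integrals. Concretely one must verify that
\[O_u\!\bigl(q_v^{-1}\widehat{\mathbbm{1}}_{\wp_v^{-d_v}\Omega_{t_v}}\bigr) = O_u\!\bigl(\vol(\mathcal{I}_v)^{-1}q_v^{-1}\mathbbm{1}_{\wp_v^{-d_v}\mathfrak{I}_{v+}}\bigr)\]
for every nilpotent $u\in\ggg_v$. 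This is the only genuinely new computation. For $u=0$ it is a straightforward volume calculation using the Iwahori factorization of $\mathcal{K}_v$; for a regular nilpotent $u$ it comes from re-reading Springer's hypothesis (Proposition \ref{ft}(3)) applied to the split regular element $t_v$, using that the principal series character $e_\rho$ takes the constant value $1$ on nontrivial unipotents in $G(\kappa_v)$ and that this contribution is exactly reproduced by the Iwahori cell in $\mathfrak{I}_{v+}$. The assumption that $o_v$ has distinct split roots is precisely what makes this orbital matching work. This step will be the main obstacle, since it is the one not explicitly given by Theorem \ref{Higg} and its preparatory lemmas; the hope is that, as in \cite[5.3]{Yu}, it reduces to a direct Bruhat-type calculation on $GL_2$. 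Once it is established, the first identity of the theorem follows, and the "In particular" statement is then a term-by-term consequence of comparing the two identifications of the individual summand $J^1(f^V)$ provided by Theorem \ref{Higg} and the construction above.
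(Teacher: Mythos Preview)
Your approach aims at the right target but takes an unnecessarily roundabout route, and the ``crux'' step as you describe it has a genuine gap. The paper's proof is much shorter and uses a cleaner idea at the \emph{group} level.

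The paper does not go to the Lie algebra and compare Fourier transforms at $v\in S_r$. Instead it replaces $f_v$ (for $v\in S_r$) by
\[
\widetilde{f}_v(x)=\begin{cases}\vol(\mathcal{I}_v)^{-1}\chi_v(x^{-1}),& x\in\mathcal{I}_v;\\ 0,&x\notin\mathcal{I}_v,\end{cases}
\]
where $\chi_v$ is the regular character of Theorem~\ref{mono}(r). Since $\rho_v=\Ind_{\mathcal{I}_v}^{\mathcal{K}_v}\chi_v$, the formula for an induced character gives $\int_{\mathcal{K}_v}\widetilde{f}_v(k^{-1}xk)\,\d k=f_v(x)$ for all $x\in G_v$. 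Because $J^1_{geom}$ is $G(\mathcal{O})$-conjugation invariant, this immediately yields $J^1_{geom}(f)=J^1_{geom}(\widetilde{f})$, and then the entire proof of Theorem~\ref{Higg} runs verbatim with $\widetilde{f}$ in place of $f$, now producing an Iwahori-type local function at every $v\in S_r\cup V$ and hence parabolic structures there. The ``In particular'' clause then follows term by term, exactly as you say.

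The gap in your plan is the claim that $J^{\ggg,1}_{nilp}(\tilde\varphi^V)=J^{\ggg,1}_{nilp}(\varphi^V)$ ``reduces to a purely local comparison of nilpotent orbital integrals''. The distribution $J^{\ggg,1}_{nilp}$ is \emph{not} invariant: it carries a truncation term coming from $\gamma=0\in\ttt(F)$, namely the constant-term integral $\int_{\nnn(\AAA)}\varphi(\ad(\delta x)^{-1}U)\,\d U$ cut off by $\hat\tau_B$. So matching local nilpotent orbital integrals alone does not suffice; you would also need to match these constant terms, and the local identity you write (which compares the \emph{Fourier transforms} $\widehat{\varphi^V_v}$ and $\widehat{\tilde\varphi^V_v}$ at nilpotent $u$) does not address this. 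What actually makes everything work is precisely the $\mathcal{K}_v$-averaging identity above, which handles both the orbital and the truncation pieces simultaneously via the $G(\mathcal{O})$-conjugation invariance of $J^1_{geom}$. If you unwind your proposed Bruhat computation you will find yourself re-proving this induced-character identity anyway, so it is cleaner to invoke it directly at the group level as the paper does.
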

\begin{proof}
Let $v\in S_r$ and $\chi_v$ be the character defined in Theorem \ref{mono}. 
Let 
\[ \widetilde{f}_v(x)=\begin{cases}
\vol(\mathcal{I}_v)^{-1}\chi_v(x^{-1}), \quad x\in \mathcal{I}_v; \\
0 \quad x\notin \mathcal{I}_v.
\end{cases}
\]  
Then for any $x\in G_v$, by the formula for induced character, we have 
\[\int_{\mathcal{K}_v}\widetilde{f}_v(k^{-1}xk)\d k= f_v(x). \]
Here the function $f_v$ ($v\in S_r$) is the function we defined in Proposition \ref{f}.
Therefore, \[ J_{geom}^1(f)=J_{geom}^1(\widetilde{f}). \]
Now the same proof of Theorem \ref{Higg} applies. 
\end{proof}

\subsection{Independence of parabolic weights}\label{indep}
The results in this section will only be needed only in the proof of the Theorem \ref{2}. We suggest that the reader reads this part only when referred to.

We come back to assumptions in Section \ref{semistabler} that $V\subseteq S_{u}$, $R=V\cup S_{cr}$ and $D_R=K_X+\sum_{v\in R}v$.
We consider the case that the parameter $\xi=(\xi_{v})_{v\in R}$ of parabolic weights is not necessarily trivial. Recall that we use $\overline{\xi}=(\overline{\xi}_{x})_{x\in \overline{R}}$ to denote the family such that $\overline{\xi}_x=\frac{1}{[\kappa_v:\mathbb{F}_q]}\xi_v$ for any point $x\in \overline{R}$ lying over $v\in R$.

\begin{theorem}\label{weights}
Suppose that $e$ is an odd integer or there exists a place $v\in R$ such that $\deg v$ is odd. 
Suppose that $\mathbb{F}_q\neq \mathbb{F}_2$. 
Varying the parabolic weights $(e,\overline{\xi})$ but remaining  in general position and  \[ \xi_{v,1}\geq \xi_{v,2}\geq \xi_{v,1}-\deg v\]  for any $v\in R$, the cardinality of the set  
\( M^{e, \xi}_R(D_R)(\mathbb{F}_{q})  \) remains the same. 
 \end{theorem}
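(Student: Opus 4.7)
The plan is a wall-crossing argument carried out within the framework of the parametric trace formula $J^{G,e,\xi}$ of Section \ref{geomet}. First, I would extend the geometric interpretation of Theorem \ref{Higg} to the setting of a non-trivial stability parameter $\xi$: by tracking how $\hat\tau_B(H_B(\delta x)+\sum_{v\in R} s_{\delta x, v}\xi_v)$ enters the definition of $k^{G,\xi}(x)$, one sees via Weil's dictionary and Iwasawa decomposition that this function encodes precisely the $\xi$-parabolic semistability condition for rank-$2$ parabolic Hitchin bundles. Combined with the Fourier transform identity \eqref{Jggg} and a test function $f^{V,e}$ analogous to $f^V$ in the proof of Theorem \ref{Higg} but twisted by a fixed id\`ele of degree $e$, this yields an identity of the form
\[|M_R^{e,\xi}(D_R)(\mathbb{F}_q)| = q^{4g-3+|\overline{R}|}\sum_{V \subseteq S_u}(-1)^{|S_u - V|}2^{|S_u - V|}\,J^{G,e,\xi}(f^{V,e}),\]
with $f^{V,e}$ independent of $\xi$. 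It therefore suffices to show $\xi \mapsto J^{G,e,\xi}(f^{V,e})$ is constant on the admissible region in general position.

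This function is locally constant: within a single chamber (a connected component of the general-position locus), the value $\hat\tau_B(H_B(\delta x)+\sum_v s_{\delta x, v}\xi_v)$ is unchanged at every $(\delta,x)$ appearing in the finite sum defining $k^{G,\xi}(x)$, so the integrand is invariant. The walls between chambers are affine hyperplanes of the form $2d + \sum_{v \in R}(\xi_{v,1}-\xi_{v,2})\,m_v/\deg v = e$, indexed by $d \in \mathbb{Z}$ and integer tuples $(m_v)_{v \in R}$ with $m_v \equiv \deg v \pmod 2$ and $|m_v| \leq \deg v$. Geometrically, a point of such a wall corresponds to the existence of a strictly $\xi$-semistable parabolic Hitchin bundle arising from an extension $0 \to \mathcal{L}_1 \to \mathcal{E} \to \mathcal{L}_2 \to 0$ with $\deg \mathcal{L}_1 = d$, Higgs field preserving $\mathcal{L}_1$, and parabolic type determined by $(m_v)$.

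Finally, I would show that the jump of $J^{G,e,\xi}(f^{V,e})$ across any such wall vanishes under the parity hypothesis. The jump is computable as a signed sum over split or non-split configurations on the wall, with the sign encoding which side of the parabolic inequality the sub-bundle lies. The involution $(\mathcal{L}_1,\mathcal{L}_2) \leftrightarrow (\mathcal{L}_2,\mathcal{L}_1)$ on wall configurations corresponds to the substitution $(d,(m_v)) \leftrightarrow (e-d,(-m_v))$ and reverses the sign of the contribution, pairing terms into cancelling opposites. Under the hypothesis that either $e$ is odd or some $\deg v$ is odd, this involution is fixed-point free: a fixed point would require $d = e-d$, forcing $e$ to be even, together with $m_v = -m_v$ for every $v$, forcing every $m_v = 0$ and hence every $\deg v$ to be even. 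The paired contributions thus cancel, giving a vanishing jump. The main obstacle is the precise bookkeeping of the wall-crossing contributions: one must track signs from the Arthur-style truncation, match the adelic data (extension classes counted by $\mathrm{Ext}^1$ with Higgs field constraints) under the swap involution, and verify that the hypothesis $\mathbb{F}_q \neq \mathbb{F}_2$ suffices to control the stabilizer $\mathbb{F}_q^\times$-actions so that the combinatorial pairing exactly cancels at the level of $\mathbb{F}_q$-point counts.
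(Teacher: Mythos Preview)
Your route is genuinely different from the paper's. The paper argues spectrally: via Lemma~\ref{Traces'} (and this is precisely where $\mathbb{F}_q\neq\mathbb{F}_2$ is used, to produce a primitive character of $\kappa_{v_0}^\times$ nontrivial on $\mathbb{F}_q^\times$) it constructs characters $(\chi_v)_{v\in R}$ such that the test function $h=\mathbbm{1}_{G(\mathcal{O}^R)}\otimes\bigotimes_{v\in R}\vol(\mathcal{I}_v)^{-1}\mathbbm{1}_{\mathcal{I}_v}\chi_v$ has $R(h)$ acting by zero on $L^2(T(F)N(\AAA)\backslash G(\AAA)/a^{\mathbb{Z}})$, and every cuspidal $\pi$ with $\pi_\rho\neq 0$ satisfies $\pi\otimes\lambda\cong\pi\Rightarrow\lambda=1$. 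The subtracted constant-term in $k^{G,\xi}$ then integrates to zero for this $h$, so $J^{G,e,\xi}(h)=\frac12\Tr(R(h)\mid L^2_{cusp}(G(F)\backslash G(\AAA)/a^{\mathbb{Z}}))$, which is visibly independent of $(e,\xi)$. Combined with the geometric identity \eqref{LieGr} this finishes the proof without any wall analysis. In particular the parity hypothesis enters only through Lemma~\ref{vanishing}, and $\mathbb{F}_q\neq\mathbb{F}_2$ only through Lemma~\ref{Traces'}.

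Your wall-crossing sketch has a real gap. That the involution $(\mathcal{L}_1,\mathcal{L}_2)\leftrightarrow(\mathcal{L}_2,\mathcal{L}_1)$ is fixed-point free under the parity hypothesis is correct, but fixed-point freeness alone does not give cancellation: the jump across a wall is not a signed count of configurations but a weighted one, each ordered pair $(\mathcal{L}_1,\mathcal{L}_2)$ contributing through the number of parabolic-Higgs extensions of $\mathcal{L}_2$ by $\mathcal{L}_1$ (with compatible Higgs field and flag). For the swap to cancel you need an equality $|\mathrm{Ext}^1_{\mathrm{par\text{-}Higgs}}(\mathcal{L}_2,\mathcal{L}_1)|=|\mathrm{Ext}^1_{\mathrm{par\text{-}Higgs}}(\mathcal{L}_1,\mathcal{L}_2)|$, i.e.\ a Serre-duality or 2-Calabi--Yau statement for the parabolic Hitchin category with divisor $K_X+\sum_{v\in R}v$. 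That is the substantive input, not bookkeeping, and it is absent from your outline; the paper's spectral route bypasses it entirely. Two smaller points: your displayed identity with the sum over $V\subseteq S_u$ is lifted from Theorem~\ref{Higg}, which concerns $M^1_V(o_{S_{cr}})$ rather than $M^{e,\xi}_R(D_R)$; the relation actually needed here is the single equality $J^{\ggg,e,\xi}(\mathbbm{1}_R)=\frac{1}{q-1}|M^{e,\xi}_R(D_R)(\mathbb{F}_q)|$ from \cite[Theorem~5.2.1]{Yu}. And your invocation of $\mathbb{F}_q\neq\mathbb{F}_2$ to ``control stabilizer actions'' does not point to any concrete step in your scheme where it would be used.
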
\begin{proof}
We will need the following lemma, which is a slight variant of \cite[Theorem 3.4.3]{Yu}. 
\begin{lemm}[\cite{Yu}]\label{vanishing}
Suppose that $f\in C_c^\infty(\ggg(\AAA))$. Suppose that the support of $f$ is contained in $\ggg(\mathcal{O}^R)\prod_{v\in R}\mathfrak{I}_v$ (where $\mathcal{O}^R=\prod_{v\notin R}\mathcal{O}_v$) and $(e,\xi)$ is in general position in the sense that
\[e+\sum_{v\in R}\epsilon_{v}(\xi_{v,1}-\xi_{v,2})\notin 2\mathbb{Z},\] for any $(\epsilon_{v})_{v\in R}\in \{1, -1\}^{R}$.
If moreover, $e$ is odd or $\deg v$ is odd for at least one $v\in R$, we have
$$J^{\ggg, e, \xi}(f)= \sum_{a\in \mathbb{F}_q}J_{(t-a)^2}^{\ggg, e, \xi}(f). $$
\end{lemm}
\begin{proof}
The situation is slightly different than \cite[Theorem 3.4.3]{Yu} where we set $\deg v=1$ for every $v\in R$.  
The same proof implies that either 
\begin{equation}\label{vanish}J^{\ggg,e,\xi}_\chi (f)=0, \end{equation} 
or $\chi$ is the characteristic polynomial of a semisimple element $\sigma\in \ggg(\mathbb{F}_q)$ such that   $Z_{G_\sigma}/Z_G$ is anisotropic, where $G_\sigma$ is the adjoint centralizer of $\sigma$. The last case happens either $\sigma$ is a central element or the characteristic polynomial of $\sigma$ is irreducible. When $e$ is odd and $\sigma\in \ggg(\mathbb{F}_q)$ has an irreducible characteristic polynomial, there is no $x\in G(\AAA)^{e}$ such that $\Ad(x^{-1})(\sigma)\in \ggg(\mathcal{O})$ since  $ G_\sigma(\AAA)G(\mathcal{O})\cap G(\AAA)^{e}=\emptyset$. If there exists some $v\in R$ of odd degree, then the characteristic polynomial of $\sigma$ can not be irreducible in $\mathbb{F}_q[t]$. In fact, $\sigma$ is conjugate to an element in $B(\mathbb{F}_{q_v})$, hence its characteristic polynomial splits in $\mathbb{F}_{q_v}[t]$ hence it must be split in $\mathbb{F}_q[t]$ as it is of degree $2$.
\end{proof}

Recall that we have fixed a divisor \[ D _R=K_X+\sum_{v\in R}v=\sum_v n_v v\] on the curve $X$.   
Let $\mathbbm{1}_{R}$ be the function defined to be the tensor product\[ \bigotimes_{v\notin R}\mathbbm{1}_{\wp^{-n_v}\ggg(\mathcal{O}_v)}\otimes \bigotimes_{v\in R}( \frac{1}{\vol(\mathcal{I}_v)} \mathbbm{1}_{\wp_v^{-n_v}\mathfrak{I}_{v+}}),\] where $\mathfrak{I}_{v+}$ consists of elements in $\ggg(\mathcal{O}_v)$ whose reduction modd-$\wp_v$ belongs to $\nnn(\kappa_v)$. We have shown in \cite[Theorem 5.2.1]{Yu} that if $\xi_{v, 1}\geq \xi_{v,2}\geq \xi_{v,1} - [\kappa_{v}: \mathbb{F}_q]$   and $(e,\xi)$ is in general position (hence semistability coincides with stability), we have 
\begin{equation}J^{\ggg, e, \xi}(\mathbbm{1}_{R}) = \frac{1}{q-1} \vert    M^{e, \xi}_R(D_R)(\mathbb{F}_q) \vert  . \end{equation}
The factor $\frac{1}{q-1}$ comes from the fact that there are $q-1$ automorphisms for stable parabolic Hitchin bundles.

We need a lemma first. \begin{lemm}\label{Traces'} Suppose that $\mathbb{F}_{q}\neq \mathbb{F}_2$.  There exists a family of characters $(\chi_v)_{v\in R}$, $\chi_v: T(\kappa_v)\rightarrow \mathbb{C}^\times$, so that \[ \prod_{v\in R}  \chi_v|_{Z(\mathbb{F}_q)} =1,  \] and  the following properties are satisfied. 
Let $\rho$ be the representation of $\prod_v \mathcal{I}_v$ obtained by the tensor product of the representations $\mathcal{I}_v\longrightarrow T_v(\kappa_v)\xrightarrow{\chi_v} \mathbb{C}^\times$. 
For any automorphic representation $\pi$ of $G(\AAA)$, if $\pi$ contains $\rho$, i.e., the $\rho$-isotypic part $\pi_\rho\neq 0$, then $\pi$ is cuspidal. Moreover, for any such $\pi$ and any character $\lambda$ of $G(\AAA)$ that factors through $\deg\circ \det$, we have \[  \pi\otimes \lambda\cong \pi \implies \lambda=1.  \] \end{lemm}
\begin{proof}
Let $v_0\in R$. Let $\chi_1$ be a primitive character of $\kappa_{v_0}^{\times}\longrightarrow \mathbb{C}^\times$, i.e., an injective homomorphism.
Since $\mathbb{F}_q\neq \mathbb{F}_{2}$,the character $ \chi_{1}$ is non-trivial on $\mathbb{F}_{q}^\times$. We set $\chi_v=1$ for any $v\neq v_0$, and $\chi_{v_0}=(\chi_1, \chi_1^{-1})$. 

We prove that the family $(\chi_v)_{v\in R}$ satisfies all the properties we need. 

We have clearly \[ \prod_{v\in R}  \chi_v|_{\mathbb{F}_q^\times} =1.   \]

Given an automorphic representation $\pi$, it is either cuspidal, or it is a sub-quotient of a parabolic induction $\Ind_{B(\AAA)}^{G(\AAA)}\mu$, for a Hecke character $\mu=(\mu_1, \mu_2)$ of $T(\AAA)/T(F)$. The latter case is impossible. In fact, the condition $\pi_{\rho}\neq 0$ implies  $(\Ind_{B(\AAA)}^{G(\AAA)}\mu)_\rho\neq 0$. This, in turn, implies that $\mu$ is unramified outside $\{v_0\}$ and   \[ \Hom_{T(\mathcal{O}_{v_0})}(\chi_{v_0}, \mu|_{T(\mathcal{O}_{v_0})})\neq 0 \text{, or }  \Hom_{T(\mathcal{O}_{v_0})}(\chi_{v_0}^{w}, \mu|_{T(\mathcal{O}_{v_0})})\neq 0, \]  here we review $\chi_{v_0}$ as a character of $T(\mathcal{O}_{v_0})$, and $\chi_{v_0}^{w}=(\chi^{-1}_{1}, \chi_{1})$. In particular, \[\mu_1|_{\mathbb{F}_{q}^\times}=\chi_1|_{\mathbb{F}_{q}^\times} \text{, or } \mu_1|_{\mathbb{F}_{q}^\times}=\chi_1^{-1}|_{\mathbb{F}_{q}^\times}.  \]  This implies that $\mu_1|_{\mathbb{F}_{q}^\times}\neq 1$. It contradicts the fact that $\mu_1$ is a Hecke character: it must be trivial on $F^\times$.

For the last assertion of the lemma, we use Langlands correspondence to prove it.  
Suppose that $(\sigma, i: \sigma\xrightarrow{\sim} F_{X/\mathbb{F}_q}^{\ast}\sigma$) is the Weil sheaf that corresponds to the cuspidal automorphic representation $\pi$. 
The sheaf $\sigma$ has a rank 2 and is smooth over $(X-\{v_0\})\otimes \overline{\mathbb{F}}_q$. The local monodromies of $\sigma$ over punctured discs $\overline{X}_{x}^{(\ast)}$ (defined in the Introduction) centered at points $x$ in $\{v_0\}\otimes \overline{\mathbb{F}}_q$ are semisimple tame local systems so that a tame generator has as eigenvalues $(\zeta^{q^i}, \zeta^{-q^i} )_{i=1, 2, \ldots, d_{v_0}}$, where $\zeta$ is a $(q^{d_{v_0}}-1)^{th}$ primitive root of unity. If the assertion is not correct, then $\sigma=\sigma_1\oplus \sigma_2$ and the Frobenius action $F_{X/\mathbb{F}_q}^{\ast}$ exchanges isomorphism classes of  $\sigma_1$ and $\sigma_2$. Hence, $\sigma_1$ is fixed by $F_{X/\mathbb{F}_q}^{\ast 2}$. Note that the ramifications of $\sigma_1$ at points in $\{v_0\}\otimes \overline{\mathbb{F}}_q$ must be multiplication by $(\zeta^{\epsilon_i q^i})_{i=1, 2, \ldots, d_{v_0}}$, where $\epsilon_i\in \{1, -1\}$. If $d_{v_0}$ is odd, then $\{v_0\}\otimes \overline{\mathbb{F}}_q$ is cyclically permuted by $F_{X/\mathbb{F}_q}^{\ast 2}$, hence $\epsilon_i$ has the same sign and their product is   \[  \zeta^{\pm(q^{d_{v_0}}-1/q-1)}\neq 1,  \] which is impossible. If $d_{v_0}$ is even, then $\epsilon_{2i}$ (resp. $\epsilon_{2i+1}$) have the same sign. This is also not possible because the product of  eigenvalues of ramifications of $\sigma_1$ is one of the following  \[\zeta^{(q^{d_{v_0}}-1/q-1)}, \zeta^{-(q^{d_{v_0}}-1/q-1)}, \zeta^{(q^{d_{v_0}}-1/q+1)}, \zeta^{-(q^{d_{v_0}}-1/q+1)}.  \] This is again not possible because none of them is $1$.   \end{proof}

We choose a family of characters $(\chi_v)_{v\in R}$ as in the Lemma \ref{Traces'}.  Let $h$ be the function \[ \mathbbm{1}_{G(\mathcal{O}^R)}\otimes \bigotimes_{v\in R}(\frac{1}{\vol(\mathcal{I}_v)}\mathbbm{1}_{\mathcal{I}_v} \chi_v).  \]  Since the support of $h$ is contained in $G(\mathcal{O}^R)\prod_{v}\mathcal{I}_v$,  we have \[ J^{G, e, \xi}_{\chi}(h)=0, \] except if $\chi=(t-a)^2\in \mathcal{E}_G$. Since  \[ \prod_{v\in R}  \chi_v|_{Z(\mathbb{F}_q)} =1,  \] we deduce, similar to the Lie algebra case that \[J^{G, e, \xi}(h)=(q-1)J^{G, e, \xi}_{unip}(h),  \] where $J^{G, e, \xi}_{unip}(h)=J^{G, e, \xi}_{(t-1)^2}(h)$. It is direct to see that the map $X\mapsto 1+X$ from the set of nilpotent elements in $\ggg(F)$ to that of unipotent elements gives us an identity:\[  J^{G, e, \xi}_{unip}(h) = q^{-deg R}  J^{\ggg, e, \xi}_{nilp}(\hat{\mathbbm{1}_{R}}).  \] Therefore we have \begin{equation}\label{LieGr}  J^{G,e,\xi}(h)=q^{-\deg R+3-4g} |M^{e,\xi}_{R}(D_R)(\mathbb{F}_q)|.  \end{equation}

Recall that $a\in \AAA^\times$ is a degree $1$ idèle, viewed as a scalar matrix.  By Lemma \ref{Traces'}, we know that the regular action  $R(h)$ on $L^2(G(F)\backslash G(\AAA)/a^\mathbb{Z})$ is a projection whose image lies inside the space of cuspidal automorphic forms and the regular action $R(h)$ on $L^2(T(F)N(\AAA)\backslash G(\AAA)/a^\mathbb{Z})$ is zero. It shows that for any $x\in G(\AAA)$,  \[0=\sum_{\gamma\in {T}(F)} \sum_{i\in\mathbb{Z}}  \int_{N(\AAA)} h((\delta x)^{-1} (\gamma n)\delta a^i x)\d n\] and hence  \begin{align*} J^{G, e, \xi}(h) &=\frac{1}{2}( \Tr(R(h)| L^2_{cusp}(G(F)\backslash G(\AAA)/a^\mathbb{Z})) +(-1)^{e} \Tr(R(h) \epsilon | L^2_{cusp}(G(F)\backslash G(\AAA)/a^\mathbb{Z}))  )\\ & = \frac{1}{2}( \Tr(R(h)| L^2_{cusp}(G(F)\backslash G(\AAA)/a^\mathbb{Z})) .  \end{align*} which is independent of $(e,\xi)$. Recall that $\epsilon$ is the sign character on $G(\AAA)$ that factors through $\deg\circ\det$. By \eqref{LieGr}, this finishes the proof. \end{proof}

\section{Proof of the main theorems} 
We will prove our main result Theorem \ref{1}. 
The main ingredient of the proof is Theorem \ref{automorphe} and Theorem \ref{Higg}. These two results give an expression for $|E_2(\mathfrak{R})^{\Fr^\ast}|$. It remains to apply this result to the curve $X\otimes\mathbb{F}_{q^k}$ over $\mathbb{F}_{q^k}$ and study how $|E_2(\mathfrak{R})^{\Fr^\ast}|$ varies for $k\in \mathbb{N}^\ast$.

\subsection{Functions of Lefschetz type}
We continue to use our notation in the introduction. Let us prove first the following proposition.

\begin{prop}\label{Lefschetz}
Let $A$ be a set with a permutation $\tau$ acting on it. We use $O(\tau|A)$ to denote the number of orbits of $\tau$ acting on $A$. It will be applied to the case that $A=\overline{S}_u$ with Frobenius element acting on it. 
\begin{enumerate}
\item
Let $k \mapsto \alpha_A(k)$ be the function that $\alpha_A(k)=|A|$ if $\tau^{k}$ is a cyclic permutation on $A$ and $\alpha_A(k)=0$ otherwise. It is of Lefschetz type.


\item 
 
We define $\beta_{A}(k)$ by 
$\beta_{A}(k)=0$ if 
$\tau^{k}$ has an orbit of even length,  $\beta_{A}(k)=2^{O(\tau^k|A)-1}$ if all orbits are of odd length. It is of Lefschetz type. 
\item 
We define $\gamma_{A}(k)$ by 
\[ \gamma_A(k)= (-1)^{O(\tau^k|A)}.\]  
It is of Lefschetz type. 

\item
We define $\omega_A=\frac{1}{2}(\alpha_A+(-1)^{|A|}\beta_A)$. It is of Lefschetz type.

\item If $S_{cr} \neq \emptyset$, then $c_{\mathfrak{R}}/2+b_{\mathfrak{R}}/2$ is of Lefscehtz type. 

\item The function $b_{\mathfrak{R}}$ is of Lefschetz type.

\item If $S_{cr} \neq \emptyset$, then the functions \[k\mapsto {c_{\mathfrak{R}}(k)\alpha_{\overline{S}_u}({k})}/{2} + {b_{\mathfrak{R}}(k)\beta_{\overline{S}_u}(k)  }/{2}\] are of Lefschetz type. 

\end{enumerate}
\end{prop}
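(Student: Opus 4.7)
The overall strategy is to express each function as an integer combination of fixed-point count functions $k \mapsto |Y^{\sigma^k}|$ for finite sets $Y$ with permutations $\sigma$. Such counts are manifestly of Lefschetz type because each $\sigma$-orbit of size $d$ contributes $d\mathbf{1}_{d\mid k} = \sum_{\zeta^d = 1}\zeta^k$, a $\mathbb{Z}$-combination of roots of unity (hence of $q$-Weil integers); as Lefschetz-type functions are closed under pointwise addition and multiplication, any $\mathbb{Z}$-combination or product of such counts is again Lefschetz.

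The easier parts (1), (3), and (6) I would handle as follows. For $\alpha_A$, if $\tau$ is not transitive on $A$ then $\alpha_A \equiv 0$, and otherwise with $|A|=n$ the Möbius inversion $\mathbf{1}_{\gcd(k,n)=1} = \sum_{d\mid n,\, d\mid k}\mu(d)$ yields $\alpha_A(k) = \sum_{d\mid n}\tfrac{n\mu(d)}{d}\cdot d\mathbf{1}_{d\mid k}$, an integer combination of fixed-point counts. For $\gamma_A$, using $O(\tau^k|A)=\sum_i\gcd(k,n_i)$ and the parity identity $(-1)^{\gcd(k,n)}=-1$ for $n$ odd and $=(-1)^k$ for $n$ even, $\gamma_A$ is of the form $\pm 1$ or $\pm(-1)^k$. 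For $b_{\mathfrak{R}}$, I would decompose $P_{\mathfrak{R}}$ into $\langle\sigma,\Fr^*\rangle$-orbits; because $\sigma$ and $\Fr^*$ commute and $\sigma^2=\mathrm{id}$, each such orbit is either a single $\Fr^*$-orbit on which $\sigma$ acts as the identity or as the unique order-$2$ shift, or a pair of $\Fr^*$-orbits swapped by $\sigma$, so the contribution of each orbit type to $b_{\mathfrak{R}}(k)$ is one of $n\mathbf{1}_{n\mid k}$, $n\mathbf{1}_{k\equiv n/2\bmod n}$, or $0$ --- all Lefschetz after checking Galois-equivariance of Fourier coefficients.

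For part (5), the hypothesis $S_{cr}\neq\emptyset$ forces $P_{\mathfrak{R}}^{\sigma}=\emptyset$ since $\varepsilon_x(1)\neq\varepsilon_x(2)$ for $x\in\overline{S}_{cr}$, so every $\sigma$-orbit on $P_{\mathfrak{R}}$ has size $2$. A Burnside-style double counting of pairs $(p,\epsilon)\in P_{\mathfrak{R}}\times\{1,\sigma\}$ with $\epsilon\Fr^{*k}p=p$ then yields the key identity
\[\frac{c_{\mathfrak{R}}(k)+b_{\mathfrak{R}}(k)}{2}=|(P_{\mathfrak{R}}/\sigma)^{\Fr^{*k}}|,\]
which realizes the left-hand side as a fixed-point count on the $\sigma$-quotient with the induced $\Fr^*$-action.

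The main obstacle lies in items (2), (4), and (7), where the integrality of a halved quantity must be established. For $\beta_A$, I would use the factorization $\beta_A(k)=\tfrac{1}{2}\prod_i\beta_i(k)$ with $\beta_i(k)=\mathbf{1}_{2^{a_i}\mid k}\cdot 2^{\gcd(k,n_i)}$ indexed by the $\tau$-orbits of sizes $n_i=2^{a_i}m_i$, and reduce to showing that each $\beta_i/2$ is already integer-coefficient Lefschetz --- for odd $n_i$ by identifying $\beta_i(k)/2=2^{\gcd(k,n_i)-1}=|(\mathcal{P}(A_i)/\!\sim)^{\tau^k}|$ where $\sim$ is the subset-complement relation (no $X\subset A_i$ can satisfy $\tau^k X=A_i\setminus X$ when $n_i$ is odd), and for even $n_i$ by a related quotient construction refined by the indicator $\mathbf{1}_{2^{a_i}\mid k}$; multiplying by the remaining $\beta_j$'s (each Lefschetz) gives $\beta_A$. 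I would then combine these quotient constructions with the one from (5) to interpret $\omega_A$ and the quantity in (7) as fixed-point counts on suitable fibered products incorporating both $P_{\mathfrak{R}}/\sigma$ and the complement-quotient on $\mathcal{P}(\overline{S}_u)$; finding the correct combined set with the right $\Fr^*$-action is expected to be the most technical step.
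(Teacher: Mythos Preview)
Your treatment of (1), (3), (5), (6) is essentially what the paper does. In particular, the paper proves (5) by exactly the identity $\frac{c_{\mathfrak{R}}(k)+b_{\mathfrak{R}}(k)}{2}=|(P_{\mathfrak{R}}/\sigma)^{\Fr^{*k}}|$ you wrote down, and then derives (6) from (5) via $b_{\mathfrak{R}}=2\cdot\tfrac{c_{\mathfrak{R}}+b_{\mathfrak{R}}}{2}-c_{\mathfrak{R}}$ rather than by your direct orbit analysis, but both are fine.

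Your approach to (2) is genuinely different and, for odd orbit lengths, cleaner than the paper's. The paper proves by a somewhat involved induction on the number of prime factors of $n$ that $k\mapsto\frac{1}{m}(2^{\phi(m)(n,k)-1}-1)$ is Lefschetz for all odd $m$ coprime to $n$, then specializes to $m=1$; your interpretation $2^{\gcd(k,n_i)-1}=|(\mathcal{P}(A_i)/\!\sim)^{\tau^k}|$ for odd $n_i$ bypasses this entirely. For even $n_i$ you are vague, but your idea can be completed: one checks that $|(\mathcal{P}(A_i)/\!\sim)^{\tau^k}|=2^{\gcd(k,n_i)-1}(2-\mathbf{1}_{2^{a_i}\mid k})$, so $\beta_i(k)/2=|\mathcal{P}(A_i)^{\tau^k}|-|(\mathcal{P}(A_i)/\!\sim)^{\tau^k}|$ is a difference of fixed-point counts. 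The paper instead writes, for a single cycle of even length $2^a\ell$, the explicit formula $\beta_A(k)=2^{2^a-a-1}\bigl(2^{(\ell,k)-1}\bigr)^{2^a}\sum_{j=1}^{2^a}\zeta_{2^a}^{jk}$.

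Where your proposal has a real gap is (4) and (7). You defer both to ``suitable fibered products incorporating both $P_{\mathfrak{R}}/\sigma$ and the complement-quotient'', but you do not construct any such set, and it is not at all obvious that $\omega_A$ admits a fixed-point interpretation in the cyclic case (note that $\omega_A$ can be negative). The paper takes a completely different route. For (7), it simply rewrites
\[
\frac{c_{\mathfrak{R}}\alpha_{\overline{S}_u}}{2}+\frac{b_{\mathfrak{R}}\beta_{\overline{S}_u}}{2}
=\frac{c_{\mathfrak{R}}+b_{\mathfrak{R}}}{2}\,\alpha_{\overline{S}_u}-b_{\mathfrak{R}}\cdot\frac{\alpha_{\overline{S}_u}-\beta_{\overline{S}_u}}{2},
\]
reducing to (4), (5), (6). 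For (4), the paper does \emph{not} seek a combinatorial model: after disposing of the non-transitive case by $\omega_A=(-1)^{|A|}\beta_{A_1}\beta_{A_2}$, it uses the Fourier criterion that a periodic $f$ of period $n$ is Lefschetz iff $\frac{1}{n}\sum_{j}f(j)\zeta_n^{-ij}\in\mathbb{Z}$ for all $i$, and checks this by direct computation with Ramanujan sums $c_n(i)=\sum_{d\mid(n,i)}\mu(n/d)d$, separating the cases $n$ odd, $4\mid n$, and $n\equiv 2\pmod 4$. This is the most delicate part of the proof, and your outline does not supply anything that would replace it.
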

\begin{proof}
(1) 
If $\tau$ is not a cyclic permutation on $A$, then neither is $\tau^k$ for every $k\geq 1$. In this case, $\alpha_A$ is constantly  $0$, and the assertion is trivial. 

We suppose in the following that $\tau$ is a cyclic permutation.  

Let $|A|=p_1^{a_1}\cdots p_s^{a_s}$ be a prime decomposition of $n$ with $p_i$ being different prime numbers and $a_i>0$. Let $\zeta_{p_i}$ be a primitive $p_i$-th roots of unity.
 Then \[ \alpha_A(k)=|A|,\] if $p_i\nmid k$ for all $p_i$,  otherwise 
 \[\alpha_A(k)=0.\] 
 It is direct to verify that we have the following identity:
\begin{equation}\label{alpha} \alpha_A(k)=\prod_{i}(p^{a_i}_i-p_i^{a_i-1}\sum_{j=1}^{p_i}\zeta_{p_i}^{jk}). \end{equation}
Since roots of unity are $q$-Weil integers of weight $0$, the statement follows. 

(2)  
Let $n$ be an odd integer. We first prove the following assertion by induction on the number of prime divisors of $n$ (counting multiplicities).  

$(\ast)$ For any odd integer $m$ such that $(m,n)=1$, the function \[k\mapsto \frac{1}{m}(2^{\phi(m)(n,k)-1}-1)    \]
is a function of Lefschetz type.  

We only need the case that $m=1$ of the assertion $(\ast)$, but this stronger assertion is easier to prove by induction.  
The case that $n=1$ is trivial since the function is constant in $k$, and it is an integer by Fermat's little theorem.

Let $l$ be any prime number, not dividing $nm$. Let $\beta\in \mathbb{N}$. Suppose by induction that the assertion $(\ast)$ holds for $n$, $nl$, $\ldots$, $nl^{\beta-1}$. 
Note that we have the following identity
 \begin{align*}\frac{1}{m}(2^{\phi(m)(nl^\beta,k)-1}-1)- \frac{1}{m}(2^{\phi(m)(n,k)-1}-1)  &=  \sum_{j=1}^{\beta}\frac{1}{m}(2^{\phi(m)(nl^j,k)-1}-2^{\phi(m)(nl^{j-1},k)-1})  \\
 &=\sum_{j=1}^{\beta}\frac{  2^{\phi(m)(nl^{j-1},k)-1} }{m} (2^{\phi(m)((nl^j,k)-(nl^{j-1},k))  }   -1)\\
 &= \sum_{j=1}^{\beta}\frac{  2^{\phi(m)(nl^{j-1},k)-1} }{ml^j} (2^{  \phi(m l^j)(n,k) }-1)\sum_{s=1}^{l^j}\zeta_{l^j}^{sk}. 
  \end{align*} 
  In the last equality, we have used the fact if $l^j\nmid k$, then $(nl^j,k)-(nl^{j-1},k)=0$ and if $l^j\mid k$, then $(nl^j,k)-(nl^{j-1},k)=(n,k)\phi(l^j)$. We have also used the fact that $\phi(m)\phi(l^j)=\phi(ml^j)$ since $l\nmid m$. 
  Since the product of Lefschetz type functions and integral multiple of Lefschetz type functions are of Lefschetz type, we deduce that the assertion $(\ast)$ is correct for $nl^\beta$ as well. By induction, we obtain the result needed.

To prove that $\beta_A$ is of Lefschetz type, it is sufficient to prove that $\tau$ is a cyclic permutation on $A$ by multiplicativity on orbits and the fact that the integral multiple of the function of Lefschetz type is again of Lefschetz type. Suppose that $|A|=2^al$ with $l$ being an odd integer. 
If $a=0$, then $\beta_A$ is the function
\[k\mapsto 2^{(|A|,k)-1}  \]
which is of Lefschetz type by the above result (by setting $m=1$). Suppose $a\geq 1$. 
In this case, we have 
\[\beta_A(k) =\begin{cases}
2^{2^a(l,k)-1}, \quad 2^a\mid k;\\
0, \quad 2^a \nmid k. 
\end{cases}
 \]
 Since $a\leq 2^a-1$, the function $\beta_A$ is of Lefschetz type, because we have
\begin{equation}\label{beta} \beta_A(k)= 2^{2^a-a-1} (2^{(l,k)-1})^{2^a}  \sum_{j=1}^{2^a} \zeta_{2^a}^k . \end{equation}
We have shown that $k\mapsto 2^{(l,k)-1}$ is of Lefschetz type, therefore so is $\beta_A$. 
 
(3) Since the product of Lefschetz type function is still of Lefschetz type, it suffices to consider the case that $\tau$ is a cyclic permutation of $A$. If $A$ has odd cardinality, then $(-1)^{O(\tau^k|A)}=-1$. Suppose that $|A|=2^am$ with $m$ being $m$ odd and $a\geq 1$. Then $O(\tau^k|A)=(2^am,k)$, and  \[(-1)^{O(\tau^k|A)}=\begin{cases} -1,\quad 2\nmid  k;   \\ 1, \quad 2\mid k.  \end{cases}   \] Therefore  in this case  \[(-1)^{O(\tau^k|A)} = (-1)^k.    \]
This is a function of Lefschetz type, and we have proved the assertion.

(4) 
Let us consider $\omega_A$. If $\tau$ is not a cyclic permutation on $A$, then $\alpha_A=0$. Let $A=A_1\cup A_2$ be a partition of $A$ into non-empty $\tau$-stable subsets. Then \[\omega_A=(-1)^{|A|}\beta_{A_1}\beta_{A_2}. \] 
Therefore $\omega_A$ is of Lefschetz type. 

In the following, we suppose that $\tau$ is a cyclic permutation. It is sufficient to consider $\frac{\alpha_A-\beta_A}{2}$.

Let $f: \mathbb{N}^{\ast}\longrightarrow \mathbb{Z}$ be a periodic function of period $n$. Then we have
\[f(k)=\sum_{i=1}^{n}\frac{\sum_{j=1}^{n}f(j)\zeta_n^{-ij}}{n} \zeta_n^{ki}.      \]
Therefore, $f$ is of Lefschetz type if and only if \begin{equation}\label{integer}\frac{\sum_{j=1}^{n}f(j)\zeta_n^{-ij}}{n}\in \mathbb{Z} \end{equation}
for $i=1, \ldots, n$. We will use this criterion to prove that $\omega_A$ is of Lefschetz type.

If $n:=|A|$ is an odd integer. 
By \eqref{integer} and the fact that $\beta_A$ is of Lefschetz type, we  know that for any $i$, the number
\[{\sum_{j=1}^{n}2^{(n,j)-1}\zeta_n^{-ij}}\]  is an integer that is divisible by \(n \). 
Moreover, \[{\sum_{j=1}^{n}\alpha_{A}(k)\zeta_n^{-ij}} = n c_{n}(i),  \]
where \(c_n(i) \) is the sum $i^{th}$ power of primitive  $n^{th}$ roots of unity, i.e. the Ramanujan's function. The function $i \mapsto c_n(i)$ takes an integral value (since $\alpha_A$ is of Lefschetz type). 
We need to prove that the number 
\begin{equation}\label{OMEGA} 2{\sum_{j=1}^{n}\omega_A(j)\zeta_n^{-ij}}=  -{\sum_{j=1}^{n}2^{(n,j)-1}\zeta_n^{-ij}}+n c_{n}(i) \end{equation}
is divisible by $2n$. 
Since we're in the case that $n$ is an odd number, and \eqref{OMEGA} is divisible by $n$, we need to show that it is divisible by $2$. 
Note that we have
\[{\sum_{j=1}^{n}2^{(n,j)-1}\zeta_n^{-ij}} = \sum_{d\mid n} 2^{d-1}c_{n/d}(i) .  \]
Modulo $2$, the expression \eqref{OMEGA} equals $(n-1)c_{n}(i)$. Since $n$ is odd, this is $0$ modulo $2$. We're done.

Now suppose that $n=|A|$ is an even integer. 
If $4\mid n$, then clearly both $ \frac{1}{2}\alpha_A(k)$ and $\frac{1}{2}\beta_A(k)$ are of Lefschetz type. We can see it from equations \eqref{alpha} and \eqref{beta}.

If $4\nmid n$, we need to prove that for any $i$
\[2{\sum_{j=1}^{n}\omega_A(j)\zeta_n^{-ij}}= nc_n(i) - \sum_{j=1}^{n/2}2^{(n, 2j) -1} \zeta_{n}^{-2ij} \]
is divisible by $2n$. As we have shown before, it is divisible by $n$. Therefore, it remains to show that it is divisible by $4$. 
Note that \[\sum_{j=1}^{n/2}2^{(n, 2j) -1} \zeta_{n}^{-2ij} = \sum_{d\mid \frac{n}{2}}2^{2d-1} c_{n/2d}(i)  .  \]
Modulo $4$, we need to prove that $2c_n(i)-2c_{n/2}(i)$ is divisible by $4$. By Möbius inversion formula, we have
\[c_n(i)=\sum_{d\mid (n,i)} \mu(n/d)d,    \]
where $\mu$ is the Möbius function.
Note that if the divisor $d$ of $n$ is odd, we have \[\mu(n/d)=-\mu(n/2d). \] 
Therefore, if $i$ is odd, then \[c_n(i)=-c_{n/2}(i). \] If $i$ is even, we have
\[c_n(i)-c_{n/2}(i) =\sum_{d\mid (n,i)} \mu(n/d)d-\sum_{d\mid (n/2,i)} \mu(n/2d)d.  \]
The sum over $d\mid (n, i)$ can be decomposed into two parts following $d$ is odd or $d$ is even. 
We deduce that for $i$ being even, 
\[\begin{split}c_n(i) &=  \sum_{d\mid (n/2, i)} \mu(n/d)d+\sum_{d\mid (n/2, i)} \mu(n/2d)2d \\&=-\sum_{d\mid (n/2, i)} \mu(n/2d)d+\sum_{d\mid (n/2, i)} \mu(n/2d)2d \\
&= c_{n/2}(i) .   
\end{split} \]
We conclude that in either case, the number $2c_n(i)-2c_{n/2}(i)$ is divisible by $4$.

(5) Let $P_{\mathfrak{R}}/\mathfrak{S}_2$ be the quotient of $P_{\mathfrak{R}}$ by the action of $\mathfrak{S}_2=\{1, \sigma\}$. Since $\overline{S}_{cr}$ is non-empty, every point in $P_{\mathfrak{R}}/\mathfrak{S}_2$ has a preimage of cardinality $2$ in $P_{\mathfrak{R}}$. 
The action of $\Fr^{\ast}$ defines an action of $P_{\mathfrak{R}}/\mathfrak{S}_2$ since it commutes with $\sigma$. For any $e=(a, \sigma(a))\in P_{\mathfrak{R}}/\mathfrak{S}_2$, if 
\[ \Fr^{\ast k}(e)=e, \]
then we have either $\Fr^{\ast k}(a)=a$ or we have $\Fr^{\ast k}(a)=\sigma(a)$. Therefore,
\[c_{\mathfrak{R}}(k)+b_{\mathfrak{R}}(k) = 2 |(P_{\mathfrak{R}}/\mathfrak{S}_2)^{\Fr^{\ast k}}|, \quad \forall k\geq 1.   \]
This proves the result.

(6) If $\overline{S}_{cr}=\emptyset$, then $b_{\mathfrak{R}}(k)=|P_{\mathfrak{R}}|$ is either $1$ or $0$. If $\overline{S}_{cr}\neq\emptyset$, then it follows from (5), because $b_{\mathfrak{R}}=2\frac{c_\mathfrak{R}+b_\mathfrak{R}}{2}-c_\mathfrak{R}$. 

(7) The function under consideration equals
\[\frac{c_{\mathfrak{R}}+b_{\mathfrak{R}}}{2}\alpha_{\overline{S}_u}  -  b_{\mathfrak{R}} \frac{ \alpha_{\overline{S}_u} -\beta_{\overline{S}_u}   }{2} .     \]
It results from (4), (5), and (6) that this is a function of Lefschetz type.   
  \end{proof}
  
\subsection{$\mathrm{Higg}_\mathfrak{R}$ is of Lefschetz type. }


\begin{theorem}\label{Lefs}
Let $o=(o_v)_{v\in S_{cr}}\in \mathcal{R}^{1}_{S_{cr}}({\mathbb{F}}_{q})$ so that every polynomial $o_v$ has distinct roots and is split over $\kappa_v$ if $v\in S_r$, and is irreducible if $v\in S_c$. 
\begin{enumerate}
\item
The function over $\mathbb{N}^\ast$: 
\[\mathrm{Higg}_{\mathfrak{R}}(k)=
\sum_{V\subseteq {S}_u\otimes\mathbb{F}_{q^k}} (-1)^{|{S}_u\otimes\mathbb{F}_{q^k} - V|} 2^{|V|}  q^{-k(4g-3+|\overline{V}|+|\overline{S}_{cr}|)}    |{M}_{V}^{1}(o)(\mathbb{F}_{q^k})|.
\] is  of Lefschetz type in $k$. 
\item The number $\mathrm{Higg}_{\mathfrak{R}}(k)$ is divisible by $\Pic(k)$ and the quotient function
\[ k\mapsto \mathrm{Higg}_{\mathfrak{R}}(k)/\Pic(k)\] is still of Lefschetz type. 
\end{enumerate}
 \end{theorem}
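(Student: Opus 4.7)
The difficulty is that the index set $S_u\otimes\mathbb{F}_{q^k}$ depends on $k$; I would bypass this by reindexing via $\Fr^k$-stable subsets of $\overline{S}_u$ and regrouping them according to $\Fr$-orbits in the finite power set $2^{\overline{S}_u}$. For each $\overline{V}\subseteq\overline{S}_u$, the variety $M^1_{\overline{V}}(o)$ is defined over $\overline{\mathbb{F}}_q$, and Frobenius induces canonical isomorphisms $M^1_{\overline{V}}(o)\xrightarrow{\sim}M^1_{\Fr(\overline{V})}(o)$. For each $\Fr$-orbit $\mathfrak{O}\subseteq 2^{\overline{S}_u}$, the disjoint union $\bigsqcup_{\overline{V}\in\mathfrak{O}}M^1_{\overline{V}}(o)$ descends, via Galois descent as in Proposition-Definition \ref{ProDef}, to a quasi-projective $\mathbb{F}_q$-variety $M^1_{\mathfrak{O}}(o)$. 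Its point count
\[|M^1_{\mathfrak{O}}(o)(\mathbb{F}_{q^k})|=\sum_{\overline{V}\in\mathfrak{O}^{\Fr^k}}|M^1_{\overline{V}}(o)(\mathbb{F}_{q^k})|\]
is a Lefschetz type function of $k$ by Grothendieck--Lefschetz. Crucially, the weight $(-1)^{|S_u\otimes\mathbb{F}_{q^k}-V|}2^{|V|}q^{-k|\overline{V}|}$ depends only on $\mathfrak{O}$ and $k$, not on the specific $\overline{V}\in\mathfrak{O}^{\Fr^k}$, since $\Fr$ commutes with $\Fr^k$ and preserves the $\Fr^k$-orbit structure within each $\Fr$-orbit of $\overline{S}_u$. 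Denoting this common weight by $c_{\mathfrak{O}}(k)$,
\[\Higg_{\mathfrak{R}}(k)=q^{-k(4g-3+|\overline{S}_{cr}|)}\sum_{\mathfrak{O}}c_{\mathfrak{O}}(k)\,|M^1_{\mathfrak{O}}(o)(\mathbb{F}_{q^k})|,\]
a finite sum over orbits $\mathfrak{O}\subseteq 2^{\overline{S}_u}$.

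\textbf{Lefschetz property of the weights.} The condition $\mathfrak{O}^{\Fr^k}\neq\emptyset$ is equivalent to $|\mathfrak{O}|\mid k$, and the indicator $\mathbbm{1}[|\mathfrak{O}|\mid k]=|\mathfrak{O}|^{-1}\sum_{\zeta^{|\mathfrak{O}|}=1}\zeta^k$ is manifestly Lefschetz type. When this holds and $\overline{V}_0\in\mathfrak{O}^{\Fr^k}$ is any representative, the remaining weight factors as
\[(q^{-|\overline{V}_0|})^k\cdot\prod_{\mathcal{O}}(-1)^{\gcd(d_{\mathcal{O}},k)-a_{\mathcal{O}}}\cdot 2^{a_{\mathcal{O}}}\]
over $\Fr$-orbits $\mathcal{O}\subseteq\overline{S}_u$ of size $d_{\mathcal{O}}$, where $a_{\mathcal{O}}=|\overline{V}_0\cap\mathcal{O}|\gcd(d_{\mathcal{O}},k)/d_{\mathcal{O}}$ is a non-negative integer because $\overline{V}_0\cap\mathcal{O}$ is a union of $\Fr^k$-orbits in $\mathcal{O}$. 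Each factor is periodic in $k$ of period dividing $d_{\mathcal{O}}$, and the arguments of Proposition \ref{Lefschetz}, in particular the proof that $\beta_A$ is Lefschetz type handling $2^{\gcd(\cdot,k)}$-type expressions, extend to show every such factor is Lefschetz type. Hence $c_{\mathfrak{O}}(k)$ is Lefschetz type as a product of Lefschetz type functions, and so is $\Higg_{\mathfrak{R}}(k)$.

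\textbf{Plan for Part (2).} The Picard scheme $\Pic_X^0$ acts on $M^1_V(o)$ via $(\mathcal{L},(\mathcal{E},\varphi,(L_x)))\mapsto(\mathcal{E}\otimes\mathcal{L},\varphi,(L_x\otimes\mathcal{L}_x))$, and since the rank $2$ and degree $1$ are coprime this action is free: for a stable parabolic Hitchin bundle, an isomorphism $(\mathcal{E},\varphi)\otimes\mathcal{L}\cong(\mathcal{E},\varphi)$ forces $\mathcal{L}^{\otimes 2}\cong\mathcal{O}$ upon taking determinants, and coprimality precludes nontrivial $2$-torsion stabilizers by the standard descent argument via cyclic \'etale covers. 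Thus $M^1_V(o)\to M^1_V(o)/\Pic_X^0$ is a $\Pic_X^0$-torsor, and the connectedness of $\Pic_X^0$ together with Lang's theorem ensures that every such torsor over $\mathbb{F}_{q^k}$ is trivial. Consequently $|M^1_V(o)(\mathbb{F}_{q^k})|=\Pic(k)\cdot|(M^1_V(o)/\Pic_X^0)(\mathbb{F}_{q^k})|$, and repeating the orbit-regrouping of Part (1) applied to the quotients shows that $\Higg_{\mathfrak{R}}(k)/\Pic(k)$ is Lefschetz type.

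\textbf{Main obstacle.} The technical heart is the verification that $c_{\mathfrak{O}}(k)$ is Lefschetz type. While each elementary factor is handled by ideas in Proposition \ref{Lefschetz}, their combination under the constraint $|\mathfrak{O}|\mid k$ imposes simultaneous divisibility relations among $\gcd(d_{\mathcal{O}},k)$ for various $\mathcal{O}$, and a careful combinatorial case analysis based on the $2$-adic valuations of $|\mathfrak{O}|$ and the $d_{\mathcal{O}}$ is needed to ensure integer exponents and a Lefschetz decomposition. A more conceptual alternative may proceed via Grothendieck--Lefschetz applied to a constructible $\ell$-adic complex on a ``master'' moduli space of parabolic Hitchin bundles incorporating all subsets at once, but the direct approach above is more elementary and self-contained.
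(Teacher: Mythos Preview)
Your orbit-regrouping strategy is natural and in fact parallels the paper's approach of building a single endomorphism $\varsigma$ on $\bigoplus_{V\subseteq\overline{S}_u}H^*_V$ and computing its trace. However, there is a genuine gap that your proposal does not address, and it is precisely the point where the paper invokes an outside result.

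The definition of ``Lefschetz type'' in this paper requires the $\alpha$'s to be $q$-Weil \emph{integers}, not merely $q$-Weil numbers. Your weight $c_{\mathfrak{O}}(k)$ contains the factor $(q^{-|\overline{V}_0|})^k$, and the whole sum carries the prefactor $q^{-k(4g-3+|\overline{S}_{cr}|)}$. Since $q^{-1}$ is not a $q$-Weil integer, neither $c_{\mathfrak{O}}$ nor the prefactor is of Lefschetz type on its own, and the product of a Lefschetz-type function with $k\mapsto q^{-Nk}$ is \emph{not} automatically Lefschetz type. The only way the full expression can be Lefschetz type is if every Frobenius eigenvalue on $H_c^*\bigl(M^1_{\overline{V}}(o),\mathbb{Q}_\ell\bigr)$ is divisible, as an algebraic integer, by $q^{4g-3+|\overline{V}|+|\overline{S}_{cr}|}$. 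This is a nontrivial geometric statement about the moduli space; the paper imports it from \cite[Theorem 5.4.1]{Yu} and packages it as the assertion $(\ast)$ that the eigenvalues of $\varsigma$ are $q$-Weil integers. Your argument, by contrast, treats $|M^1_{\mathfrak{O}}(o)(\mathbb{F}_{q^k})|$ as a black-box Lefschetz-type function via Grothendieck--Lefschetz, which is correct but insufficient: you then multiply by negative powers of $q$ without justification. The combinatorics you flag as the ``main obstacle'' (the $2^{\gcd}$-type factors) is comparatively minor and indeed handled by Proposition \ref{Lefschetz}; the real obstacle is this divisibility, which you do not mention.

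The same gap propagates to Part (2): even granting your $\Pic^0_X$-torsor argument and Lang's theorem, repeating the orbit regrouping on the quotients still leaves you multiplying point counts by $q^{-kN}$, and you again need the divisibility of Frobenius eigenvalues on the cohomology of the quotients. The paper's one-line proof of Part (2) likewise rests on \cite[Theorem 5.4.1]{Yu}.
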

 
\begin{proof} (1)
Let $F_q:  a\mapsto a^{1/q}$ be the  geometric Frobenius element in $\Gal(\overline{\mathbb{F}}_q/\mathbb{F}_q)$. 
We use the notation \[ U^{(1/q)}:=U\times_{\Spec(\mathbb{F}_{q}), F_q}\Spec(\overline{\mathbb{F}}_{q}). \]

Let us prove a lemma first. 
\begin{lemm} \label{frobedt}
Let $V\subseteq S_{u}\otimes\mathbb{F}_{q^k}$.  
We know that\[   {M}_{\Fr(\overline{V})}^{1}(o)\cong {M}_{\overline{V}}^{1}(o)^{(1/q)}.  \] 
In particular, the relative Frobenius morphism induces a linear map: 
\[F_V^\ast: H_c^{\ast}({M}_{\overline{V}}^{1}(o), \mathbb{Q}_\ell)\longrightarrow H_c^{\ast}({M}_{\Fr(\overline{V})}^{1}(o), \mathbb{Q}_\ell).\]
Let $d$ be the least positive integer such that $\Fr^{d}(V)=V$. Then the $d$-times composition of $F_V^\ast$ coincides with the action of geometric Frobenius element $F_{q^{d}}\in \Gal(\overline{\mathbb{F}}_{q}/\mathbb{F}_{q^{d}})$ (recall that ${M}_{\overline{V}}^{1}(o)$ has an $\mathbb{F}_{q^d}$-structure). 

\end{lemm}
\begin{proof}

Let $R=(S_{cr}\otimes\mathbb{F}_{q^{d}})\cup V$, 
and $\overline{R}=\overline{S}_{cr}\cup \overline{V}$, where $\overline{V}=V\otimes_{\mathbb{F}_{q^{d}}}\overline{\mathbb{F}}_q$. 

Note that the functor $(\cdot)^{(q)}$ induces an equivalence of categories from  $(Sch/{\mathbb{F}}_{q^d})$,  the category of schemes over ${\mathbb{F}}_{q^d}$, to itself. Its $d$-fold iterate is the identity functor. Suppose that $(\mathcal{E}, \varphi, (L_x)_{x\in \overline{V}})$ is a parabolic Hitchin bundle over $\overline{X}$, then $\mathcal{E}^{(1/q)}$ is again a vector bundle over $\overline{X}$, but the parabolic structures are imposed at points in $\Fr(\overline{V})$. The map $\mathcal{E}\mapsto \mathcal{E}^{(1/q)}$ is a bijection between 
${M}_{\Fr(\overline{V})}^{1}(\Fr(D_{\overline{R}}))(\overline{\mathbb{F}}_q)$ and ${M}_{\overline{V}}^{1}(D_{\overline{R}})^{(1/q)}(\overline{\mathbb{F}}_q)$. 
With this bijection $\theta$, we can apply \cite[Th. 4.6]{Yokogawa0}. 
In fact, let $T$ be a scheme defined over $\overline{\mathbb{F}}_q$ and $(\mathcal{E}, \varphi, (L_x)_{x\in \Fr(\overline{V})})_T$ be a flat family of parabolic Hitchin bundle over $\overline{X}_{T}$ with parabolic structures in $\Fr(\overline{V})$. The associated parabolic Hitchin bundle $(\mathcal{E}, \varphi, (L_x)_{x\in \Fr(\overline{V})})_T^{(q)}$ is a flat family over $\overline{X}_{T^{(q)}}$ with parabolic structures in $\overline{V}$. By  \cite[Th. 4.6, (4.6.4)]{Yokogawa0}, we obtain a morphism $T^{(q)}\rightarrow {{M}_{\overline{V}}^{1}}(D_{\overline{R}})$. Applying the functor $(\cdot)^{(1/q)}$, we obtain $T\rightarrow {M}_{\overline{V}}^{1}(D_{\overline{R}})^{(1/q)}$. Then \cite[Th. 4.6, (4.6.5)]{Yokogawa0} implies that ${M}_{\Fr(\overline{V})}^{1}(\Fr(D_{\overline{R}}))\cong {M}_{\overline{V}}^{1}(D_{\overline{R}})^{(1/q)}$.

It is clear that $(\mathcal{R}_{\overline{R}}^{1})^{(1/q)}\cong \mathcal{R}_{\Fr(\overline{R})}^{1}$. We have the relative Frobenius morphism:
\[(\mathcal{R}_{\overline{R}}^{1})^{(1/q)}\longrightarrow (\mathcal{R}_{\overline{R}}^{1}). \]
The scheme $\mathcal{R}^{1}_{S_{cr}}$ is defined over $\mathbb{F}_q$ and the relative Frobenius induces an identity on its $\mathbb{F}_q$-points. 
Since $o\in \mathcal{R}^{1}_{S_{cr}}({\mathbb{F}_q})$, its embedding in $\mathcal{R}_{\overline{R}}^{1}(\overline{\mathbb{F}}_{q})$ is sent to $o$ itself via the relative Frobenius morphism \((\mathcal{R}_{\overline{R}}^{1})^{(1/q)}\longrightarrow (\mathcal{R}_{\overline{R}}^{1})\). These imply the first assertion. 

We still denote the induced relative Frobenius morphism by $F_V$:
\[F_{ V}: M_{\Fr(\overline{V})}^{1}(o) \longrightarrow  M_{\overline{V}}^{1}(o). \]
Note that by definition, the composition 
\[ F_V \circ F_{\Fr(V)}\circ \cdots \circ F_{\Fr^{d-1}(V)}  \]
coincides with the Frobenius endomorphism, i.e. the base change to $\overline{\mathbb{F}}_q$ of the $q^{d}$-Frobenius morphism of $M_{V}^{1}(o)$. On étale cohomology, its action coincides with the geometric Frobenius element $F_{q^{d}}$ of the Galois group $\Gal(\overline{\mathbb{F}}_q/\mathbb{F}_{q^{d}})$. 
The last assertion hence follows. 
\end{proof}

Let us choose a total order on $\overline{S}_u$.   Note that $\Fr$ acts on  $\overline{S}_{u}$. 
 For any $V\subseteq \overline{S}_{u}$, let 
\[ {inv}(\Fr|V)\] be the inversion number of $\Fr$ on $V$, i.e., the number of pairs $(x_1, x_2)$ of points in $V$ such that  $x_1<x_2$ and $\Fr(x_1)>\Fr(x_2)$.

For any subset $V$ of $\overline{S}_u$, let us consider $H^{\ast}(\mathbb{P}^1, \mathbb{Q}_\ell)^{\otimes V}$,
where the tensor product is understood as the tensor product of graded vector space.
Let $\alpha$ be a generator in $H^{0}(\mathbb{P}^1, \mathbb{Q}_\ell)$ and $\beta$ be a generator in $H^{2}(\mathbb{P}^1, \mathbb{Q}_\ell)$. 

We set \[\tau: H^{\ast}(\mathbb{P}^1, \mathbb{Q}_\ell)^{\otimes V}\longrightarrow H^{\ast}(\mathbb{P}^1, \mathbb{Q}_\ell)^{\otimes \Fr(V)}\] as the map of graded vector spaces that sends an element in $H^{\ast}(\mathbb{P}^1, \mathbb{Q}_\ell)^{\otimes V}$ represented by $(a_x\alpha_x+b_x\beta)_{x\in V}$ to  $(a_{x}\alpha_{\Fr(x)}+b_{x}\beta_{\Fr(x)})_{x\in \Fr(V)}$:
\[ \tau(  (a_x\alpha_x+b_x\beta_x)_{x\in V}    ) = (a_{x}\alpha_{\Fr(x)}+b_{x}\beta_{\Fr(x)})_{x\in \Fr(V)}.\]

Let \[ H^\ast_V:= H_c^{\ast}({M}_{\overline{V}}^{1}(o), \mathbb{Q}_\ell)\otimes H^{\ast}(\mathbb{P}^1, \mathbb{Q}_\ell)^{\otimes V}. \]
Let $\varsigma$ be a linear endomorphism: 
\[ \varsigma: \bigoplus_{V\subseteq \overline{S}_u} H_{V}^i\longrightarrow \bigoplus_{V\subseteq \overline{S}_u} H_{V}^i, \] 
defined by \[ \varsigma=\oplus_{V}(-1)^{inv(\Fr|V)}q^{3-4g-|V|-|\overline{S}_{cr}| }F_V^\ast\otimes \tau. \]

We will show that:
\begin{enumerate}
\item[($\ast$)] the eigenvalues of $\varsigma$ are $q$-Weil integers. 
\item[($\ast\ast$)] 
\[ \mathrm{Higg}_{\mathfrak{R}}(k) =  \sum_{i}(-1)^i\Tr(\varsigma^{k}| \bigoplus_{V\subseteq \overline{S}_u} H_{V}^i ).  \]
\end{enumerate}
These two properties suffice to prove the theorem.

Let us prove $(\ast)$. It is sufficient to prove that for $k$ divisible enough, the eigenvalues of $\varsigma^k$ are $q$-Weil integers. 
 Let $V\subseteq \overline{S}_u$. Let $d_V\geq 1$ be the smallest positive integer such that $V$ is defined over $\mathbb{F}_{q^{d_V}}$. 
Since the eigenvalues of Frobenius action on $\ell$-adic cohomology are $q$-Weil integers, 
the only non-trivial point is to show that the eigenvalues of $F_V^{\ast d_V}$ are divisible by \[q^{d_V(4g-3+|\overline{V}|+|\overline{S}_{cr}|)}. \] 
This is a corollary of \cite[Theorem 5.4.1]{Yu} and Grothendieck-Lefschetz fixed point formula.

Now we prove $(\ast\ast)$. Note that for any $k$, only those $V$ fixed by $\varsigma^k$ will contribute a non-trivial trace. These are exactly those coming from subsets of $S_u\otimes \mathbb{F}_{q^k}$.  The alternative trace $\sum_{i} (-1)^{i}\Tr(\varsigma^k| H_V^i) $ equals \[ (-1)^{O(\Fr^k| V)-|V|}q^{3-4g-|V|-|\overline{S}_{cr}| } \]  times
\[ \sum_{i} (-1)^{i}\Tr(F_{q^{d_V}}^{\ast k/d_V}| H_c^{i}({M}_{\overline{V}}^{1}(o), \mathbb{Q}_\ell)) \sum_{i} (-1)^{i}\Tr(\tau^k| H^{\ast}(\mathbb{P}^1, \mathbb{Q}_\ell)^{\otimes V}) .\] 
Here $O(\Fr^k|V)$ means the number of orbits of $\Fr^k$ on $V$ and Lemma \ref{frobedt} is used. 

We have 
\[\sum_{i} (-1)^{i}\Tr( F_{q^{d_V}}^{\ast k/d_V} | H_c^{i}({M}_{V}^{1}(o)_{\overline{\mathbb{F}}_q}, \mathbb{Q}_\ell)) = |{M}_{V}^{1}(o)(\mathbb{F}_{q^k})| . \]
It reduces to prove that 
\[\sum_{i} (-1)^{i}\Tr(\tau^k| H^{\ast}(\mathbb{P}^1, \mathbb{Q}_\ell)^{\otimes V}) =2^{O(\Fr^k| V)}.\]
By multiplicativity on the orbits of $\Fr^k$ of the two sides, it suffices to consider the case that $\Fr^k$ has only one orbit in $V$, in which case we can do the calculation by choosing a basis: $(\delta_{x}\alpha+ (1-\delta_{x})\beta)_{x\in I}$ where $(\delta_x)_{x\in I}\in \{0,1\}^{I}$. It is clear that in this basis, $\tau^k$ is a permutation matrix, and there are exactly two elements in the basis fixed by $\tau^k$: those given by $\delta_x=\delta_{x'}$ for all $x, x'\in I$. Therefore the left hand side is $2$ as well.

(2) The second assertion is a corollary of \cite[Theorem 5.4.1]{Yu} and the above arguments. 

\end{proof}


\subsection{Proof of Theorem \ref{1}}

The proof is based on Theorem \ref{automorphe} where we have computed the number of cuspidal automorphic representations that correspond to elements in $E_2(\mathfrak{R})^{\Fr^{*}}$ (Theorem \ref{Langlands}). We need to have an expression for $E_2(\mathfrak{R})^{\Fr^{*k}}$ for $k\geq 1$. 
Since \[X\otimes_{\mathbb{F}_q}\overline{\mathbb{F}}_{q}\cong (X\otimes_{\mathbb{F}_q}\mathbb{F}_{q^k}  )\otimes_{\mathbb{F}_{q^k}}\overline{\mathbb{F}}_{q},  \]
and the Frobenius endomorphism of $\overline{X}$ obtained from $X\otimes_{\mathbb{F}_q}\mathbb{F}_{q^k} $ is the $k^{th}$ power of the Frobenius obtained from $X$, 
we can apply this theorem to the function field $F\otimes_{\mathbb{F}_q}\mathbb{F}_{q^k}$. The only difficulty remains that the ramification type on the automorphic side may change when $k$ varies. For example, a place can split into several places, and a supercuspidal representation can become non-supercuspidal after base change.

Let us explain how ramification types on the automorphic side change when $k$ varies. First, a place $v\in S$ of degree $n$ corresponds to an orbit of length $n$ of Frobenius endomorphism on $\overline{S}$. A place of $F$ of degree $n$ splits into $(n,k)$-points of degree $n/(n,k)$ of $F\otimes_{\mathbb{F}_q}\mathbb{F}_{q^k}$ for $k\geq 1$. For ramification types, suppose that \[S\otimes_{\mathbb{F}_q}\mathbb{F}_{q^k}= S_{r}(k)  \coprod S_c(k)  \coprod S_s(k)  \coprod S_u(k) ,  \] 
is a decomposition following the ramification type furnished by Theorem \ref{mono}. Then we have 
\[S_u(k)=S_u(1)\otimes_{\mathbb{F}_q}\mathbb{F}_{q^k},\] and \[S_s(k)=S_s(1)\otimes_{\mathbb{F}_q}\mathbb{F}_{q^k}.\] 
The sets $S_{r}(k)$ and $S_c(k)$ behave differently. If $k$ is an odd number, we have
\[S_{r}(k)=S_{r}(1)\otimes_{\mathbb{F}_q}\mathbb{F}_{q^k},\] and \[S_c(k)=S_c(1)\otimes_{\mathbb{F}_q}\mathbb{F}_{q^k}.\] 
However, if $k$ is an even number, we have 
\[S_{r}(k)=(S_{r}(1)\otimes_{\mathbb{F}_q}\mathbb{F}_{q^k}) \cup  (S_c(1)\otimes_{\mathbb{F}_q}\mathbb{F}_{q^k}),\] and \[S_c(k)=\emptyset.\]

We obtain the cardinality $|E_{2}(\mathfrak{R})^{\Fr^{\ast }}|$ by comparing
 Theorem \ref{automorphe} and Theorem \ref{Higg} by the trace formula:
\[J^1_{spec}(f)=J^1_{geom}(f). \]
The theorem is then just a reformulation of the results using definitions of the functions $\Higg_\mathfrak{R}$,  $\alpha_{\overline{S}_u}$, $\beta_{\overline{S}_u}$, $\eta_{\overline{S}_u}$ and $\omega_{\overline{S}_u}$ in Proposition \ref{Lefschetz} and the vanishing on $c_\mathfrak{R}$ and $b_{\mathfrak{R}}$ in Lemma \ref{bck}. 
Note that we have
\[S^2\Pic_X^0(\mathbb{F}_q)= \frac{1}{2}\biggr(\vert\Pic_X^0(\mathbb{F}_{q^2})\vert+\vert \Pic_X^0(\mathbb{F}_q) \vert^2  \biggr).   \]

It is tedious but direct to verify. Let us be satisfied to explain how to verify the most complicated case that $\overline{S}_{cr} \neq \emptyset $ and $\overline{S}_{u}\neq \emptyset$. 
It should be divided into some sub-cases. If $S_c=\emptyset$, then $|E_{2}(\mathfrak{R})^{\Fr^{\ast k}}|$ is given by $\Higg(k)$ minus the error terms in one of the cases (13), (14), (15), (16) of Theorem \ref{automorphe}. If $S_c\neq \emptyset$, then for $k$ odd, $|E_{2}(\mathfrak{R})^{\Fr^{\ast k}}|$  is given by $\Higg(k)$ minus the error terms in one of the cases (1), (2), (4) (5) of Theorem \ref{automorphe} but for $k$ even, it is given by $\Higg(k)$ minus the error terms in one of the cases (13), (14), (15), (16). We use the definition of $\alpha_{\overline{S}_u}$ and $\beta_{\overline{S}_u}$ to write the result in a uniform formula. One needs to note that 
by Lemma \ref{bck},  
$b_{\mathfrak{R}}(2k)=0$ if $\deg S_c$ is odd and $c_{\mathfrak{R}}(2k+1)=0$ if $S_c\neq \emptyset$.

 \section{The case $g=0$}

\subsection{The case $g=0$}
We are going to give another expression for $\mathrm{Higg}_\mathfrak{R}(k)$ when $g=0$. Let $R=S_{cr}\cup S_{u}$ and $D_R=K_X+\sum_{v\in R}v$. 

Suppose that $(e,\xi)\in \mathbb{Z}\times (\mathbb{Q}^2)^{R}$ is in general position. 
We have a $\mathbb{G}_m$-action on $M_{R}^{e,\xi}=M_{R}^{e,\xi}(D_R)$ given by dilation on the Higgs field. 
We have a modular description for the $\mathbb{G}_m$-fixed points due to Hitchin and Simpson. 

Suppose that $(\mathcal{E}, \theta, (L_x)_{x\in \overline{R}})$ is a parabolic Higgs bundle fixed by $\mathbb{G}_m$-action, then $(\mathcal{E}, \theta)\cong (\mathcal{E}, t\theta)$ for any $t\in \mathbb{G}_m$. By arguments of \cite[Lemma 4.1]{SimpsonHL}, either $(1)$ $\theta=0$ and the underlying parabolic bundle $(\mathcal{E}, (L_x)_{x\in \overline{R}})$ is $\xi$-semistable; or (2) $\theta\neq 0$, $\mathcal{E}$ is decomposed as a direct sum of line bundles \[\mathcal{E}=\mathcal{L}_1\oplus \mathcal{L}_2,  \] and $\theta$ equals the composition
\[\theta: \mathcal{E}\xrightarrow{\text{projection}} \mathcal{L}_2\longrightarrow \mathcal{L}_1(K_X+\sum_{x\in \overline{R}}x)\hookrightarrow\mathcal{E}(K_X+\sum_{x\in \overline{R}}x). \] 
Note that if $g=0$, the first case does not happen as there are no semistable parabolic Higgs bundles of rank $2$ when the $(e,\xi)$ is in general position as defined by \eqref{generalposition}. 
Let $f: (\mathcal{E}, \theta, (L_x)_{x\in \overline{R}})\xrightarrow{\sim} (\mathcal{E}, t\theta, (L_x)_{x\in \overline{R}})$ be an isomorphism of parabolic Higgs bundles. 
Then $f$ has constant coefficient in $\overline{\mathbb{F}}_q$.  Then we have
\[\begin{cases}
f\theta=t\theta f; \\
f(L_x)=L_x, \quad \forall x\in R. 
\end{cases} \]
Let $\lambda$ be an eigenvalue of $f$, then $\mathcal{E}_\lambda:= \ker(f-\lambda)^2$ is a subbundle of $\mathcal{E}$ and $\theta$ sends $\mathcal{E}_\lambda$ to $\mathcal{E}_{t\lambda}$. If $\theta$ is non-zero, then $\mathcal{E}_\lambda$ and $\mathcal{E}_{t\lambda}$ are non-zero and therefore $\mathcal{E}=\mathcal{E}_\lambda\oplus \mathcal{E}_{t\lambda}$. In this case, either \[L_x=\mathcal{E}_{\lambda, x}\] or \[ L_{x}=\mathcal{E}_{t\lambda, x}.\] 
Therefore $(M_{R}^{e,\xi})^{\mathbb{G}_m}$ consists of so-called graded parabolic Higgs bundles, which we will denote by ${^{gr}M}_{R}^{e,\xi}$.

\begin{theorem}\label{identi} 
Suppose that $\mathbb{F}_q\neq \mathbb{F}_2$, $\xi_{v,1}=\xi_{v,2}$ for $v\in S_u$, $(e, \xi)$ is in general position defined by \eqref{generalposition}.  
Let $^{gr}M_{R}^{e,\xi}(S_u)$ be the open subvariety of ${^{gr}M}_{R}^{e,\xi}=(M_{R}^{e,\xi})^{\mathbb{G}_m}$ consisting of those graded semistable parabolic Higgs bundles $(\mathcal{E}, \theta, (L_x)_{x\in \overline{R}})$ such that $\theta_x\neq 0$ for any $x$ lying over points in $S_u$. Suppose $g=0$, and $S_c=\emptyset$. 

If either $e$ is an odd integer or there is a place of odd degree in $R$, then we have 
\[   |{^{gr}M}_{R}^{e,\xi}(S_u)(\mathbb{F}_{q^k})| = \mathrm{Higg}_{\mathfrak{R}}(k), \quad \forall k\geq 1.   \]
\end{theorem}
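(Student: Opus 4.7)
The plan is to compute both sides directly for $g=0$ and match them via a generating-function identity that realizes the signed sum over $V$ in $\mathrm{Higg}_{\mathfrak{R}}(k)$ as a M\"obius inversion implementing the non-vanishing condition $\theta_x \neq 0$ for $x \in \overline{S}_u$.

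First, I would parametrize $^{gr}M_{R}^{e,\xi}(S_u)$ via the Hitchin--Simpson modular description recalled just above the statement. A graded parabolic Higgs bundle with $\theta \neq 0$ has $\mathcal{E} = \mathcal{L}_1 \oplus \mathcal{L}_2$ with $\theta \colon \mathcal{L}_2 \to \mathcal{L}_1(D_R)$, each parabolic line $L_x$ equal to $\mathcal{L}_{1,x}$ or $\mathcal{L}_{2,x}$, and at $x \in \overline{S}_u$ the condition $\theta_x \neq 0$ is equivalent to the zero divisor $Z_\theta$ avoiding $\overline{S}_u$ and forces $L_x = \mathcal{L}_{1,x}$. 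For $g=0$ this data becomes rigid: a pair $(e_1,e_2)$ with $e_1+e_2 = e$ (determining $\mathcal{L}_i \cong \mathcal{O}(e_i)$), an effective divisor $Z_\theta$ of degree $d(e_1,e_2) = e_1 - e_2 + |\overline{R}| - 2$ on $\mathbb{P}^1 \setminus \overline{S}_u$, and a Galois-equivariant choice of side $i_v \in \{1,2\}$ for each place $v \in S_r$. The $\mathbb{F}_{q^k}$-count of $^{gr}M_{R}^{e,\xi}(S_u)$ becomes a sum
\[
\sum_{(e_1,e_2)\text{ admissible}} 2^{|S_r \otimes \mathbb{F}_{q^k}|} \, \bigl|S^{d(e_1,e_2)}(\mathbb{P}^1 \setminus \overline{S}_u)(\mathbb{F}_{q^k})\bigr|,
\]
with admissibility dictated by $\overline{\xi}$-semistability applied to the $\theta$-invariant subbundle $\mathcal{L}_1$.

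Second, I would rewrite $\mathrm{Higg}_{\mathfrak{R}}(k)$ in comparable form. Since $S_c = \emptyset$, Theorem~\ref{NoSc} converts $|M_V^1(o_{S_r})(\mathbb{F}_{q^k})|$ into $|M^1_{V \cup S_r}(D_{V \cup S_r})(\mathbb{F}_{q^k})|$, turning the prescribed residue at $S_r$ into a nilpotent parabolic structure. Each of these counts admits the same Hitchin--Simpson parametrization of its own $\mathbb{G}_m$-fixed locus---now for parabolic Higgs bundles with parabolic structure at $V \cup S_r$ but without the non-vanishing requirement at $V$---yielding expressions in effective divisors on $\mathbb{P}^1$ together with Galois-equivariant sign choices. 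The factor $2^{|V|}$ in the definition of $\mathrm{Higg}_{\mathfrak{R}}$ then absorbs the two sides available at each point of $V$, matching the $2^{|S_r \otimes \mathbb{F}_{q^k}|}$ appearing on the left.

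Third, the desired equality should reduce to the standard zeta-function identity
\[
\zeta_{\mathbb{P}^1 \setminus \overline{S}_u}(t) \;=\; \zeta_{\mathbb{P}^1}(t) \prod_{v \in S_u \otimes \mathbb{F}_{q^k}} \bigl(1 - t^{\deg v}\bigr),
\]
compared coefficient by coefficient. The product $\prod_v (1 - t^{\deg v})$, expanded, is exactly the signed sum over $V \subseteq S_u \otimes \mathbb{F}_{q^k}$ that appears in $\mathrm{Higg}_{\mathfrak{R}}(k)$; so after equating degrees the signed sum collapses into the count of effective divisors on $\mathbb{P}^1 \setminus \overline{S}_u$, which is exactly the left side. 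The $q$-power normalizations match by a direct Riemann--Roch computation comparing the dimensions of the relevant Hitchin base on the $\mathrm{Higg}_{\mathfrak{R}}$ side with the attractor fiber dimensions for the $\mathbb{G}_m$-cells on the $^{gr}M$ side when $g=0$.

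The main obstacle is the stability matching. On the left the count depends on the auxiliary $(e,\xi)$ in general position, while on the right Theorem~\ref{NoSc} naturally supplies counts at degree $1$ with trivial weights; bridging these requires invoking Theorem~\ref{weights} to assert $|M_{R}^{e,\xi}(\mathbb{F}_{q^k})|$ is independent of the choice of $(e,\xi)$, which in turn makes the set of admissible line bundle pairs $(e_1,e_2)$ after the $\mathbb{G}_m$-stratification agree on both sides. The oddness hypothesis (either $e$ odd or a place of odd degree in $R$) is exactly what allows Theorem~\ref{weights} to be applied and rules out strictly semistable parabolic Higgs bundles, which would otherwise obstruct the stratification. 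Carrying out the Galois descent so that the identity holds for every $k \geq 1$, rather than merely after averaging, is the final technical point that needs to be treated uniformly.
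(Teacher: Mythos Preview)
Your approach differs substantially from the paper's. Rather than parametrizing the graded moduli by effective divisors and matching via the zeta function of $\mathbb{P}^1\setminus\overline{S}_u$, the paper argues by a recursive stratification. After rewriting $\Higg_{\mathfrak{R}}(k)$ (via Theorem~\ref{NoSc} and Theorem~\ref{weights}) as a signed sum of $|{^{gr}M}_{V\cup S_r}^{e,\xi}(\mathbb{F}_{q^k})|$ over $V\subseteq S_u\otimes\mathbb{F}_{q^k}$, it peels off one place $v\in S_u$ at a time: stratifying ${^{gr}M}_R^{e,\xi}(T)$ by whether $\theta$ vanishes at the points over $v$, the hypothesis $\xi_{v,1}=\xi_{v,2}$ makes both choices $L_x=\mathcal{L}_{1,x}$ and $L_x=\mathcal{L}_{2,x}$ on the vanishing locus give strata isomorphic to ${^{gr}M}_{R-\{v\}}^{e,\xi}(T)$, and Galois descent shows that only the two constant choices over $v$ carry $\mathbb{F}_q$-points. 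This yields
\[
|{^{gr}M}_R^{e,\xi}(T)(\mathbb{F}_q)|=|{^{gr}M}_R^{e,\xi}(T\cup\{v\})(\mathbb{F}_q)|+2\,|{^{gr}M}_{R-\{v\}}^{e,\xi}(T)(\mathbb{F}_q)|,
\]
which iterates to the signed sum. The argument never touches $S_r$ and never needs the explicit $(e_1,e_2)$-decomposition.

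Your sketch has a gap precisely where the paper's argument avoids one. The ``Galois-equivariant choice of side $i_v\in\{1,2\}$ for each $v\in S_r$'' is not decoupled from $Z_\theta$ or from stability: choosing $L_x=\mathcal{L}_{2,x}$ at $x\in\overline{S}_r$ forces $\theta_x=0$ (hence $Z_\theta\ni x$) and shifts the parabolic slope of $\mathcal{L}_1$ by $\overline{\xi}_{x,2}-\overline{\xi}_{x,1}$. Hence the count does not factor as $2^{|S_r\otimes\mathbb{F}_{q^k}|}\cdot|S^{d}(\mathbb{P}^1\setminus\overline{S}_u)(\mathbb{F}_{q^k})|$, and the same issue afflicts your reading of the $2^{|V|}$ factor after Theorem~\ref{NoSc} places parabolic structure at $V\cup S_r$. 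A generating-function proof along your lines may be possible, but it requires tracking these couplings carefully; the paper's stratification sidesteps them entirely by recursing only over $S_u$, where $\xi_{v,1}=\xi_{v,2}$ makes the two sides genuinely symmetric.
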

\begin{proof}
From Theorem \ref{NoSc}, for any $k\geq 1$, the expression $\mathrm{Higg}_{\mathfrak{R}}(k)$ is given by
\[\mathrm{Higg}_{\mathfrak{R}}(k)=
\sum_{V\subseteq {S}_u\otimes\mathbb{F}_{q^k}} (-1)^{|{S}_u\otimes\mathbb{F}_{q^k} - V|} 2^{|V|}     |{^{gr}}{M}_{V\cup (S_r\otimes\mathbb{F}_{q^k})}^{e, \xi}(\mathbb{F}_{q^k})|.
\] 
It suffices to verify the Theorem for the case $k=1$.

Let $(\mathcal{E}, \theta, (L_x)_{x\in \overline{R}})$ be a graded parabolic Higgs bundles.  
For each $x\in \overline{R}$, the fiber map
\[\theta_x: \mathcal{E}_x\longrightarrow \mathcal{E}(K_X+\sum_{x\in \overline{R}}x)_x, \] preserves the parabolic structure. It means that $\Im \theta_x \subseteq L_x$ and $\theta_x(L_x)=0$. 
Suppose that $\theta_x$ is zero, then it is possible that $L_x=\mathcal{L}_{1, x}$ or $L_{x}=\mathcal{L}_{2, x}$. If $\theta_x$ is non-zero, then we can only have $L_x=\mathcal{L}_{1, x}$. We obtain a stratification for any $T\subseteq {S}_u$ and $x\in \overline{S}_u-\overline{T}$,
\[^{gr}M_{\overline{R}}^{e,\overline{\xi}}(\overline{T})=N(1_x)\cup N(2_x)  \cup {^{gr}M^{e,\overline{\xi}}_{\overline{R}}(\overline{T}\cup \{x\})}.  \]
where $N(i_x)$ consists of those $(\mathcal{E}, \theta, (L_x)_{x\in \overline{R}})$ in $^{gr}M_{\overline{R}}^{e,\overline{\xi}}(\overline{T})$ such that $\theta_x=0$ and $L_x=\mathcal{L}_{i, x}$. 
Let $v\in S_u-T$ be a closed point in $S_u$. Repeat the procedure above, we obtain a decomposition of $^{gr}M_{\overline{R}}^{e, \overline{\xi}}(\overline{T})$ as a disjoint union by locally closed subvarieties: 
\[ ^{gr}M_{\overline{R}}^{e,\overline{\xi}}(\overline{T}\cup \overline{\{v\}} )  \cup \bigcup_{ (a_x)_{x\in \overline{\{v\}} }\in \{1, 2\}^{   \overline{\{v\}} }} N((a_x)_{x\in \overline{\{v\}}} ) ,  \] 
where $N((a_x)_{x\in \overline{\{v\}}} )$ consists of those $(\mathcal{E}, \theta, (L_x)_{x\in \overline{R}})$ in $^{gr}M_{\overline{R}}^{e,\overline{\xi}}(\overline{T})$ such that  so that $\theta_{x}=0$ for all $x\in \overline{\{v\}}$ and $L_x=\mathcal{L}_{a_x, x}$. 

Now we must consider how to descend to $\mathbb{F}_q$. 
Since for an $\overline{\mathbb{F}}_q$-sub-variety $Z$ defined over $\overline{\mathbb{F}}_q$ of $^{gr}M_{\overline{R}}^{e,\overline{\xi}}=(^{gr}M_{{R}}^{e,{\xi}})_{\overline{\mathbb{F}}_q}$, it comes from a variety defined over $\mathbb{F}_q$ if it is fixed by Frobenius, i.e., $Z^{(q)}=Z$ as sub-varieties of $^{gr}M_{\overline{R}}^{e,\overline{\xi}}$ where $Z^{(q)}$ is defined by the Cartesian diagram:
\[   \begin{CD}
Z^{(q)}@>>> Z\\
@VVV@VVV\\
 \Spec(\overline{\mathbb{F}}_q)@>x\mapsto x^q>> \Spec(\overline{\mathbb{F}}_q)
\end{CD}.
\]
Here it is important that $Z^{(q)}=Z$ instead of just isomorphism; otherwise, we don't have a descent datum. The equality here is another way to express the commutativity of the following diagram: 
\[   \begin{CD}
(M_{\overline{R}}^{e,\xi})^{(q)}@>\sim>> (M_{\overline{R}}^{e,\xi})\\
@AAA@AAA\\
Z^{(q)}@>\sim>> Z
\end{CD}.
\]

We have
\[ N((a_x)_{x\in \overline{\{v\}}} ) =  N((a_{\Fr(x)})_{{x}\in \overline{\{v\}}} )^{(q)}  . \]
Therefore, we see that  
\( N((1_x)_{x\in \overline{\{v\}}} ) \)
and \(   N((2_x)_{x\in \overline{\{v\}}} )\) are defined over $\mathbb{F}_q$. The variety 
\[ \bigcup_{ (a_x)_{x\in \overline{\{v\}} }\in \{1, 2\}^{   \overline{\{v\}} }  - \{( 1_x)_{x\in  \overline{\{v\}} }, (2_x)_{x\in  \overline{\{v\}}     }  \} } N((a_x)_{x\in \overline{\{v\}}} ) \] 
is defined over $\mathbb{F}_q$ which has no $\mathbb{F}_q$-points, since Frobenius  permutes its components without any fixed component. 
Since $\xi_{v,1 }=\xi_{v,2}$ for $v\in S_u$, as varieties, we have
\[N((1_x)_{x\in \overline{\{v\}}} ) \cong  {^{gr}M_{R-\{v\}}^{e,\xi'}(T)}, \]
and 
\[N((1_x)_{x\in \overline{\{v\}}} ) \cong  {^{gr}M_{R-\{v\}}^{e,\xi'}(T)}, \]
where $\xi' =(\xi_v)_{v\in R-\{v\}}\in (\mathbb{Q}^2)^{R-\{v\}}$ (it is still in general position because of our assumption). Here we remove the overline in the notation to indicate they are varieties over $\mathbb{F}_q$.

We deduce that for any $T\subseteq S_u$ and $v\in S_u-T$,  \[ |^{gr}M_{R}^{e,\xi}(T)(\mathbb{F}_q)|= |^{gr}M_{R}^{e,\xi}(T\cup {\{v\}} )(\mathbb{F}_q)|+2 |^{gr}M_{R-\{v\}}^{e,\xi}(T)(\mathbb{F}_q)| . \]
 By repeating this equality, we obtain the desired identity. 
\end{proof}

\subsection{An example}
Let us consider an example that $X=\mathbb{P}^1=\mathbb{P}^1_{\mathbb{F}_q}$. Note that in this case, $\Omega_{\mathbb{P}^1}^1\cong \mathcal{O}_{\mathbb{P}^1}(-2)$. 

 Suppose $S=\{x_1, \ldots, x_n\}\subseteq \mathbb{P}^1$ is a finite set of closed points of degree $1$. Namely, we can identify $S$ with a subset of $\mathbb{P}^1(\mathbb{F}_q)$. 
Let us consider those $\ell$-adic local systems over $\overline{\mathbb{P}}^1-\overline{S}$ fixed by $\Fr^\ast$ whose local monodromies around $x_i$ ($i<n$) are tame and are at most unipotent, i.e., they are either trivial at 
$x_i$ or are unipotent with one Jordan block at $x_i$, and they are at most quasi-unipotent with eigenvalues $-1$ at $x_n$. 
We can deduce either from Theorem \ref{2} or from its proof directly that they are in bijection  with equivalent classes of semistable graded parabolic Higgs bundles composed of the following data (simply because they have the same number): 
\[ \mathcal{E}=\mathcal{O}_{\mathbb{P}^1}(m)\oplus \mathcal{O}_{\mathbb{P}^1}(1-m),\] \[\theta:\mathcal{E}\longrightarrow \mathcal{O}_{\mathbb{P}^1}(m)\longrightarrow \mathcal{O}_{\mathbb{P}^1}(-m+n-1)\longrightarrow \mathcal{E}(n-2),\] and parabolic structures \[ L_{x_i}=\mathcal{O}_{\mathbb{P}^1}(1-m)_{x_i}, \quad 1\leq i\leq n.\]  
We choose parabolic weights to be zero. Then the semistability says that $m$ is an integer such that \[ m>1-m.\] 
Note that $\theta$ exists if and only if \[ m  \leq  -m+n-1,\]
and when such a $\theta$ exists, the pair $(\mathcal{E}, \theta)$ is semistable if and only if $\theta$ is non-zero. 
Therefore we have \[   1\leq m \leq [\frac{n-1}{2}], \]
where $[\frac{n-1}{2}]$ is the largest integer smaller or equal to $\frac{n-1}{2}$. 

Two graded parabolic Higgs bundles are isomorphic if and only if $m$ is the same and $\theta$ are differed by a non-zero scalar. In fact, two isomorphic graded parabolic Higgs bundles have isomorphic underlying vector bundles. Therefore $m$ should be the same. 
Suppose that $(\mathcal{E}, \theta_1)$ and $(\mathcal{E}, \theta_2)$ are graded parabolic Higgs bundles with $\mathcal{E}=\mathcal{O}_{\mathbb{P}^1}(m)\oplus \mathcal{O}_{\mathbb{P}^1}(1-m)$. It is clear that $\theta_1$ and $\theta_2$ are differed by a constant if and only if there is a $\varphi\in \Aut(\mathcal{E})$ such that \[ (\varphi\otimes id_{\Omega_{\mathbb{P}^1}^1}) \circ\theta_1\circ \varphi^{-1}=\theta_2,\] 
i.e. $(\mathcal{E}, \theta_1)$ and $(\mathcal{E}, \theta_2)$  are isomorphic.

We conclude from the above discussion that the moduli space of semistable graded parabolic Higgs bundles of rank $2$, degree $1$, and for the zero parabolic weight is \[ \coprod_{m} \mathrm{PHom}(\mathcal{O}_{\mathbb{P}^1}(m), \mathcal{O}_{\mathbb{P}^1}(-m+n-1))\cong \coprod_{m=1}^{[\frac{n-1}{2}]} \mathbb{P}^{n-1-2m}.  \]
In particular, the number of its $\mathbb{F}_q$-points equals
\begin{equation} \sum_{i=3}^{n}[\frac{i-1}{2}] q^{n-i}. \end{equation}
This number is also the number of $\ell$-adic local systems over $\mathbb{P}^{1}_{\overline{\mathbb{F}}_q}-\overline{S}$ fixed by $\Fr^\ast$ whose local  monodromies around $x_i$ ($i<n$) are tame and are at most unipotent, i.e., they are either trivial at 
$x_i$ or are unipotent with one Jordan block at $x_i$, and they are at most quasi-unipotent with eigenvalues $-1$ at $x_n$. 
We can not provide a natural bijection between these objects from our method, but I get to know from Kang Zuo that in his work under preparation joint with Jinbang Yang, when $n=4$, they can construct a natural injective map from graded parabolic Higgs bundles to $\ell$-adic local systems, which then is bijective.


\end{document}